\newif\ifdebug
\newif \iffig
\newif \iftable
\title{Canonical currents and heights for K3 surfaces}
\thanks{\noindent
Revised \textsc{\today}\\
\indent \textsc{MSC2020} Subject Classification System: 37F80, 37P30, 32U40, 32Q20, 14J28, 14J50}
\date{January 2021}
\author{
	Simion Filip
}
\address{
	\parbox{0.7\textwidth}{
		Department of Mathematics\\
		University of Chicago\\
		5734 S University Ave\\
		Chicago, IL 60637}
}
\email{{sfilip@math.uchicago.edu}}
\author{
	Valentino Tosatti
}
\address{
	\parbox{0.7\textwidth}{Courant Institute of Mathematical Sciences\\
New York University\\
251 Mercer St\\
New York, NY 10012}
}
\email{{tosatti@cims.nyu.edu}}
\begin{document}

%%%%%%%%%%%%%%%%%%%%%%%%%%%%%%%%%%%%%%%%%%%%%%%%%%%%%%%%%%%%%%%%%%%%%
%						Abstract
%%%%%%%%%%%%%%%%%%%%%%%%%%%%%%%%%%%%%%%%%%%%%%%%%%%%%%%%%%%%%%%%%%%%%
\begin{abstract}
	We construct canonical positive currents and heights on the boundary of the ample cone of a K3 surface.
	These are equivariant for the automorphism group and fit together into a continuous family, defined over an enlarged boundary of the ample cone.
	Along the way, we construct preferred representatives for certain height functions and currents on elliptically fibered surfaces.
\end{abstract}
%%%%%%%%%%%%%%%%%%%%%%%%%%%%%%%%%%%%%%%%%%%%%%%%%%%%%%%%%%%%%%%%%%%%%
%						End of Abstract
%%%%%%%%%%%%%%%%%%%%%%%%%%%%%%%%%%%%%%%%%%%%%%%%%%%%%%%%%%%%%%%%%%%%%

%%%%%%%%%%%%%%%%%%%%%%%%%%%%%%%%%%%%%%%%%%%%%%%%%%%%%%%%%%%%%%%%%%%%%
%						Title, ToC, FixMe
%
\maketitle
%
%			ToC
%\noindent\hrulefill
\tableofcontents
%\nointerlineskip
%\noindent \hrulefill
%---------------------------------------
%					Things to be fixed
\ifdebug
   \listoffixmes
\fi
%%%%%%%%%%%%%%%%%%%%%%%%%%%%%%%%%%%%%%%%%%%%%%%%%%%%%%%%%%%%%%%%%%%%%

%%%%%%%%%%%%%%%%%%%%%%%%%%%%%%%%%%%%%%%%%%%%%%%%%%%%%%%%%%%%%%%%%%%%%%%%%%%%%%%
% Start of Section: Introduction
%%%%%%%%%%%%%%%%%%%%%%%%%%%%%%%%%%%%%%%%%%%%%%%%%%%%%%%%%%%%%%%%%%%%%%%%%%%%%%%

\section{Introduction}
	\label{sec:introduction}

Consider an algebraic variety $X$ equipped with an automorphism or endomorphism $f$.
Suppose also that $f$ has some hyperbolicity, e.g. the action of $f^*$ on cohomology $H^\bullet(X)$ has spectral radius larger than $1$.
When $X$ is defined over $\bC$, a basic first step in complex dynamics is to construct closed positive currents which are expanded or contracted by $f$.
When $X$ is defined over a global field, such as a number field, one is led to arithmetic dynamics considerations where it is often possible to construct height functions that have analogous equivariance properties under $f$.
The closed positive currents, or rather their potentials, can be viewed as the local heights at the archimedean place associated to the global height function.

This paper studies the case when there is a large group of automorphisms.
While it is possible to apply the standard constructions to each individual group element, taking into consideration the whole group leads to a substantially larger set of currents and heights, which also assemble into a well-behaved object.

%%===============================================
% 	Start of Subsection: Main results

\subsection{Main results}
	\label{ssec:main_results_intro}

We consider K3 surfaces since these have some of the most interesting automorphisms groups which are ``large'' in an appropriate sense.
Our assumptions are spelled out in \autoref{sssec:standing_assumptions_main}, but in short, we assume that the Picard rank $\rho$ is at least $3$ and the automorphism group is a lattice in the corresponding $\SO_{1,\rho-1}(\bR)$.
Because we consider automorphisms and not endomorphisms (i.e. our map $f$ is invertible), the canonical currents and heights are defined only for cohomology classes of self-intersection zero and on the boundary of the ample cone.
Furthermore, to assemble them into a well-behaved family, we must introduce a slightly larger space denoted $\partial^\circ \Amp_c(X)$, see \autoref{ssec:boundaries} for precise definitions.
With these preparations, here are our main two results.
In both cases, $X$ is a K3 surface satisfying the assumptions in \autoref{sssec:standing_assumptions_main}.

\begin{theoremintro}[Canonical currents]
	\label{thm:canonical_currents_intro}
	There exists a unique $\Aut(X)$-equivariant map
	\[
		\eta\colon \partial^\circ\Amp_c(X)\to \cZ_{1,1}(X)
	\]
	which is continuous and takes values in closed positive currents.
	It is compatible with taking cohomology classes, the currents in the image have $C^0$ potentials and the map is also continuous for the $C^0$-topology of potentials on currents.
\end{theoremintro}
\noindent See \autoref{thm:continuous_family_of_boundary_currents} for further details and the proof.
Furthermore the currents in the image also have laminarity properties, see \autoref{sssec:laminarity}.

\begin{theoremintro}[Canonical heights]
	\label{thm:canonical_heights_intro}
	Suppose that $X$ is defined over a number field $k$ and let $\ov k$ be an algebraic closure.
	There exists a unique $\Aut(X)$-equivariant map
	\[
		h^{can}\colon \partial^\circ\Amp_c(X)\to \cH(X)
	\]
	taking values in height functions and compatible with the map to the Picard group, such that for any $p\in X\left(\ov k\right)$ the function
	\[
	 	h^{can}(-;p)\colon \partial^\circ \Amp_c(X)\to \bR
	\]
	is non-negative, continuous, and equivariant for the scaling action of $\bR_{>0}$ on both sides.
\end{theoremintro}
\noindent See \autoref{thm:canonical_heights_on_the_boundary} for further details and the proof.
To a given point $p\in X\left(\ov k\right)$ we can associate the star-shaped set in $\partial^\circ \Amp(X)$ where the height is bounded above by $1$.
\autoref{fig:star_shaped_sets} presents some numerical simulations.
In \autoref{thm:invariance_of_total_height} we associate further invariants to this canonical height, which are in particular constant along $\Aut(X)$-orbits.

\begin{figure}[htbp!]
	\centering
	\includegraphics[width=0.32\linewidth]{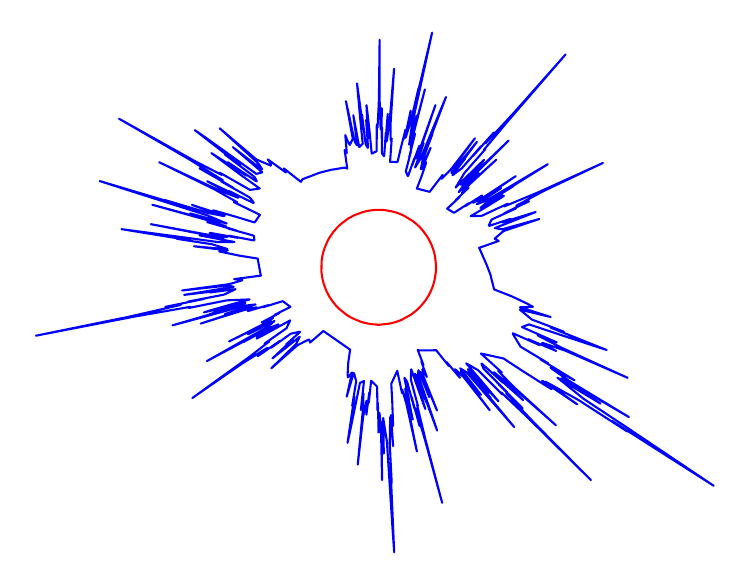}
	\includegraphics[width=0.32\linewidth]{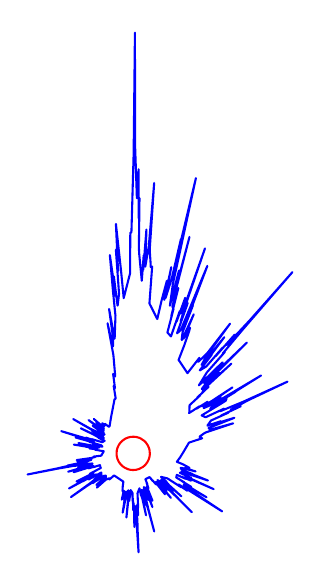}
	\includegraphics[width=0.32\linewidth]{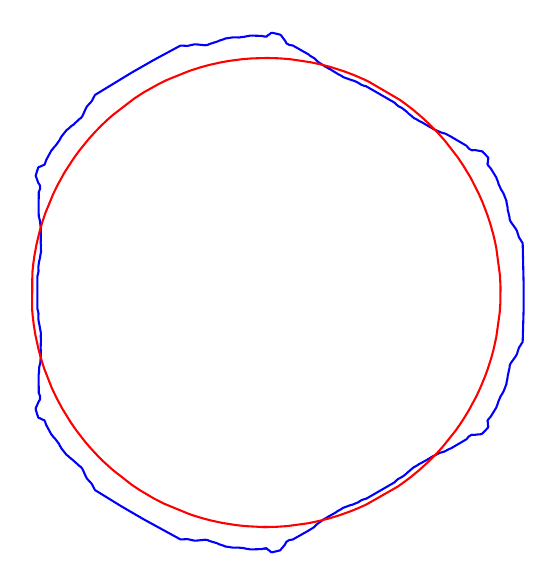}
	\caption{The star-shaped sets associated to canonical heights (in blue).
	The red circles are cohomology classes of normalized mass $1$.
	See \autoref{thm:invariance_of_total_height}.}
	\label{fig:star_shaped_sets}
\end{figure}

\subsubsection*{Uniqueness}
% \subsubsection*{Uniqueness}
The currents constructed in \autoref{thm:canonical_currents_intro} are the unique closed positive representatives of their cohomology classes, when the cohomology class is not proportional to a rational one.
This is a result of Sibony--Verbitsky, included here as \autoref{thm:uniqueness_in_irrational_classes_verbitsky_sibony}.
This then implies that the map $\eta$ in \autoref{thm:canonical_currents_intro} is unique with its stated properties.
For rational classes, there is always the current that comes from degenerating the Ricci flat metrics.
However, as soon as the Picard rank $\rho$ is at least $4$, one needs to ``fill-in'' a positive-dimensional space of currents for the rational classes, and these are in general distinct from the currents obtained via Ricci-flat metrics.
See \autoref{sssec:an_example_with_elliptic_fibrations} for some examples, and \autoref{ssec:boundary_currents_and_ricci_flat_metrics} for a further discussion.

The heights constructed in \autoref{thm:canonical_heights_intro} are also uniquely characterized by equivariance and continuity for fixed $p\in X(\ov k)$.
We expect that a different characterization, in the spirit of the Sibony--Verbitsky argument (\autoref{thm:uniqueness_in_irrational_classes_verbitsky_sibony}) is possible.
It would involve proving the same uniqueness theorem for positive currents at the non-archimedean places of $k$.

\subsubsection*{Elliptic fibrations}
To establish the above results, we need to obtain accurate estimates for the dynamics of parabolic automorphisms of the K3 surface, i.e. those that preserve an elliptic fibration.
The needed estimates are established with the help of the following intermediary results, which might be of independent interest.

Suppose that $X\xrightarrow{\pi}B$ is a genus one fibration, where $B$ is a curve and $X$ is a surface, all defined over the field $k$.
For currents we take $k=\bC$ and for heights $k$ is a number field.

\begin{theoremintro}[Preferred currents]
	\label{thm:preferred_currents_intro}
	Under the assumptions of \autoref{sssec:standing_assumptions_elliptic_currents} on $X\to B$, suppose that $\alpha$ is a smooth closed $(1,1)$-form on $X$ such that $\int_{X_b}\alpha=0$ i.e. $\alpha$ integrates to zero on some (hence every) fiber.

	Then there exists $\phi\in L^{\infty}(X)$ such that $\alpha+dd^c\phi$ restricts to zero on every smooth fiber of the fibration.
\end{theoremintro}
\noindent For the proof and a more refined statement, see \autoref{thm:preferred_currents}.
In fact $\phi$ can be arranged smooth over the open locus where the fibers are smooth, but the key point is global boundedness of $\phi$ on $X$.
We also obtain a ``current-valued pairing'', see \autoref{thm:current_valued_pairing}.
As a consequence of this construction, we obtain a ``current-valued pairing'' analogous to the results of \cite[\S6]{DeMarcoMyrto-Mavraki2020_Elliptic-surfaces-and-arithmetic-equidistribution-for-R-divisors-on-curves}.

Here is also the height version of \autoref{thm:preferred_currents_intro}:

\begin{theoremintro}[Preferred heights]
	\label{thm:preferred_heights_intro}
	Under the assumptions of \autoref{sssec:standing_assumptions_elliptic_heights}, suppose that $L^0$ is a line bundle on $X$ that restricts to have degree $0$ on every irreducible component of any fiber of the elliptic fibration.

	Then there exists a height function $h^{pf}_{L^0}$ associated to $L^0$ such that $h^{pf}_{L^0}$ restricts on each fiber $X_b$ as a canonical height associated to $L^0\vert_{X_b}$.
\end{theoremintro}
\noindent Note that we say ``a'' canonical height, because a canonical height depends on a choice of basepoint in each fiber, so the assertion is that $h^{pf}_{L^0}\vert_{X_b}$ equals such a canonical height up to some constant which is allowed to depend on $b\in B(\ov k)$.
For the proof and a more refined version, see \autoref{thm:preferred_heights_on_elliptic_fibrations}.
Just like for the preferred currents in \autoref{thm:preferred_currents_intro}, the main difficulty is to obtain a height function defined on all of $X$.

% Again, it is a classical result of Silverman that there is a pairing between parabolic automorphisms and line bundles on $X$ to heights on $B$, see \autoref{thm:height_valued_pairing}.

% 	End of Subsection: Main results
%%===============================================

%%===============================================
% 	Start of Subsection: Applications of the main results

\subsection{Applications of the main results}
	\label{ssec:applications_of_the_main_results}

\subsubsection*{Regularity of boundary currents}
It is well-known that cohomology $(1,1)$-classes on the boundary of the K\"ahler cone of a compact K\"ahler manifold always contain closed positive currents, whose potentials are quasi-plurisubharmonic functions, and an important question is to understand when these currents can be chosen to be sufficiently regular.

This question is in general very hard, even in the case of K3 surfaces. For example, while one might naively expect that classes on the boundary of the K\"ahler (or even ample) cone of a K3 surface should contain a smooth semipositive form, this was recently shown to be false by the authors \cite{FT} using a Kummer rigidity result in dynamics \cite{CD, FT2}. In these examples the class is an eigenclass for a hyperbolic automorphism of the surface.

However, in these dynamical counterexamples the currents have H\"older continuous (but not smooth) potentials.  A direct consequence of \autoref{thm:canonical_currents_intro} is then that every K3 surface that satisfies the assumptions in \autoref{sssec:standing_assumptions_main} has the
property that every class on the boundary of the ample cone contains a closed positive current with continuous potentials.
This confirms the following conjecture of the second-named author \cite[Conjecture 3.7]{Tos_survey} for these surfaces:
\begin{conjectureintro}
Every class $[\alpha]\in H^{1,1}(X,\mathbb{R})$ on the boundary of the K\"ahler cone of a compact Calabi-Yau manifold $X$ contains a closed positive current with bounded potentials.
\end{conjectureintro}
\noindent On K3 surfaces, this conjecture is known when the selfintersection of the class is positive  \cite[Thm. 1.3]{FT}, or when it vanishes and the class is rational (up to scaling) \cite[Prop. 1.4]{FT}. For the remaining irrational classes with vanishing selfintersection, the conjecture was only previously known for the aforementioned eigenclasses of hyperbolic automorphisms.

\subsubsection*{Ricci-flat metrics}
Let $X$ be a K3 surface that satisfies the assumptions in \autoref{sssec:standing_assumptions_main}, equipped with a K\"ahler metric $\omega$ and a closed real $(1,1)$-form $\alpha$ whose class $[\alpha]\in \NS(X)_{\mathbb{R}}$ is on the boundary of the ample cone and has vanishing selfintersection. Thanks to Yau's Theorem \cite{Yau_Ricci}, for any $t>0$ the cohomology class $[\alpha]+t[\omega]$ contains a unique Ricci-flat K\"ahler metric $\omega_t$, which is of the form $\omega_t=\alpha+t\omega+i\partial\overline{\partial}\vp_t$ for smooth potentials $\vp_t$, which can be normalized to have average zero. These metrics are degenerating as $t\to 0$, and understanding their behavior is a problem that has received much attention recently, see for example the recent survey \cite{Tos_survey}.

While the picture is by now quite clear when $[\alpha]$ is a rational class (up to scaling, or equivalently it is pulled back from the base of an elliptic fibration), the behavior of the metrics $\omega_t$ remains mysterious when the class is irrational.
As an application of our results, we are able to shed some light (see \autoref{ssec:boundary_currents_and_ricci_flat_metrics} and in particular \autoref{thm:Linfty_bound_potentials}):
\begin{theoremintro}\label{ricciflat}
Suppose $X$ is a K3 surface that satisfies the assumptions in \autoref{sssec:standing_assumptions_main}, let $\alpha,\omega,\omega_t$ be as above, and suppose $[\alpha]$ is irrational. Then as $t\to 0$ the Ricci-flat metrics $\omega_t$ weakly converge to the canonical current in $[\alpha]$ given by  \autoref{thm:canonical_currents_intro}, and its normalized potentials $\vp_t$ satisfy a uniform $L^\infty$ estimate independent of $t$. Furthermore, the Gromov-Hausdorff limit of $(X,\omega_t)$ is a point.
\end{theoremintro}

% \vtnote{surely it is also true that $\vp_t$ converge in $C^0$ to the canonical potential. But after some thinking it seems out of reach of the ``abstract nonsense'' pluripotential/MA estimates business. But perhaps it can be shown using our arguments with Aut(X)}
% \sfnote{I added a short remark after \autoref{thm:Linfty_bound_potentials}. Is that sufficient?}

\noindent This settles the second author's \cite[Conjecture 3.25]{Tos_survey} for these surfaces. The $L^\infty$ bound also holds when $[\alpha]$ is rational, but in this case it was already known. On the other hand, when $[\alpha]$ is rational (and nontrivial), the Gromov-Hausdorff limit is a certain metric on $\mathbb{P}^1$, and the weak limit as currents need not coincide with the canonical current from \autoref{thm:canonical_currents_intro}, as we shall explain in \autoref{sssec:rational_boundary_points} and \autoref{sssec:an_example_with_elliptic_fibrations}.

\subsubsection*{Small points and bounded orbits}
In \autoref{ssec:small_points_and_bounded_orbits} we include some simple applications of the canonical heights from \autoref{thm:canonical_heights_intro}.
Here is one, see \autoref{thm:zero_height_implies_bounded_orbit_irrational_case}:
\begin{theoremintro}[Bounded orbit equivalent to vanishing height]
	\label{thm:bounded_orbit_equivalent_to_vanishing_height_intro}
	Suppose that $p\in X(\ov k)$ and $[\alpha]\in \partial^\circ \Amp_c(X)$ is an irrational direction (see \autoref{eqn:boundary_stratification}).
	Let $\{\gamma_i\}_{i\geq 1}$ be the sequence of automorphisms associated to $[\alpha]$ and the generators, by the construction in \autoref{sssec:fixed_finite_set_of_generators}.

	Then $h^{can}([\alpha];p)=0$ if and only if the set $\{\gamma_n\cdots \gamma_1 p\}_{n\geq 1}$ is finite.
\end{theoremintro}
\noindent We also include in \autoref{thm:kernel_of_quadratic_form_and_torsion} some results about finiteness of orbits along elliptic fibrations, with some mild adaptations to our case where the existence of a section is not assumed.
We will return to further arithmetic applications of these facts in another text.

In \autoref{thm:invariance_of_total_height} we associate an invariant to $\Aut(X)$-orbits of rational points:
\begin{theoremintro}[Total height]
	\label{thm:total_height_intro}
	For any $p\in X(\ov k)$ the natural volume of
	\[
	 	\{[\alpha]\in \partial^\circ \Amp_c(X)\colon h^{can}([\alpha];p)\leq 1\}
	\]
	 is constant on the $\Aut(X)$-orbit of $p$.
\end{theoremintro}
\noindent In \autoref{fig:star_shaped_sets} the set in question is bounded by the blue curve.

\subsubsection*{Suspension space}
Associated to an algebraic K3 surface is a natural finite-volume hyperbolic manifold.
We introduce in \autoref{sec:suspension_space_construction} a ``suspended space'' over this hyperbolic manifold and consider associated dynamical systems.
Some elementary results are included in \autoref{ssec:g_dynamics} and \autoref{ssec:a_dynamics}.
Some more challenging questions are formulated there as well; the one we find most interesting concerns unipotent flows, see \autoref{eg:u_invariant_implies_p_invariant}.
Let us note that the results of Cantat--Dujardin \cite{CantatDujardin2020_Stationary-measures-on-complex-surfaces} are very closely related to this suspended space, although they do not seem to translate directly to this setting (see however \autoref{sssec:relation_to_p_dynamics}).

The currents provided by \autoref{thm:canonical_currents_intro} can be used to construct \emph{the} measure of maximal entropy associated to the K3 surface.
This will be treated in another text.

% 	End of Subsection: Applications of the main results
%%===============================================

%%===============================================
% 	Start of Subsection: Context and related works

\subsection{Context and related works}
	\label{ssec:context_and_related_works}

\subsubsection*{Relation to the work of Cantat--Dujardin}
The two recent papers of Cantat and Dujardin are concerned with closely related topics.
In \cite[Thm.D]{CantatDujardin2020_Stationary-measures-on-complex-surfaces} they construct closed positive currents on the boundary of the ample cone, for some irrational classes, obtained as limit points of random walks on $\Aut(X)$.
They also obtain uniqueness, by a method similar to that of Sibony--Verbitsky, as well as \Holder potentials.

It is convenient to compare our results using the language of hyperbolic geometry and the word metric on the group $\Aut(X)$.
See \autoref{sec:hyperbolic_geometry_background} for some of the notation.
It is shown in \cite{GadreMaherTiozzo2015_Word-length-statistics-and-Lyapunov-exponents-for-Fuchsian-groups} that, provided there is at least one parabolic automorphism, for a Lebesgue-typical geodesic the cusp excursions are quite long.
% This was known since Sullivan, but they
Specifically they prove that
\[
	\lim \frac{\dist_{word}(1,\gamma)}{\dist_{hyp}([\omega_0],\gamma[\omega_0])} = +\infty
\]
where $[\omega_0]$ is a basepoint in hyperbolic space, and $\dist_{word},\dist_{hyp}$ denote the word length distance on $\Aut(X)$ and hyperbolic distance respectively.
On the other hand, for random walks the above limit is finite.
This implies that on the boundary, harmonic measure for random walks is singular for Lebesgue measure.
In particular, the set of cohomology classes covered by \cite[Thm.~D]{CantatDujardin2020_Stationary-measures-on-complex-surfaces} is of zero Lebesgue measure.

In order to access the Lebesgue-generic points on the boundary of the ample cone, one needs to separately treat the parabolic automorphism and obtain sharper estimates.
This is in fact the bulk of the work in our case, both for currents (\autoref{sec:elliptic_fibrations_and_currents}) as well as for heights (\autoref{sec:elliptic_fibrations_and_heights}).

\subsubsection*{Relation to the work of Baragar}
	\label{sssec:work_of_baragar}
Let us additionally remark that Baragar \cite{Baragar1996_Rational-points-on-K3-surfaces-in-bf-P1bf-P1bf-P1} has considered some related constructions related to heights on K3 surfaces.
With van Luijk \cite{BaragarLuijk2007_K3-surfaces-with-Picard-number-three-and-canonical-vector} they provide numerical evidence that a much stronger notion of canonical height does not exist.
This was established by Kawaguchi \cite{Kawaguchi2013_Canonical-vector-heights-on-K3-surfaces---a-nonexistence-result} for certain K3 surfaces and in greater generality by Cantat and Dujardin in \cite{CantatDujardin2020_Finite-orbits-for-large-groups-of-automorphisms-of-projective-surfaces} for a large class of surfaces.
Specifically, their definition of canonical heights would require an $\Aut(X)$-equivariant assignment of heights to any class in $\NS(X)$, not just the ones in the isotropic cone, and which is furthermore linear for the additive structure on $\NS(X)$.

Let us note that because of the necessity to consider the blown-up boundary $\partial^\circ\Amp_c(X)$, i.e. the non-uniqueness of canonical heights in rational boundary directions if $\rk \NS(X)\geq 4$, under this rank assumption and provided the existence of at least one elliptic fibration it follows from the construction in this text that canonical heights in this strong sense cannot exist.

\subsubsection*{Work of Sibony--Verbitsky}
On K3 surfaces (and \hyperkahler manifolds) Sibony and Verbitsky \cite{VerbitskySibony} have announced a quite general uniqueness result for positive currents of self-intersection zero.
We include a special case which is relevant for our work in \autoref{thm:uniqueness_in_irrational_classes_verbitsky_sibony}, proving it using the methods described to the first-named author by Verbitsky.

\subsubsection*{Relation to work of DeMarco--Mavraki}
The basic results on how the canonical height moves in a family of elliptic curves were obtained by Tate \cite{Tate} and Silverman \cite{Silverman1994_Variation-of-the-canonical-height-in-algebraic-families,Silverman1992_Variation-of-the-canonical-height-on-elliptic-surfaces.-I.-Three-examples}.
In their recent work, DeMarco and Mavraki \cite{DeMarcoMyrto-Mavraki2020_Elliptic-surfaces-and-arithmetic-equidistribution-for-R-divisors-on-curves} further enhanced those results, showing in particular that the relevant height functions come from adelically metrized line bundles and enjoy equidistribution properties.

In the present text, we adapted the classical results of Tate and Silverman to our situation of a family of genus one curves with automorphisms, lacking in general a section.
One distinction is our \autoref{thm:preferred_heights_on_elliptic_fibrations} yielding preferred heights on the total space of the fibration.
These objects could be of independent interest and we expect they also come from adelically metrized line bundles.

\subsubsection*{Betti map}
Recall that given an elliptic fibration $X\to B$ over $\bC$ (or more generally a family of abelian varieties), one has a canonical ``Betti form'' $\omega_{Betti}$ on the locus of smooth fibers $X^{\circ}$ which in particular has the property that it restricts to the flat metric on each fiber.
This object, which apparently originated in \cite[Lemma IV.8.5]{Satake}, was later rediscovered in \cite[pp.24-25]{GSVY} in the context of noncompact Calabi-Yau manifolds, and has since been extensively used in the study of degenerating Calabi-Yau metrics where it is dubbed a ``semi-flat form'' (see e.g. \cite{GW,GTZ,HT}, and also \cite[Theorem 3.1]{TZ} which gives a characterization of this form that also shows that it coincides with the Betti form), as well as in arithmetic considerations, see e.g. \cite{AndreCorvajaZannier2020_The-Betti-map-associated-to-a-section-of-an-abelian-scheme}.

However, it is typically not possible to extend the Betti form to an object with useful regularity on all of $X$.
Analogously to the Betti form, one also has the \Neron--Tate cannonical heights on each fiber of the fibration, which again typically do not extend to a height function on all of $X$.

Our constructions of preferred currents and heights in \autoref{thm:preferred_currents_intro} and \autoref{thm:preferred_heights_intro} can be viewed as one way to address this issue.
If we consider forms, or heights, that have degree $0$ on the fibers, then it is possible to obtain objects global on $X$ with some regularity.

% 	End of Subsection: Context and related works
%%===============================================

%%=============================================================================
%%					start of subsec: Assumptions

\subsection{Assumptions}
	\label{ssec:assumptions}

\subsubsection{Standing assumptions}
	\label{sssec:standing_assumptions_main}
Throughout this paper, we make the following simplifying assumptions.
Our K3 surface $X$ is algebraic, of Picard rank $\rho\geq 3$, and contains no $(-2)$ curves.
A result of Sterk \cite{Sterk_Finiteness-results-for-algebraic-K3-surfaces} implies that in this case $\Aut(X)$ maps (with finite kernel) to a lattice in the orthogonal group $\Orthog_{1,\rho-1}(\bR)$.
This also implies that all singular fibers of elliptic fibrations on $X$ are of Kodaira type $I_1$ or $II$.

In \autoref{sec:elliptic_fibrations_and_currents}, which treats currents in elliptic fibrations, these assumptions are relaxed to allow any elliptically fibered K\"ahler surface with singular fibers of Kodaira type $I_1$ or $II$ (see \autoref{sssec:standing_assumptions_elliptic_currents}).
In \autoref{sec:elliptic_fibrations_and_heights}, which treats heights in elliptic fibrations, we allow quite general elliptic fibrations (see \autoref{sssec:standing_assumptions_elliptic_heights}).
In particular, the results in these two sections do not assume that $X$ is a K3 surface.

In the results on heights we assume that the K3 surface is defined over a finite extension of $\bQ$.

\subsubsection{Relaxing the standing assumptions}
	\label{sssec:relaxing_the_standing_assumptions}
As it will become apparent from the method of proof, there is no need for most results to make the assumption on the absence of $(-2)$ curves on the K3 surface.
We expect that the main results on the existence of equivariant currents and heights continue to hold for any projective K3 surface, and will consider the general case in subsequent work.

Let us note also that there are plenty of examples satisfying our standing assumptions, for instance Wehler surfaces \cite{Wehler1988_K3-surfaces-with-Picard-number-2} defined by an equation of bidegree $(2,2,2)$ in $\bP^1\times \bP^1\times \bP^1$.
% \sfnote{Is this ok? I think a student could do it, or maybe we can see how many ``weasel words'' we need to put in to say it's always true.}

%%					end of subsec: Assumptions
%%=============================================================================

\subsubsection*{Acknowledgments}
We are grateful to Misha Verbitsky for telling us about his proof of uniqueness of closed positive currents.
Ian Agol informed us about the notion of truncated hyperbolic space, appearing in \autoref{sec:hyperbolic_geometry_background}, whose boundary is the relevant object for many of our constructions.
We have also benefited from conversations with Serge Cantat, Romain Dujardin, Bertrand Deroin, and Hans-Joachim Hein.
We are grateful to Serge Cantat for extensive feedback that significantly improved the exposition, and to Jeff Diller, Curt McMullen and the referee for further feedback.

This research was partially conducted during the period SF served as a Clay Research Fellow.
Both authors gratefully acknowledges support from the Institute for Advanced Study for an excellent research environment and for hosting SF and VT when a major part of this work was completed. VT would also like to thank the Department of Mathematics and the Center for Mathematical Sciences and Applications at Harvard University for their hospitality.
This material is based upon work supported by the National Science Foundation under Grant No. DMS-2005470, DMS-1638352 (SF) and DMS-2231783, DMS-1903147, DMS-1610278 (VT).

%%%%%%%%%%%%%%%%%%%%%%%%%%%%%%%%%%%%%%%%%%%%%%%%%%%%%%%%%%%%%%%%%%%%%%%%%%%%%%%
% 					End of Section: Introduction
%%%%%%%%%%%%%%%%%%%%%%%%%%%%%%%%%%%%%%%%%%%%%%%%%%%%%%%%%%%%%%%%%%%%%%%%%%%%%%%

%%%%%%%%%%%%%%%%%%%%%%%%%%%%%%%%%%%%%%%%%%%%%%%%%%%%%%%%%%%%%%%%%%%%%%%%%%%%%%%
%%% 				Start of Section: Hyperbolic geometry background
%%%%%%%%%%%%%%%%%%%%%%%%%%%%%%%%%%%%%%%%%%%%%%%%%%%%%%%%%%%%%%%%%%%%%%%%%%%%%%%

\section{Hyperbolic geometry background}
	\label{sec:hyperbolic_geometry_background}

\subsubsection*{Outline of section}
We describe some of the geometric properties of the ample cone in the language of hyperbolic geometry, since we find it more convenient.
Specifically, in \autoref{ssec:groups_and_spaces} we introduce the groups and several flavors of hyperbolic spaces that we consider.
In \autoref{ssec:boundaries} we introduce the several flavors of boundaries that will appear in our results.
Finally, in \autoref{ssec:linear_algebra_and_hyperbolic_geometry} we include some of the linear algebra estimates that are equivalent to geometric properties in hyperbolic space.
For a general introduction to hyperbolic geometry, the reader can consult the introductory notes \cite{CannonFloydKenyon1997_Hyperbolic-geometry} and the more comprehensive survey \cite{Vinberg1993_Geometry.-II}.

%%===============================================
% 	Start of Subsection: Groups and spaces

\subsection{Groups and spaces}
	\label{ssec:groups_and_spaces}

\begin{wrapfigure}{L}{0.5\textwidth}
	\centering
	\includegraphics[width=\linewidth]{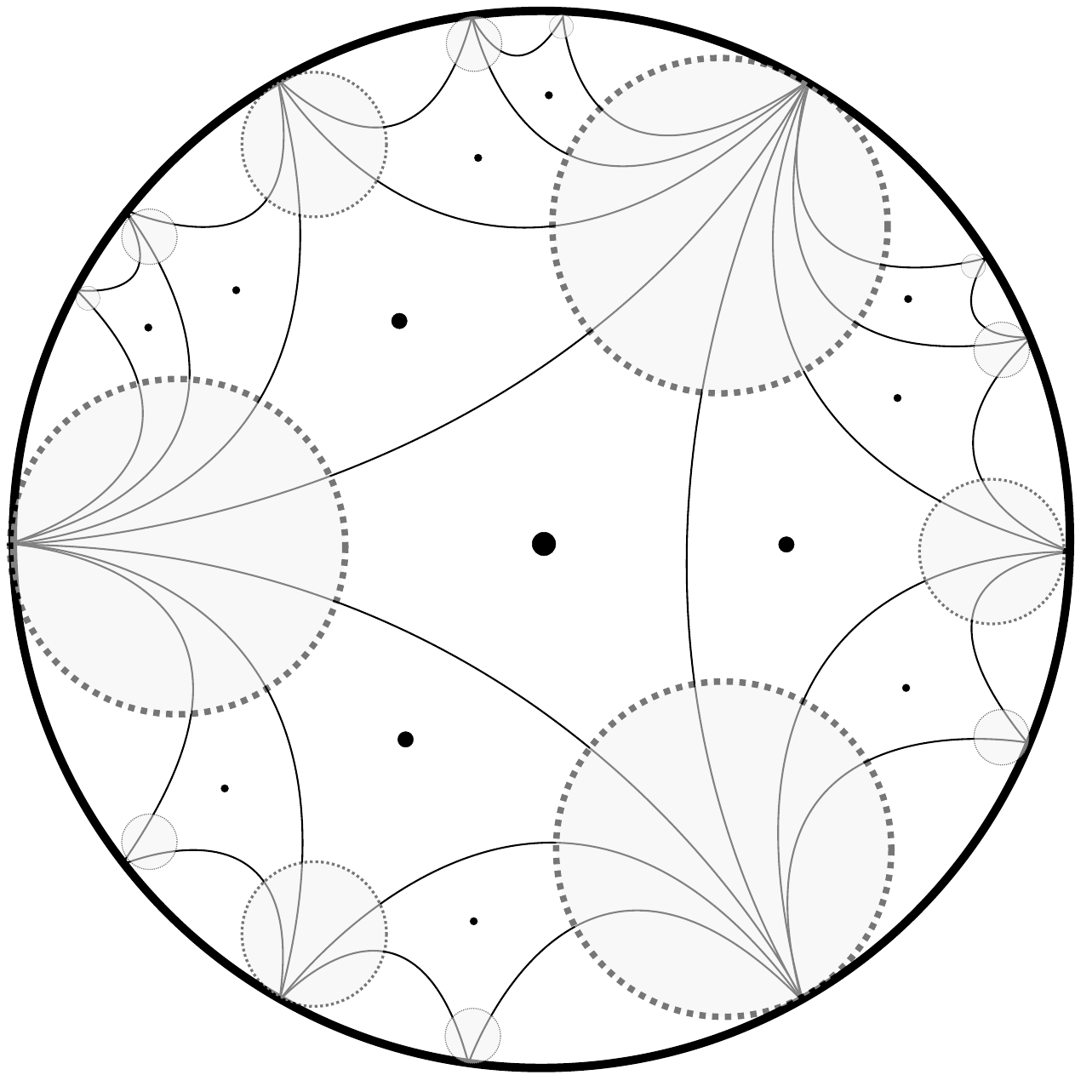}
	\caption{A fundamental domain with horoballs at the cusps, and a reference point in the fundamental domain.}
	\label{fig:tesselation}
\end{wrapfigure}

\subsubsection{Setup}
	\label{sssec:setup_groups_and_spaces}
Denote by $N:=\NS(X)$ the \Neron--Severi group of the K3 surface $X$, with $N_{\bZ}$ the corresponding lattice and $N_{\bR}$ the extension of scalars to $\bR$.
The intersection pairing on it has signature $(1,\rho-1)$ where $\rho=\rk N$.
By our standing assumptions on $X$ in \autoref{sssec:standing_assumptions_main}, the group $\Aut(X)$ is a lattice in the real orthogonal group $\Orthog_{1,\rho-1}(N_{\bR})$.
We denote by $\Amp(X)\subset N_{\bR}$ the ample cone of $X$, which again by our standing assumptions is one of the connected components of the set of vectors $v\in N_{\bR}$ satisfying $v.v>0$.
We will further denote by $\Amp^1(X)$ the subset of ample classes satisfying $v.v=1$.
Then equipped with the (negative of) the induced metric, $\Amp^1(X)$ is isometric to hyperbolic $(\rho-1)$-space.

Define next $	\cM = \leftquot{\Aut(X)}{\Amp^1(X)}$ which is a finite-volume hyperbolic orbifold of dimension at least $2$.
The arguments in the rest of the text are insensitive to passing to a finite-index subgroup, so we assume from now on that $\Aut(X)$ is torsion-free.
In particular, cusp stabilizers (see below) can be assumed to be free abelian groups of rank $\bZ^{\rho-2}$.

\subsubsection{Cusps of $\cM$}
	\label{sssec:cusps_of_cm}
Under our standing assumptions, by \cite[Prop.~11.1.3, Rmk.~8.2.13]{Huy}, see also \cite[Prop. 2.1.12]{Filip2020_Counting-special-Lagrangian-fibrations-in-twistor-families-of-K3-surfaces}, given a vector $[E]\in N_{\bZ}$ which is primitive and satisfies $[E].[E]=0$, precisely one of $\pm [E]$ can be represented by a smooth genus one curve $E$ which furthermore determines a genus one fibration on $X$ with base $\bP^1$.
We always take $[E]$ to denote the class represented by a genus one curve.

The hyperbolic manifold $\cM$ need not be compact and its cusps are in bijection with $\Aut(X)$-orbits of vectors $[E]$ representing genus one fibrations.
Call any such class $[E]$ a \emph{parabolic boundary point}.
It determines for each $c>0$ an (open) horoball in $\Amp^1(X)$ defined by
\[
	H_{[E],c}:=\{[\omega]\in \Amp^1(X)\colon [\omega].[E]<c\}.
\]
Taking the complements of these horoballs yields
\begin{align}
	\label{eqn:amp_c_defn}
	\begin{split}
	\Amp^1_c(X) & :=\{[\omega]\in \Amp^1(X)\colon [\omega].[E]\geq c\\
	& \qquad \qquad \forall [E]\text{ parabolic boundary point}\}\\
	\cM_c & = \leftquot{\Aut(X)}{\Amp^1_c(X)}
	\end{split}
\end{align}
Take $c>0$ sufficiently small such that the inclusion $\cM_c\into \cM$ induces an isomorphism $\pi_1(\cM_c)\toisom \pi_1(\cM)$ and the complement $\cM\setminus \cM_c$ is a finite disjoint union of ``standard hyperbolic cusps'', i.e. quotients of a horoball by an abelian subgroup of cofinite volume.
We regard $c>0$ as fixed from now on, and picked sufficiently small for later parts of the argument.

For each parabolic class $[E]$ let $\Gamma_{[E]}\subset \Aut(X)$ be its stabilizer.
It is a group commensurable to $\bZ^{\rho-2}$.
Since we passed to a finite index subgroup of $\Aut(X)$ to remove torsion, we can assume that $\Gamma_{[E]}\isom \bZ^{\rho-2}$, which we now do.
% Fix such an isomorphism for each $\Aut(X)$-equivalence class of parabolic boundary points $[E]$.

\subsubsection{Fundamental domain, basepoint}
	\label{sssec:fundamental_domain_basepoint}
We fix a basepoint $[\omega_0]\in \Amp^1_c(X)$ and its associated Dirichlet fundamental domain $F_{[\omega_0]}$.
This fixes also representatives of $\Aut(X)$-equivalences class of cusps, given by finitely many parabolic points $[E_1],\ldots, [E_p]$, and their corresponding parabolic automorphism groups.
Denote by $F_{[\omega_0],c}:=F_{[\omega_0]}\cap \Amp^1_c(X)$ the compact part of the fundamental domain.

\subsubsection{Recurrent and divergent rays}
	\label{sssec:recurrent_and_divergent_rays}
Let $s$ be any hyperbolic ray starting in the fundamental domain $F_{[\omega_0],c}$.
When projected to the finite volume hyperbolic manifold $\cM$, either $s$ returns infinitely often to the compact set $\cM_c$, or not.
Rays that return to compact sets are called recurrent, and divergent otherwise.
If the endpoint of the ray $s$ is a parabolic boundary point, i.e. proportional to a rational vector, then the ray is divergent, and otherwise the ray is recurrent.

\begin{figure}[htbp!]
	\centering
	\includegraphics[width=1.0\linewidth]{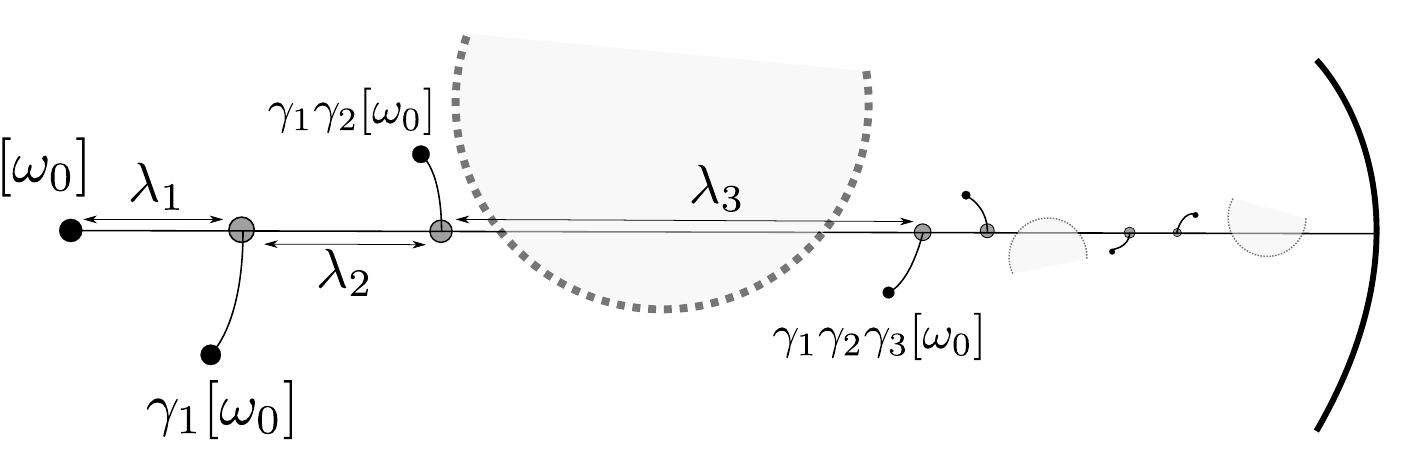}
	\caption{A recurrent geodesic.
	The centers of fundamental domains along the way, and cusp excursions.
	The parameters $\lambda_i$ as the distances between the projections of the centers of fundamental domains to the geodesic.}
	\label{fig:geodesic}
\end{figure}

We will be interested in parametrizing the list of translates of the compact part of the fundamental domain $\gamma F_{[\omega_0],c}$, that are encountered by geodesic rays.

\subsubsection{Fixed finite set of generators}
	\label{sssec:fixed_finite_set_of_generators}
Recall that we have the fixed representatives for each cusp and parabolic groups $\Gamma_{[E_i]}$ with $i=1\ldots k$.
We fix a finite set of generators $S\subset \Aut(X)$ (or rather, its finite index subgroup that we are considering) such that the fundamental domain $F_{[\omega_0],c}$ can be taken to any of its adjacent translates by one of the generators.
Then we have the following geometric construction, which writes the elements of the fundamental group as products of generators and parabolic elements in $\Gamma_{[E_i]}$.

Let $s$ be any hyperbolic ray starting in the fundamental domain $F_{[\omega_0],c}$.
Any point on the geodesic $s$ is contained in some translate of the fundamental domains $F_{[\omega_0]}$, and this divides up $s$ into segments.
Given one such segment, one possibility is that it, as well as the next one, are contained in translates of the compact part $F_{[\omega_0],c}$.
Then we will use a generator from $S$ to relate the two adjacent fundamental domains.
The other possibility is that there is a sequence of fundamental domains that $s$ encounters, such that the corresponding segments in $s$ are contained in translates of $F_{[\omega_0]}\setminus F_{[\omega_0],c}$.
The union of those segments is entirely contained in a horoball associated to a specific parabolic boundary point $[E]$.
If the ray $s$ eventually exits the horoball, then we can connect the first and the last fundamental domains by an element of $\Gamma_{[E]}$.

To summarize, attached to $s$ there exists a sequence $\{\gamma_i\}$ with $\gamma_{i}\in S\bigcup_{i=1}^p \Gamma_{[E_i]}$, where $[E_1],\ldots,[E_p]$ are representatives of the $\Aut(X)$-orbits on parabolic points, with the following properties:
\begin{enumerate}
	\item The sequence of points $\pi_s(\gamma_1\cdots \gamma_n[\omega_0])\in s$ on $s$ are a bounded hyperbolic distance from $\gamma_1\cdots \gamma_n[\omega_0]$.
	Denote by $\lambda_i$ the (oriented) distance between the successive projections (see \autoref{fig:geodesic}).
	Specifically
	\[
		\lambda_i:=\pm\dist(\pi_s(\gamma_1\cdots \gamma_i[\omega_0]), \pi_s(\gamma_1\cdots \gamma_{i+1}[\omega_0]))
	\]
	where the sign is $+$ if the points come in the written order as seen from the origin of the ray $s$ and $-$ otherwise.
	The displacements $\lambda_i$ are typically positive (see \autoref{prop:displacement_estimate}).
	% Furthermore, the distance between $\gamma_1\cdots \gamma_n[\omega_0]$ and its projection via $\pi_s$ is uniformly bounded.
	\item If the endpoint of $s$ is rational (the geodesic is divergent), then the sequence $\{\gamma_i\}$ is finite (with $n$ elements) and there is an index $1\leq k(s)\leq p$ such that the geodesic $(\gamma_1\cdots \gamma_n)^{-1}s$ goes into the rational endpoint corresponding to $[E_{k(s)}]$.
	\item Furthermore, for any other geodesic ray $s'$, given any $n\geq 1$ there exists $\ve>0$ such that if the distance between the endpoints of $s,s'$, as seen in the visual metric from $[\omega_0]$ is less than $\ve$, then we can choose the corresponding sequence $\{\gamma_i(s')\}$ such that $\gamma_i(s)=\gamma_i(s')$ for all $i\leq n$.
\end{enumerate}
In the case when $s$ is rational, say with sequence $\gamma_1,\cdots, \gamma_n$, the last requirement is to be understood as saying that $\gamma_{n+1}(s')\in \Gamma_{[E_{i(s)}]}$ and given any $R>0$, we can pick $\ve>0$ such that $\norm{\gamma_{n+1}(s')}_{\NS}\geq R$ (see \autoref{sssec:summary_elliptic_fibrations} for $\norm{\bullet}_{\NS}$).
In other words, by picking $\ve>0$ sufficiently small, we can arrange that the cusp excursion is sufficiently long.
This last assertion can be verified in the upper half-space model $\bR^{\rho-2}\times \bR_{+}$ of hyperbolic space, by placing the parabolic point at infinity.
Then if $s$ is going into the parabolic point, it must be a vertical geodesic, and a neighborhood of the parabolic boundary point is given by the set of points $p\in \bR^{\rho-2}$ satisfying $\norm{p}\geq A$.
By taking $A$ sufficiently large, i.e. the neighborhood sufficiently small, we can ensure that the geodesic excursion into the cusp is sufficiently long.

\begin{proposition}[Displacement estimate]
	\label{prop:displacement_estimate}
	There exist constants $C_0, \delta>0$ such that for any geodesic ray $s$ and sequence of displacements $\lambda_1,\ldots, \lambda_n,\ldots$ as above, we have the estimate for any $a,N\geq 1$:
	\[
		\delta \cdot N - C_0 \leq \lambda_a + \cdots + \lambda_{a+(N-1)}
	\]
\end{proposition}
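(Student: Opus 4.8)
The plan is to reduce the estimate to a volume-packing argument in the hyperbolic space $\Amp^1(X)$. Set $x_i:=\gamma_1\cdots\gamma_i[\omega_0]$ and $D_i:=\gamma_1\cdots\gamma_i F_{[\omega_0]}$, so that $x_i$ is the center of the Dirichlet domain $D_i$, and parametrize $s$ by arclength, writing $t(q)$ for the parameter of a point $q\in s$. Unwinding the sign convention in the definition of the $\lambda_i$, one has $\lambda_i=t(\pi_s(x_{i+1}))-t(\pi_s(x_i))$, so the sum telescopes and the claim is equivalent to
\[
	t(\pi_s(x_{a+N}))-t(\pi_s(x_a))\;\geq\;\delta N-C_0 .
\]
(For a divergent ray the sequence $\{\gamma_i\}$ is finite and this is understood for $a+N-1$ in its range.) Thus it suffices to bound $L:=t(\pi_s(x_{a+N}))-t(\pi_s(x_a))$ from below linearly in $N$, with constants independent of $s$, $a$, $N$.

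The first ingredient I would establish is that $i\mapsto t(\pi_s(x_i))$ is non-decreasing up to a \emph{uniformly bounded, non-accumulating} additive defect. Since Dirichlet domains are convex, $s$ meets each $D_i$ in a connected arc $s\cap D_i=:[u_i,v_i]$, these arcs occur along $s$ in increasing order of $i$ (consecutive ones are either adjacent, when $\gamma_{i+1}\in S$, or separated by a cusp excursion, when $\gamma_{i+1}\in\Gamma_{[E]}$), and the $D_i$ — hence the $x_i$ — are pairwise distinct. Next I would show that each arc $s\cap D_i$ has length $\leq D_1$ and lies within distance $\leq D_1$ of $x_i$, for a uniform $D_1$: this is immediate when $s\cap D_i$ lies in the compact part, since then $x_i$ and $s\cap D_i$ both lie in the single translate $\gamma F_{[\omega_0],c}$ of a compact set; and when $D_i$ is the entrance or exit domain of a cusp excursion it follows from the explicit geometry of the standard horoball at the corresponding cusp, as in the upper half-space computation of \autoref{sssec:fixed_finite_set_of_generators}. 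Combined with property (1), namely $\dist(x_i,\pi_s(x_i))\leq R$, this gives $t(\pi_s(x_i))\in[u_i-C_1,\,v_i+C_1]$ with $C_1:=R+D_1$, and hence, using $v_i\leq u_j$ for $i<j$, the bound $t(\pi_s(x_j))\geq t(\pi_s(x_i))-2C_1$ for all $j\geq i$ — crucially with $C_1$ independent of how many steps separate $i$ and $j$.

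The second ingredient is separation: because $\Aut(X)$ is torsion-free and acts properly discontinuously on $\Amp^1(X)$ with $[\omega_0]$ interior to $F_{[\omega_0]}$, there is $r_0>0$ with $\dist(\gamma[\omega_0],\gamma'[\omega_0])\geq 2r_0$ for $\gamma\neq\gamma'$, so the balls $B(x_i,r_0)$ are pairwise disjoint; let $v_0>0$ be the volume of a ball of radius $r_0$. Now I would run the packing argument: by the first ingredient all of $\pi_s(x_a),\dots,\pi_s(x_{a+N})$ lie in a sub-arc $\sigma\subset s$ of length $\leq L+4C_1$, so by property (1) the $N+1$ disjoint balls $B(x_i,r_0)$ all lie in the tubular neighborhood $\mathcal{N}_{R+r_0}(\sigma)$; covering $\sigma$ by $\lceil L+4C_1\rceil+1$ points spaced at most $1$ apart shows $\operatorname{vol}(\mathcal{N}_{R+r_0}(\sigma))\leq C_2(L+1)$, with $C_2$ depending only on $R,r_0,\rho$. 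Comparing volumes gives $(N+1)v_0\leq C_2(L+1)$, hence $L\geq \delta N-C_0$ for $\delta:=v_0/C_2$ and a suitable $C_0>0$, as desired.

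The step I expect to be the main obstacle is the uniform estimate on $s\cap D_i$ when $D_i$ is a cusp-excursion domain: one must show that the entrance and exit fundamental domains of an \emph{arbitrarily deep} excursion still meet $s$ in a uniformly short arc located a bounded distance from their centers, so that the monotonicity defect $2C_1$ above is genuinely independent of the excursion length (and hence of $N$). The naive per-step lower bound $\lambda_i\geq -C$ is not enough, since it accumulates over a window of $N$ steps; everything else in the argument — convexity of Dirichlet domains, the separation estimate, and the volume bound for a tube around a geodesic segment — is soft. This cusp estimate should fall out of the structure of the cusps fixed in \autoref{sssec:cusps_of_cm} together with the construction in \autoref{sssec:fixed_finite_set_of_generators}, by placing the relevant parabolic point at infinity in the upper half-space model.
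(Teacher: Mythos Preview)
Your approach is correct and takes a genuinely different route from the paper's. The paper compares $\sum\lambda_i$ to the hyperbolic orbit distance $d^{orb}(x_a,x_{a+N})$ (via the bounded gap between centers and their projections), then passes to the truncated space $\Amp^1_c$ with its intrinsic path metric $d^{orb,c}$ and uses the cocompactness of the $\Gamma$-action there to extract a linear-in-$N$ lower bound; the argument is very terse and leans on the $\CAT(0)$ structure of $\Amp^1_c$ set up earlier in \autoref{sssec:truncated_hyperbolic_space}. Your route avoids the truncated space entirely: you telescope to $L=t(\pi_s(x_{a+N}))-t(\pi_s(x_a))$, establish almost-monotonicity of the projected parameters directly from the ordering of the convex Dirichlet domains along $s$, and then run a volume-packing argument --- $N+1$ separated orbit points cannot fit in the $(R+r_0)$-tube around a short segment of $s$. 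This is more elementary and makes the geometric inputs (orbit injectivity radius, linear growth of tube volume) completely explicit.

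One correction: your intermediate claim that each arc $s\cap D_i$ has length $\leq D_1$ need not hold --- for an entrance or exit domain of a deep cusp excursion, $s$ can linger arbitrarily long in the cusp part of $D_i$ before crossing into an adjacent chimney. But you do not actually need this. For almost-monotonicity you only use that $x_i$ lies within bounded distance of \emph{some} point $s(w_i)$ with $w_i\in[u_i,v_i]$; this holds simply because $s$ meets the compact part $D_i^c$ (that is how $D_i$ gets recorded), $D_i^c$ has diameter $\leq D$, and $x_i\in D_i^c$. This gives $|t(\pi_s(x_i))-w_i|\leq 2D$. Since the $D_i$ are distinct convex domains visited by $s$ in order, the intervals $[u_i,v_i]$ are ordered along $s$, so the $w_i$ are increasing, and your conclusion $t(\pi_s(x_j))\geq t(\pi_s(x_i))-4D$ for $j>i$ follows. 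With this minor fix the ``main obstacle'' you flag evaporates and the packing step goes through unchanged.
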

\begin{proof}
	Let $\gamma_1,\ldots, \gamma_n,\ldots$ be the generators corresponding to the fundamental domains visited by the ray $s$.
	On the automorphism group $\Aut(X)$, let $d^{orb}$ denote the distance function obtained from embedding it to $\Amp^1$ via the orbit of $[\omega_0]$, and restricting the hyperbolic distance function.
	Then we have
	\[
		d^{orb}(\gamma_1\cdots \gamma_a , \gamma_1\cdots \gamma_{a+N}) \leq \lambda_a+\cdots \lambda_{a+(N-1)} - C'
	\]
	for some fixed constant $C'$, since the centers of the fundamental domain and their projections to $s$ are a bounded distance away.

	On the other hand we have the distance function $d^{orb,c}$ on $\Aut(X)$ obtained from the embedding into $\Amp_1^c$ with its path metric, so we clearly have $d^{orb}\geq d^{orb,c}$.
	Finally, we have the estimate
	\[
		d^{orb,c}(\gamma_1\cdots \gamma_a , \gamma_1\cdots \gamma_{a+N})
		\geq
		\frac{1}{D}N - C''
	\]
	where $D,C''$ depends on the diameter of the compact part of the fundamental domain.
	Concatenating the bounds yields the result.
\end{proof}

% 	End of Subsection: Groups and spaces
%%===============================================

%%===============================================
% 	Start of Subsection: Boundaries

\subsection{Boundaries}
	\label{ssec:boundaries}

\subsubsection{Truncated hyperbolic space}
	\label{sssec:truncated_hyperbolic_space}
The space $\Amp^1_c(X)$ defined in \autoref{eqn:amp_c_defn} is a \emph{truncated hyperbolic space}, see e.g. \cite[II.11]{BridsonHaefliger1999_Metric-spaces-of-non-positive-curvature} and \cite{Ruane2005_CAT0-boundaries-of-truncated-hyperbolic-space}.
We will quote some useful results from \cite[p.362]{BridsonHaefliger1999_Metric-spaces-of-non-positive-curvature}.
First, it is a complete, $\CAT(0)$, uniquely geodesic metric space.
The geodesics are described in \cite[Cor.~11.34]{BridsonHaefliger1999_Metric-spaces-of-non-positive-curvature} as a concatenation of hyperbolic geodesics in the interior of $\Amp^1_c(X)$, and flat geodesics in the boundary horospheres, such that the successive pieces meet tangentially.

Observe that the horoball associated to a class $[E]$ is defined by $[\omega].[E]<c$ and its boundary is naturally an affine space over $[E]^{\perp}/[E]$, where $[E]^{\perp}\subset N$ denotes the orthogonal to $[E]$ for the intersection pairing, and hence contains $[E]$ since $[E]$ is isotropic.
In particular the quotient $[E]^{\perp}/[E]$ is of dimension $\rho-2$ and carries a non-degenerate negative-definite inner product (for which we flip the sign to make it positive-definite).
In particular, the visual boundary of the horosphere is naturally the unit sphere $\bS([E]^{\perp}/[E])$.

\subsubsection{The boundary of truncated hyperbolic space}
	\label{sssec:the_boundary_of_truncated_hyperbolic_space}
A description of geodesics also gives a description of the visual $\CAT(0)$-boundary, see \cite[II.8]{BridsonHaefliger1999_Metric-spaces-of-non-positive-curvature} for the definition of visual boundary.
According to \cite[Thm.~3.4]{Ruane2005_CAT0-boundaries-of-truncated-hyperbolic-space} the boundary $\partial \Amp^1_c(X)$ is homeomorphic to a sphere $\bS^{\rho-2}$ with a countable collection of dense pairwise disjoint open discs removed, one disc for each cusp.
Recall that under our assumptions on the K3 surface, each cusp is identified with a rational point on the boundary of the full hyperbolic space $\Amp^1(X)$.
Thus set-theoretically we have that
\[
	\partial \Amp^1_c(X) = \partial \Amp^1(X)^{irr} \coprod_{[E]}\bS\left([E]^{\perp}/[E]\right)
\]
where $\partial \Amp^1(X)^{irr}$ denotes the irrational points on the boundary of the full hyperbolic space, and $\bS\left([E]^\perp/[E]\right)$ denotes the unit sphere associated to the quotient space with its natural nondegenerate (negative) definite inner product.
This set-theoretical decomposition is equivariant for the action of $\Aut(X)$.

Let us finally note that for each point $p\in\partial \Amp^1_c(X)$, we have a sequence of translates $\gamma_1\cdots \gamma_n[\omega_0]$ of the basepoint which approach it, such that the elements $\gamma_i$ belong to a fixed finite set $S$ or to one of finitely many parabolic groups $\Gamma_{[E]}$, as in \autoref{sssec:fixed_finite_set_of_generators}.
If $p$ is not on one of the spheres, then the index $n$ goes to infinity.
If $p$ is on one of the spheres, then the index eventually remains constant equal to $n$ and only the last element changes, say $\gamma_{n,i}$ with $n$ fixed and $i\to +\infty$.
Furthermore, for each parabolic boundary point $[E]$ we will construct in \autoref{sssec:linearizing_automorphisms} an injection $\xi\colon \Gamma_{[E]}\into [E]^{\perp}/[E]$ and denote by $\norm{-}_{\NS}$ the positive-definite norm on $[E]^\perp/[E]$ induced from the intersection pairing and flipping the sign.
With this identification we have that $\frac{\xi(\gamma_{n,i})}{\norm{\xi(\gamma_{n,i})}_{\NS}}$ approaches $\left(\gamma_1\cdots \gamma_{n-1}\right)^{-1}p$ on the sphere of $[E]^{\perp}/[E]$, and furthermore $\norm{\xi(\gamma_{n,i})}_{\NS}\to +\infty$.

\subsubsection{The projectivized visual boundary}
	\label{sssec:the_projectivized_visual_boundary}
We will need a variant of the above boundary, where we identify antipodal points on spheres, namely at the set-theoretical level we have:
\begin{align}
	\label{eqn:boundary_stratification_projectivized}
	\partial^{\circ}\Amp^{1}_c(X):= \partial \Amp^1(X)^{irr} \coprod_{[E]}\bP\left([E]^{\perp}/[E]\right)
\end{align}
To give this space a topology, we describe it alternatively as a projective limit of blow-ups of the rational points on the boundary:
\[
	\partial^{\circ}\Amp^{1}_c(X) = \varprojlim_{R\to +\infty}  \Bl_{\cC_R} \partial \Amp^1(X)
\]
where $\cC_R$ denotes the cusp points whose representatives fiber classes $[E]$ satisfy $[\omega_0].[E]\leq R$.
This projective limit is independent of the choice of basepoint $[\omega_0]$ since the function on $\partial \Amp(X)$ given by $v\mapsto [\omega_0].v$ is comparable, up to multiplicative constants, with one given by another class $[\omega_0']$.
This description of the boundary is also closer in spirit to the one from \cite[II.8.5]{BridsonHaefliger1999_Metric-spaces-of-non-positive-curvature}.
Note that to give a similar description of the visual boundary $\partial \Amp^1_c(X)$, we must simply replace blow-ups by real-oriented blowups.

\subsubsection{Blown-up boundary of the ample cone}
	\label{sssec:blown_up_boundary_of_the_ample_cone}
In order to state our main result, we must introduce a version of the space $\partial^\circ \Amp^1_{c}(X)$ from \autoref{sssec:the_projectivized_visual_boundary} that also includes scalings.
Denote by $\partial \Amp(X)\subset \NS(X)_{\bR}$ the boundary of the ample cone (with the origin removed).
Under our assumptions on the K3 surface, this is equal to one of the components of $\{v\in \NS(X)_{\bR}\setminus 0\colon v.v=0\}$, i.e. a cone formed by null rays.

Let us define then
\[
	\partial^\circ \Amp_c(X):= \varprojlim_{R\to +\infty} \Bl_{\cE_R}  \partial \Amp(X)
\]
where $\cE_R$ denotes the (finitely many) rays spanned by a primitive integral vector $[E]$ satisfying $[\omega_0].[E]\leq R$ and $\Bl_{\cE_R}$ denotes the blowup of the boundary of the ample cone along the given rays.
Note that the projective limit and induced topology are independent of the choice of $[\omega_0]$ and the induced map
\begin{align}
	\label{eqn:fullbdy_to_projectivized}
	\partial^\circ \Amp_c(X) \xrightarrow{\bullet/\bR_{>0}}
	\partial^\circ \Amp^1_c(X)
\end{align}
obtained by quotienting by positive rescaling is continuous.
Finally, note that every point in $\partial^\circ\Amp_c(X)$ determines a cohomology class via the natural projection to $\partial\Amp(X)\subset \NS(X)_\bR$.

We also have the stratification analogous to \autoref{eqn:boundary_stratification_projectivized}
\begin{align}
	\label{eqn:boundary_stratification}
	\partial^{\circ}\Amp_c(X):= \partial \Amp(X)^{irr} \coprod_{[E]}
	\Big(\bR_{>0}[E]\Big)\times\bP\Big([E]^{\perp}/[E]\Big)
\end{align}
where $\bR_{>0}[E]$ denotes the ray spanned by $[E]$.
To specify what it means for a function on $\partial^{\circ}\Amp_c(X)$ to be continuous, it suffices to specify what it means for a sequence of points $p_i$ in it to converge to another one $p$.
Let us denote by $[p_i],[p]$ their projections under $\partial^\circ\Amp_c(X)\to \partial \Amp(X)$, i.e. the corresponding cohomology classes.
If $p$ is in the irrational part, for convergence it suffices to have that the projections $[p_i]$ converge to the projection $[p]$.
If the point is in one of the rational strata, of the form say $p=[E]\times [\xi]$ (or some rescaling of it), then we must have that $[p_i]$ converge to $p$ in the blow-up $\Bl_{\bR_{>0}[E]}\partial \Amp(X)$ of the ray spanned by $[E]$.
Note that $p$ is by definition a point on the exceptional divisor of the blow-up.

To prove continuity of various quantities on the boundary, we will fix an ample class $[\omega_0]$ and use it to identify $\partial^\circ \Amp_c^1(X)$ with the subset of $\partial^\circ \Amp_c(X)$ formed of classes that have cup-product $1$ with $[\omega_0]$.
In that case, convergence of points on the boundary translates directly to geometric statements about geodesic rays in hyperbolic space, see \autoref{sssec:fixed_finite_set_of_generators}.

\subsubsection{Some examples}
	\label{sssec:some_examples}
We describe the boundaries discussed above in some special cases, namely $\rho=3,4$.
Since the boundaries for the ample cone (without a normalization) are topologically just $\bR$ times the boundaries associated to hyperbolic space, we will only describe the latter.
We always assume that there are at least some parabolic points, otherwise all constructions reduce to usual boundaries.

When $\rho=3$, so $\Amp^1(X)$ is hyperbolic $2$-space, the usual boundary $\partial \Amp^1(X)$ is just a circle.
The oriented blown-up boundary $\partial \Amp^1_c(X)$ is a Cantor set, obtained from the circle by inserting an open set at the countable many rational points on the circle.
The projectively blown-up boundary $\partial^\circ \Amp^1_c(X)$ identifies pairs of points separated by a gap in the Cantor set, recovering the original circle boundary.

When $\rho=4$, so $\Amp^1(X)$ is hyperbolic $3$-space, the usual boundary $\partial \Amp^1(X)$ is a $2$-sphere.
The oriented blown-up boundary $\partial \Amp^1_c(X)$ is a \Sierpinski carpet, obtained from the $2$-sphere by inserting an open disk at the countable many rational points on the sphere.
The projectively blown-up boundary $\partial^\circ \Amp^1_c(X)$ identifies opposite points on the boundary circles to yield $\bR\bP^1$'s, so $\partial^\circ \Amp^1_c(X)$ can be viewed as obtain from the \Sierpinski carpet by gluing in countably many \Mobius bands.
Now $\partial \Amp^1_c(X)$ and $\partial^\circ \Amp^1_c(X)$ are no longer manifolds.

% 	End of Subsection: Boundaries
%%===============================================

%%===============================================
% 	Start of Subsection: Linear algebra and hyperbolic geometry

\subsection{Linear algebra and hyperbolic geometry}
	\label{ssec:linear_algebra_and_hyperbolic_geometry}

\subsubsection{Setup}
	\label{sssec:setup_linear_algebra_and_hyperbolic_geometry}

We now interpret some of the geometric constructions in \autoref{ssec:groups_and_spaces} in terms of linear-algebraic properties of the matrices corresponding to the group elements $\gamma_i$.
First, because the compact part of the fundamental domain $F_{[\omega_0],c}$ has bounded diameter, all projected basepoints are a uniform bounded distance from their projections in \autoref{sssec:fixed_finite_set_of_generators} above.

Recall that $\NS(X)=:N$ is equipped with an intersection pairing of signature $(1,\rho-1)$ and we will denote by $\SO(N_{\bR})$ the \emph{indefinite} orthogonal group preserving it, which is isomorphic to $\SO_{1,\rho-1}(\bR)$.
The fixed basepoint $[\omega_0]\in \Amp^1(X)$ determines a positive-definite metric on $N_{\bR}$, which we use also use and denote by $\norm{\bullet}$.
It is explicitly given as follows: every $[\alpha]\in N_{\bR}$ can be written uniquely as $[\alpha]=a[\omega_0]+[\beta]$ where $a\in\mathbb{R}$ and $[\beta]\cdot[\omega_0]=0$, and we let $\norm{[\alpha]}^2=a^2-[\beta]^2$.
Note that an element $g\in \SO(N_{\bR})$ does not preserve this norm, and its operator norm for this metric is related to the hyperbolic distance by
\begin{align}
	\label{eqn:operator_norm_hypb_dist}
	\norm{g}_{op} = e^{\dist([\omega_0],g[\omega_0])}
\end{align}
Using the notation from \autoref{sssec:fixed_finite_set_of_generators} we have that
\[
	\frac 1C \leq
	\frac{\norm{\gamma_i\gamma_{i+1}\cdots \gamma_{i+j}}_{op}}
	{e^{\lambda_i + \phantom{\cdots} \cdots \phantom{\cdots} + \lambda_{i+j}}}
	\leq C
\]
for some fixed constant $C$ and for all $i\geq 1, j\geq 0$.

\subsubsection{Mass in cohomology}
	\label{sssec:mass_in_cohomology}
For $[\omega]\in \Amp(X)$ denote by $M([\omega]):=[\omega_0].[\omega]$, the \emph{mass with respect to the reference class $[\omega_0]$}.
This will be a useful normalization factor to bring all cohomology classes to a bounded set.
Additionally, there exists a constant $C>0$ (depending on $[\omega_0]$ or equivalently the fixed inner product) such that
\[
	\frac{1}{C}\norm{[\omega]} \leq M([\omega]) \leq C\norm{[\omega]} \text{ for all }[\omega] \text{ s.t. }[\omega]^2\geq 0 \text{ and }[\omega].[\omega_0]\geq 0
\]
Indeed, the comparison follows immediately on the compact set of classes with $M([\omega])=1$ and inside the ample cone, and the inequalities are invariant by rescaling.

\subsubsection{Transversality and growth}
	\label{sssec:transversality_and_growth}
Any $g\in \SO(N_{\bR})$ has a KAK (or Cartan, or polar), decomposition as $g=k_1ak_2$ where $k_i$ preserves the fixed norm $\norm{\bullet}$ on $N_{\bR}$ (equivalently, the basepoint $[\omega_0]$) and the element $a$ expands a null-vector by $\norm{g}_{op}$ and contracts another null-vector by $\norm{g}_{op}^{-1}$.
The span of the two null-vectors contains $[\omega_0]$.

\begin{definition}[Transversality of a vector relative to a transformation]
	\label{def:transversality_of_a_vector_relative_to_a_transformation}
	We will say that $v\in N_\bR$ is \emph{$C$-transverse} for $g\in \SO(N_{\bR})$ if we have that
	\[
		\norm{gv} \geq \frac{1}{C}\norm{g}_{op}\norm{v}
	\]
	If the constant $C$ only depends on some parameters fixed in terms of the hyperbolic manifold $\cM$, we will say that $v$ is \emph{transverse} for $g$ without specifying the parameter $C$.
\end{definition}
Note that being $C$-transverse is invariant under scaling $v$.
Given a KAK decomposition $g=k_1a k_2$, the condition that the unit vector $v$ is transverse for $g$ means that $k_2v$ is not too close to the unit eigenvector contracted by $a$, with distance depending on the amount of transversality.
Relative to the reference basepoint $[\omega_0]$, a vector $v$ is $C$-transverse for $g$ if $v$ and $g^{-1}[\omega_0]$ are not too close, projectively.
If $v$ satisfies $v.v=1$ (so it determines a point in hyperbolic space), then $C$-transversality is equivalent to $v$ not being contained in a neighborhood of the endpoint at infinity of the geodesic ray starting at $[\omega_0]$ and going to $g^{-1}[\omega_0]$.

Because the generators $\gamma_i$ in \autoref{sssec:adapted_generators} are chosen in terms of the visits of a hyperbolic geodesic to fundamental domains, it follows that we have the following properties:
\begin{itemize}
	\item The vector $[\omega_0]$ is $C$-transverse for $\gamma_i\cdots \gamma_j$ for any $j\geq i\geq 1$.
	\item The vectors $\gamma_{j+1}\cdots \gamma_{j+k}[\omega_0]$ are $C$-transverse for $\gamma_i\cdots \gamma_j$ for any $k\geq 1$ and $j\geq i\geq 1$
\end{itemize}
for some constant $C$ that only depends on $\cM$ and $[\omega_0]$.
These estimates are simply restatements of the geometric properties of the geodesics, translated via the identification of operator norms and hyperbolic distances in \autoref{eqn:operator_norm_hypb_dist}.

From now on, to avoid making constants explicit, we will use the notation $A\leqapprox B$ to mean that there exists some constant $c>0$, depending on $\cM$ and $[\omega_0]$, such that $A\leq c\cdot B$.

\begin{proposition}[Contraction from transversality]
	\label{prop:contraction_from_transversality}
	Suppose that $x,y\in N_\bR$ are two vectors that are transverse for an element $g\in \SO\left(N_\bR\right)$.
	Then we have the estimate
	\[
		\norm{
		\frac{gx}{M(gx)}
		-
		\frac{gy}{M(gy)}
		} \leqapprox \frac{1}{\norm{g}_{op}}
	\]
\end{proposition}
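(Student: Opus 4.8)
The idea is to reduce everything to the Cartan (KAK) decomposition $g = k_1 a k_2$ and to track what happens to the unit vectors $k_2 x / \norm{x}$ and $k_2 y / \norm{y}$ under the diagonal part $a$. Write $v^+$ for the null eigenvector expanded by $a$ (by the factor $\norm{g}_{op}$) and $v^-$ for the one contracted (by $\norm{g}_{op}^{-1}$); recall that $[\omega_0]$ lies in the span of $v^{\pm}$, so in an orthonormal basis adapted to this splitting, $a$ acts as $\mathrm{diag}(\lambda, \lambda^{-1}, 1, \dots, 1)$ with $\lambda = \norm{g}_{op}$, where the fixed subspace is $\langle v^+, v^-\rangle^\perp$. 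The transversality hypothesis on $x$ (and $y$) says exactly that the $v^-$-component of $k_2 x$, after normalizing $\norm{k_2 x}=1$, is bounded below: $\norm{gx} \geq C^{-1}\norm{g}_{op}\norm{x}$ forces the coefficient of $v^+$ in $k_2(x/\norm{x})$ to be $\gtrsim 1$, hence the $v^-$-coefficient is bounded away from the regime that would cause cancellation.

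First I would compute, for a unit vector $u$ with $v^+$-coefficient $a_u$ bounded below, the image $g u = k_1 (a (k_2 u))$ and its mass $M(gu) = [\omega_0] . (gu)$. Since $k_1$ preserves $[\omega_0]$ and the intersection form, $M(gu) = (k_1^{-1}[\omega_0]) . (a k_2 u) = [\omega_0].(a k_2 u)$; because $[\omega_0] \in \langle v^+, v^- \rangle$ this pairing is dominated by $\lambda \cdot a_u \cdot ([\omega_0].v^+) \asymp \lambda$ up to the transversality constant (one checks the other components contribute $O(1)$). Thus $\dfrac{g u}{M(gu)} = \dfrac{k_1 (a k_2 u)}{M(gu)}$, and dividing numerator and denominator by $\lambda$, the vector $\lambda^{-1} a k_2 u$ converges to $a_u \cdot v^+$ (in the span of $v^+$) with an error of size $O(\lambda^{-1})$ coming from the fixed directions and $O(\lambda^{-2})$ from $v^-$. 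After normalizing by the mass, $\dfrac{gu}{M(gu)} = \dfrac{v^+}{[\omega_0].v^+} + O(\lambda^{-1})$, where crucially the leading term is \emph{independent of $u$} — the $a_u$ cancels between numerator and denominator. Applying this to $u = x/\norm{x}$ and $u = y/\norm{y}$ and subtracting, the two leading terms cancel and one is left with $\norm{\tfrac{gx}{M(gx)} - \tfrac{gy}{M(gy)}} \leqapprox \lambda^{-1} = \norm{g}_{op}^{-1}$, which is the claim.

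The main obstacle — and the only place where the transversality hypothesis is genuinely used — is controlling the denominators $M(gx)$ and $M(gy)$ uniformly from below by $\gtrsim \norm{g}_{op}\norm{x}$ and keeping the constants depending only on the transversality constant $C$ (hence on $\cM$ and $[\omega_0]$). Without a lower bound on $M(gu)$ the division could blow up the $O(\lambda^{-1})$ and $O(\lambda^{-2})$ error terms uncontrollably. Once one knows $M(gu) \asymp \norm{g}_{op}\norm{u}$ with the implied constants depending only on $C$ — which follows from the transversality estimate together with the comparison $M \asymp \norm{\cdot}$ on the relevant cone from \autoref{sssec:mass_in_cohomology}, and from the fact that $a_u$ is bounded away from zero — the rest is the elementary linear-algebra expansion of $a$ acting on the adapted orthonormal basis. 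A minor technical point to handle carefully is that $x, y$ need not lie on $\{v.v = 1\}$ nor in the closed ample cone, so one should phrase the mass comparison using only $\norm{gx} \asymp M(gx)$ for vectors in the forward cone, checking that $gx$ indeed lands in (a fixed neighborhood of) the cone where this comparison is valid; this is where the $C$-transversality is again what keeps $gx$ from drifting toward the null direction $v^-$ where the comparison would degenerate.
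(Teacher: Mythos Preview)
Your proposal is correct and follows essentially the same route as the paper: both arguments write the KAK decomposition $g=k_1ak_2$, decompose $k_2x$ along the expanded and contracted null eigenvectors $v_+,v_-$ of $a$ (plus an orthogonal part), use transversality to bound the $v_+$-coefficient away from zero, and then check directly that $\tfrac{gx}{M(gx)}=v_+ + O(\norm{g}_{op}^{-1})$, so that subtracting the same expression for $y$ gives the bound. Your write-up is more explicit about the mass lower bound and the cone issue, which the paper glosses over, but the underlying computation is identical.
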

\begin{proof}
	Let us write the KAK decomposition $g=k_1ak_2$ and let $v_-,v_+$ be the vectors contracted by $a$, each of mass $1$, so that $2[\omega_0]=v_- + v_+$.
	The transformation $k_1$ does not affect the estimates so we can assume that $k_1=\id$.
	We then have that
	\[
		k_2 x = c_-(x)v_- + c_+(x)v_+ + v_0(x)
	\]
	with $v_0(x)$ orthogonal to both $v_-$ and $v_+$.
	By the transversality of $x$ for $g$ we know that $c_+\geq c>0$ where $c$ is some constant determined by the amount of transversality.
	By a direct calculation it follows that $\frac{ak_2x}{M(ak_2x)}=v_+ + \frac{O(1)}{\norm{g}_{op}}$.
	A similar argument applies to $y$ and the result follows.
\end{proof}

% 	End of Subsection: Linear algebra and hyperbolic geometry
%%===============================================

%%%%%%%%%%%%%%%%%%%%%%%%%%%%%%%%%%%%%%%%%%%%%%%%%%%%%%%%%%%%%%%%%%%%%%%%%%%%%%%
%%% 				End of Section: Hyperbolic geometry background
%%%%%%%%%%%%%%%%%%%%%%%%%%%%%%%%%%%%%%%%%%%%%%%%%%%%%%%%%%%%%%%%%%%%%%%%%%%%%%%

%%%%%%%%%%%%%%%%%%%%%%%%%%%%%%%%%%%%%%%%%%%%%%%%%%%%%%%%%%%%%%%%%%%%%%%%%%%%%%%
%%% 				Start of Section: Elliptic fibrations and currents
%%%%%%%%%%%%%%%%%%%%%%%%%%%%%%%%%%%%%%%%%%%%%%%%%%%%%%%%%%%%%%%%%%%%%%%%%%%%%%%

\section{Elliptic fibrations and currents}
	\label{sec:elliptic_fibrations_and_currents}

\subsubsection*{Outline}
The main result of this section is \autoref{thm:current_valued_pairing}, which establishes the existence of canonical currents associated to parabolic automorphisms of an elliptic fibration. In this section we can relax considerably our standing assumptions in \autoref{sssec:standing_assumptions_main} to the following more general setup.

\subsubsection{Standing assumptions for this section}
	\label{sssec:standing_assumptions_elliptic_currents}
Throughout this section, $X$ denotes a compact \Kahler surface with a genus one fibration
\[
	X \xrightarrow{\pi} B
\]
where $B$ is a compact complex curve, such that the following holds:
\begin{center}
The singular fibers of $\pi$ are reduced and irreducible, i.e. the singularities have Kodaira type $I_1$ or $II$.
\end{center}

Denote by $\Aut_\pi(X)$ the automorphisms of $X$ that preserve the fibers of $\pi$.
In most cases, including K3 surfaces, this group is of finite index in the group of all automorphisms of $X$ taking fibers to (possibly other) fibers.
Indeed, in the K3 case (where $B=\bP^1$), the induced group of automorphisms of $\bP^1$ must preserve $\pi_*\dVol$ and hence be of finite order.

Observe also that when $X$ is a K3 surface, the standing assumption of this section is implied by our earlier standing assumptions in \autoref{sssec:standing_assumptions_main}, since by \cite[Theorem 11.1.9]{Huy} any singular fiber which is not of type $I_1$ or $II$ contains a $(-2)$-curve.

%%===============================================
% 	Start of Subsection: Calculations in cohomology

\subsection{Calculations in cohomology}
	\label{ssec:calculations_in_cohomology_elliptic_fibrations}

\subsubsection{Setup}
	\label{sssec:setup_calculations_in_cohomology_elliptic_fibrations}
For a point $b\in B$ denote by $X_b:=\pi^{-1}(b)$ the corresponding fiber and let $[E]\in \NS(X)$ denote the class of a general fiber of the fibration.
Importantly, in this section we consider the action of $\Aut(X)$ on $\NS(X)$ by \emph{pushforward} (induced by the pushforward operation on divisors and denoted $\gamma_*$ for $\gamma\in \Aut(X)$); this is relevant for keeping track of some signs.
The following filtration of the \Neron--Severi group is preserved by $\Aut_\pi(X)$:
\begin{align}
	\label{eqn:cohomology_filtration_elliptic_fibration}
	0\subsetneq [E] \subsetneq [E]^{\perp} \subsetneq \NS(X)
\end{align}
where $[E]^{\perp}$ denotes the orthogonal complement for the intersection pairing.
The induced intersection pairing on $[E]^\perp/[E]$ is strictly negative-definite and we let $\Aut_\pi^\circ(X)$ denote the subgroup of $\Aut_\pi(X)$ whose induced action on $[E]^\perp/[E]$ is trivial.
This is a finite-index subgroup of $\Aut_\pi(X)$.

\subsubsection{Linearizing automorphisms}
	\label{sssec:linearizing_automorphisms}
Because $\Aut_\pi^\circ(X)$ preserves the filtration in \autoref{eqn:cohomology_filtration_elliptic_fibration} and acts as the identity on subquotients, there is a group homomorphism
\begin{align}
	\label{eqn:aut_pi_linearization_cohomology}
	\begin{split}
	\Aut_\pi^\circ(X) & \xrightarrow{\xi} \Hom\left(\rightquot{[E]^\perp}{[E]},[E]\right)\\
	\gamma & \mapsto \xi(\gamma)(v)= \gamma_*(v)-v
	\end{split}
\end{align}
Note that for $v\in [E]^{\perp}$ and $\gamma\in \Aut_\pi^\circ(X)$, we have by definition that $\gamma_*(v)$ and $v$ agree modulo $[E]$, so their difference is a multiple of $[E]$.
Via duality and identifications of duals using the intersection pairing, we have equivalently
\[
	\xi(\gamma)^t \in \Hom\left(\NS(X)/[E]^{\perp},\rightquot{[E]^\perp}{[E]}	\right)
\]
Indeed, the intersection pairing establishes a nondegenerate duality pairing between $[E]$ and $\NS(X)/[E]^{\perp}$ and induces a perfect pairing on $[E]^\perp/[E]$.
Alternatively, one can define the transformation by
\[
	\xi(\gamma)^t([v]) := \gamma_* [v] - [v] \in [E]^{\perp}/[E] \quad \text{ for }[v]\in \NS(X)/[E]^{\perp}
\]
since for any $v\in \NS(X)$ and $\gamma\in \Aut_\pi^{\circ}(X)$, we have that $\gamma_* v$ and $v$ agree modulo $[E]^{\perp}$, so their difference $\gamma_* v-v$ is in $[E]^{\perp}$, and moreover changing $v$ to $v+v'$ with $v\in [E^{\perp}]$ changes the difference to $\gamma_*v'-v'$, which is an element of $[E]$.

Since the class of the fiber $[E]$ is canonically defined, both spaces on the right-hand side above can be canonically identified with $[E]^\perp/[E]$.
By abuse of notation, we occasionally identify here and below a free $\bZ$-module of rank $1$ with its generator (and in our situation we have a distinguished choice of one of the two generators).

\begin{proposition}[Formulas for fibration automorphism]
	\label{prop:formulas_for_fibration_automorphism}
	\leavevmode
	\begin{enumerate}
	\item
	The map from \autoref{eqn:aut_pi_linearization_cohomology} injects $\Aut_\pi^\circ(X)$ into $[E]^\perp/[E]$.
	\item
	Given $\xi\in [E]^\perp/[E]$, define
	\[
	n_{\xi}(v) := (v.[E])\xi - (v.\xi)[E] \quad \forall v\in \NS(X)
	\]
	This is a nilpotent transformation, well-defined and independent of the lift of $\xi$ from $[E]^\perp/[E]$ to $[E]^\perp$, and strictly decreasing the filtration from \autoref{eqn:cohomology_filtration_elliptic_fibration}.
	Furthermore, the image of $\NS(X)/[E]^\perp$ under $n_{\xi}$ is the span of $\xi$ inside $[E]^\perp/[E]$, as can be seen directly from the formula for $n_\xi$, and we also have $n_\xi(\xi)=-(\xi.\xi)[E]$.
	In addition $n_\xi$ is an element of the indefinite orthogonal Lie algebra, i.e. $n_\xi(v).w + v.n_\xi(w)=0$.

	\item
	The action of $\gamma\in \Aut_\pi^{\circ}(X)$ on $\NS(X)$ is given by
	\[
		1 + n_{\xi} + \tfrac12 {n_{\xi}^2} \quad \text{ with } n_\xi:=n_{\xi(\gamma)}.
	\]
	\end{enumerate}
\end{proposition}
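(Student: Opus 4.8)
The plan is to prove the three assertions in the order (2), then (3), then (1): part (2) is self-contained linear algebra on $\NS(X)$, part (3) uses the identities collected in (2), and part (1) then follows almost immediately from (3). Throughout I fix the distinguished generator of the rank-one module $[E]$ (the class of an effective genus one curve) and use the non-degenerate pairing on $[E]^\perp/[E]$ to identify $\Hom\!\left([E]^\perp/[E],[E]\right)$ with $[E]^\perp/[E]$. This identification, together with the sign in the definition $\xi(\gamma)(v)=\gamma_*(v)-v$, is exactly what pins down the sign conventions in the statement; it is also the place where the choice to let $\Aut(X)$ act by pushforward (rather than pullback) on $\NS(X)$ matters.

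For (2), everything is a direct verification on the formula $n_\xi(v)=(v.[E])\xi-(v.\xi)[E]$. Replacing $\xi$ by $\xi+a[E]$ changes $n_\xi(v)$ by $a(v.[E])[E]-a(v.[E])[E]=0$, so $n_\xi$ only depends on the class of $\xi$ in $[E]^\perp/[E]$. Using $[E].[E]=0$ and $\xi.[E]=0$ one reads off $n_\xi([E])=0$, $n_\xi\!\left([E]^\perp\right)\subseteq[E]$ and $n_\xi(\NS(X))\subseteq[E]^\perp$, i.e. $n_\xi$ strictly decreases the filtration \autoref{eqn:cohomology_filtration_elliptic_fibration}; in particular $n_\xi^3=0$, so $n_\xi$ is nilpotent. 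The induced map $\NS(X)/[E]^\perp\to[E]^\perp/[E]$ sends $v$ to $(v.[E])\xi$ modulo $[E]$, whose image is the span $(\xi+[E])/[E]$; the identity $n_\xi(\xi)=(\xi.[E])\xi-(\xi.\xi)[E]=-(\xi.\xi)[E]$ and the skew-symmetry $n_\xi(v).w+v.n_\xi(w)=0$ are one-line expansions. I would also record at this point that $\xi\mapsto n_\xi$ is linear into the orthogonal Lie algebra $\mathfrak{so}(\NS(X))$ and that, since $n_\xi^3=0$, one has $\exp(n_\xi)=1+n_\xi+\tfrac12 n_\xi^2$.

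For (3), note first that $\gamma_*$ is an isometry preserving the filtration \autoref{eqn:cohomology_filtration_elliptic_fibration} and acting trivially on all three graded pieces, so $\gamma_*-1$ strictly decreases the filtration, $(\gamma_*-1)^3=0$, the operator $N:=\log\gamma_*$ lies in $\mathfrak{so}(\NS(X))$ with $N^3=0$ and strictly decreases the filtration, and $\gamma_*=\exp(N)=1+N+\tfrac12 N^2$. It remains to identify $N$ with $n_{\xi(\gamma)}$. Since $(\gamma_*-1)^2=0$ on $[E]^\perp$, we get $N|_{[E]^\perp}=(\gamma_*-1)|_{[E]^\perp}=\xi(\gamma)$, which under the identification above is precisely $n_{\xi(\gamma)}|_{[E]^\perp}$. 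Finally, a nilpotent in $\mathfrak{so}(\NS(X))$ that strictly decreases the filtration is determined by its restriction to $[E]^\perp$: for $w$ with $w.[E]\neq 0$, pairing $Nw\in[E]^\perp$ against $[E]^\perp$ determines $Nw$ modulo the line $\bR[E]=\left([E]^\perp\right)^\perp$ in terms of $N|_{[E]^\perp}$, and the antisymmetry relation forces $(Nw).w=0$, which pins down the remaining $[E]$-coefficient. By (2), $n_{\xi(\gamma)}$ is such an element with the same restriction to $[E]^\perp$, so $N=n_{\xi(\gamma)}$. (Alternatively one may compute $\gamma_* w$ outright for a single $w$ with $w.[E]=m\neq 0$: writing $\gamma_* w=w+u$ with $u\in[E]^\perp$, isometry against $[E]^\perp$ gives $u\equiv m\,\xi(\gamma)\ \bmod\ [E]$, while $(\gamma_* w)^2=w^2$ together with $[E].[E]=0$ fixes the $[E]$-coefficient of $u$, and the result matches $w+n_{\xi(\gamma)}(w)+\tfrac12 n_{\xi(\gamma)}^2(w)$.)

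For (1): by (3) the action $\gamma\mapsto\gamma_*$ factors as $\gamma\mapsto\xi(\gamma)\mapsto\exp(n_{\xi(\gamma)})$, so $\gamma_*$ is determined by $\xi(\gamma)\in[E]^\perp/[E]$; since under the standing assumptions $\Aut_\pi^\circ(X)$, sitting inside the torsion-free group we have fixed, acts faithfully on $\NS(X)$, the map $\gamma\mapsto\xi(\gamma)$ is injective. Equivalently, without invoking (3): if $\xi(\gamma)=0$ then $\gamma_*$ fixes $[E]^\perp$ pointwise, hence $(\gamma_* w-w).v=(\gamma_* w).(\gamma_* v)-w.v=0$ for every $w$ and every $v\in[E]^\perp$, so $\gamma_* w-w\in\bR[E]$; then $(\gamma_* w)^2=w^2$ and $[E].[E]=0$ force $\gamma_* w=w$ whenever $w.[E]\neq 0$, and so $\gamma_*=\id$. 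The entire argument is elementary linear algebra, and I do not expect a genuine obstacle; the only points requiring care are the bookkeeping of signs (pushforward versus pullback, the chosen generator of $[E]$, and which identification of duals is in force) and, for (1), the input that $\Aut_\pi^\circ(X)$ acts faithfully on $\NS(X)$ under the standing assumptions. The one mildly non-obvious feature is the quadratic term $\tfrac12 n_\xi^2$ (equivalently the $(\xi.\xi)[E]$ correction) in (3): it is invisible on $[E]^\perp$ and is forced on us precisely by the requirement that $\gamma_*$ preserve self-intersections.
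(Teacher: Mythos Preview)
Your proof is correct and follows essentially the same approach as the paper: both compute $\log\gamma_*$ and identify it with $n_{\xi(\gamma)}$ by matching on a large enough piece and invoking the orthogonal Lie algebra constraint. The only differences are cosmetic: you reorder the parts as (2), (3), (1) and match $N$ with $n_\xi$ on $[E]^\perp$, whereas the paper matches them modulo $[E]$ (the dual viewpoint); for (1) you deduce injectivity from (3) plus faithfulness on $\NS(X)$, while the paper instead identifies the relevant unipotent subgroup of $\Orthog(\NS(X))$ directly with $[E]^\perp/[E]$.
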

\noindent In particular, we have for a vector $v\in \NS(X)$ and $\xi=\xi(\gamma)$ associated to $\gamma\in \Aut_\pi^{\circ}(X)$ that the action is given by
\begin{align}
	\label{eqn:cohomology_action_explicit}
	\gamma(v) = v + \Big[(v.[E])\xi - (v.\xi)[E]\Big] + (v.[E])\frac{-\xi.\xi}{2}[E],
\end{align}
a well-known formula, see e.g. \cite[(11)]{Cantat_Dolgachev}.
Note that $\xi.\xi<0$ since the intersection pairing on $[E]^\perp/[E]$ is strictly negative-definite.
\begin{proof}[Proof of \autoref{prop:formulas_for_fibration_automorphism}]
	The map from \autoref{eqn:aut_pi_linearization_cohomology} is injective, because more generally the kernel of the map $\Aut(X)\to \Orthog(\NS(X))$, when $h^{1,0}=0$, is a group of automorphisms of $X$ preserving an ample line bundle (so they are naturally a subgroup of some $\PGL_n$).
	The subgroup of elements in $\Orthog(\NS(X))$ which preserve $[E]$ and act as the identity on $[E]^{\perp}/[E]$ is then naturally identified with $[E]^\perp/[E]$, as can be seen for instance by choosing an explicit basis.

	For (ii) the stated properties of $n_{\xi}$ are checked directly from the formula defining it.

	To establish (iii) observe that the properties of $n_\xi$ match those that define the logarithm of the action of $\gamma$ on $\NS(X)$.
	Indeed $\log \gamma$ is a nilpotent transformation in the orthogonal Lie algebra that can be defined as $(\gamma-1) + \tfrac 12(\gamma-1)^2$.
	The term $(\gamma-1)^2$ maps $\NS(X)/[E]^{\perp}$ to $[E]$, so does not contribute to the expression $(\gamma-1) v \mod [E]$ for a vector $v$.
	Therefore $\log \gamma$ and $n_{\xi}$ are nilpotent transformations in the orthogonal Lie algebra which agree on $\NS(X)/[E]$, and hence agree.
	Therefore $\gamma=\exp(n_{\xi})$ as claimed.
\end{proof}

\subsubsection{A remark on signs}
	\label{sssec:a_remark_on_signs}
The cup-product on $[E]^\perp/[E]$ is naturally negative-definite.
Because of this, in order to have convenient algebraic expressions, and to work with positive currents, we will have to introduce some minus signs when defining certain currents.
One such example is in \autoref{thm:current_valued_pairing}, where the pairing $\eta(\gamma,[\alpha])$ gives a current in the cohomology class $-[\xi(\gamma)].[\alpha]$ on $B$.

% 	End of Subsection: Calculations in cohomology
%%===============================================

%%===============================================
% 	Start of Subsection: Analytic properties of parabolic automorphisms

\subsection{Analytic properties of parabolic automorphisms}
	\label{ssec:analytic_properties_of_parabolic_automorphisms}

\subsubsection{Setup}
	\label{sssec:setup_analytic_properties_of_parabolic_automorphisms}
For the fibration $X\xrightarrow{\pi}B$ let $B^\circ\subseteq B$ denote the image of the smooth fibers and $X^\circ:=\pi^{-1}\left(B^{\circ}\right)$.
The fiber over $b\in B$ is denoted $X_b$ as before.
For this section, fix a \Kahler metric $\omega$ on $X$.

Recall that an usc function $\phi:X\to\mathbb{R}\cup\{-\infty\}$ is called quasi-psh if locally it equals the sum of a psh function plus a smooth function.
Quasi-psh functions thus satisfy $dd^c\phi\geq \gamma$ weakly on $X$, for some smooth real $(1,1)$-form $\gamma$. More generally, a quasi-positive $(1,1)$ current is defined as $T=\alpha+dd^c\phi\geq \gamma$, where $\phi$ is quasi-psh and $\alpha,\gamma$ are smooth $(1,1)$-forms.
We can define the restriction of $T$ to any fiber $X_b,b\in B$ for which $\phi|_{X_b}\not\equiv -\infty$ simply by setting $T|_{X_b}=\alpha|_{X_b}+dd^c(\phi|_{X_b})$, where in the case when $X_b$ is singular we refer e.g. to \cite[\S 1]{Dema_MA} for details about smooth forms and currents on singular complex analytic spaces.

For a compact complex manifold $M^n$ we will use the notation $\cA_{k,k}(M)$ to denote the space of currents dual to smooth $(k,k)$-forms on $M$ (i.e. currents of bidegree $(n-k,n-k)$). Thus, the space of $(1,1)$ currents on the base $B$ is equal to $\cA_{0,0}(B)$.

If $T$ is a $(1,1)$ current on $M$ and $\omega$ is a K\"ahler metric, we will use the standard notation $\mathrm{tr}_\omega T$ for the distribution defined by
$$\mathrm{tr}_\omega T=\frac{n \omega^{n-1}\wedge T}{\omega^n}.$$
In particular, for a smooth function $u$ we define its $\omega$-Laplacian by $\Delta_\omega u=\mathrm{tr}_\omega(dd^c u)$.

\begin{definition}[$\pi$-trivial forms and currents]
	\label{def:pi_trivial_form}
	A smooth $(1,1)$-form $\alpha$ on $X$ is called \emph{$\pi$-trivial} if for all $b\in B$ we have $\alpha\vert_{X_b}\equiv 0$.
	Similarly, a quasi-positive $(1,1)$-current $T$ on $X$ is $\pi$-trivial if for all $b\in B$ the restriction $T|_{X_b}$ is well-defined and is zero.
    We also define $\pi$-triviality on $X^\circ$ if the above condition holds only for all $b\in B^\circ$.
\end{definition}

\begin{remark}[Pulling back from $B$]
	\label{rmk:pulling_back_from_b}
	\leavevmode
	\begin{enumerate}
		\item
		Note that $\pi$-trivial forms need not be pulled back from $B$.
		However, if a $\pi$-trivial form $\alpha$ is of the form $\alpha=dd^c\phi$ for a function $\phi$ then there exists some $\phi'$ on $B$ with $\pi^{*}\phi'=\phi$ since the condition $dd^c\left(\phi\vert_{X_b}\right)\equiv 0$ implies that $\phi$ is constant on fibers.
		\item However, if $\eta$ is a $\pi$-trivial closed $(1,1)$-current which is in the class of a fiber, then $\eta=\pi^*\eta_B$ for a closed $(1,1)$-current $\eta_B$ on $B$.
		Indeed, pick some smooth $\omega_B$ on $B$ such that $[\eta]=\pi^*[\omega_B]$ in cohomology and write $\eta = \pi^*\omega_B + dd^c \phi$ for some $\phi$.
		For every fiber we have $\eta\vert_{X_b}= 0$ (since $\eta$ is $\pi$-trivial) and $\pi^*\omega_B\vert_{X_b}\equiv 0$ so it follows that $dd^c\left(\phi\vert_{X_b}\right)\equiv 0$, which in turn implies $\phi$ is constant on fibers.
		\item Additionally, if $\eta$ is a closed \emph{positive} $(1,1)$-current in the class of the fiber, then it is necessarily $\pi$-trivial and pulled back from $B$.
		Indeed, writing $\eta=\pi^*\omega_B + dd^c\phi$ it follows that $dd^c\phi\geq 0$ on each fiber and hence $\phi$ is constant on fibers.
	\end{enumerate}
\end{remark}

\begin{theorem}[Preferred currents]
	\label{thm:preferred_currents}
	Assume all singular fibers of $\pi$ are reduced and irreducible (i.e. of Kodaira types $I_1$ and $II$). Let $\alpha$ be a closed smooth real $(1,1)$-form on $X$, which satisfies
	\begin{equation}
	\label{eqn:alpha_zero_integral}
	\int_{X_b}\alpha=0,\quad \text{for all }b\in B
	\end{equation}
	or equivalently $[\alpha].[E]=0$.

	Then there exists $\phi\in L^\infty(X)\cap C^{\infty}\left(X^\circ\right)$, unique up to a function pulled back from $B$, such that
	\[
		\alpha + dd^c \phi \text{ is }\pi\text{-trivial on }X,
	\]
and
	\[
		\norm{\phi}_{L^\infty(X)} \leq C(X,\omega)\norm{\alpha}_{L^\infty(X,\omega)}.
	\]
%	where $\norm{\alpha}_{L^\infty(\omega)}$ is the smallest $C$ such that $-C\omega\leq \alpha \leq C\omega$.
\end{theorem}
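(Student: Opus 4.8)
The plan is to solve the equation fiberwise and then patch. On each smooth fiber $X_b$, the cohomological hypothesis $\int_{X_b}\alpha = 0$ means that $\alpha|_{X_b}$ is exact on the elliptic curve $X_b$, so there is a unique function $\psi_b$ on $X_b$ with average zero (with respect to the flat metric, say, or more robustly the restriction $\omega|_{X_b}$) such that $\alpha|_{X_b} + dd^c_{X_b}\psi_b = 0$. Gluing these $\psi_b$ together defines a function $\phi$ on $X^\circ$, and standard elliptic regularity (Hodge theory with parameters, or the fact that the fiberwise Laplace operator inverts smoothly in $b$) shows $\phi \in C^\infty(X^\circ)$, and by construction $\alpha + dd^c\phi$ is $\pi$-trivial on $X^\circ$. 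Uniqueness up to a pullback from $B$ is clear: two solutions differ by a function whose $dd^c$ vanishes along every smooth fiber, hence is constant on fibers, hence pulled back from $B^\circ$, and one argues the pullback extends across the finitely many singular points.

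The genuine content — and the main obstacle — is the global $L^\infty$ bound on $\phi$ near the singular fibers, which are of type $I_1$ or $II$ (nodal or cuspidal rational curves). A priori $\psi_b$ could blow up as $b$ approaches a point $b_0$ with $X_{b_0}$ singular, since the flat metric on $X_b$ degenerates (its diameter stays bounded but the complex structure pinches). I would handle this by a local analysis in coordinates near a singular fiber: write the fibration locally as a family of cubics, use explicit uniformization of the nearby elliptic curves (e.g. via the Weierstrass $\wp$-function with period $\tau(b)\to i\infty$ as $b\to b_0$), and track the dependence of the Green's operator on $\tau$. The key is that the $C^0$-norm of the solution to $\Delta_{\omega_b}\psi_b = f_b$ on a genus-one curve, with $\int \psi_b = 0$, is controlled by a Sobolev/Moser iteration estimate whose constant depends only on the Sobolev constant of $(X_b,\omega_b)$ and on $\|f_b\|_{L^p}$ — and one must show these stay bounded, uniformly in $b$, up to the singular fiber. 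For type $II$ one can instead use a normalization map from $\bP^1$ and the fact that the cuspidal curve is a $\bZ$-quotient-free rational curve, reducing to estimates on $\bP^1$.

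Concretely the steps are: (1) produce $\phi$ on $X^\circ$ fiberwise and prove its smoothness there via elliptic theory with parameters; (2) verify $\alpha + dd^c\phi$ is $\pi$-trivial on $X^\circ$; (3) the core estimate: prove a uniform $C^0$ bound $\|\psi_b\|_{L^\infty(X_b)} \le C(X,\omega)\|\alpha\|_{L^\infty}$ independent of $b\in B^\circ$, including as $b$ approaches a singular value, by a local model computation near each of the (finitely many) singular fibers combined with a uniform Sobolev inequality on the fibers; (4) conclude $\phi \in L^\infty(X)$ by the uniform bound, then show $\phi$ extends across the singular fibers as an $L^\infty$ (indeed quasi-psh, since $\alpha + dd^c\phi \ge -C\omega$) function, using that a bounded function smooth off a proper analytic subset of an analytic space extends; (5) check that $\alpha + dd^c\phi$ is then $\pi$-trivial on all of $X$, i.e. also restricts to zero on the singular fibers, which follows from the restriction being a limit of the $\pi$-trivial restrictions on nearby smooth fibers together with the uniform mass bound. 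Step (3) is where essentially all the difficulty lies; the rest is soft once that uniform estimate is in hand.
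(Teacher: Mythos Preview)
Your overall strategy matches the paper's: construct $\phi$ fiberwise on $X^\circ$ via Hodge theory with parameters, then prove the uniform fiberwise $L^\infty$ bound as $b$ approaches a singular value. You correctly flag step~(3) as the crux.

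Where you and the paper diverge is in how that uniform bound is obtained. You propose explicit uniformization of the degenerating elliptic curves and tracking the Green's operator, or alternatively a Sobolev/Moser iteration with a uniform Sobolev constant; both are plausible in principle, but you leave the actual estimate open. The paper takes a cleaner and more citable route: represent $\phi_b$ via the Green's function $G_b$ of $(X_b,\omega_b)$, observe $|\Delta_{\omega_b}\phi_b|\leq\|\alpha\|_{L^\infty(X,\omega)}$, and reduce to a uniform lower bound $G_b\geq -C_0$. By the Cheng--Li argument this follows from a uniform upper bound on the Poincar\'e constant of $(X_b,\omega_b)$, which is precisely Yoshikawa's theorem (see also Ruan--Zhang) under the assumption that the singular fibers are reduced and irreducible. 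This is where that hypothesis is used, and the paper's argument makes the dependence transparent; your sketch does not isolate which geometric quantity must stay bounded.

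Two smaller points. Your parenthetical in step~(4), that $\phi$ is quasi-psh because $\alpha+dd^c\phi\geq -C\omega$, is not correct: $\pi$-triviality controls only the fiber directions, so $dd^c\phi$ has no global lower bound. This is harmless, since once $\phi\in L^\infty(X^\circ)$ the extension across a measure-zero set is automatic. More substantively, for $\pi$-triviality on the singular fibers the paper does not argue by limits as you propose in step~(5); it defines $\phi|_{X_0}$ directly by solving $dd^c\tilde\phi_0=-\mu^*\alpha$ on the normalization $\wtilde{X_0}\cong\bP^1$ and pushing down via $\nu_*$. Your limit argument is essentially the content of the paper's subsequent proposition on continuous potentials, which requires additional work (Schauder estimates in local trivializations and a Riemann-type removable singularity argument).
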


\begin{remark}
Using the arguments in the proof of \autoref{prop:continuous_potentials} and more work, it is possible to show that the function $\phi$ in \autoref{thm:preferred_currents} is smooth on $X$ minus the (finitely many) singular points of the singular fibers.
Since this information is not needed here, we will not belabor this point.
\end{remark}

\begin{proof}[Proof of \autoref{thm:preferred_currents}] We first construct the desired function $\phi$ on $X^\circ$. For this, given any $b\in B^\circ$, the condition that $\alpha+dd^c\phi$ is $\pi$-trivial restricts to $X_b$ to
$$dd^c(\phi|_{X_b})=-\alpha|_{X_b},$$
and this equation on $X_b$ is certainly solvable since \autoref{eqn:alpha_zero_integral} gives that $\int_{X_b}\alpha=0$ and so $\alpha|_{X_b}$ is null-cohomologous and hence $dd^c$-exact. Since the fiberwise solution is unique up to a constant, we see that
%For $b\in B$ write $\omega_b=\omega|_{X_b}$ for the restriction of the \Kahler metric. Since $X_b$ is $1$-dimensional, we have $(\mathrm{tr}_{\omega_b}(\alpha|_{X_b}))\omega_b=\alpha|_{X_b}$, and so
%$$\int_{X_b}(\mathrm{tr}_{\omega_b}(\alpha|_{X_b}))\omega_b=\int_{X_b}\alpha=0,$$
for every $b\in B^{\circ}$ there is a unique function $\phi_b\in C^\infty(X_b)$ solving
%the Poisson equation
%$$\Delta_{\omega_b}\phi_b=-\mathrm{tr}_{\omega_b}(\alpha|_{X_b}),\quad \int_{X_b}\phi_b\omega_b=0.$$
%We also have $\left(\Delta_{\omega_b}\phi_b\right)\omega_b=dd^c\phi_b$ and so
\begin{equation}\label{fiberwise}
dd^c\phi_b=-\alpha|_{X_b},\quad \int_{X_b}\phi_b\omega_b=0,
\end{equation}
on $X_b$.
It follows from standard Hodge theory with parameters (i.e. elliptic estimates on the fixed smooth manifold underlying $X_b$ with respect to the smoothly varying family of Riemannian metrics defined by $\omega_b$) that the functions $\phi_b$ vary smoothly in $b\in B^\circ$.
Thus, taken together, they define a smooth function $\phi$ on $X^\circ$ (i.e. $\phi|_{X_b}=\phi_b$) which satisfies
$$(\alpha+dd^c\phi)|_{X_b}=0,\quad \text{for all }b\in B^{\circ},$$
and so $\alpha+dd^c\phi$ is $\pi$-trivial on $X^\circ$.

Next let us discuss what happens at the singular fibers, so let $0\in B\setminus B^\circ$ be a critical value on the base, with singular fiber $X_0\xrightarrow{\iota} X$ and singular point $x_0\in X_0$.	Let $\wtilde{X_0}\xrightarrow{\nu}X_0$ be its normalization, which maps a finite set $S\subset\wtilde{X_0}\isom \bP^1$ to $x_0 \in X_0$ and is biholomorphic on the complement, and denote by $\mu=\iota\circ\nu$. The set $S$ consists of two points when $X_0$ is of type $I_1$ and of one point when it is of type $II$.

Assumption \autoref{eqn:alpha_zero_integral} implies that $\int_{\wtilde{X_0}}\mu^*\alpha=0,$ and so $\mu^*\alpha$ is null-cohomologous on $\wtilde{X_0}$ and hence $dd^c$-exact. Since $\mu^*\omega$ is a semipositive $(1,1)$-form on $\wtilde{X_0}$ which is K\"ahler away from a finite set (in particular $\int_{\wtilde{X_0}}\mu^*\omega>0$), we can find a  unique function $\ti{\phi}_0\in C^\infty(\wtilde{X_0})$ satisfying
	\begin{equation}\label{hodge}
		\int_{\wtilde{X_0}}\ti{\phi}_0\mu^*\omega = 0 \qquad \text{and} \qquad dd^c\ti{\phi}_0=-\mu^*\alpha,
	\end{equation}
Then $\phi_0:=\nu_*\ti{\phi}_0$ is a bounded function on $X_0$ which satisfies $\alpha|_{X_0}+dd^c \phi_0=0$, and so extending $\phi$ to $X_0$ by $\phi|_{X_0}=\phi_0$, and repeating this procedure at all singular fibers, we obtain a function $\phi$ on $X$, smooth on $X^\circ$, such that $\alpha+dd^c\phi$ is $\pi$-trivial on $X$.

Every other function $\phi'$ with this property will satisfy $dd^c(\phi'-\phi)|_{X_b}=0$ and so will differ from $\phi$ by the pullback of a function from $B$.

It remains to show that $\phi$ is bounded on $X$, and since by construction $\phi$ is bounded on the singular fibers we only need to prove boundedness on $X^\circ$, or in other words that there is a constant $C$ such that for all $b\in B^{\circ}$ we have
\begin{equation}
	\label{eqn:phi_is_bounded}
	\sup_{X_b}|\phi_b|\leq C.
\end{equation}
To see this, let $G_b$ be the Green's function of the Laplacian of $(X_b,\omega_b)$, which is symmetric in its two variables and is normalized by $\int_{X_b}G_b(\cdot,x)\omega_b=0$ for all $x\in X_b$, and which is defined by the property that for every smooth function $u$ on $X_b$ with $\int_{X_b}u\omega_b=0$ we have
$$u(x)=-\int_{X_b}(\Delta_{\omega_b}u) G_b(\cdot,x)\omega_b \quad \text{for all $x\in X_b$.}$$

The key claim then is that we can find $C_0>0$ such that for all $b\in B^{\circ}$ we have
\begin{equation}
	\label{eqn:lower_bound_Green}
	G_b(\cdot,\cdot)\geq -C_0,
\end{equation}
i.e. we have a uniform lower bound for the Green's function of the Laplacian of the degenerating Riemann surfaces $(X_b,\omega_b)$.
To establish this, we employ a classical argument of Cheng--Li \cite{CL}, which is clearly explained in \cite[Chapter 3, Appendix A, pp.137-140]{Si} (see also the recent exposition in \cite[\S 3]{DGG}), which shows that to prove \autoref{eqn:lower_bound_Green} it suffices to obtain a uniform upper bound for the Poincar\'e constant of $(X_b,\omega_b)$, which is uniform for all $b\in B^{\circ}$ (using here that $X_b$ has real dimension $2$ and that  the volume of $(X_b,\omega_b)$ is constant as we vary $b$).

More precisely, the argument of Cheng--Li gives a uniform bound lower bound for the Green's function of $(X_b,\omega_b)$ in terms of a positive lower bound the volume $\int_{X_b}\omega_b$ (which holds trivially in the present case) and of an upper bound for the Poincar\'e constant $C_{P,b}$ of $(X_b,\omega_b)$, which is characterized by
$$\int_{X_b} u^2 \omega_b \leq C_{P,b} \int_{X_b}|d u|^2_{\omega_b}\omega_b,$$
for all functions $u\in C^\infty(X_b)$ with $\int_{X_b}u\omega_b=0$. In other words, $C_{P,b}$ is the reciprocal of the lowest positive eigenvalue of the Laplacian of $\omega_b$.

Thus, to complete the proof of the claim \autoref{eqn:lower_bound_Green} it remains to obtain a uniform upper bound for $C_{P,b}$ independent of $b\in B^\circ$.
This bound is proved by  Yoshikawa \cite{Yo} (see also the shorter and clear exposition in \cite[Proposition 3.2]{RZ}), using the assumption that all singular fibers are reduced and irreducible (and in general the Poincar\'e constant blows up near singular fibers which do not satisfy this). Combining all these results we obtain \autoref{eqn:lower_bound_Green}.

%So far we have not used the assumptions that all singular fibers are reduced and irreducible. This is used now to prove the Poincar\'e constant bound, as shown by Yoshikawa \cite{Yo} (see also the shorter and clear exposition in \cite[Proposition 3.2]{RZ}). Combining all these results we obtain \autoref{eqn:lower_bound_Green}.

Next, observe that by definition we have
$$-\norm{\alpha}_{L^\infty(X,\omega)}\omega\leq \alpha\leq \norm{\alpha}_{L^\infty(X,\omega)}\omega,$$
holds on all of $X$. Restricting to $X_b$, tracing with respect to $\omega_b$, and using \autoref{fiberwise} we get that
$$-\norm{\alpha}_{L^\infty(X,\omega)}\leq \mathrm{tr}_{\omega_b}(\alpha|_{X_b})=-\Delta_{\omega_b}\phi_b\leq \norm{\alpha}_{L^\infty(X,\omega)},$$
for all $b\in B^{\circ}$.
We can then use the Green's formula for $\phi_b$ on $X_b$, which has fiber integral zero, to get that for all $x\in X_b$ and all $b\in B^{\circ}$ we have
$$\phi_b(x)=-\int_{X_b}\Delta_{\omega_b}\phi_b(G_b(\cdot,x)+C_0)\omega|_{X_b},$$
and using the $L^\infty$ bound for $\Delta_{\omega_b}\phi_b$ and the facts that $G_b(\cdot,x)+C_0\geq 0$ and $G_b(\cdot,x)$ integrates to zero on $X_b$, we immediately obtain that \autoref{eqn:phi_is_bounded} holds.
\end{proof}

\begin{proposition}[Continuous potentials]
	\label{prop:continuous_potentials}
	Suppose $\alpha$ is a closed smooth $(1,1)$-form on $X$ which satisfies \autoref{eqn:alpha_zero_integral} and $\phi\in L^\infty(X)\cap C^\infty(X^\circ)$ is provided by \autoref{thm:preferred_currents}, i.e. so that $\alpha + dd^c\phi$ is $\pi$-trivial on $X$.
	For any $\gamma \in \Aut_\pi^{\circ}(X)$ there exists a closed $(1,1)$ current $\eta=\eta(\gamma,\alpha)\in \cA_{0,0}(B)$ such that
	\[
		\gamma_*\left(\alpha+dd^c\phi\right) = \left(\alpha+dd^c\phi\right) + \pi^* \eta.
	\]
Furthermore $\eta$ has continuous potentials on $B$.
\end{proposition}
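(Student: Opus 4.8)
The plan is to compute the cohomology class of $\gamma_*(\alpha+dd^c\phi)-(\alpha+dd^c\phi)$, split off an explicit piece that is pulled back from $B$, and identify the remaining potential with the pullback of a continuous function on $B$. First I would set $T:=\alpha+dd^c\phi$, which by \autoref{thm:preferred_currents} is a $\pi$-trivial closed $(1,1)$-current on $X$ with bounded potential. Since $\gamma\in\Aut_\pi^{\circ}(X)$ preserves the fibration --- let $\psi$ denote the induced automorphism of $B$, so that $\gamma(X_b)=X_{\psi(b)}$, with $\psi=\mathrm{id}$ if $\Aut_\pi(X)$ fixes each fiber --- and since pushforward commutes with restriction to fibers (all potentials in sight being bounded, hence not $\equiv-\infty$ on any fiber), $\gamma_*T$ is again $\pi$-trivial, and so is $S:=\gamma_*T-T$. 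On the cohomological side $[S]=\gamma_*[\alpha]-[\alpha]$, and since $[\alpha].[E]=0$ the formula \autoref{eqn:cohomology_action_explicit} gives $[S]=c\,[E]$ with $c:=-[\alpha].\xi(\gamma)$; as $[E]$ is the class of a fiber, $c\,[E]=\pi^*(c\,[\mathrm{pt}])$ for the point class $[\mathrm{pt}]$ on $B$. Fixing a smooth real $(1,1)$-form $\beta_B$ on $B$ in the class $c\,[\mathrm{pt}]$, the form $\gamma_*\alpha-\alpha-\pi^*\beta_B$ is smooth, closed, real $(1,1)$ and cohomologically trivial on the compact K\"ahler surface $X$, hence equals $dd^c\rho$ for some $\rho\in C^{\infty}(X,\bR)$. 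Then with
\[
	h:=\rho+\phi\circ\gamma^{-1}-\phi\in L^{\infty}(X)\cap C^{\infty}(X^{\circ})
\]
one has $S=\pi^*\beta_B+dd^c h$ as currents on $X$, so it suffices to produce a continuous function $\psi_B$ on $B$ with $h=\pi^*\psi_B$ and then take $\eta:=\beta_B+dd^c_B\psi_B$.

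Next I would check that $h$ is constant along each smooth fiber. Restricting $dd^c h=S-\pi^*\beta_B$ to $X_b$ with $b\in B^{\circ}$, both terms on the right restrict to zero ($S$ is $\pi$-trivial, and a form pulled back from $B$ restricts to zero on every fiber), so $dd^c(h|_{X_b})=0$; being smooth on the compact Riemann surface $X_b$, $h|_{X_b}$ is harmonic, hence constant. So there is a smooth bounded function $\psi_B^{\circ}$ on $B^{\circ}$ with $h=\pi^*\psi_B^{\circ}$ on $X^{\circ}$, whence $S=\pi^*(\beta_B+dd^c_B\psi_B^{\circ})$ over $B^{\circ}$.

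The main obstacle is extending $\psi_B^{\circ}$ continuously across the finitely many critical values of $\pi$. I would compute $\psi_B^{\circ}$ by integrating over fibers against the K\"ahler form: for $b\in B^{\circ}$,
\[
	\psi_B^{\circ}(b)=\frac{1}{c_1}\int_{X_b}h\,\omega_b,\qquad c_1:=[\omega].[E]>0.
\]
Expanding $h=\rho+\phi\circ\gamma^{-1}-\phi$, using the normalization $\int_{X_b}\phi\,\omega_b=0$ (which holds for the representative $\phi$ constructed in the proof of \autoref{thm:preferred_currents}, for every $b\in B^{\circ}$), and the change of variables $x=\gamma(y)$, this becomes
\[
	\psi_B^{\circ}(b)=\frac{1}{c_1}\left(\int_{X_b}\rho\,\omega+\int_{X_{\psi^{-1}(b)}}\phi\,(\gamma^*\omega-\omega)\right).
\]
The first term is the fiber integral of a fixed smooth form, hence continuous on all of $B$ by continuity of the integration currents $[X_b]$. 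For the second term, near a critical value $b_0$ with singular point $x_0\in X_{b_0}$ I would split $X_b$ into $X_b\cap U$ and $X_b\setminus U$ for a small ball $U\ni x_0$: the contribution over $X_b\cap U$ is bounded by $\norm{\phi}_{L^{\infty}(X)}$ times the $\omega$-area of $X_b\cap U$, which tends to $0$ as $U\downarrow\{x_0\}$ uniformly for $b$ near $b_0$ (by the standard area bound for pieces of analytic curves, finite because the singularities are nodes or cusps), while on $X_b\setminus U$ one has $X_b\setminus U\to X_{b_0}\setminus U$ smoothly with $\phi$ continuous there. The hard part is precisely this last continuity of $\phi$ away from the singular points of the singular fibers; I expect to establish it along the lines of the remark following \autoref{thm:preferred_currents} --- elliptic estimates with parameters extended over the smooth locus of the singular fibers, equivalently the uniform lower bound for the Green's functions of $(X_b,\omega_b)$ already used in its proof --- and this is exactly where the assumption that all singular fibers have Kodaira type $I_1$ or $II$ is essential. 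Granting it, $\psi_B^{\circ}$ extends to a function $\psi_B\in C^0(B)$.

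Finally I would set $\eta:=\beta_B+dd^c_B\psi_B\in\cA_{0,0}(B)$; it has continuous potentials, being the sum of a smooth form and $dd^c_B$ of a continuous function. Since $\pi^*\psi_B$ and $h$ both lie in $L^{\infty}(X)$ and agree on $X^{\circ}$, whose complement has measure zero, they are equal in $L^1(X)$, so $dd^c(\pi^*\psi_B)=dd^c h$ as currents on $X$. Therefore $\pi^*\eta=\pi^*\beta_B+dd^c h=S$, i.e. $\gamma_*(\alpha+dd^c\phi)=(\alpha+dd^c\phi)+\pi^*\eta$, as required.
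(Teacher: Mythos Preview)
Your overall structure is correct and matches the paper's: write $\gamma_*\alpha-\alpha=\pi^*\beta_B+dd^c\rho$ with $\rho$ smooth, observe that $h:=\rho+\gamma_*\phi-\phi$ is fiberwise constant on $B^\circ$, and reduce the statement to continuity of the resulting function $\psi_B^\circ$ across the critical values. You also correctly identify the crux --- regularity of $\phi$ along the smooth locus of the singular fibers --- and that this is where the hypothesis on the Kodaira types enters.

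Two comments. First, the fiber-integration formula is an unnecessary detour. Since $h$ is already \emph{constant} on each smooth fiber, you do not need to average it: choose any small open $U\subset X_{b_0}\setminus\{x_0\}$ with a local trivialization $U\times D\to D$, and simply show that $h|_{U\times\{b\}}\to h|_{U\times\{0\}}$ as $b\to 0$. This is exactly what the paper does, and it avoids the area estimate for $X_b\cap U$ entirely. Second, the mechanism you invoke for the hard step is slightly misidentified: the Green's function lower bound was used to get the $L^\infty$ bound on $\phi$ in \autoref{thm:preferred_currents}, but for the continuity here the paper argues differently. One fixes a K\"ahler metric $\tilde\omega$ on the normalization $\widetilde{X_0}$, pulls back the equation $dd^c\phi_b=-\alpha_b$ to the trivialized neighborhood, applies interior Schauder estimates for $\Delta_{\mu_*\tilde\omega}$ (using the $L^\infty$ bound already in hand), extracts a locally smooth limit by Ascoli--Arzel\`a along a subsequence, and then identifies the limit with the canonical $\tilde\phi_0$ via a Riemann-type extension theorem for bounded harmonic functions across the finite set $S\subset\widetilde{X_0}$. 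Uniqueness of the limit then upgrades subsequential convergence to full convergence. The same argument applied to $\gamma_*\phi$ finishes the job. Your proposal leaves this as a gap (``I expect to establish it\dots''), and it is precisely the substantive content of the proof.
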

\noindent Later on in \autoref{thm:current_valued_pairing} we will establish that $\eta$ depends only on the classes $\xi(\gamma)$ and $[\alpha]$ in $[E]^\perp/[E]$.

\begin{proof}
Thanks to \autoref{eqn:cohomology_action_explicit}, we have
$$[\gamma_*\alpha]-[\alpha]=-([\alpha].\xi)[E].$$
Fix a smooth form $\beta$ on $B$ with the correct total integral so that $\pi^*\beta$ is a smooth representative of $-([\alpha].\xi)[E]$. We can then write $\gamma_*\alpha-\alpha=\pi^*\beta+dd^cf$ for some smooth function $f$ on $X$ (observe here that $\gamma_*\alpha-\alpha$ is smooth on $X$ since $\gamma_*\alpha=(\gamma^{-1})^*\alpha$). Let $\phi\in L^\infty(X)\cap C^\infty(X^\circ)$ be as in \autoref{thm:preferred_currents}, then
the current $\gamma_*(\alpha+dd^c\phi)- (\alpha+dd^c\phi)$ is $\pi$-trivial on $X$ and in the class of $-([\alpha].\xi)[E]$,
so by \autoref{rmk:pulling_back_from_b} it follows that this current is of the form $\pi^*\eta$ for some $\eta\in \cA_{0,0}(B)$. On $X$ we thus have
\begin{equation}\label{grz}
\pi^*\eta=(\gamma_*\alpha-\alpha) + dd^c(\gamma_*\phi-\phi) = \pi^*\beta+dd^c(f+\gamma_*\phi-\phi),
\end{equation}
so that $f+\gamma_*\phi-\phi$ is constant on all fibers and hence it is the pullback of a function $u$ on $B$, with $\eta=\beta+dd^c u$. Since $\phi\in L^\infty(X)\cap C^\infty(X^\circ)$ and $f\in C^\infty(X)$, we see that $u\in L^\infty(B)\cap C^\infty(B^\circ)$. We are thus claiming that $u$ is a continuous function on $B$.

Let thus $0\in B\backslash B^\circ$ with singular fiber $X_0=\pi^{-1}(0)$. 	Using the same notation as in the construction of $\phi$ in \autoref{thm:preferred_currents}, given any sufficiently small open set $U\subseteq \left(X_0\setminus x_0\right)$ the elliptic fibration map $\pi$ can be holomorphically trivialized near $U$ as $U\times D\to D$ where $D\subset B$ is an open disc containing $0$, and the map $\mu$ is an isomorphism over $U$.

The key claim is then that on some nonempty open set $U$ as above, in the local trivialization we have that $(f+\gamma_*\phi-\phi)|_{U\times\{b\}}$ converges uniformly on $U$ to $(f+\gamma_*\phi-\phi)|_{U\times\{0\}}$ as $b\to 0$. Since these functions are all fiberwise constants (equal to $\pi^*u$), the desired continuity of $u$ follows from this, and since $f$ is smooth on all of $X$, it suffices to prove the claim for $\gamma_*\phi-\phi$.

To do this,
fix a K\"ahler metric $\ti{\omega}$ on $\wtilde{X_0}$, and take any sufficiently small $U\subseteq \left(X_0\setminus x_0\right)$ with trivialization as above. For $b\in D\backslash\{0\},$ we get smooth functions $\phi_b=\phi|_{U\times\{b\}}$ on $U$ which satisfy $dd^c\phi_b=-\alpha_b$ where $\alpha_b=\alpha|_{U\times\{b\}}$, and which vary smoothly in $b\in D\backslash\{0\}$. Therefore they also satisfy
$$\Delta_{\mu_*\ti{\omega}}\phi_b=-\mathrm{tr}_{\mu_*\ti{\omega}}\alpha_b,\quad \text{on }U,$$
where the right hand side here is a family of smooth functions that varies smoothly in $b\in D$,
and the functions $\phi_b$ have a uniform $L^\infty$ bound by \autoref{thm:preferred_currents}. We can then apply standard Schauder estimates for the Laplacian of $\mu_*\ti{\omega}$, which show that given any $U'\Subset U$ and $k\in\mathbb{N}$ and $0<a<1$ there is a constant $C$ (independent of $b$) such that
$$\|\phi_b\|_{C^{k+2,a}(U')}\leq C(\|\Delta_{\mu_*\ti{\omega}}\phi_b\|_{C^{k,a}(U)}+\|\phi_b\|_{L^\infty(U)})= C(\|\mathrm{tr}_{\mu_*\ti{\omega}}\alpha_b\|_{C^{k,a}(U)}+\|\phi_b\|_{L^\infty(U)})\leq C',$$
where again $C'$ is independent of $b$.
%that up to shrinking $U$ slightly, the functions $\phi_b$ satisfy
%$$\|\phi_b\|_{C^{k,a}(U)}\leq C_{k,a},$$
%for all $k\in\mathbb{N}$ and $0<a<1$, where $C_{k,a}$ does not depend on $b$.
Thus, shrinking $U$ slightly, Ascoli-Arzel\`a then implies that for some sequence $b_i\to 0$ the corresponding functions $\phi_{b_i}$ converge in the smooth topology on $U$ to a smooth function $\hat{\phi}_{U,0}$ on $U$ which satisfies
$$\Delta_{\mu_*\ti{\omega}}\hat{\phi}_{U,0}=-\mathrm{tr}_{\mu_*\ti{\omega}}\alpha|_U,\quad \text{on }U.$$
We can repeat this procedure on a different open set $U'$, taking a subsequence of $b_i$, and obtain smooth convergence on $U\cup U'$. Continuing this way, taking a countable collection of such $U_j$'s which cover $X_0\setminus x_0$ (with discs $0\in D_j\subset B$ with shrinking radii), and passing to a diagonal sequence, we obtain some new sequence $b_i\to 0$ and a bounded smooth function $\hat{\phi}_0$ on $X_0\setminus x_0$ such that $\phi_{b_i}$ converge to $\hat{\phi}_0$ locally smoothly on $X_0\setminus x_0$ (in the obvious sense) and so we have
$$\Delta_{\mu_*\ti{\omega}}\hat{\phi}_{0}=-\mathrm{tr}_{\mu_*\ti{\omega}}\alpha|_{X_0\setminus x_0},\quad \text{on }X_0\setminus x_0.$$
We also have that as $b\to 0$ the part of $X_b$ which is not covered by the union of the $U_j\times\{b\}$'s with $b\in D_j$ has measure that is going to zero.
Thanks to this, and to the uniform $L^\infty$ bound for $\phi_b$ from \autoref{thm:preferred_currents}, the relation $\int_{X_b}\phi_b\omega_b=0$ passes to the limit to
$$\int_{X_0\setminus x_0}\hat{\phi}_0 \omega=0.$$
Pulling back via $\mu$, which is an isomorphism over $X_0\setminus x_0$, we see that
$$\Delta_{\ti{\omega}}\mu^*\hat{\phi}_{0}=-\mathrm{tr}_{\ti{\omega}}\mu^*\alpha,\quad \text{on }\wtilde{X_0}\setminus S,\quad \int_{\wtilde{X_0}\setminus S}\mu^*\hat{\phi}_0 \mu^*\omega=0.$$
On the other hand the function $\ti{\phi}_0$ defined by \eqref{hodge} satisfies
$$\Delta_{\ti{\omega}}\ti{\phi}_{0}=-\mathrm{tr}_{\ti{\omega}}\mu^*\alpha,\quad \text{on }\wtilde{X_0},\quad \int_{\wtilde{X_0}}\ti{\phi}_0 \mu^*\omega=0,$$
so the difference $\mu^*\hat{\phi}_0-\ti{\phi}_0$ is a smooth function on $\wtilde{X_0}\setminus S$ which is bounded, belongs to the kernel of $\Delta_{\ti{\omega}}$ and it also integrates trivially against $\mu^*\omega$. By a classical Riemann-type extension theorem (``bounded harmonic functions extend across point singularities'', see e.g. \cite[Thm 2.3]{ABR}) $\mu^*\hat{\phi}_0-\ti{\phi}_0$ extends smoothly across $S$ and the extension is then forced to be zero by the integral condition. Thus $\hat{\phi}_0=\phi_0$, and in particular if we picked a different subsequence $b_i\to 0$, the limit is forced to be the same by the uniqueness of $\phi_0$, and so the functions $\phi_b$ converge locally smoothly in our trivialization to $\phi_0$ without having to pass to any subsequence.

The same argument applies to $\gamma_*\phi_b:=(\gamma_*\phi)|_{X_b}$, which satisfies
$$dd^c\gamma_*\phi_b=-(\gamma_*\alpha)|_{X_b},\quad \int_{X_b}\gamma_*\phi_b\gamma_*\omega=0,$$
and so we see that  $\gamma_*\phi_b$ converges locally smoothly on $U$ to $\gamma_*\phi_0$, and this completes the proof of the key claim.
\end{proof}

\begin{remark}[More general singular fibers]\label{rmk:assumptions_for_pi_trivializing}
When $\pi$ has general singular fibers, it is clear from the proof of \autoref{thm:preferred_currents} that we can still find $\phi\in C^\infty(X^\circ)$ such that $\alpha+dd^c\phi$ is $\pi$-trivial on $X^\circ$.
However, in general we see no reason for $\phi$ to be bounded on $X^\circ$.

Nevertheless, we do believe that the result in \autoref{prop:continuous_potentials} does hold without the restriction on the singular fibers, which boils down to showing that while $\phi$ need not be bounded on $X^\circ$ anymore, $\gamma_*\phi-\phi$ should extend to a continuous function on $X$.
Indeed the following simple argument (essentially the same as Tate's \cite{Tate}) proves that $\gamma_*\phi-\phi$ is bounded on $X^\circ$ in general.
This does not quite yield \autoref{prop:continuous_potentials}, which asserts the \emph{continuity} of the potentials of $\eta$.

Recall that $f=\pi^*u+\phi-\gamma_*\phi$ where $f$ is smooth on all of $X$. Then for $k\geq 1$ the Birkhoff sum $S_k(\gamma,f):=\sum_{i=0}^{k-1}\gamma^i_*f$ satisfies
\begin{equation*}S_k(\gamma,f)=k\pi^*u+\phi-\gamma^k_*\phi,\end{equation*}
so for every $b\in B^\circ$ and for every $k\geq 1$ we have
\begin{equation*}|u(b)|\leq \frac{1}{k}\sup_{X_b}|S_k(\gamma,f)|+\frac{1}{k}\sup_{X_b}|\phi-\gamma^k_*\phi|\leq \sup_X|f|+\frac{2}{k}\sup_{X_b}|\phi|,\end{equation*}
and letting $k\to\infty$ proves that $\|u\|_{L^\infty(B^\circ)}\leq \sup_X|f|$ as desired.
\end{remark}

\begin{theorem}[Current-valued pairing]
	\label{thm:current_valued_pairing}
	There exists a map
	\[
		\eta_B\colon
		\Aut^\circ_\pi(X)
		\times
		\left(\rightquot{[E]^\perp}{[E]}\right)
		\to
		\cA_{0,0}(B)
	\]
	which is linear in each coordinate and with the following properties.
	\begin{enumerate}
		\item If $\alpha$ is a closed smooth $(1,1)$-form with $[\alpha].[E]=0$, $\phi$ is provided by \autoref{thm:preferred_currents}, and $\gamma\in \Aut^{\circ}_\pi(X)$ then
		\[
			\gamma_{*}(\alpha + dd^c\phi)
			=
			(\alpha + dd^c \phi) + \pi^{*}\eta_B(\gamma,[\alpha])
		\]
		\item The map is compatible with integration and cup-product, i.e.
		\[
			\int_B \eta_B(\gamma,[\alpha]) = -[\xi(\gamma)].[\alpha]
		\]
		\item For an automorphism $\gamma\in \Aut^\circ_\pi(X)$ we have
		\[
			\eta_B(\gamma,\xi(\gamma))\geq 0 \quad \text{ as a current on }B.
		\]

		\item The map is symmetric, i.e. if $\xi_i=\xi(\gamma_i)$
		\[
			\eta_B(\gamma_1,\xi_2) = \eta_B(\gamma_2,\xi_1)
		\]
	\end{enumerate}
\end{theorem}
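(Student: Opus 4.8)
\emph{Overall strategy.} The plan is to build the map $\eta_B$ out of the current $\eta(\gamma,\alpha)$ produced by \autoref{prop:continuous_potentials}, to show it depends only on the class of $\alpha$ in $[E]^\perp/[E]$ (which is precisely property (1)), then to check (2) by a cohomology computation, and finally to obtain (3) and (4) together from an explicit pairing formula.

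\emph{Definition, well-definedness, bilinearity (property (1)).} Given $\gamma\in\Aut^\circ_\pi(X)$ and $\bar\alpha\in[E]^\perp/[E]$, lift $\bar\alpha$ to a class in $[E]^\perp\subset\NS(X)$, pick a smooth closed representative $\alpha$ (automatically $[\alpha].[E]=0$), let $\phi$ be the bounded potential of \autoref{thm:preferred_currents}, and set $\eta_B(\gamma,\bar\alpha):=\eta(\gamma,\alpha)$, the current of \autoref{prop:continuous_potentials}; by construction it has continuous potentials and realizes the identity in (1). For independence of the choices: if $\alpha'-\alpha=dd^cg$ one may take $\phi'=\phi-g$, so $\alpha'+dd^c\phi'=\alpha+dd^c\phi$ and $\eta(\gamma,\alpha')=\eta(\gamma,\alpha)$; and if $\beta_E$ is a smooth closed representative of $[E]$ then $\int_{X_b}\beta_E=0$, so by \autoref{thm:preferred_currents} and \autoref{rmk:pulling_back_from_b}(2) the $\pi$-trivial current $\beta_E+dd^c\psi$ equals $\pi^*\nu$ for some $\nu\in\cA_{0,0}(B)$, whence $\pi^*\eta(\gamma,\beta_E)=\gamma_*(\pi^*\nu)-\pi^*\nu=0$ because $\pi\circ\gamma=\pi$. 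Thus $\eta(\gamma,\alpha)$ depends only on $\bar\alpha$. It is additive in $\bar\alpha$ since the defining equation of \autoref{prop:continuous_potentials} is linear and $\phi_1+\phi_2$ is a potential for $\alpha_1+\alpha_2$; and additive in $\gamma$ since, with $T=\alpha+dd^c\phi$, one has $\gamma_{1*}\gamma_{2*}T=\gamma_{1*}(T+\pi^*\eta(\gamma_2,\alpha))=T+\pi^*\eta(\gamma_1,\alpha)+\pi^*\eta(\gamma_2,\alpha)$, again using $\gamma_{1*}\pi^*=\pi^*$. Passing to cohomology in $\pi^*\eta(\gamma,\alpha)=\gamma_*T-T$ and using $[\alpha].[E]=0$ in \autoref{eqn:cohomology_action_explicit} gives $\pi^*[\eta(\gamma,\alpha)]=\gamma_*[\alpha]-[\alpha]=-\big([\alpha].\xi(\gamma)\big)[E]$; since $[E]=\pi^*[\mathrm{pt}]$ and $\pi^*$ is injective on $H^2$, integrating over $B$ yields $\int_B\eta_B(\gamma,[\alpha])=-[\xi(\gamma)].[\alpha]$, which is (2).

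\emph{The pairing formula, and (3)--(4).} Let $\omega$ be the fixed K\"ahler form and $m=\int_{X_b}\omega$. For $\gamma_i\in\Aut^\circ_\pi(X)$ with $\xi_i=\xi(\gamma_i)$, use as a representative of $\xi_i$ the smooth closed form $\alpha_i:=-\tfrac1m(\gamma_i^*\omega-\omega)$; by \autoref{eqn:cohomology_action_explicit} one has $[\alpha_i]\equiv\xi_i\bmod[E]$. Let $\phi_i$ be its bounded potential and $T_i:=\alpha_i+dd^c\phi_i$ the associated $\pi$-trivial current. The claim is that for every smooth $\theta$ supported in $B^\circ$,
\[
\int_B\eta_B(\gamma_j,\xi_i)\,\theta=-\int_X T_i\wedge T_j\wedge\pi^*\theta .
\]
Indeed, pairing $\pi^*\eta(\gamma_j,\alpha_i)=\gamma_{j*}T_i-T_i$ against the form $\tfrac1m\pi^*\theta\wedge\omega$ (whose fiberwise integral is $\theta$) gives $\int_B\eta_B(\gamma_j,\xi_i)\theta=\tfrac1m\int_X(\gamma_{j*}T_i-T_i)\wedge\pi^*\theta\wedge\omega$; moving $\gamma_{j*}$ onto the other factor and using $\gamma_j^*\pi^*\theta=\pi^*\theta$ turns this into $\tfrac1m\int_X T_i\wedge\pi^*\theta\wedge(\gamma_j^*\omega-\omega)=-\int_X T_i\wedge\pi^*\theta\wedge\alpha_j$. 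Writing $\alpha_j=T_j-dd^c\phi_j$ and integrating by parts, the term $\int_X T_i\wedge\pi^*\theta\wedge dd^c\phi_j=\int_X\phi_j\,T_i\wedge\pi^*(dd^c\theta)$ vanishes, since $T_i$ is $\pi$-trivial and thus has no component in the fiber direction, while $\pi^*(dd^c\theta)$ has only a component in the base direction, so their wedge is zero. All these integrals take place over $X^\circ$ (as $\mathrm{supp}\,\theta\subset B^\circ$ and $\gamma_j$ preserves $X^\circ$), where $T_i$ is smooth by \autoref{thm:preferred_currents}, so no Bedford--Taylor theory is needed. The right-hand side is symmetric under $i\leftrightarrow j$, giving (4) on $B^\circ$. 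For $i=j$, a pointwise linear-algebra computation shows that any real $(1,1)$-form vanishing on the fibers satisfies $T\wedge T\le0$ (consistently with its cohomological self-intersection lying in the negative-definite space $[E]^\perp/[E]$), so $-\int_X T_i\wedge T_i\wedge\pi^*\theta\ge0$ for $\theta\ge0$, giving (3) on $B^\circ$. Finally, each $\eta_B(\gamma,\bar\alpha)$ has a continuous potential and $B\setminus B^\circ$ is a finite, hence polar, set, so the identity of (4) and the inequality of (3) propagate from the dense open $B^\circ$ to all of $B$ by removability of polar singularities for harmonic and subharmonic functions.

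\emph{Main difficulty.} The crux is property (3): the representative $\alpha$ of $\xi(\gamma)$ has negative self-intersection and $T=\alpha+dd^c\phi$ is not quasi-positive, yet $\gamma_*T-T$ is a positive current, and the only mechanism I see producing this is the pairing formula above, whose positivity rests entirely on the sign of the self-wedge of a $\pi$-trivial form. The attendant technical point is that $\phi$ is merely bounded (not quasi-psh) near the singular fibers, which is why the computation is run over $X^\circ$ and then transported to $B$ via the continuity of the potential of $\eta_B$ coming from \autoref{prop:continuous_potentials}.
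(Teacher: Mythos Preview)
Your argument is correct. The construction of $\eta_B$, its well-definedness on $[E]^\perp/[E]$, bilinearity, and the cohomological identity (2) are essentially as in the paper (you are slightly more explicit about independence modulo $[E]$, which the paper leaves implicit).

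For (3) and (4), however, you take a genuinely different route. The paper proves both by passing to limits of large iterates: it first establishes (in \autoref{prop:large_iterates_of_one_automorphism} and \autoref{prop:large_iterates_of_several_automorphisms}) an exact formula for $\gamma_k^{n_k}\cdots\gamma_1^{n_1}\,\omega$ with $\omega$ K\"ahler, then obtains symmetry by comparing the $n^{-2}$-limits of $(\gamma_1^n\gamma_2^n)_*\omega$ and $(\gamma_2^n\gamma_1^n)_*\omega$, and positivity from the weak limit $\tfrac{1}{n^2}\gamma_*^n\omega\to\tfrac12\eta(\gamma,\gamma)$. Your approach instead produces the closed pairing formula
\[
\int_B\eta_B(\gamma_j,\xi_i)\,\theta=-\int_{X^\circ}T_i\wedge T_j\,\pi^*\theta,
\]
from which symmetry is immediate and positivity reduces to the pointwise linear-algebra fact that a real $(1,1)$-form with vanishing fiber component has $T\wedge T\le 0$; the passage from $B^\circ$ to $B$ via continuity of potentials and removability across finite sets is clean. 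Your method is more direct and yields an explicit local expression for $\eta_B$ on $B^\circ$ (useful in its own right, and in the spirit of the Betti/semi-flat picture), while the paper's iterate argument is more dynamical and feeds naturally into the later estimates (\autoref{sssec:summary_elliptic_fibrations}) that are reused in the construction of boundary currents.
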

\begin{proof}
	By \autoref{prop:continuous_potentials}, there exists a current $\eta_B(\gamma,\alpha)$ such that
	\[
		\gamma_*(\alpha + dd^c\phi) = (\alpha + dd^c \phi) + \pi^*\eta_B(\gamma,\alpha)
	\]
	If we select a different representative in the same cohomology class, i.e. $\alpha'=\alpha + dd^c\psi$, then the current $\alpha'+dd^c\phi'$ provided by \autoref{thm:preferred_currents} differs from $\alpha+dd^c\phi$ by $dd^c$ of a function pulled back from $B$.
	Thus the expression $\gamma_*(\alpha+dd^c\phi)-(\alpha+dd^c\phi)$ depends only on the cohomology class $[\alpha]$.
	
	The definition of $\eta_B(\gamma,[\alpha])$ immediately yields the relations
	\begin{align*}
		\eta_B(\gamma,[\alpha_1]+[\alpha_2])
		& = \eta_B(\gamma,[\alpha_1]) + \eta_B(\gamma,[\alpha_2])\\
		\eta_B(\gamma_1\circ \gamma_2,[\alpha])
		& = \eta_B(\gamma_1,[\alpha]) + \eta_B(\gamma_2,[\alpha])
	\end{align*}
	The identity $\int_B \eta_B(\gamma,[\alpha]) = [\xi(\gamma)].[\alpha]$ follows from \autoref{eqn:cohomology_action_explicit} and the fact that $[\alpha].[E]=0$ by assumption.

	Symmetry is established in \autoref{cor:symmetry_of_pairing} and positivity is established in \autoref{cor:positivity_of_pairing}.
	The proofs of both of these facts involve taking a large power of an element and passing to the limit, and do not depend on each other.
	They do require understanding the iterates of forms that integrate nontrivially on fibers.
\end{proof}

\noindent We now study forms that integrate nontrivially on the fiber.

\begin{proposition}[Large iterates of one automorphism]
	\label{prop:large_iterates_of_one_automorphism}
	Let $\omega$ be a closed smooth $(1,1)$-form with $\int_{X_b}\omega=1$ and let $\gamma\in \Aut_\pi^{\circ}(X)$ be an automorphism.
    Define the closed $(1,1)$-form $\alpha$ by
	\[
		\gamma_*\omega = \omega + \alpha,
	\]
	so it satisfies $[\alpha].[E]=0$ and let $\phi$ be the function provided by \autoref{thm:preferred_currents} applied to $\alpha$.
	Then for $n\geq 0$ we have that
	\[
		\gamma_*^n \omega
		=
		\omega + n\cdot \alpha
		+ dd^c \left[n\phi - S_n(\gamma,\phi)\right]
		+ \tfrac{n(n-1)}{2}\pi^*\eta_{\gamma}
	\]
	where $\eta_{\gamma}:=\eta_B(\gamma,\xi(\gamma))$ is the current provided by \autoref{thm:current_valued_pairing}, and $S_{k}(\gamma,\phi)=\sum_{i=0}^{k-1} \gamma_*^i\phi$ is a Birkhoff sum.
\end{proposition}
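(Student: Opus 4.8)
The plan is to prove this by induction on $n$, with \autoref{thm:current_valued_pairing}(1) supplying the one-step relation. The base cases are immediate: for $n=0$ both sides equal $\omega$, and for $n=1$ the right-hand side is $\omega+\alpha+dd^c(\phi-S_1(\gamma,\phi))+0=\omega+\alpha=\gamma_*\omega$ since $S_1(\gamma,\phi)=\phi$ and the coefficient $\tfrac{n(n-1)}{2}$ vanishes.

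Before running the induction I would record two preliminary facts. First, since $\gamma\in\Aut_\pi^\circ(X)$ preserves every fiber we have $\pi\circ\gamma=\pi$, so $\gamma_*$ commutes with $dd^c$ and fixes every current pulled back from $B$; in particular $\gamma_*\pi^*\eta_\gamma=\pi^*\eta_\gamma$. Second, because $\int_{X_b}\omega=1$ we have $[\omega].[E]=1$, so \autoref{eqn:cohomology_action_explicit} applied to $v=[\omega]$ gives
\[
	[\alpha]=\gamma_*[\omega]-[\omega]=\xi(\gamma)-\big([\omega].\xi(\gamma)\big)[E]-\tfrac{\xi(\gamma).\xi(\gamma)}{2}[E].
\]
Hence the image of $[\alpha]$ in $[E]^\perp/[E]$ is exactly $\xi(\gamma)$, and since the pairing of \autoref{thm:current_valued_pairing} depends on its second argument only through its class in $[E]^\perp/[E]$, we get $\eta_B(\gamma,[\alpha])=\eta_B(\gamma,\xi(\gamma))=\eta_\gamma$. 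Thus \autoref{thm:current_valued_pairing}(1) reads $\gamma_*(\alpha+dd^c\phi)=(\alpha+dd^c\phi)+\pi^*\eta_\gamma$, or equivalently
\[
	\gamma_*\alpha=\alpha+dd^c(\phi-\gamma_*\phi)+\pi^*\eta_\gamma.
\]

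For the inductive step I would assume the formula for $n$, apply $\gamma_*$ to both sides, and substitute $\gamma_*\omega=\omega+\alpha$, the displayed identity for $\gamma_*\alpha$, $\gamma_*\pi^*\eta_\gamma=\pi^*\eta_\gamma$, and the Birkhoff cocycle identity $\gamma_*S_n(\gamma,\phi)=\sum_{i=1}^{n}\gamma_*^i\phi=S_{n+1}(\gamma,\phi)-\phi$. Collecting terms: the $\omega$-term is unchanged; the $\alpha$-contributions combine to $\alpha+n\alpha=(n+1)\alpha$; the $dd^c$-contributions combine to $dd^c\big[n(\phi-\gamma_*\phi)+n\gamma_*\phi-(S_{n+1}(\gamma,\phi)-\phi)\big]=dd^c\big[(n+1)\phi-S_{n+1}(\gamma,\phi)\big]$; and the $\pi^*\eta_\gamma$-contributions combine to $\big(n+\tfrac{n(n-1)}{2}\big)\pi^*\eta_\gamma=\tfrac{(n+1)n}{2}\pi^*\eta_\gamma$. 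This is precisely the asserted identity with $n$ replaced by $n+1$, which closes the induction.

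I expect no genuine obstacle: the argument is bookkeeping once \autoref{thm:current_valued_pairing} is available. The two points needing a moment's care are the identification $\eta_B(\gamma,[\alpha])=\eta_\gamma$ — which rests on \autoref{eqn:cohomology_action_explicit} showing that $\alpha$ represents exactly $\xi(\gamma)$ in $[E]^\perp/[E]$, together with the well-definedness of the pairing on that quotient — and keeping track of the signs and index shifts in the Birkhoff sum $S_n(\gamma,\phi)$.
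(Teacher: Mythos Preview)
Your proof is correct and is essentially the same argument as the paper's: both hinge on the identification of $[\alpha]$ with $\xi(\gamma)$ in $[E]^\perp/[E]$ and on the one-step relation $\gamma_*(\alpha+dd^c\phi)=(\alpha+dd^c\phi)+\pi^*\eta_\gamma$ from \autoref{thm:current_valued_pairing}(i). The only cosmetic difference is that the paper organizes the computation as the telescoping sum $\gamma_*^n\omega=\omega+\sum_{k=0}^{n-1}\gamma_*^k\alpha$ together with $\gamma_*^k(\alpha+dd^c\phi)=(\alpha+dd^c\phi)+k\pi^*\eta_\gamma$, whereas you unfold the same bookkeeping as an induction on $n$.
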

\begin{proof}
	From the formula in \autoref{eqn:cohomology_action_explicit}, the class of $[\alpha]$ in $[E]^\perp/[E]$ is $\xi$, where $\xi=\xi(\gamma)$ is the element corresponding to the automorphism.
	Taking $\phi$ as provided by \autoref{thm:preferred_currents} gives
	\[
		\gamma_*(\alpha + dd^c\phi)
		=
		(\alpha+dd^c\phi) + \pi^*\eta_B(\gamma,\xi)
	\]
	where $\eta_B(\gamma,\xi)$ is the current on $B$ provided by \autoref{thm:current_valued_pairing}.
	Let $\eta_\gamma:=\eta_B(\gamma,\xi)$ as per the statement of the claim and note that more generally we have
	\begin{align*}
		\gamma_*^k(\alpha + dd^c\phi) & = (\alpha +dd^c\phi)
		+ \pi^*\eta_B(\gamma^k,\xi)\\
		& = (\alpha+dd^c\phi) + k\pi^*\eta_\gamma
	\end{align*}
	We then have by \autoref{thm:current_valued_pairing}
	\begin{align*}
		\gamma_*^n \omega & =
		\gamma_*^{n-1}(\gamma_*\omega) =
		\omega + \alpha + \gamma_*\alpha + \cdots + \gamma^{n-1}_*\alpha\\
		& = \omega + n\alpha + dd^c[n\phi - S_{n}(\gamma,\phi)]\\
		& + (1+2+\cdots + (n-1))\pi^*\eta_\gamma
	\end{align*}
	as required.	
\end{proof}

\begin{remark}[On notation for pairings]
	\label{rmk:on_notation_for_pairings}
From now on, to ease the notational burden and emphasize certain symmetries, we will use the notation
\[
	\eta(\gamma,\gamma'):=\pi^*\eta_B(\gamma,\xi(\gamma'))
\]
viewed as a $(1,1)$-current on $X$.
To ease notation, below we write pushed-forward forms or functions as $\gamma \omega$ instead of $\gamma_*\omega$.

Later on, in \autoref{sec:boundary_currents_direct_construction}, when the dependence of the pairing on the elliptic fibration will be relevant, we will write $\eta_{[E]}(\gamma,\gamma')$ for the corresponding current on $X$.
\end{remark}

\begin{proposition}[Large iterates of several automorphisms]
	\label{prop:large_iterates_of_several_automorphisms}
	Let $\omega$ be a closed smooth $(1,1)$-form with $\int_{X_b}\omega=1$ and let $\gamma_1,\ldots, \gamma_k\in \Aut_\pi^{\circ}(X)$ be automorphism.
    Define closed $(1,1)$-forms $\alpha_i$ with $[\alpha_i].[E]=0$ by
	\[
		\gamma_i\omega = \omega + \alpha_i,
	\]
	and let $\phi_i$ be the functions provided by \autoref{thm:preferred_currents} applied to $\alpha_i$.
	Then for $n_1,\ldots n_k \geq 0$ we have that
	\begin{align*}
		\gamma_k^{n_k}\cdots \gamma_1^{n_1} \omega
		&
		=
		\omega + n_k \alpha_k + \cdots + n_1 \alpha_1 + \\
		 & + dd^c \left[
		 \sum_{i=1}^k n_i \phi_i - S_{n_i}\left(\gamma_i, \gamma_k^{n_k}\cdots \gamma_{i+1}^{n_{i+1}}\phi_i\right)
		 \right]\\
		 & + \left[
		 \sum_{i=1}^k \frac{n_i(n_i-1)}{2}\eta(\gamma_i,\gamma_i)
		 +
		 \sum_{1\leq i <j \leq k}n_i n_j \eta(\gamma_i,\gamma_j)
		 \right]
	\end{align*}
	where as before $S_{n}(\gamma,\phi):=\phi + \cdots + \gamma^{n-1}\phi$ denotes a Birkhoff sum.	
\end{proposition}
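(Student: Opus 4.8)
The plan is to induct on $k$, the base case $k=1$ being exactly \autoref{prop:large_iterates_of_one_automorphism}. For the inductive step I would peel off the \emph{innermost} factor: write $g:=\gamma_k^{n_k}\cdots\gamma_1^{n_1}=g'\circ\gamma_1^{n_1}$ with $g':=\gamma_k^{n_k}\cdots\gamma_2^{n_2}\in\Aut_\pi^\circ(X)$, substitute the one-automorphism formula of \autoref{prop:large_iterates_of_one_automorphism} for $\gamma_1^{n_1}\omega$, push the resulting identity forward by $g'$, and add to it the inductive hypothesis applied to the $(k-1)$ automorphisms $\gamma_2,\dots,\gamma_k$ (whose associated data $\alpha_i,\phi_i$ are unchanged, and for which, when $i\geq 2$, the automorphisms of index $>i$ are the same in both tuples, so the corresponding $dd^c$-summands and pairing terms are furnished verbatim).

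Pushing forward by $g'$ is linear and commutes with $dd^c$, and since every element of $\Aut_\pi^\circ(X)$ preserves each fiber of $\pi$ it fixes any current pulled back from $B$; thus the diagonal term $\tfrac{n_1(n_1-1)}{2}\eta(\gamma_1,\gamma_1)$ and all the terms produced by the inductive hypothesis survive the pushforward untouched. The only genuinely new ingredient is $g'\alpha_1$: applying \autoref{thm:current_valued_pairing}(1) with $\gamma=g'$ and the cocycle and bilinearity relations for $\eta_B$ (together with $[\alpha_1]\equiv\xi(\gamma_1)$ modulo $[E]$), one gets $g'(\alpha_1+dd^c\phi_1)=(\alpha_1+dd^c\phi_1)+\sum_{i\geq 2}n_i\,\eta(\gamma_i,\gamma_1)$, hence $g'\alpha_1=\alpha_1+dd^c(\phi_1-g'\phi_1)+\sum_{i\geq 2}n_i\,\eta(\gamma_i,\gamma_1)$. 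Because $\Aut_\pi^\circ(X)$ is abelian (it injects into $[E]^\perp/[E]$ by \autoref{prop:formulas_for_fibration_automorphism}), $g'$ commutes with $\gamma_1$, so $g'S_{n_1}(\gamma_1,\phi_1)=S_{n_1}(\gamma_1,g'\phi_1)$. The key cancellation is then that, inside the $dd^c$-bracket obtained by pushing the one-automorphism formula through $g'$, the summand $-n_1 g'\phi_1$ coming from $g'\alpha_1$ cancels the summand $+n_1 g'\phi_1$ coming from $g'(n_1\phi_1)$, leaving exactly $n_1\phi_1-S_{n_1}(\gamma_1,g'\phi_1)=n_1\phi_1-S_{n_1}\big(\gamma_1,\gamma_k^{n_k}\cdots\gamma_2^{n_2}\phi_1\big)$, which is precisely the $i=1$ summand of the claimed $dd^c$-bracket.

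It then remains to collect the pairing terms: the inductive hypothesis supplies $\sum_{i\geq 2}\tfrac{n_i(n_i-1)}{2}\eta(\gamma_i,\gamma_i)$ and $\sum_{2\leq i<j\leq k}n_in_j\,\eta(\gamma_i,\gamma_j)$, the one-automorphism formula supplies $\tfrac{n_1(n_1-1)}{2}\eta(\gamma_1,\gamma_1)$, and $g'\alpha_1$ supplies the new cross terms $\sum_{i\geq 2}n_1n_i\,\eta(\gamma_i,\gamma_1)$; by the symmetry $\eta(\gamma_i,\gamma_1)=\eta(\gamma_1,\gamma_i)$ from \autoref{thm:current_valued_pairing}(4) these are exactly the missing $(1,i)$ cross terms, and everything combines into the displayed formula. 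I expect the only subtle point to be the bookkeeping — a priori the order in which one composes the $\gamma_i^{n_i}$ seems to affect which arguments enter the Birkhoff sums and which slot each $\gamma_i$ occupies in the pairings — but peeling the innermost factor makes the Birkhoff arguments come out in the stated order automatically, and the abelianness of $\Aut_\pi^\circ(X)$ and the symmetry of $\eta_B$ absorb the residual ambiguity; beyond this, the computation is a routine expansion.
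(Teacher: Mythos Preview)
Your approach is correct and essentially the same as the paper's: both proceed by induction on $k$ and reduce to the one-automorphism formula plus the defining relation for $\eta_B$. The only cosmetic difference is that you peel off the \emph{innermost} factor $\gamma_1^{n_1}$, whereas the paper peels off the \emph{outermost} factor $\gamma_k^{n_k}$; the bookkeeping is symmetric and both produce the same expression.

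There is, however, one genuine logical issue. At the end you invoke the symmetry $\eta(\gamma_i,\gamma_1)=\eta(\gamma_1,\gamma_i)$ from \autoref{thm:current_valued_pairing}(iv) to rewrite the new cross terms in the order stated. In the paper's architecture, that symmetry is not yet available: it is established in \autoref{cor:symmetry_of_pairing} \emph{as a consequence} of this proposition, by applying the formula to both $\gamma_2^{n}\gamma_1^{n}\omega$ and $\gamma_1^{n}\gamma_2^{n}\omega$ and comparing. So citing symmetry here is circular. In fact, a direct calculation by either route (yours or the paper's) yields the cross terms in the form $\sum_{i<j}n_in_j\,\eta(\gamma_j,\gamma_i)$, with the acting automorphism in the first slot; the paper is admittedly a bit loose about this in the displayed statement. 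The easy fix is simply to drop the appeal to symmetry and record the formula with the ordering your computation actually produces; symmetry is then derived afterwards, exactly as in \autoref{cor:symmetry_of_pairing}.
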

\begin{proof}
	The case $k=1$ is \autoref{prop:large_iterates_of_one_automorphism}.
	The general formula is verified (by induction) on $k$ by direct calculation.
	Let us apply $\gamma_{k}^{n_{k}}$ to the formula assumed valid for $k-1$.
	Observe that the terms pulled back from $B$ are not changed, and we have, combining the previous calculations in \autoref{prop:large_iterates_of_one_automorphism} and \autoref{thm:current_valued_pairing}:
	\begin{align*}
		\gamma_k^{n_k}\omega & =
		\omega + n_k \alpha_k + dd^c\left[n_k\phi_k - S_{n_k}(\gamma_k,\phi_k)	\right] + \frac{n_k(n_k-1)}{2}\eta(\gamma_k,\gamma_k)\\
		\gamma_k^{n_k}\left(\sum_{i=1}^{k-1}n_i\alpha_i\right) & =
		\sum_{i=1}^{k-1}
		n_i\Big[\alpha_i +
		dd^c
		(\phi_i - \gamma_k^{n_k}\phi_i)
		+ \eta\left(\gamma_k^{n_k},[\alpha_i]\right)
		\Big]
	\end{align*}
	Recall that $[\alpha_i]=\xi(\gamma_i)$ and so the last term above can be also expressed as $n_k\cdot \eta(\gamma_k,\gamma_i)$.
	Finally we have
	\begin{align*}
		\gamma_k^{n_k}\sum_{i=1}^{k-1} n_i \phi_i - S_{n_i}\left(\gamma_i, \gamma_{k-1}^{n_{k-1}}\cdots \gamma_{i+1}^{n_{i+1}}\phi_i\right)
		% =\\
		= \sum_{i=1}^{k-1}
		n_i \gamma_k^{n_k}\phi_i -
		S_{n_i}\left(\gamma_i,\gamma_k^{n_k}\cdots \gamma_{i+1}^{n_{i+1}}\phi_i\right)
	\end{align*}
	where we used that the parabolic automorphisms $\gamma_i$ commute.
	Adding up the contributions yields the result, noting that the terms $n_i \gamma_k^{n_i}\phi_i$ cancel out.
\end{proof}

\begin{corollary}[Symmetry of pairing]
	\label{cor:symmetry_of_pairing}
	The pairing in \autoref{thm:current_valued_pairing} is symmetric, i.e. for two automorphisms $\gamma_1,\gamma_2\in \Aut_\pi^{\circ}$ with corresponding vectors $\xi_i=\xi(\gamma_i)$ we have
	\[
		\eta_B(\gamma_1,\xi_2) = \eta_B(\gamma_2,\xi_1) \text{ or as we shall also write }\eta(\gamma_1,\gamma_2) = \eta(\gamma_2,\gamma_1).
	\]
\end{corollary}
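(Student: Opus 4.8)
The plan is to exploit the fact that parabolic automorphisms of a fixed genus one fibration commute --- by \autoref{prop:formulas_for_fibration_automorphism} the group $\Aut_\pi^\circ(X)$ injects into the abelian group $[E]^\perp/[E]$ --- together with the expansion of \autoref{prop:large_iterates_of_several_automorphisms} for iterated pushforwards, applied with the two opposite orderings of $\gamma_1$ and $\gamma_2$.

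Fix a closed smooth $(1,1)$-form $\omega$ with $\int_{X_b}\omega=1$, write $\gamma_i\omega=\omega+\alpha_i$, and let $\phi_i\in L^\infty(X)\cap C^\infty(X^\circ)$ be the potentials furnished by \autoref{thm:preferred_currents}. Since $\gamma_1$ and $\gamma_2$ commute, $\gamma_2^{n}\gamma_1^{n}\omega=\gamma_1^{n}\gamma_2^{n}\omega$ for every $n\geq 1$. Expand the left-hand side using \autoref{prop:large_iterates_of_several_automorphisms} with $k=2$ in the order $(\gamma_1,\gamma_2)$ and $n_1=n_2=n$, and the right-hand side in the order $(\gamma_2,\gamma_1)$ with $n_1=n_2=n$, then subtract the two identities. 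The smooth cohomological terms $\omega+n\alpha_1+n\alpha_2$ agree, and the ``diagonal'' contributions $\tfrac{n(n-1)}{2}\eta(\gamma_1,\gamma_1)$ and $\tfrac{n(n-1)}{2}\eta(\gamma_2,\gamma_2)$ agree, so one is left with
\[
	n^2\big(\eta(\gamma_1,\gamma_2)-\eta(\gamma_2,\gamma_1)\big)=dd^c Q_n,
\]
where $Q_n\in L^\infty(X)$ is the difference of the two Birkhoff-sum potentials occurring in \autoref{prop:large_iterates_of_several_automorphisms}; the proof of that proposition already records that the leftover $n_i\gamma_k^{n_i}\phi_i$ terms cancel, so no other terms survive.

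It then remains to estimate the size of $Q_n$. Each summand of $Q_n$ is, up to sign, either $n\phi_i$ or a Birkhoff sum $S_n(\gamma_i,\gamma_j^{n}\phi_i)$ of one of the uniformly bounded functions $\phi_i$; since automorphisms act on functions by pullback, hence isometrically on $L^\infty(X)$, each such Birkhoff sum has supremum norm at most $n\max_i\|\phi_i\|_{L^\infty(X)}$, so that $\|Q_n\|_{L^\infty(X)}=O(n)$. Therefore
\[
	\eta(\gamma_1,\gamma_2)-\eta(\gamma_2,\gamma_1)=dd^c\!\left(\tfrac{1}{n^2}Q_n\right),
\]
and since $\tfrac{1}{n^2}Q_n\to 0$ uniformly on $X$ as $n\to\infty$, it converges to $0$ in $L^1(X)$ and hence the right-hand side converges to $0$ in the sense of currents. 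As the left-hand side is independent of $n$, it vanishes. Finally $\eta(\gamma_i,\gamma_j)=\pi^*\eta_B(\gamma_i,\xi(\gamma_j))$, and $\pi^*$ is injective on currents (over the submersive locus $B^\circ$ this is immediate in local coordinates, and the currents in $\cA_{0,0}(B)$ have continuous potentials so carry no mass on the finite set $B\setminus B^\circ$), whence $\eta_B(\gamma_1,\xi_2)=\eta_B(\gamma_2,\xi_1)$.

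The one point that requires care is the bookkeeping in the subtraction step --- checking that everything except the mixed $\eta$'s really cancels between the two orderings --- but this is entirely governed by \autoref{prop:large_iterates_of_several_automorphisms}, so no essential difficulty arises. Everything else is a soft limiting argument relying only on the uniform bound $\|\phi_i\|_{L^\infty(X)}<\infty$ from \autoref{thm:preferred_currents}.
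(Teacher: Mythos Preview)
Your proof is correct and follows essentially the same route as the paper: use the commutativity $\gamma_1^n\gamma_2^n=\gamma_2^n\gamma_1^n$, expand both sides via \autoref{prop:large_iterates_of_several_automorphisms}, and extract the symmetry by dividing by $n^2$ and letting $n\to\infty$. The paper phrases the last step as computing $\lim_{n\to\infty}\tfrac{1}{n^2}\gamma^n_*\omega$ in two ways, whereas you subtract first and bound the leftover $dd^cQ_n$ explicitly in $L^\infty$; these are the same argument. Your added sentence on injectivity of $\pi^*$ to pass from $\eta(\gamma_1,\gamma_2)=\eta(\gamma_2,\gamma_1)$ on $X$ to $\eta_B(\gamma_1,\xi_2)=\eta_B(\gamma_2,\xi_1)$ on $B$ is a welcome detail the paper leaves implicit; the cleanest justification is that the difference has continuous potential and trivial cohomology class on $B$, so equals $dd^cw$ with $w\in C^0(B)$, and $dd^c(\pi^*w)=0$ forces $w$ constant.
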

\begin{proof}
	Since the two automorphisms commute we have that $\gamma:=\gamma_1\gamma_2 = \gamma_2\gamma_1$.
	Select $\omega$ as in \autoref{prop:large_iterates_of_several_automorphisms} and apply the statement to $\gamma_1^n\gamma_2^n$ and $\gamma_2^n\gamma_1^n$.
	Dividing by $n^2$ and sending $n\to+\infty$ gives
	\begin{align*}
		\eta(\gamma_1,\gamma_1) + \frac{1}{2}\eta(\gamma_1,\gamma_2)
		+ \eta(\gamma_2,\gamma_2)
		=
		\lim_{n\to +\infty} \frac{\gamma_*^n \omega}{n^2}
		=
		\eta(\gamma_2,\gamma_2)+
		\frac{1}{2}\eta(\gamma_2,\gamma_1)
		+ \eta(\gamma_1,\gamma_1)
	\end{align*}
	thus yielding the symmetry.
\end{proof}

\noindent Using the symmetry property just established, we can write the conclusion of \autoref{prop:large_iterates_of_several_automorphisms} in a more intrinsic form.
Specifically, set $\gamma:=\gamma_k^{n_k}\cdots \gamma_1^{n_1}$ and then
\begin{align}
	\label{eqn:large_parabolic_automorphism}
	\begin{split}
	\gamma_* \omega
		&
		=
	\omega + n_k \left(\alpha_k - \frac{1}{2}\eta(\gamma_k,\gamma_k)\right) + \cdots + n_1\left( \alpha_1 - \frac{1}{2}\eta(\gamma_k,\gamma_k)\right) \\
	 & + dd^c \left[
	 \sum_{i=1}^k n_i \phi_i - S_{n_i}\left(\gamma_i, \gamma_k^{n_k}\cdots \gamma_{i+1}^{n_{i+1}}\phi_i\right)
	 \right]\\
	 & + \frac{1}{2}\eta(\gamma,\gamma)
	 \end{split}
\end{align}
In this form, the dominant (quadratic) term of order $-[\xi(\gamma)].[\xi(\gamma)]$ is the last one, whereas the intermediate ones are linear in the size of $\xi(\gamma)$ or bounded.

\begin{corollary}[Positivity of pairing]
	\label{cor:positivity_of_pairing}
	For any $\gamma\in \Aut^\circ_{\pi}(X)$ and any \Kahler form $\omega$ on $X$ with $\int_{X_b}\omega=1$, we have that
$$\lim_{n\to+\infty} \frac{\gamma^n_*\omega}{n^2}=\frac{1}{2}\eta(\gamma,\gamma),$$
in the weak topology. In particular we have
	\[
		\eta(\gamma,\gamma)\geq 0.
	\]
\end{corollary}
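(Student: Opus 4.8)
The plan is to substitute the explicit iteration formula of \autoref{prop:large_iterates_of_one_automorphism} and read off which terms survive after dividing by $n^2$. Write $\gamma_*\omega = \omega + \alpha$ with $[\alpha].[E]=0$, let $\phi\in L^\infty(X)\cap C^\infty(X^\circ)$ be the potential produced by \autoref{thm:preferred_currents} for $\alpha$, and abbreviate $\eta(\gamma,\gamma):=\pi^*\eta_B(\gamma,\xi(\gamma))$ as in \autoref{rmk:on_notation_for_pairings}. Then for $n\geq 0$ that proposition gives
\[
	\frac{\gamma^n_*\omega}{n^2}
	= \frac{\omega}{n^2} + \frac{\alpha}{n}
	+ \frac{1}{n^2}dd^c\!\left[n\phi - S_n(\gamma,\phi)\right]
	+ \frac{n(n-1)}{2n^2}\,\eta(\gamma,\gamma).
\]
The forms $\omega$ and $\alpha$ are fixed and smooth, so $\omega/n^2\to 0$ and $\alpha/n\to 0$ weakly; the scalar $\tfrac{n(n-1)}{2n^2}\to\tfrac12$, so the last term converges weakly to $\tfrac12\eta(\gamma,\gamma)$. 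Thus it remains to show the potential term tends to $0$ weakly.

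For that I would argue purely by $L^\infty$ bounds. Since $\gamma$ is a biholomorphism, $\gamma^i_*\phi = \phi\circ\gamma^{-i}$, so $\|\gamma^i_*\phi\|_{L^\infty(X)} = \|\phi\|_{L^\infty(X)}$ for every $i\geq 0$; hence $\|S_n(\gamma,\phi)\|_{L^\infty(X)}\leq n\|\phi\|_{L^\infty(X)}$ and therefore
\[
	\frac{1}{n^2}\bigl\|\,n\phi - S_n(\gamma,\phi)\,\bigr\|_{L^\infty(X)} \leq \frac{2\,\|\phi\|_{L^\infty(X)}}{n},
\]
which tends to $0$. So the functions $\tfrac{1}{n^2}\bigl(n\phi - S_n(\gamma,\phi)\bigr)$ converge to $0$ uniformly on the compact manifold $X$, a fortiori in $L^1(X)$; since $dd^c$ is continuous for the weak topology on currents, $\tfrac{1}{n^2}dd^c[n\phi - S_n(\gamma,\phi)]\to 0$ weakly. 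Combining the above, $\gamma^n_*\omega/n^2\to\tfrac12\eta(\gamma,\gamma)$ in the weak topology, which is the first assertion.

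For the positivity statement I would simply note that each $\gamma^n_*\omega = (\gamma^{-n})^*\omega$ is a smooth positive (indeed Kähler) $(1,1)$-form, since $\gamma$ is a biholomorphism, and that the cone of closed positive $(1,1)$-currents is closed in the weak topology; hence the limit $\tfrac12\eta(\gamma,\gamma)$, and therefore $\eta(\gamma,\gamma)$, is $\geq 0$. Pulling back by a local holomorphic section of $\pi$ over $B^\circ$ and invoking continuity of the potential of $\eta_B$, this moreover gives $\eta_B(\gamma,\xi(\gamma))\geq 0$ as a current on $B$, which is part~(3) of \autoref{thm:current_valued_pairing}.

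I do not expect a genuine obstacle here: the corollary is essentially a formal consequence of the quadratic-growth formula in \autoref{prop:large_iterates_of_one_automorphism}. The one point that genuinely uses structure is the invariance $\|\gamma^i_*\phi\|_{L^\infty}=\|\phi\|_{L^\infty}$, which is exactly what makes the Birkhoff-sum correction grow only linearly in $n$ and hence negligible at scale $n^2$; the rest is weak continuity of $dd^c$ and closedness of the positive cone under weak limits.
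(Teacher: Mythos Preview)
Your proof is correct and follows essentially the same approach as the paper: both arguments divide the iteration formula by $n^2$, observe that only the quadratic term $\tfrac{n(n-1)}{2}\eta(\gamma,\gamma)$ survives, and deduce positivity from the weak closedness of the cone of closed positive currents. You invoke \autoref{prop:large_iterates_of_one_automorphism} directly for a single automorphism, whereas the paper phrases its argument via the multi-generator formula \autoref{eqn:large_parabolic_automorphism}; your choice is the more economical one, and your explicit $L^\infty$ bound on the Birkhoff-sum term makes the vanishing of the $dd^c$ contribution transparent.
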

\begin{proof}
	In \autoref{eqn:large_parabolic_automorphism}, take $\omega$ to be a \Kahler form and express $\gamma$ as a product of generators.
	Then any weak limit of $\frac{\gamma^n_*\omega}{n^2}$ is on the one hand a positive current, and on the other hand equal to $\frac{1}{2}\eta(\gamma,\gamma)$.
\end{proof}

\subsubsection{Extension to real coefficients}
	\label{sssec:extension_to_real_coefficients}
Let us note that the current-valued bilinear pairing $\eta$ from \autoref{thm:current_valued_pairing} extends from the lattice $[E]^{\perp}/[E]$ to the vector space $([E]^{\perp}/[E])\otimes_{\bZ}\bR=:([E]^{\perp}/[E])_{\bR}$ by taking a set of generators $\gamma_i$ of $\Aut^{\circ}_{\pi}(X)$ and allowing the expression $\sum n_i \gamma_i$ to have real coefficients $n_i$.
We see, in particular, that restricted to the unit sphere, this gives a mapping to positive currents, which is moreover continuous in the $C^0$ topology of potentials.
Indeed, the current for an arbitrary real-valued point is given as a linear combination of a finite set of basis currents, with coefficients determined by the coordinates of the point, and the basis currents have continuous potentials.

\subsubsection{Summary}
	\label{sssec:summary_elliptic_fibrations}
\autoref{prop:large_iterates_of_several_automorphisms} applies to the case of $[\omega].[E]=1$.
We can reduce to this case by scaling, and the explicit formula is as follows.
Assume that $\gamma_i \omega = \omega + \alpha_i$ and $\phi_i$ is the result of applying \autoref{thm:preferred_currents} to $\alpha_i$.
Then applying \autoref{prop:large_iterates_of_several_automorphisms} to $\omega/([\omega].[E])$ and multiplying by $[\omega].[E]$ the result gives (with $\gamma=\gamma_k^{n_k}\cdots \gamma_1^{n_1}$):
\begin{align*}
	\gamma_k^{n_k}\cdots \gamma_1^{n_1} \omega
		&
		=
	\omega + n_k \left(\alpha_k - \frac{1}{2}\eta(\gamma_k,\gamma_k)\right) + \cdots + n_1\left( \alpha_1 - \frac{1}{2}\eta(\gamma_k,\gamma_k)\right) \\
	 & + dd^c \left[
	 \sum_{i=1}^k n_i \phi_i - S_{n_i}\left(\gamma_i, \gamma_k^{n_k}\cdots \gamma_{i+1}^{n_{i+1}}\phi_i\right)
	 \right]\\
	 & + \frac{([\omega].[E])}{2}\eta(\gamma,\gamma)
\end{align*}
Note that the middle terms, involving $dd^c$, are cohomologically trivial and also of linear magnitude in the size of $\gamma$.
Note also that the first row of terms has magnitude which is linear in the size of $\gamma$, while the last term is of quadratic magnitude.

For later estimates, we will use the notation
\begin{align}
	\label{eqn:norm_gamma}
	\norm{\gamma}_{\NS}^2:= -[\xi(\gamma)].[\xi(\gamma)]
\end{align}
which is a natural strictly positive-definite norm on $\Aut^{\circ}_{\pi}(X)$ after it is identified with $[E]^\perp/[E]$ via $\xi$.

Also importantly, let us note that once we fix a positive-definite norm $\norm{\cdot}$ on $\NS(X)$, there exists a constant $C=C(E)$ such that:
\begin{align}
	\label{eqn:norm_gamma_operator_square}
	\frac 1C \left(1+\norm{\gamma}^{2}_{\NS}\right) \leq \norm{\gamma}_{op} \leq C \left(1+\norm{\gamma}^{2}_{\NS}\right)
\end{align}
where $\norm{\gamma}_{op}$ refers to the operator norm of $\gamma$ acting (as a matrix) on $\NS(X)$.
Note, importantly, that $\norm{\gamma}^2_{\NS}$ appears and not $\norm{\gamma}_{\NS}$.
Indeed, for a unit vector $v$ whose cup product with $[E]$ is bounded below by a constant, the size of $\norm{\gamma v}$ is of the order $\norm{\gamma}_{\NS}^2$, up to some multiplicative constant, because of \autoref{eqn:cohomology_action_explicit}.
The estimate in \autoref{eqn:norm_gamma_operator_square} is compatible (and implies) the fact that the operator norm of $\gamma^k$ grows quadratically in $k$.

% 	End of Subsection: Analytic properties of parabolic automorphisms
%%===============================================

%%%%%%%%%%%%%%%%%%%%%%%%%%%%%%%%%%%%%%%%%%%%%%%%%%%%%%%%%%%%%%%%%%%%%%%%%%%%%%%
%%% 				End of Section: Elliptic fibrations and currents
%%%%%%%%%%%%%%%%%%%%%%%%%%%%%%%%%%%%%%%%%%%%%%%%%%%%%%%%%%%%%%%%%%%%%%%%%%%%%%%

%%%%%%%%%%%%%%%%%%%%%%%%%%%%%%%%%%%%%%%%%%%%%%%%%%%%%%%%%%%%%%%%%%%%%%%%%%%%%%%
%%% 				Start of Section: Boundary currents, direct construction
%%%%%%%%%%%%%%%%%%%%%%%%%%%%%%%%%%%%%%%%%%%%%%%%%%%%%%%%%%%%%%%%%%%%%%%%%%%%%%%

\section{Boundary currents, direct construction}
	\label{sec:boundary_currents_direct_construction}

\subsubsection*{Outline of section}
We can now construct the canonical currents on the boundary of the ample cone of a K3 surface.
The main result of this section is \autoref{thm:continuous_family_of_boundary_currents} that establishes the existence of a continuous family of boundary currents.

In \autoref{ssec:potentials_from_a_basis} we fix smooth representatives for every cohomology class, so that any other current is expressed relative to the fixed basis using a potential function.
In \autoref{ssec:existence_of_boundary_potentials} we establish the existence of the canonical boundary currents and establish their properties.
In \autoref{ssec:uniqueness_results_after_verbitsky_sibony} we include a result due to Sibony--Verbitsky that shows the uniqueness of the closed positive currents in irrational boundary classes.
Finally, in \autoref{ssec:boundary_currents_and_ricci_flat_metrics} we include some consequences for degenerating Ricci-flat metrics, and discuss further geometric interpretations of our construction.

%%===============================================
% 	Start of Subsection: Potentials from a basis

\subsection{Potentials from a basis}
	\label{ssec:potentials_from_a_basis}

\subsubsection{Setup}
	\label{sssec:setup_potentials_from_a_basis}
For this section and the next, fix a basis $\omega_0,\omega_1\ldots, \omega_{\rho-1}$ spanning the subspace in cohomology given by the \Neron--Severi group $\NS(X)\subseteq H^{1,1}(X)$.
For a closed $(1,1)$-form $\omega$ with $[\omega]\in \NS(X)$, there exists a unique way to write it as
\[
	\omega = \sum c_i \omega_i + dd^c \phi(\omega) \qquad \text{ with }\int_X \phi(\omega)\dVol = 0\text{ and }c_i\in \bR.
\]
We will call $\phi(\omega)$ the \emph{non-canonical potential} of $\omega$.
Given a cohomology class $[\omega]$, we will write $A([\omega])$ for the $(1,1)$-form satisfying
\[
	A([\omega]) = \sum c_i \omega_i \qquad \text{ where }[\omega]=\sum c_i[\omega_i].
\]
With this notation, we always have
\begin{align}
	\label{eqn:omega_noncanonical_potential}
	\omega = A([\omega]) + dd^c \phi(\omega).
\end{align}
We will use the notation $\phi(\omega)$ also for currents which have continuous potentials, such as $\eta(\gamma,\gamma')$ defined in \autoref{sec:elliptic_fibrations_and_currents}.
From now on, all constants are allowed to depend on the choice of the initial representative $(1,1)$-forms and this will not be stated additionally.

\subsubsection{Mass and normalization}
	\label{sssec:mass_and_normalization}
We assume that $\omega_0$, the first element in the basis, is a \Kahler metric normalized to $[\omega_0]^2=1$ and such that the class $[\omega_0]$ coincides with the reference basepoint from \autoref{ssec:groups_and_spaces}.
Recall also that we defined
\[
	M([\omega]):=[\omega_0].[\omega] \text{ to be the \emph{mass} of }[\omega].
\]
To study the (non-canonical) potentials of \Kahler forms as they approach the boundary, we will always normalize them to have mass $1$.
We will say that $\omega$, or $[\omega]$, is \emph{normalized} if we have that
\[
	[\omega]^2\geq 0 \text{ and }[\omega].[\omega_0]=1,\text{ i.e. }M([\omega])=1 \text{ and it is in the positive cone.}
\]

\subsubsection{Action of automorphisms on potentials}
	\label{sssec:action_of_automorphisms_on_potentials}
For the action of an automorphism $\gamma\in \Aut(X)$ on forms and potentials, we will use the following expressions.
First we have from the definitions
\[
	\gamma_* A([\omega]) = A(\gamma_*[\omega])  + dd^c \phi\Big(\gamma_* A([\omega])\Big)
\]
For a general $(1,1)$-form $\omega$ we have, after acting by $\gamma$ on \autoref{eqn:omega_noncanonical_potential}:
\begin{align}
	\label{eqn:gamma_action_noncanonical_potential}
	\begin{split}
	\gamma_* \omega & =
	\gamma_* A([\omega]) + dd^c \gamma_* \phi(\omega)\\
	& =
	A\left(\gamma_*[\omega]\right) + dd^c \Big(\phi\big(\gamma_* A([\omega])\big) + \gamma_*\phi(\omega) \Big) \text{ and so:}\\
	\phi(\gamma_*\omega) & =
	\phi\left(\gamma_{*}A([\omega])\right) + \gamma_*\phi(\omega)
	\end{split}
\end{align}

\begin{proposition}[Boundedness for fixed automorphism]
	\label{prop:boundedness_for_fixed_automorphism}
	For $\gamma\in \Aut(X)$ fixed, there exists a constant $C(\gamma)$ such that for any $[\omega]$ with $[\omega]^2\geq 0$ and $[\omega].[\omega_0]\geq 0$ we have the bound on potentials:
	\[
		\norm{\phi\big(\gamma_* A([\omega])\big)}_{C^0} \leq C(\gamma)M([\omega])
	\]
	Furthermore, for any $[\omega],[\omega']$ which are normalized we have
	\[
		\norm{\phi\big(\gamma_* A([\omega])\big) - \phi\big(\gamma_* A([\omega'])\big)}_{C^0} \leq 2C(\gamma)
	\]
\end{proposition}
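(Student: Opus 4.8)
The plan is to reduce the entire statement to the linearity of the non-canonical potential operator $\phi$ together with the finite-dimensionality of $\NS(X)_\bR$, and then feed in the mass comparison from \autoref{sssec:mass_in_cohomology} at the very end. No substantial analytic input is needed beyond the $dd^c$-lemma already built into the setup of \autoref{sssec:setup_potentials_from_a_basis}.

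First I would record that $\beta\mapsto\phi(\beta)$, defined on smooth closed real $(1,1)$-forms with $[\beta]\in\NS(X)_\bR$ by \autoref{eqn:omega_noncanonical_potential}, is a linear operator: if $\beta = A([\beta]) + dd^c\phi(\beta)$ and $\beta' = A([\beta']) + dd^c\phi(\beta')$, then adding and using linearity of $A$ in the cohomology class gives $\beta+\beta' = A([\beta]+[\beta']) + dd^c(\phi(\beta)+\phi(\beta'))$; since $\phi(\beta)+\phi(\beta')$ has zero $\dVol$-average, the uniqueness clause in the definition forces $\phi(\beta+\beta') = \phi(\beta)+\phi(\beta')$, and scalars are handled identically. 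Since $\gamma$ acts on $\NS(X)$, the form $\gamma_*A([\omega])$ is smooth with class $\gamma_*[\omega]\in\NS(X)_\bR$, so $\phi$ applies to it and $\phi(\gamma_*A([\omega]))$ is smooth (being $\phi$ of a smooth form), in particular continuous. Composing the linear maps $[\omega]\mapsto A([\omega])\mapsto \gamma_*A([\omega])\mapsto \phi(\gamma_*A([\omega]))$ exhibits
\[
	L_\gamma\colon \NS(X)_\bR\longrightarrow C^\infty(X)\subset C^0(X),\qquad L_\gamma([\omega]):=\phi\big(\gamma_*A([\omega])\big),
\]
as a linear map.

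Now, since $\NS(X)_\bR$ is finite-dimensional, $L_\gamma$ is automatically bounded for the fixed positive-definite norm $\norm{\bullet}$ on $\NS(X)$ (see \autoref{sssec:setup_linear_algebra_and_hyperbolic_geometry}) and the $C^0$-norm on the target: there is $C'(\gamma)$ with $\norm{L_\gamma([\omega])}_{C^0}\le C'(\gamma)\norm{[\omega]}$ for all $[\omega]$. By the mass comparison of \autoref{sssec:mass_in_cohomology}, $\norm{[\omega]}\le C\,M([\omega])$ whenever $[\omega]^2\ge 0$ and $[\omega].[\omega_0]\ge 0$, so $\norm{\phi(\gamma_*A([\omega]))}_{C^0}\le C'(\gamma)\,C\,M([\omega])$, which is the first assertion with $C(\gamma):=C'(\gamma)\,C$. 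For the second, if $[\omega],[\omega']$ are normalized then $M([\omega])=M([\omega'])=1$ and the same comparison gives $\norm{[\omega]},\norm{[\omega']}\le C$, hence $\norm{[\omega]-[\omega']}\le 2C$ by the triangle inequality; applying linearity of $L_\gamma$ and the operator bound yields
\[
	\norm{\phi\big(\gamma_*A([\omega])\big)-\phi\big(\gamma_*A([\omega'])\big)}_{C^0}=\norm{L_\gamma([\omega]-[\omega'])}_{C^0}\le 2\,C'(\gamma)\,C=2\,C(\gamma).
\]

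I do not expect a genuine obstacle: the argument is entirely soft, resting only on the $dd^c$-lemma and the elementary boundedness of linear maps out of finite-dimensional spaces. The one point deserving a moment's care is confirming that $\phi$ is linear and that $L_\gamma$ genuinely takes continuous (indeed smooth) values, so that the asserted $C^0$-estimates are meaningful — both of which are immediate from the defining uniqueness of the non-canonical potential.
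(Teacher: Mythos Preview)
Your proof is correct and takes essentially the same approach as the paper: the paper normalizes to $M([\omega])=1$, observes that such classes form a compact set on which $\phi(\gamma_*A([\omega]))$ varies continuously (hence is bounded), and then uses scaling and the triangle inequality. Your version unpacks the ``scaling plus continuity on a compact set'' into the equivalent statement that $L_\gamma$ is a linear map out of a finite-dimensional space (hence bounded), and then invokes the mass--norm comparison from \autoref{sssec:mass_in_cohomology} explicitly; the two arguments are the same idea in slightly different packaging.
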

\begin{proof}
	By scaling the first inequality we can normalize to $M([\omega])=1$ and then the bound follows since $[\omega]$ belongs to a compact set and $\phi\big(\gamma_* A([\omega])\big)$ varies continuously (even smoothly) on it.
	The second inequality follows from the first by the triangle inequality, and using that $M([\omega])=1$.
\end{proof}

\subsubsection{Estimates for elliptic fibrations}
	\label{sssec:estimates_for_elliptic_fibrations}
Suppose now that $X\xrightarrow{\pi}\bP^1$ is an elliptic fibration, with class of fiber $[E]$, with parabolic automorphism group $\Aut^\circ_{\pi}(X)$ as in \autoref{sec:elliptic_fibrations_and_currents}.
We will sharpen the estimate in \autoref{prop:boundedness_for_fixed_automorphism} to include the size of a parabolic automorphism $\gamma$ (see \autoref{sssec:summary_elliptic_fibrations}) and obtain convergence towards a positive current.

Specifically, fix $\gamma_1,\ldots, \gamma_k$ generators of $\Aut_{\pi}^{\circ}(X)$ with $k=\rho-2$ and a constant $C([E])$ depending on the fibration.
We will say that $[\omega]$ is \emph{parabolically normalized} if, in addition to being normalized (i.e. $[\omega]^2\geq 0$ and $[\omega].[\omega_0]=1$) we also have that $[\omega].[E]\geq \frac{1}{C([E])}$.
Note that this implies that there also exists a constant $C'([E])$ such that $C'([E])\geq [\omega].[E]$ since $[\omega]$ is restricted to a compact set.
Observe that being parabolically normalized is equivalent to being transverse (with a uniform constant) for any element $\gamma\in \Aut_\pi^\circ(X)$, i.e. satisfying the inequality $\norm{\gamma [\omega]}\geq \frac{1}{C''}\norm{\gamma}_{op}\norm{\omega}$ for a constant $C''$ depending only on $[E]$.
Finally, recall that for norms of parabolic automorphisms we have $\norm{\gamma}_{op}\approx \norm{\gamma}_{\NS}^2$, see \autoref{eqn:norm_gamma_operator_square}.

\begin{proposition}[Parabolic convergence to the boundary]
	\label{prop:parabolic_convergence_to_the_boundary}
	Consider a parabolic automorphism $\gamma=\gamma_k^{n_k}\cdots \gamma_1^{n_1}$ written as a product of generators and $[\omega]$ a parabolically normalized class.
	\begin{enumerate}
		\item There exists a constant $C_1=C_1(E)$ giving the mass estimate:
		\[
			\norm{
			\frac{M(\gamma_*[\omega])}{M\left(\frac{[\omega].[E]}{2}\eta(\gamma,\gamma)\right)} - 1}
			\leq
			\frac{C_1}{1 + \norm{\gamma}_{\NS}}
		\]
		\item
		\label{large_parabolic_brings_close_to_eta}
		There exists a constant $C_2=C_2(E)$ giving the estimate on potentials:
		\[
			\norm{
			\frac{\phi\left(\gamma_*A([\omega])\right)}{M(\gamma_* [\omega])
			}
			-
			\frac{\phi\left(\eta(\gamma,\gamma)\right)}{M\left(\eta(\gamma,\gamma)\right)}
			}_{C^0}
			\leq
			\frac{C_2}{1 + \norm{\gamma}_{\NS}}
		\]
		\item \label{key_parabolic_estimate}
		We also have the estimate
		\[
			\norm{
			\phi\left(\gamma_* A([\omega])\right)
			}_{C^0}
			\leq C_3 \left(1+\norm{\gamma}_{\NS}^2\right)\norm{[\omega]}
		\]
		which holds for arbitrary $[\omega]$.
	\end{enumerate}
\end{proposition}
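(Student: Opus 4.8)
The plan is to prove the three bounds in the order (1), (3), (2): the first is purely cohomological, the third is the structural core, and the second falls out of combining them with the formula of \autoref{sssec:summary_elliptic_fibrations}. For (1), write $\xi=\xi(\gamma)$, apply \autoref{eqn:cohomology_action_explicit} to $v=[\omega]$, pair with $[\omega_0]$, and use $-\xi.\xi=\norm{\gamma}_{\NS}^2$ to get
\[
M(\gamma_*[\omega]) = \frac{([\omega].[E])([\omega_0].[E])}{2}\norm{\gamma}_{\NS}^2 + ([\omega].[E])([\omega_0].\xi) - ([\omega].\xi)([\omega_0].[E]) + [\omega_0].[\omega].
\]
Since $[\omega]$ is parabolically normalized, $[\omega_0].[\omega]=1$ and $[\omega].[E]$ is bounded above and below, while a fixed lift of $\xi$ to $[E]^\perp$ has $\norm{\xi}\leqapprox\norm{\gamma}_{\NS}$; hence the last three terms are $\leqapprox 1+\norm{\gamma}_{\NS}$. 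The leading term equals $M\!\left(\tfrac{[\omega].[E]}{2}\eta(\gamma,\gamma)\right)$ because $[\eta(\gamma,\gamma)]=\norm{\gamma}_{\NS}^2[E]$ by \autoref{thm:current_valued_pairing}, so dividing yields $1+O(1/\norm{\gamma}_{\NS})$ once $\norm{\gamma}_{\NS}$ is bounded below; the finitely many $\gamma\neq\id$ of bounded norm are handled by noting both masses are positive continuous functions on a compact set of $[\omega]$'s, so the ratio stays bounded and $C_1$ can be enlarged.

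For (3), note first that $[\omega]\mapsto\phi(\gamma_*A([\omega]))$ is $\bR$-linear, since $A$ and $\gamma_*$ are linear and the mean-zero non-canonical potential is additive on closed forms. Choosing a fixed basis $v_1,\dots,v_\rho$ of $\NS(X)$ with $v_i.[E]=1$ for all $i$ — possible because the affine hyperplane $\{v:v.[E]=1\}$ spans $\NS(X)_\bR$ — it suffices to bound $\norm{\phi(\gamma_*A(v_i))}_{C^0}\leqapprox 1+\norm{\gamma}_{\NS}^2$ for each fixed $v_i$. I would apply \autoref{prop:large_iterates_of_several_automorphisms} to the closed smooth form $A(v_i)$, which has fiber integral $1$: with $\gamma_jA(v_i)=A(v_i)+\alpha_j^{(i)}$, $\phi_j^{(i)}$ the functions supplied by \autoref{thm:preferred_currents}, and $\gamma=\gamma_k^{n_k}\cdots\gamma_1^{n_1}$, this writes $\gamma_*A(v_i)$ as $A(v_i)+\sum_j n_j(\alpha_j^{(i)}-\tfrac12\eta(\gamma_j,\gamma_j))+dd^c\Psi_\gamma^{(i)}+\tfrac12\eta(\gamma,\gamma)$, with $\Psi_\gamma^{(i)}$ the displayed combination of Birkhoff sums. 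Passing to non-canonical potentials, the bookkeeping is: each $\phi_j^{(i)}$ is a fixed bounded function and pushforward by an automorphism preserves the sup norm, so $\norm{\Psi_\gamma^{(i)}}_{C^0}\leq 2\sum_j n_j\norm{\phi_j^{(i)}}_{C^0}\leqapprox\sum_j n_j\leqapprox\norm{\gamma}_{\NS}$; the current $\eta(\gamma,\gamma)=\sum_{j,l}n_jn_l\,\eta(\gamma_j,\gamma_l)$ is a quadratic combination of finitely many fixed currents with continuous (hence bounded) potentials by \autoref{thm:current_valued_pairing}, so $\norm{\phi(\eta(\gamma,\gamma))}_{C^0}\leqapprox(\sum_j n_j)^2\leqapprox\norm{\gamma}_{\NS}^2$; and the remaining terms contribute potentials of size $\leqapprox\norm{\gamma}_{\NS}$. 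Adding these and subtracting the $\dVol$-mean (which at worst doubles the $C^0$-norm) gives the bound.

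For (2), keep $[\omega]$ parabolically normalized, $M_\omega:=[\omega].[E]\approx 1$. Applying the rescaled form of \autoref{eqn:large_parabolic_automorphism} (from \autoref{sssec:summary_elliptic_fibrations}) to the representative $A([\omega])$ and passing to potentials as in (3), using that $\phi(\eta(\gamma,\gamma))$ is already the mean-zero potential, yields $\phi(\gamma_*A([\omega]))=\tfrac{M_\omega}{2}\phi(\eta(\gamma,\gamma))+E_\gamma$ with $\norm{E_\gamma}_{C^0}\leqapprox\norm{\gamma}_{\NS}$ (the generator-level potentials are bounded by \autoref{prop:boundedness_for_fixed_automorphism}, as $[\omega]$ ranges in a compact set). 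Dividing by $M(\gamma_*[\omega])\approx\norm{\gamma}_{\NS}^2$, the $E_\gamma$-term contributes $\leqapprox 1/\norm{\gamma}_{\NS}$ in $C^0$, and inserting (1) in the shape $M(\gamma_*[\omega])=\tfrac{M_\omega}{2}M(\eta(\gamma,\gamma))(1+\theta_\gamma)$ with $|\theta_\gamma|\leqapprox 1/(1+\norm{\gamma}_{\NS})$ rewrites $\tfrac{M_\omega}{2}\phi(\eta(\gamma,\gamma))/M(\gamma_*[\omega])$ as $\big(\phi(\eta(\gamma,\gamma))/M(\eta(\gamma,\gamma))\big)(1+O(|\theta_\gamma|))$; since $\norm{\phi(\eta(\gamma,\gamma))}_{C^0}\leqapprox\norm{\gamma}_{\NS}^2\approx M(\eta(\gamma,\gamma))$, this differs from $\phi(\eta(\gamma,\gamma))/M(\eta(\gamma,\gamma))$ by $\leqapprox 1/(1+\norm{\gamma}_{\NS})$ in $C^0$, which is the assertion; bounded-norm $\gamma$ are absorbed into $C_2$ as in (1).

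The \emph{main obstacle} is (3): one cannot estimate $\norm{\gamma_*A([\omega])}_{C^0}$ directly, since pushing a fixed smooth form forward by a parabolic automorphism of large norm distorts its derivatives uncontrollably. The way around this is never to examine $\gamma_*A([\omega])$ pointwise, but to invoke the structural formula of \autoref{sssec:summary_elliptic_fibrations}, which concentrates all the large behaviour into the single current $\tfrac{M_\omega}{2}\eta(\gamma,\gamma)$ — quadratically bounded precisely because it is a fixed quadratic combination of finitely many currents with continuous potentials — together with Birkhoff sums of uniformly bounded functions, of which there are only $\leqapprox\norm{\gamma}_{\NS}$ and whose sup norms pushforward does not increase. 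Everything else is compactness, already packaged in \autoref{prop:boundedness_for_fixed_automorphism}, and the cohomological identity \autoref{eqn:cohomology_action_explicit}.
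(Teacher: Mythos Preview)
Your proof is correct and follows essentially the same approach as the paper: apply the structural formula from \autoref{sssec:summary_elliptic_fibrations} to $A([\omega])$, isolate the quadratic term $\tfrac{[\omega].[E]}{2}\eta(\gamma,\gamma)$, and bound the remaining linear and bounded pieces using the $L^\infty$ control on the $\phi_j$'s and the continuous potentials of the generator-level currents. The paper organizes the logic as (i)$\to$(ii)$\to$(iii), deriving (iii) from the intermediate estimate obtained inside the proof of (ii) and then extending by linearity over a basis of parabolically normalized classes, whereas you prove (iii) directly first using a basis of classes with $v_i.[E]=1$; the content is the same.
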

\noindent The estimates could be rewritten in terms of $\norm{\gamma}_{op}$, keeping in mind the bound in \autoref{eqn:norm_gamma_operator_square} that gives $\norm{\gamma}_{op}\approx \norm{\gamma}_{\NS}^2$.

\begin{proof}
	We apply the expression in \autoref{sssec:summary_elliptic_fibrations} to $A([\omega])$.
	Part (i) then follows immediately since it only involves cohomology classes.
	For part (ii), recall that the generators $\gamma_i$ are fixed at the start, so as $[\omega]$ varies in the compact set of parabolically normalized classes, so do the $\alpha_i$ so we have uniform $C^0$ (even $C^k$) bounds on the functions $\phi_i$ appearing in the formula.
	We then have the estimates
	\begin{align*}
		\norm{\phi(A([\omega]))}_{C^0} & = O(1)\\
		\norm{\phi\left(\sum n_i (\alpha_i - \tfrac{1}{2}\eta(\gamma_i,\gamma_i))\right)}_{C^0} & = O\left(\norm{\gamma}_{\NS}\right)\\
		\norm{\sum {n_i \phi_i} - S_{n_i}(\gamma_i, \gamma_k^{n_k}\cdots \gamma_{i+1}^{n_{i+1}}\phi_i)}_{C^0} & = O\left(\norm{\gamma}_{\NS}\right)
	\end{align*}
	so the main contribution comes from the term $\frac{[\omega].[E]}{2}\eta(\gamma,\gamma)$ which has mass $\frac{([\omega].[E])}{2}\norm{\gamma}^2_{\NS}$, while the other terms have size $O(\norm{\gamma}_{\NS})$, and hence the quadratic term dominates the rest.
	So to get the claimed estimate in part (ii), multiply both sides by $M(\gamma_*[\omega])$ and use the estimates from part (i) that give
	\[
		\frac{M(\gamma_*[\omega])}{M(\eta(\gamma,\gamma))}=\tfrac12 [\omega].[E] + O\left(\frac{1}{1+\norm{\gamma}_{\NS}}\right)
	\]
	and then observe that we have
	\[
		\phi(\gamma_*A([\omega])) = \phi\left(\tfrac{[\omega].[E]}{2}\eta(\gamma,\gamma)\right) + O(\norm{\gamma}_{\NS})
	\]
	This last estimate also gives the desired bound in (iii) for parabolically normalized classes, keeping in mind that $\eta(\gamma,\gamma)$ scales quadratically in $\gamma$ and is uniformly bounded for $\gamma$ in a bounded set.
	To obtain the estimate in (iii) for arbitrary classes, observe that we can fix a basis of parabolically normalized classes, apply the estimate to each of them, and sum up the estimates (using the linearity of the assignments $[\omega]\mapsto A([\omega])$ and $\omega\mapsto \phi(\omega)$).
\end{proof}

\begin{corollary}[Parabolic convergence to current]
	\label{cor:parabolic_convergence_to_current}
	Suppose that $\{\chi_i\}\in \Aut^{\circ}_{\pi}(X)$ is a sequence of elements with $\norm{\chi_i}\to +\infty$ and such that $\frac{\chi_i}{\norm{\chi_i}_{\NS}}\in \left([E]^\perp/E\right)_\bR$ convergences to an element $\chi$.
	Then for any smooth $(1,1)$-form $\omega$ we have the convergence:
	\[
		\frac{(\chi_i)_* \omega}{M((\chi_i)_*\omega )} \to
		\frac{\eta(\chi,\chi)}{M(\eta(\chi,\chi))}
	\]
	in the $C^0$ topology of the (non-canonical) potentials.
\end{corollary}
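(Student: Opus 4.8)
The plan is to deduce this corollary from \autoref{prop:parabolic_convergence_to_the_boundary} together with the continuity of the current-valued pairing recorded in \autoref{sssec:extension_to_real_coefficients}; essentially all the analytic content is already contained in those statements, so what remains is to assemble the pieces and track the normalizations.

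First I would carry out the standard reductions. Since the ratio $(\chi_i)_*\omega/M((\chi_i)_*\omega)$ is unchanged when $\omega$ is multiplied by a positive constant, I may rescale so that $[\omega]$ is normalized, and I will assume $[\omega].[E]>0$ --- this holds whenever $\omega$ is a K\"ahler form, which is the situation of interest --- so that $[\omega]$ is parabolically normalized in the sense of \autoref{sssec:estimates_for_elliptic_fibrations} (with the fibration constant allowed to depend on the fixed $\omega$). Writing $\chi_i$ in the fixed generators $\gamma_1,\dots,\gamma_k$, $k=\rho-2$, of $\Aut^\circ_\pi(X)$ as $\chi_i=\gamma_k^{n_k^{(i)}}\cdots\gamma_1^{n_1^{(i)}}$, the hypothesis becomes, under the identification $\xi$ of \autoref{sssec:linearizing_automorphisms}, that $\hat\xi_i:=\xi(\chi_i)/\norm{\chi_i}_{\NS}$ converges to a unit vector $\chi\in([E]^\perp/[E])_\bR$ while $\norm{\chi_i}_{\NS}\to+\infty$ (recall $\norm{\chi_i}_{op}\approx 1+\norm{\chi_i}_{\NS}^2$ by \autoref{eqn:norm_gamma_operator_square}).

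Next I would invoke \autoref{prop:parabolic_convergence_to_the_boundary} with $\gamma=\chi_i$. Part (i), combined with $[\eta(\chi_i,\chi_i)]=\norm{\chi_i}_{\NS}^2[E]$, gives $M((\chi_i)_*\omega)\to+\infty$, and together with the explicit formula of \autoref{sssec:summary_elliptic_fibrations} --- where every term other than $\tfrac{[\omega].[E]}{2}\eta(\chi_i,\chi_i)$ is only $O(\norm{\chi_i}_{\NS})$ --- it also yields the cohomological convergence $(\chi_i)_*[\omega]/M((\chi_i)_*[\omega])\to[E]/([\omega_0].[E])$. Part (ii) says that $\phi\big((\chi_i)_*A([\omega])\big)/M((\chi_i)_*\omega)$ and $\phi\big(\eta(\chi_i,\chi_i)\big)/M(\eta(\chi_i,\chi_i))$ differ by a $C^0$-null sequence. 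To pass from $A([\omega])$ to $\omega$ itself I would use \autoref{eqn:gamma_action_noncanonical_potential}, which gives $\phi((\chi_i)_*\omega)=\phi\big((\chi_i)_*A([\omega])\big)+(\chi_i)_*\phi(\omega)$, and $(\chi_i)_*\phi(\omega)$ has the same $C^0$ norm as $\phi(\omega)$, hence is $C^0$-negligible after division by $M((\chi_i)_*\omega)\to+\infty$. Therefore $\phi((\chi_i)_*\omega)/M((\chi_i)_*\omega)$ and $\phi(\eta(\chi_i,\chi_i))/M(\eta(\chi_i,\chi_i))$ also differ by a $C^0$-null sequence.

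Finally I would identify the limit. By bilinearity (\autoref{sssec:extension_to_real_coefficients}) one has $\eta(\chi_i,\chi_i)=\norm{\chi_i}_{\NS}^2\,\eta(\hat\xi_i,\hat\xi_i)$, and $M(\eta(\hat\xi_i,\hat\xi_i))=[\omega_0].[E]$ for every $i$ since $\hat\xi_i$ is a unit vector, so $\eta(\chi_i,\chi_i)/M(\eta(\chi_i,\chi_i))=\eta(\hat\xi_i,\hat\xi_i)/([\omega_0].[E])$. As $\hat\xi_i\to\chi$ and the assignment $\hat\xi\mapsto\eta(\hat\xi,\hat\xi)$ from the unit sphere to currents is continuous for the $C^0$ topology of potentials (again \autoref{sssec:extension_to_real_coefficients}), and since $M(\eta(\chi,\chi))=[\omega_0].[E]>0$, it follows that $\eta(\chi_i,\chi_i)/M(\eta(\chi_i,\chi_i))\to\eta(\chi,\chi)/M(\eta(\chi,\chi))$ in the $C^0$ topology on potentials. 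Stringing the three steps together gives the claimed convergence. I do not expect a serious obstacle: the one point requiring care is verifying that the linear-order terms in \autoref{sssec:summary_elliptic_fibrations} are dominated by the quadratic term $\eta(\chi_i,\chi_i)$ and that this leading term is continuous in the renormalized direction $\hat\xi_i$, but these are exactly what \autoref{prop:parabolic_convergence_to_the_boundary} and \autoref{sssec:extension_to_real_coefficients} provide, so the remaining work is bookkeeping together with the reduction to a parabolically normalized $\omega$ with $[\omega].[E]>0$.
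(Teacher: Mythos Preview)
Your proposal is correct and follows the same approach as the paper: reduce to a parabolically normalized class and apply part (ii) of \autoref{prop:parabolic_convergence_to_the_boundary}. The paper's proof is a two-line sketch that simply asserts one may choose $C(E)$ so that $\omega$ is parabolically normalized and then invokes part (ii); you have written out the details the paper leaves implicit, namely the passage from $A([\omega])$ to $\omega$ via \autoref{eqn:gamma_action_noncanonical_potential} and the identification of the limit via the continuity of the extended pairing from \autoref{sssec:extension_to_real_coefficients}.
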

Note that the space of closed $(1,1)$-forms with locally $C^0$-potentials is a finite-dimensional extension of the Banach space $C^0(X)/const.$ and hence carries a natural topology.
In fact we can also put a norm by using the fixed basis of cohomology from \autoref{sssec:setup_potentials_from_a_basis}.
\begin{proof}
	Given a fixed $\omega$, there is some constant $C(E)$ such that $\omega$ is parabolically normalized.
	Then the result follows from part (ii) of \autoref{prop:parabolic_convergence_to_the_boundary}.
\end{proof}

% 	End of Subsection: Potentials from a basis
%%===============================================

%%===============================================
% 	Start of Subsection: Existence of boundary potentials

\subsection{Existence of boundary potentials}
	\label{ssec:existence_of_boundary_potentials}

\subsubsection{Rational boundary points and currents}
	\label{sssec:rational_boundary_points_and_currents}
Recall that the boundary of interest $\partial^\circ \Amp_c(X)$ has a stratification as in \autoref{eqn:boundary_stratification}.
It consists of irrational cohomology classes, as well as elements of the form $k[E]\times [\xi]$ where $k$ is a real strictly positive scalar, $[E]$ is the (integral, primitive) class of an elliptic fibration, and $[\xi]\in \bP\left([E]^\perp/[E]\right)$.

For a class $[E]$ associated to a genus one fibration, from now on we denote the current-valued pairing from \autoref{thm:current_valued_pairing} by $\eta_{[E]}$, see also \autoref{rmk:on_notation_for_pairings}.
This is to distinguish it from pairings of other genus one fibrations, and to distinguish it from the global family of boundary currents that the main result provides.

By our standing assumptions on K3 surfaces in \autoref{sssec:standing_assumptions_main}, and due to Sterk's theorem, the subgroup $\Aut^\circ_\pi(X)$ of parabolic automorphisms gives a lattice in $[E]^\perp/[E]$ using the embedding $\xi_{[E]}$ given by $\xi_{[E]}(\gamma):=\gamma_*[\omega]-[\omega]$ where $[\omega].[E]=1$ is fixed, see \autoref{sssec:linearizing_automorphisms}.
By extension of scalars, we will use below the currents $\eta_{[E_0]}(\xi_1,\xi_2)$ where $\xi_i\in [E]^\perp/[E]$ are real vectors.

Here is the main result of this section.
Recall that $\partial^\circ \Amp_c(X)$ is defined in \autoref{sssec:blown_up_boundary_of_the_ample_cone} and $\cZ_{1,1}(X)$ denotes closed $(1,1)$-currents.

\begin{theorem}[Continuous family of boundary currents]
	\label{thm:continuous_family_of_boundary_currents}
	There exists a map
	\[
		\eta_X\colon \partial^\circ \Amp_c(X) \to \cZ_{1,1}(X)
	\]
	compatible with the map from $\partial^\circ \Amp_c(X)$ to $\partial \Amp(X)\subset \NS(X)$, and from $\cZ_{1,1}(X)$ to $H^{1,1}(X)$.
	The map has the following additional properties:
	\begin{enumerate}
		\item \label{eta_positive_currents}
		For any $[\alpha]\in \partial^\circ \Amp_c(X)$ the current $\eta_X([\alpha])$ is positive and has continuous potentials.
		\item \label{eta_is_C0_continuous}
		The map $\eta_X$ is continuous for the $C^0$-topology on potentials.
		\item \label{eta_is_equivariant}
		The map $\eta_X$ is $\Aut(X)$-equivariant, and also equivariant for the scaling action of $\bR_{>0}$.
		\item
		\label{eta_on_spheres}
		On the real projective $(\rho-2)$-spaces in $\partial^\circ \Amp_c(X)$ associated to a rational boundary ray $\bR[E]$ corresponding to a genus one fibration, the map $\eta_X$ agrees with the current-valued pairing from \autoref{thm:current_valued_pairing} with the assignment
		\[
			\eta_X(k[E]\times [\xi])=k\cdot \frac{\eta_{[E]}([\xi],[\xi])}{\norm{\xi}_{\NS}^2} \text{ for }[\xi]\in \bP([E]^{\perp}/[E]), k\in\mathbb{R}.
		\]
		Note that the formula uses, but is independent of, a choice of a lift of $[\xi]$ to $[E]^\perp/[E]$.
		\item For any $[\alpha]\in \partial^\circ \Amp_c(X)$ let $\{\gamma_i\}$ be the associated sequence of automorphisms as in \autoref{sssec:fixed_finite_set_of_generators}.
		Then for any smooth representative $\omega_0$ of a cohomology class $[\omega_0]\in \Amp^1(X)$ we have that
		\[
			\lim_{n\to +\infty} \frac{\gamma_1\cdots \gamma_n \omega_0}{M(\gamma_1\cdots \gamma_n[\omega_0])}
			=\eta_X\left(\frac{[\alpha]}{M([\alpha])}\right)
		\]
		\item For boundary classes $[\alpha_\pm]$ expanded/contracted by hyperbolic automorphisms of $X$, the currents $\eta_X([\alpha_{\pm}])$ agree with those constructed by Cantat \cite{Cantat_Dynamique-des-automorphismes-des-surfaces-K3}.
	\end{enumerate}
	\hspace{\parindent}Furthermore, the combination of either \ref{eta_positive_currents} and \ref{eta_is_C0_continuous}, or the combination of \ref{eta_positive_currents} and \ref{eta_on_spheres}, uniquely determine $\eta_X$.
\end{theorem}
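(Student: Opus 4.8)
The plan is to construct $\eta_X$ by taking limits of pushforwards of a fixed Kähler form, using the geodesic combinatorics from \autoref{sssec:fixed_finite_set_of_generators} to control the limit, and then to verify the stated properties one at a time, with uniqueness following from the two rigidity inputs already available.

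\smallskip

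\textbf{Construction.} Fix the reference Kähler form $\omega_0$ and, for $[\alpha]\in\partial^\circ\Amp_c(X)$, pick the associated sequence $\gamma_1,\dots,\gamma_n,\dots$ of generators or parabolic elements approaching $[\alpha]$ in the truncated hyperbolic space. Set $g_n=\gamma_1\cdots\gamma_n$ (or, in the rational strata, $g_n = \gamma_1\cdots\gamma_{n_0-1}\gamma_{n_0,i}$ with $i\to\infty$) and consider the normalized pushed-forward forms $(g_n)_*\omega_0 / M\big((g_n)_*\omega_0\big)$, written via the non-canonical potentials of \autoref{ssec:potentials_from_a_basis} as $A\big(\widehat{(g_n)_*[\omega_0]}\big) + dd^c\phi_n$ with $\phi_n = \phi\big((g_n)_* A([\omega_0])\big)/M((g_n)_*[\omega_0])$. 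The cohomology classes $\widehat{(g_n)_*[\omega_0]}$ converge to the class of $[\alpha]$ by construction of the boundary, so the issue is $C^0$-convergence of the potentials $\phi_n$. In the irrational case this is handled by iterating \autoref{prop:contraction_from_transversality} together with \autoref{prop:boundedness_for_fixed_automorphism} along the hyperbolic part of the geodesic and \autoref{prop:parabolic_convergence_to_the_boundary}(ii) along each cusp excursion: the transversality properties listed after \autoref{def:transversality_of_a_vector_relative_to_a_transformation} guarantee that each step contracts the $C^0$-distance between competing potentials by a factor $\leqapprox \norm{g}_{op}^{-1}$, and \autoref{prop:displacement_estimate} ensures these factors are summable, so $\{\phi_n\}$ is $C^0$-Cauchy; define $\eta_X([\alpha])$ as the limit. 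In the rational strata, \autoref{cor:parabolic_convergence_to_current} identifies the limit with $k\cdot\eta_{[E]}([\xi],[\xi])/\norm{\xi}_{\NS}^2$, which also shows the construction is independent of the choices of $\gamma_i$ made along the way (two admissible sequences eventually agree up to the cusp data, by property (3) of \autoref{sssec:fixed_finite_set_of_generators}, and \autoref{prop:contraction_from_transversality} makes the tails negligible).

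\smallskip

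\textbf{Verifying the properties.} Positivity in \ref{eta_positive_currents} is automatic: each $(g_n)_*\omega_0$ is Kähler, so the $C^0$-limit is a positive current, and continuity of potentials is built into the convergence. Compatibility with cohomology is clear since the classes converge. Equivariance \ref{eta_is_equivariant}: applying $\gamma\in\Aut(X)$ to the sequence $g_n$ approaching $[\alpha]$ yields $\gamma g_n$ approaching $\gamma[\alpha]$, and $(\gamma g_n)_*\omega_0 = \gamma_*\big((g_n)_*\omega_0\big)$; using the cocycle relation \autoref{eqn:gamma_action_noncanonical_potential} and \autoref{prop:boundedness_for_fixed_automorphism} (the extra potential $\phi(\gamma_* A(\cdot))$ contributes a bounded term which, after normalizing by $M\to\infty$, vanishes) one gets $\eta_X(\gamma[\alpha]) = \gamma_*\eta_X([\alpha])$; the $\bR_{>0}$-equivariance is immediate from linearity of $A(\cdot)$ and $\phi(\cdot)$ in the class. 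Continuity \ref{eta_is_C0_continuous}: this is where property (3) of \autoref{sssec:fixed_finite_set_of_generators} does the work — if $[\alpha']\to[\alpha]$ then the associated generator sequences agree for arbitrarily long initial segments (for a rational limit, the cusp excursion can be forced arbitrarily long), so by the contraction-along-geodesics estimates the potentials $\phi_n([\alpha'])$ and $\phi_n([\alpha])$ are $C^0$-close uniformly, giving continuity; the rational-stratum case uses the continuity of the real-coefficient extension of $\eta_{[E]}$ noted in \autoref{sssec:extension_to_real_coefficients}. Property \ref{eta_on_spheres} is \autoref{cor:parabolic_convergence_to_current} restated, and \ref{eta_on_spheres}'s last sentence (independence of lift of $[\xi]$) is the homogeneity of $\eta_{[E]}(\xi,\xi)/\norm{\xi}^2_{\NS}$. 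For the last item I would invoke Cantat's construction \cite{Cantat_Dynamique-des-automorphismes-des-surfaces-K3}: his current is also a normalized limit of pushforwards of a Kähler form, so it coincides with ours by the same $C^0$-Cauchy argument.

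\smallskip

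\textbf{Uniqueness.} Given a second map $\eta_X'$ with the stated properties, consider an irrational $[\alpha]$. If $[\alpha]$ is not proportional to a rational class, \autoref{thm:uniqueness_in_irrational_classes_verbitsky_sibony} (Sibony--Verbitsky) forces $\eta_X'([\alpha]) = \eta_X([\alpha])$ since both are closed positive currents in the same class. For rational $[\alpha]$, i.e.\ the points $k[E]\times[\xi]$, property \ref{eta_on_spheres} pins down the value directly; alternatively, continuity \ref{eta_is_C0_continuous} plus density of irrational directions in $\partial^\circ\Amp_c(X)$ approaching any point of the exceptional $\bP([E]^\perp/[E])$ (which holds in the topology of \autoref{sssec:blown_up_boundary_of_the_ample_cone}) extends uniqueness from the irrational locus. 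This gives both asserted characterizations. \textbf{The main obstacle} I anticipate is the $C^0$-Cauchy estimate for the potentials along a recurrent (irrational) geodesic: one must carefully bookkeep the bounded contributions of $\phi(\gamma_{i*}A(\cdot))$ at each generator step against the geometric contraction, and in particular control the cusp excursions — where the potential grows like $O(\norm{\gamma}_{\NS})$ by \autoref{prop:parabolic_convergence_to_the_boundary} while the mass grows like $O(\norm{\gamma}_{\NS}^2)$, so the \emph{normalized} potential converges but only after the quadratic mass gain is accounted for — and to show the errors from successive excursions telescope, which is precisely what \autoref{prop:displacement_estimate} is designed to supply.
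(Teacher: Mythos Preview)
Your proposal is correct and follows essentially the same approach as the paper: construct $\eta_X$ as a $C^0$-limit of normalized pushforwards $(g_n)_*\omega_0/M((g_n)_*[\omega_0])$ along the generator sequences of \autoref{sssec:fixed_finite_set_of_generators}, use the Cauchy estimate (which the paper packages as \autoref{prop:cauchy_property_for_a_fixed_geodesic}, built from exactly the ingredients you list) for irrational points and \autoref{cor:parabolic_convergence_to_current} for rational ones, then derive uniqueness from Sibony--Verbitsky.

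One structural difference worth noting: the paper derives equivariance \emph{from} uniqueness rather than directly. Your direct argument for \ref{eta_is_equivariant} has a small gap --- the sequence $\gamma g_n$ is not the canonical generator sequence for $\gamma[\alpha]$ from \autoref{sssec:fixed_finite_set_of_generators}, so to conclude $\eta_X(\gamma[\alpha])=\gamma_*\eta_X([\alpha])$ you would first need to know the limit is independent of the choice of admissible approximating sequence (your remark about independence only covers sequences that ``eventually agree up to cusp data''). The paper sidesteps this entirely: once positivity and $C^0$-continuity are in hand, \autoref{thm:uniqueness_in_irrational_classes_verbitsky_sibony} forces agreement on the dense irrational locus, and continuity then propagates both uniqueness and equivariance to all of $\partial^\circ\Amp_c(X)$. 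This also makes the independence-of-sequence issue moot.
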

\begin{proof}
	To each boundary point $[\alpha]\in \partial^\circ \Amp_c(X)$ we have an associated sequence of automorphisms $\gamma_1,\gamma_2,\ldots$ from the list fixed in \autoref{sssec:fixed_finite_set_of_generators}.
	We will check that the assignment (after normalization)
	\[
		\frac{[\alpha]}{M([\alpha])} \mapsto \lim_{n\to +\infty} \frac{\gamma_1\cdots \gamma_n \omega_0}{M(\gamma_1\cdots \gamma_n[\omega_0])}
	\]
	works, i.e. the limit exists and has the required properties.
	When we approach a parabolic boundary point, we interpret the limit $\lim_{n\to +\infty}$ as $\lim_{i\to +\infty}$ and we have the sequence $\gamma_1\cdots \gamma_{n-1}\gamma_{n,i}$ instead, where the sequence $\gamma_{n,i}$ belongs to the corresponding parabolic subgroup.

	Once we establish \ref{eta_positive_currents} and \ref{eta_is_C0_continuous}, i.e. positivity and continuity in the $C^0$-topology of potentials, the rest follows.
	Indeed, uniqueness (and hence equivariance) follows from \autoref{thm:uniqueness_in_irrational_classes_verbitsky_sibony} below, due to Sibony--Verbitsky, stating that the closed positive currents in the irrational boundary classes are unique.
	Since the irrational points are dense in both $\partial \Amp(X)$ and $\partial^\circ \Amp_c(X)$ the uniqueness and equivariance properties of $\eta$ follow from its continuity.
	One can alternatively use the dense set of classes expanded by hyperbolic automorphisms, on which uniqueness holds by a simpler, by now standard, argument.
	Again continuous dependence of potentials on the boundary class, for the construction of $\eta$ below, implies the uniqueness and equivariance.

	To define the map at the set-theoretical level, recall from \autoref{sssec:blown_up_boundary_of_the_ample_cone} that
	\[
		\partial^\circ\Amp_c(X)=\partial \Amp(X)^{irr}\coprod_{[E]}\left(\bR_{>0}[E]\right)\times \bP\left([E]^{\perp}/[E]\right)
	\]
	where $\partial \Amp(X)^{irr}$ denotes the irrational points and $\bR_{>0}[E]$ denotes the ray corresponding to the parabolic boundary class $[E]$.
	For $[\alpha]\in \partial^\circ\Amp_c(X)$, we divide the discussion into two cases, according to whether it is in the rational or irrational part.

\subsubsection*{Case $[\alpha]$ is rational}
	At the rational points of the form $[\alpha]:=[E]\times [\xi]$, where $\xi\in \bP\left([E]^\perp/[E]\right)$ define the current to be
	\[
		\eta\left([E]\times [\xi]\right) := \frac{\eta_{[E]}(\xi,\xi)}{\norm{\xi}^2_{\NS}}
	\]
	which is in particular in the cohomology class $[E]$.
	Extend the assignment by scaling to the entire ray $\bR_{>0}[E]$.
	Positivity follows from \autoref{thm:current_valued_pairing}.

	For continuity at this point, we argue as follows.
	First, by applying a finite sequence of generators $\gamma_1([\alpha]),\ldots,\gamma_{n-1}([\alpha])$ (given by a corresponding geodesic going into that cusp) we can assume that $\left(\gamma_1([\alpha])\cdots\gamma_{n-1}([\alpha])\right)^{-1}[E]$ is one of the fixed representatives for $\Aut(X)$-equivalences of genus one fibrations, say $[E_0]$.
	We can thus reduce to this case.
	By the construction in \autoref{sssec:fixed_finite_set_of_generators} for any $R>0$, there exists a neighborhood $U$ of the point $[\alpha]$ in $\partial^\circ\Amp_c(X)$ such that any geodesic $s$ starting at $[\omega_0]$ and landing in the neighborhood $U$ must have that $\gamma_{1}(s)$ is a parabolic element in $\Gamma_{[E_0]}$, with $\norm{\xi(\gamma_{1}(s))}_{\NS}\geq R$, and furthermore such that the ray determined by $\xi(\gamma_{1}(s))$ is in the prescribed neighborhood of $[\xi]$ in $\bP\left([E]^{\perp}/[E]\right)$.
	The estimates in \autoref{sssec:estimates_for_continuity_at_a_parabolic_boundary_point} below establish the required continuity.

\subsubsection*{Case $[\alpha]$ is irrational.}	
	At the irrational points, the map $\eta$ is defined by the current whose noncanonical potential is provided by \autoref{prop:cauchy_property_for_a_fixed_geodesic}.
	The proposition gives us a current normalized to mass $1$, but the map is extended naturally by scaling to the entire ray.
	Positivity follows by construction, and continuity follows from the estimate provided by \autoref{prop:cauchy_property_for_a_fixed_geodesic} and the fact that given $n$, there is a neighborhood of the given irrational endpoint such that geodesics $s$ landing in the neighborhood must have the sequence $\gamma_i(s)$ agree with that of the irrational point up to $n$.
\end{proof}

\subsubsection{Setup}
	\label{sssec:setup_existence_of_boundary_potentials}
To ease notation, all actions of group elements $\gamma\in \Aut(X)$ on $(1,1)$-forms and on functions are by pushforward and we write $\gamma \omega$ for $\gamma_*\omega$ and similarly for functions.
Applying \autoref{eqn:gamma_action_noncanonical_potential} successively, we have the formula
\begin{align}
	\label{eqn:gamma1n_omega}
	\begin{split}
	\phi(\gamma_1\cdots \gamma_n\omega) & =
	\phi\big(\gamma_1 A(\gamma_2\cdots \gamma_n[\omega])\big)+ \cdots \\
	& + \gamma_1 \cdots \gamma_{i-1}\phi\big(\gamma_i A(\gamma_{i+1}\cdots \gamma_n[\omega])\big)+ \cdots \\
	& + \gamma_1 \cdots \gamma_n \phi(\omega)
	\end{split}
\end{align}
Note that this is the correct order of multiplication, i.e. in the sequence $\gamma_1\cdots \gamma_n \omega$ the next term is obtained by applying $\gamma_{n+1}$ ``based'' at the previous element, i.e. we must conjugate $\gamma_{n+1}$ by $\gamma_1\cdots \gamma_n$ and apply it to the point, hence we get $\gamma_1\cdots \gamma_{n+1}\omega$.
We will use the following notation to occasionally shorten expressions:
\[
	\omega_{i,j}:=\gamma_i\cdots \gamma_j \omega_0 \qquad
	M_{i,j}=M([\omega_{i,j}]) \text{ with }i\leq j.
\]
\subsubsection{The necessary bounds}
	\label{sssec:the_necessary_bounds_currents}
Let us summarize the inequalities that we will use in the proof.
From \autoref{ssec:groups_and_spaces} and \autoref{ssec:linear_algebra_and_hyperbolic_geometry} we have that
\begin{align}
	\label{eqn:useful_inequalities}
	\begin{split}
	e^{\lambda_i + \cdots + \lambda_j}\leqapprox &
	\quad M_{i,j}\quad \quad \leqapprox
	e^{\lambda_i + \cdots + \lambda_j}\\
	e^{\lambda_i + \cdots + \lambda_j}\leqapprox & \norm{\gamma_i\cdots \gamma_j}_{op}\leqapprox
	e^{\lambda_i + \cdots + \lambda_j}
	\end{split}
\end{align}
for all $j\geq i \geq 1$.
Additionally, we will use that $[\omega_0]$ and $\gamma_{j+1}\cdots \gamma_{j+k}[\omega_0]$ is transverse for $\gamma_i\cdots \gamma_j$ and hence the estimates in \autoref{prop:contraction_from_transversality} and \autoref{def:transversality_of_a_vector_relative_to_a_transformation} hold.
We will also use systematically the bound
\begin{align}
	\label{eqn:basic_bound_potential_all_automorphisms}
	\norm{\phi\big(\gamma A([\omega])\big)}_{C^0}\leqapprox \norm{\gamma}_{op} \norm{[\omega]}
\end{align}
which is valid when $\gamma$ belongs to the fixed set of generators (since we consider a finite number of such generators), or to one of the finitely many fixed parabolic subgroups.
For elements of parabolic subgroups, this is \autoref{prop:parabolic_convergence_to_the_boundary}\ref{key_parabolic_estimate}, and for a fixed finite list of elements this is \autoref{prop:boundedness_for_fixed_automorphism}.

\begin{proposition}[Boundedness of potentials]
	\label{prop:boundedness_of_potentials}
	Let $s$ be any geodesic starting at $[\omega_0]$ and landing at an irrational point on the boundary, with associated sequence of generators $\gamma_i=\gamma_i(s)$ (see \autoref{sssec:fixed_finite_set_of_generators}).
	Then the sequence of normalized potentials
	\[
		\frac{\phi(\gamma_1\cdots\gamma_n \omega_0)}{M(\gamma_1\cdots\gamma_n [\omega_0])}
	\]
	is bounded by a constant independent of the geodesic.
	The same boundedness holds for any fixed $i$ and the sequence $\frac{\phi(\gamma_i\cdots \gamma_n \omega_0)}{M(\gamma_i\cdots \gamma_n[\omega_0])}$, with a uniform constant independent of $i$ or the geodesic.
\end{proposition}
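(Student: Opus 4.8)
The plan is to expand the non-canonical potential of $\gamma_1\cdots\gamma_n\omega_0$ by the cocycle formula \autoref{eqn:gamma1n_omega} and then sum a geometric series whose decay is controlled by the displacement estimate \autoref{prop:displacement_estimate}. Since $\omega_0$ is one of the fixed basis forms we have $\phi(\omega_0)=0$, so \autoref{eqn:gamma1n_omega} with $\omega=\omega_0$ reduces to
\[
	\phi(\gamma_1\cdots\gamma_n\omega_0)=\sum_{i=1}^{n}\gamma_1\cdots\gamma_{i-1}\,\phi\big(\gamma_i A(\gamma_{i+1}\cdots\gamma_n[\omega_0])\big).
\]
Pushforward by an automorphism is precomposition with its inverse and hence preserves the $C^0$ norm of a function, so the triangle inequality gives $\norm{\phi(\gamma_1\cdots\gamma_n\omega_0)}_{C^0}\leq\sum_{i=1}^{n}\norm{\phi(\gamma_i A(\gamma_{i+1}\cdots\gamma_n[\omega_0]))}_{C^0}$.

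For each $i$ the element $\gamma_i$ lies either in the fixed finite generating set $S$ or in one of the finitely many parabolic subgroups $\Gamma_{[E_j]}$, so the crude per-step bound \autoref{eqn:basic_bound_potential_all_automorphisms} (which is \autoref{prop:boundedness_for_fixed_automorphism} in the first case and \autoref{prop:parabolic_convergence_to_the_boundary}\ref{key_parabolic_estimate} in the second) applies and gives
\[
	\norm{\phi\big(\gamma_i A(\gamma_{i+1}\cdots\gamma_n[\omega_0])\big)}_{C^0}\leqapprox\norm{\gamma_i}_{op}\cdot\norm{\gamma_{i+1}\cdots\gamma_n[\omega_0]}.
\]
The class $\gamma_{i+1}\cdots\gamma_n[\omega_0]$ is ample, hence lies in the positive cone and pairs positively with $[\omega_0]$, so by the mass comparison of \autoref{sssec:mass_in_cohomology} its norm is comparable to $M_{i+1,n}$; combining with \autoref{eqn:useful_inequalities} I get $\norm{\gamma_i}_{op}\approx e^{\lambda_i}$, $M_{i+1,n}\approx e^{\lambda_{i+1}+\cdots+\lambda_n}$ and $M_{1,n}\approx e^{\lambda_1+\cdots+\lambda_n}$ (with the convention $M_{n+1,n}=1$). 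Dividing the displayed sum by $M_{1,n}$ therefore yields
\[
	\frac{\norm{\phi(\gamma_1\cdots\gamma_n\omega_0)}_{C^0}}{M(\gamma_1\cdots\gamma_n[\omega_0])}\leqapprox\sum_{i=1}^{n}e^{-(\lambda_1+\cdots+\lambda_{i-1})}.
\]

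It remains to observe that the right-hand side is bounded independently of $n$ and of the geodesic. This is exactly where \autoref{prop:displacement_estimate} enters: taking $a=1$ there, for every $i\geq 2$ we have $\lambda_1+\cdots+\lambda_{i-1}\geq\delta(i-1)-C_0$, so the sum is dominated by $e^{C_0}\sum_{i\geq1}e^{-\delta(i-1)}=e^{C_0}/(1-e^{-\delta})$, a constant depending only on $\cM$ and $[\omega_0]$. The statement for the tail sequences $\gamma_i\cdots\gamma_n$ is proved by the same computation started at index $i$ and using \autoref{prop:displacement_estimate} with $a=i$; since $\delta$ and $C_0$ are absolute, the bound is uniform in $i$. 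There is no serious obstacle here: the one point to be careful about is that \autoref{eqn:basic_bound_potential_all_automorphisms} really does cover both types of steps (finite generators and arbitrarily large parabolic elements), and that the summability of $\sum_i e^{-(\lambda_1+\cdots+\lambda_{i-1})}$ is not circular but genuinely supplied by the geometric input \autoref{prop:displacement_estimate}.
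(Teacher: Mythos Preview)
Your proof is correct and follows essentially the same approach as the paper: expand via \autoref{eqn:gamma1n_omega}, bound each term using \autoref{eqn:basic_bound_potential_all_automorphisms} together with the mass/operator-norm comparisons of \autoref{eqn:useful_inequalities}, and then sum the resulting series $\sum_i e^{-(\lambda_1+\cdots+\lambda_{i-1})}$ via the displacement estimate \autoref{prop:displacement_estimate}. The only cosmetic difference is that the paper first normalizes the class inside $A(\cdot)$ before invoking the per-step bound, whereas you apply the bound directly and then compare masses; the arithmetic is the same.
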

\begin{proof}
	We estimate the $C^0$ norm of every term in \autoref{eqn:gamma1n_omega}, taking into account that pushing forward by $\gamma_1\cdots \gamma_{i-1}$ does not affect the $C^0$-norm.
	The manipulations are:
	\begin{align*}
		\norm{\frac{\phi(\gamma_i A(\gamma_{i+1}\cdots \gamma_n[\omega_0]))}{M(\gamma_1\cdots \gamma_n [\omega_0])}}_{C^0} & =
		\norm{\frac{\phi(\gamma_i A([\omega_{i+1,n}]))}{M_{1,n}}}_{C^0}\\
		& =
		\frac{M_{i+1,n}}{M_{1,n}}
		\norm{\phi\left(\gamma_i \frac{A([\omega_{i+1,n}])}{M_{i+1,n}}\right)}_{C^0}\\
		& \leqapprox
		\frac{M_{i+1,n}}{M_{1,n}}\norm{\gamma_i}_{op} \leqapprox
		\frac{e^{\lambda_{i+1}+\cdots + \lambda_n}}{e^{\lambda_1 + \cdots + \lambda_n}} \norm{\gamma_i}_{op}\\
		& \leqapprox
		e^{-(\lambda_1 + \cdots + \lambda_{i-1})}
	\end{align*}
	Using the estimate from \autoref{prop:displacement_estimate} allows us to sum the geometric series and conclude.
\end{proof}

\begin{proposition}[Cauchy property for a fixed geodesic]
	\label{prop:cauchy_property_for_a_fixed_geodesic}
	Let $s$ be any geodesic starting at $[\omega_0]$ and landing at an irrational point on the boundary, with associated sequence of generators $\gamma_i=\gamma_i(s)$ (see \autoref{sssec:fixed_finite_set_of_generators}).
	Then the sequence of normalized potentials
	\[
		\frac{\phi(\gamma_1\cdots\gamma_n \omega_0)}{M(\gamma_1\cdots\gamma_n [\omega_0])} = \frac{\phi(\omega_{1,n})}{M_{1,n}}
	\]
	forms a Cauchy sequence and we furthermore have the estimate
	\[
	\norm{
		\frac{\phi(\gamma_1\cdots\gamma_n \omega_0)}{M(\gamma_1\cdots\gamma_n [\omega_0])}
		-
		\frac{\phi(\gamma_1\cdots\gamma_{n+m} \omega_0)}{M(\gamma_1\cdots\gamma_{n+m} [\omega_0])}
	}_{C^0}
	\leqapprox ne^{-\delta \cdot n}
	\]
	for some fixed $\delta>0$ (depending only on the geometric constructions in \autoref{sec:hyperbolic_geometry_background}).

	Therefore, there exists a closed positive current with continuous potentials in the boundary class normalized to mass $1$ corresponding to the geodesic $s$.
\end{proposition}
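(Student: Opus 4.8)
The plan is to expand each normalized potential by the telescoping identity \autoref{eqn:gamma1n_omega} and then estimate the summands one at a time, using throughout that pushforward by an automorphism is an isometry for the $C^0$-norm. Since $\omega_0$ is the first basis element we have $\phi(\omega_0)=0$, so with the convention $[\omega_{n+1,n}]:=[\omega_0]$ formula \autoref{eqn:gamma1n_omega} reads $\phi(\omega_{1,n})=\sum_{i=1}^{n}\gamma_1\cdots\gamma_{i-1}\,\phi(\gamma_i A([\omega_{i+1,n}]))$. To compare $\phi(\omega_{1,n})/M_{1,n}$ with $\phi(\omega_{1,n+m})/M_{1,n+m}$ I would split the difference into the $n$ ``matched'' terms (indices $1\le i\le n$, which appear in both expansions) and a ``tail'' consisting of the terms with $n<i\le n+m$ that occur only in the expansion of $\phi(\omega_{1,n+m})$.

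For the tail I expect no difficulty: writing $[\omega_{i+1,n+m}]=M_{i+1,n+m}\widehat w$ with $\widehat w$ of mass one, the bounds \autoref{eqn:basic_bound_potential_all_automorphisms} and \autoref{eqn:useful_inequalities} give that the $i$-th tail term has $C^0$-norm $\leqapprox \tfrac{M_{i+1,n+m}}{M_{1,n+m}}\norm{\gamma_i}_{op}\leqapprox e^{-(\lambda_1+\cdots+\lambda_{i-1})}$, which by \autoref{prop:displacement_estimate} is $\leqapprox e^{-\delta i}$; summing the geometric series over $i>n$ gives $\leqapprox e^{-\delta n}$.

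The matched terms are the crux. For fixed $i$ with $1\le i\le n$ I would set $\widehat v:=[\omega_{i+1,n}]/M_{i+1,n}$ and $\widehat w:=[\omega_{i+1,n+m}]/M_{i+1,n+m}$, both of mass one. The essential geometric input is that $[\omega_0]$ and $\gamma_{n+1}\cdots\gamma_{n+m}[\omega_0]$ are both transverse for $\gamma_{i+1}\cdots\gamma_n$ (see \autoref{sssec:transversality_and_growth}), so \autoref{prop:contraction_from_transversality} applied to $g'=\gamma_{i+1}\cdots\gamma_n$ yields $\norm{\widehat v-\widehat w}\leqapprox \norm{\gamma_{i+1}\cdots\gamma_n}_{op}^{-1}\leqapprox e^{-(\lambda_{i+1}+\cdots+\lambda_n)}$. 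Writing the $i$-th matched difference as $a\,\phi(\gamma_iA(\widehat v))-b\,\phi(\gamma_iA(\widehat w))$ with $a:=M_{i+1,n}/M_{1,n}$ and $b:=M_{i+1,n+m}/M_{1,n+m}$, the triangle inequality bounds it by $|a|\,\norm{\phi(\gamma_iA(\widehat v-\widehat w))}_{C^0}+|a-b|\,\norm{\phi(\gamma_iA(\widehat w))}_{C^0}$. The first piece I control by \autoref{eqn:basic_bound_potential_all_automorphisms} (applied to the class $\widehat v-\widehat w$, to which it extends by linearity) together with $|a|\leqapprox e^{-(\lambda_1+\cdots+\lambda_i)}$. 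For the second, I use that $a=1/M(g\widehat v)$ and $b=1/M(g\widehat w)$ with $g=\gamma_1\cdots\gamma_i$, and that $\widehat v,\widehat w$ are transverse for $g$, so $M(g\widehat v)\approx M(g\widehat w)\approx\norm{g}_{op}$ and hence $|a-b|\leqapprox \norm{\widehat v-\widehat w}/\norm{g}_{op}$. Both pieces then come out $\leqapprox e^{-(\lambda_1+\cdots+\lambda_{i-1})}\,e^{-(\lambda_{i+1}+\cdots+\lambda_n)}$, and two applications of \autoref{prop:displacement_estimate} (to the blocks $\lambda_1,\dots,\lambda_{i-1}$ and $\lambda_{i+1},\dots,\lambda_n$) bound this by $\leqapprox e^{-\delta(n-1)}$, uniformly in $i$ and $m$; summing over the $n$ matched terms gives $\leqapprox ne^{-\delta n}$, and adding the tail bound proves the displayed estimate. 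The main obstacle is exactly this step: one has to check that the loss $\norm{\gamma_i}_{op}$ incurred by a single generator is absorbed by the gain $\norm{\widehat v-\widehat w}$ from transverse contraction, and that the two displacement blocks flanking the index $i$ always combine to exponential decay in $n$ regardless of where $i$ lies between $1$ and $n$.

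Finally, completeness of $C^0(X)$ turns this Cauchy estimate into uniform convergence $\phi(\omega_{1,n})/M_{1,n}\to\psi_s$ for some $\psi_s\in C^0(X)$. Since the normalized classes $[\omega_{1,n}]/M_{1,n}$ converge to the mass-one class $[\alpha_s]\in\partial\Amp(X)$ cut out by the irrational endpoint of $s$, one gets $A([\omega_{1,n}])/M_{1,n}\to A([\alpha_s])$ and hence $\omega_{1,n}/M_{1,n}=A([\omega_{1,n}])/M_{1,n}+dd^c(\phi(\omega_{1,n})/M_{1,n})\to A([\alpha_s])+dd^c\psi_s$ weakly. Each $\omega_{1,n}/M_{1,n}$ is a positive multiple of the pushforward of the K\"ahler form $\omega_0$ by an automorphism, hence K\"ahler, so the weak limit is a closed positive $(1,1)$-current with continuous potential $\psi_s$ lying in the boundary class $[\alpha_s]$ normalized to mass one, which is the current attached to the geodesic $s$.
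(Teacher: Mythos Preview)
Your proposal is correct and follows essentially the same route as the paper: both split the difference into the $m$ ``tail'' terms (bounded via \autoref{eqn:basic_bound_potential_all_automorphisms} and summed geometrically) and the $n$ ``matched'' terms, and in the matched case both use \autoref{prop:contraction_from_transversality} to control $\widehat v-\widehat w$, the same mass-ratio estimate for $|a-b|$, and two applications of \autoref{prop:displacement_estimate} to the flanking blocks, arriving at the bound $e^{-(\lambda_1+\cdots+\lambda_{i-1})}e^{-(\lambda_{i+1}+\cdots+\lambda_n)}\leqapprox e^{-\delta(n-1)}$ per term. The only cosmetic difference is that you organize the matched subtraction via $|a||X-Y|+|a-b||Y|$, whereas the paper expands both terms first and then subtracts; the resulting two error pieces and their sizes are identical.
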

\begin{proof}
Since $\omega_0$ is a \Kahler metric, any weak limit of the sequence is a positive current.
The estimate on the sequence of potentials implies, in particular, that we also have $C^0$ convergence of potentials and the limit has a continuous potential.

To analyze the sequence of potentials, we will use the expansion in \autoref{eqn:gamma1n_omega} to match up the first $n$ terms and estimate their difference.
For the remaining $m$ terms in $\phi(\omega_{1,n+m})$, we use the same type of estimate as in \autoref{prop:boundedness_of_potentials}, plus the estimates on masses \autoref{eqn:useful_inequalities}, to find
\begin{align*}
	\frac{\norm{\phi(\gamma_{n+i}A([\omega_{n+i+1,n+m}]))}_{C^0}}{M_{1,n+m}}
	& \leqapprox
	\frac{\norm{\gamma_{n+i}}_{op} M_{n+i+1,n+m}}{M_{1,n+m}}\\
	& \leqapprox e^{-(\lambda_1+\cdots + \lambda_n)}e^{-(\lambda_{n+1} + \cdots + \lambda_{n+i-1})}
\end{align*}
We next apply the estimate from \autoref{prop:displacement_estimate} to sum the above estimate in $i$ and obtain a bound of $\leqapprox e^{-(\lambda_1+\cdots +\lambda_n)}\leqapprox e^{-\delta n}$.

For the remaining $n$ differences, we need to estimate (omitting $\gamma_1\cdots \gamma_{i-1}$ applied to both terms, which does not affect the $C^0$ norm):
\begin{multline*}
	\norm{
	\frac{\phi(\gamma_iA(\gamma_{i+1}\cdots \gamma_n[\omega]))}{M_{1,n}}
	-
	\frac{\phi(\gamma_i A(\gamma_{i+1}\cdots \gamma_{n+m}[\omega]))}{M_{1,n+m}}
	}_{C^0}
	=\\
	\norm{
	\frac{\phi(\gamma_i A([\omega_{i+1,n}]))}{M_{1,n}}
	-
	\frac{\phi(\gamma_i A([\omega_{i+1,n+m}]))} {M_{1,n+m}}
	}_{C^0}
\end{multline*}
The first step is to rewrite each term in normalized form as
\[
	\frac{M_{i+1,n}}{M_{1,n}}{\phi\left(\gamma_i A\left(\left[\frac{\omega_{i+1,n}}{M_{i+1,n}}\right]\right)\right)}
	\text{ and }
	\frac{M_{i+1,n+m}}{M_{1,n+m}}{\phi\left(\gamma_i A\left(\left[\frac{\omega_{i+1,n+m}}{M_{i+1,n+m}}\right]\right)\right)}
\]
To proceed, we will use the estimate from \autoref{prop:contraction_from_transversality}:
\[
	\norm{
	\frac{[\omega_{i,j}]}{M_{i,j}} -
	\frac{\gamma_i\cdots \gamma_j [\alpha]}{M(\gamma_i\cdots \gamma_j[\alpha])}
	}
	\leq
	\frac{C}{\norm{\gamma_i\cdots \gamma_j}_{op}}
\]
whenever $[\alpha]$ is transverse to $\gamma_i\cdots \gamma_j$.
Let us apply this estimate to the expressions to which we apply the operator $A$:
\begin{align*}
	\frac{[\omega_{i+1,n+m}]}{M_{i+1,n+m}}
	& =
	\frac{\gamma_{i+1}\cdots \gamma_n [\omega_{n+1,n+m}]}{M(\gamma_{i+1}\cdots \gamma_n [\omega_{n+1,n+m}])}
	=
	\frac{[\omega_{i+1,n}]}{M_{i+1,n}} + \frac{O(1)}{\norm{\gamma_{i+1}\cdots \gamma_n}_{op}}
\end{align*}
We now also take $[\alpha]= \frac{[\omega_{n+1,n+m}]}{M_{n+1,n+m}}$ to find
\begin{align*}
	\frac{[\omega_{i+1,n}]}{M_{i+1,n}} -
	\frac{\gamma_{i+1}\cdots \gamma_n\frac{[\omega_{n+1,n+m}]}{M_{n+1,n+m}}}{M\left(\gamma_{i+1}\cdots \gamma_n\frac{[\omega_{n+1,n+m}]}{M_{n+1,n+m}}\right)}
	= \frac{O(1)}{\norm{\gamma_{i+1}\cdots \gamma_n}_{op}}
\end{align*}
So we have the estimate for the masses:
\begin{align*}
	\abs{\frac{M_{i+1,n}}{M_{1,n}}
	-
	\frac{M_{i+1,n+m}}{M_{1,n+m}}}
	& =
	\abs{
	\frac{1}{M\left(\gamma_1\cdots \gamma_i \frac{\omega_{i+1,n}}{M_{i+1,n}}\right)}
	-
	\frac{1}{M\left(\gamma_1\cdots \gamma_i \frac{\omega_{i+1,n+m}}{M_{i+1,n+m}}\right)}
	}
	\\
	& =
	\frac{M\left(\gamma_1\cdots \gamma_i \frac{O(1)}{\norm{\gamma_{i+1}\cdots \gamma_n}_{op}}\right)}
	{M\left(\gamma_1\cdots \gamma_i \frac{\omega_{i+1,n}}{M_{i+1,n}}\right)
	\cdot
	M\left(\gamma_1\cdots \gamma_i \frac{\omega_{i+1,n+m}}{M_{i+1,n+m}}\right)
	}
	\\
	& \leq
	\frac{O(1)}{\norm{\gamma_1\cdots \gamma_i}_{op}{\norm{\gamma_{i+1}\cdots \gamma_n}_{op}}}\\
	& \leqapprox e^{-(\lambda_1 + \cdots + \lambda_n)}
\end{align*}
where we use for the last inequality again transversality to say that the mass, and the norm of the operator are the same up to constants.

So the terms we must subtract from each other are (using the above estimates)
\begin{align*}
	\left(\frac{M_{i+1,n+m}}{M_{1,n+m}} +
	\frac{O(1)}{e^{\lambda_1 + \cdots + \lambda_n}}\right)
	& \cdot \phi\left(\gamma_i A \left(\frac{[\omega_{i+1,n}]}{M_{i+1,n}}\right)\right)\\
	\intertext{ and }
	\left(\frac{M_{i+1,n+m}}{M_{1,n+m}}
	\phantom{e^{-(\lambda_1 + \cdots + \lambda_n)}}
	\right)
	& \cdot \phi\left(\gamma_i A \left(\frac{[\omega_{i+1,n}]}{M_{i+1,n}}
	+ \frac{O(1)}{e^{\lambda_{i+1}+\cdots+\lambda_n}}\right)\right)
\end{align*}
Upon subtraction, we are left with the two terms
\begin{align*}
	\frac{O(1)}{e^{\lambda_1+\cdots + \lambda_n}}
	\cdot \phi\left(\gamma_i A \left(\frac{[\omega_{i+1,n}]}{M_{i+1,n}}\right)\right)\text{ and }
	\frac{M_{i+1,n+m}}{M_{1,n+m}}
	\cdot \frac{\phi\left(\gamma_i A\left(O(1)\right)\right)}{e^{\lambda_{i+1+\cdots +\lambda_n}}}
\end{align*}
Using that $\norm{\phi(\gamma_iA(O(1)) )}_{C^0}\leq O(1)\norm{\gamma_i}_{op}$ which is \emph{valid for both situations, when $\gamma_i$ is parabolic, or when $\gamma_i$ belongs to some fixed set of generators} (see \autoref{eqn:basic_bound_potential_all_automorphisms}),
we get that each term in the difference is bounded by
\[
	\norm{\text{difference}}_{C^0}
	\leqapprox e^{-(\lambda_1+\cdots + \lambda_n)}e^{\lambda_i}
\]
We again apply the estimate from \autoref{prop:displacement_estimate} to $\lambda_1+\cdots +\lambda_{i-1}$ and to $\lambda_{i+1}+\cdots + \lambda_n$ to find that the term above is bounded by $O(1)\cdot e^{-n\delta}$, and since we have $n$ terms the result follows.
\end{proof}

\subsubsection{Estimates for continuity at a parabolic boundary point}
	\label{sssec:estimates_for_continuity_at_a_parabolic_boundary_point}
We now assemble also the estimates that give continuity at the parabolic boundary points, needed for the proof of \autoref{thm:continuous_family_of_boundary_currents}.
Recall that we have a sequence $\gamma_1,\gamma_2,\ldots$ such that $\gamma_1\in \Gamma_{[E_0]}$ and $\norm{\xi(\gamma_1)}_{\NS}\geq R$, and we are allowed to choose $R$ as large as we want.
We need to estimate a potential, which we rewrite using \autoref{eqn:gamma_action_noncanonical_potential}:
\begin{align*}
	\frac{\phi(\gamma_1\cdots \gamma_n \omega_0)}{M(\gamma_1\cdots \gamma_n[\omega_0])} & =
	\frac{\phi\left(\gamma_1\cdot A\big( \gamma_2\cdots \gamma_n[\omega_0] \big)\right)}
	{M\left(\gamma_1\cdot \big( \gamma_2\cdots \gamma_n[\omega_0] \big)\right)} +
	\\
	& +
	\gamma_1\phi\left(\frac{\gamma_2\cdots \gamma_n[\omega_0]}{M(\gamma_2\cdots \gamma_n[\omega_0])}\right)
	\cdot
	\frac{M(\gamma_2\cdots \gamma_n[\omega_0])}{M(\gamma_1\cdots \gamma_n[\omega_0])}
\end{align*}
We consider the behavior of this expression as $\norm{\xi(\gamma_1)}_{\NS}\to +\infty$.

The second term goes to zero, since \autoref{prop:boundedness_of_potentials} says that the $C^0$-norm of $\phi\left(\frac{\gamma_2\cdots \gamma_n[\omega_0]}{M(\gamma_2\cdots \gamma_n[\omega_0])}\right)$ is uniformly bounded, it does not change when we apply $\gamma_1$, while the factor $\frac{M(\gamma_2\cdots \gamma_n[\omega_0])}{M(\gamma_1\cdots \gamma_n[\omega_0])}$ goes to zero using the estimates in \autoref{eqn:useful_inequalities}.

For the first term, to ease notation let $\xi_1:=\xi(\gamma_1)$.
Recall that as $R\to +\infty$ (i.e. $\norm{\xi_1}_{\NS}\to +\infty$) we have that $\frac{\xi_1}{\norm{\xi_1}_{\NS}}\to \xi$ and thus $\frac{\eta_{[E_0]}(\xi_1,\xi_1)}{\norm{\xi_1}^{2}_{\NS}}\to \eta_{[E_0]}(\xi,\xi)$, in the $C^0$-topology of potentials.
By \autoref{prop:parabolic_convergence_to_the_boundary}\ref{large_parabolic_brings_close_to_eta} we also have that
\[
	\frac{\phi\left(\gamma_1\cdot A\big( \gamma_2\cdots \gamma_n[\omega_0] \big)\right)}
	{M\left(\gamma_1\cdot \big( \gamma_2\cdots \gamma_n[\omega_0] \big)\right)}
	=
	\frac{\phi(\eta_{[E_0]}(\xi_1,\xi_1))}{M(\eta_{[E_0]}(\xi_1,\xi_1))}
	+
	O\left(\frac{1}{1+\norm{\gamma_1}_{\NS}}\right)
\]
The assumption of the proposition are verified since the cohomology class $\gamma_2\cdots \gamma_n[\omega_0]$ is parabolically normalized for the class $[E_0]$ by the geometric interpretation in \autoref{sssec:fundamental_domain_basepoint}.
It remains to observe that $M(\eta_{[E_0]}(\xi_1,\xi_1))=\norm{\xi_1}^2_{\NS}M([E_0])$ and analogously for $\xi$, and from this the convergence to the correctly normalized current follows.
In fact, we get a quantitative estimate depending on the distance between $\frac{\xi_1}{\norm{\xi_1}_{\NS}}$ and $\xi$, which goes to zero as $R\to +\infty$.
\hfill \qed

\subsubsection{Laminarity}
	\label{sssec:laminarity}
It follows from the work of Dujardin \cite{Dujardin2003_Laminar-currents-in-Bbb-P2} (see also de Th\'{e}lin \cite[Thm.~1]{Thelin2004_Sur-la-laminarite-de-certains-courants}) that the currents constructed in \autoref{thm:continuous_family_of_boundary_currents} are also weakly laminar, in the sense of \cite[Def.~2]{Thelin2004_Sur-la-laminarite-de-certains-courants}.
Indeed, all the currents can also be obtained as weak limits of $\frac{1}{m_n}(\gamma_n)_{*} T_C$, where $C$ is a fixed smooth ample curve, and $T_C$ is the current of integration on it, $\gamma_n$ is a sequence of automorphisms, and $m_n\to +\infty$ is a sequence of normalization factors.
By the results the aforementioned papers, it suffices to provide a bound on the genus and singularities of the curves $(\gamma_n)_* T_C$ in terms of $m_n$, but since the $\gamma_n$ is an automorphism the genus doesn't change and no singularities are introduced.

Specifically, fix an ample linear divisor $A$ on $X$ and let $\pi_{A}\colon X\dashrightarrow \bP^1$ be a rational projection determined by a $2$-dimensional linear subsystem.
We can select the $2$-dimensional subspace in a Baire-generic way such that the locus of indeterminacy of $\pi_A$ doesn't contain any point on any of the curves $\gamma_n(C)$.
By construction it follows that $\deg \pi_A\vert_{\gamma_n(C)}=[A].[\gamma_n(C)]=:d_n$.
By Riemann--Hurwitz the number of ramification points of $\pi_A\vert_{\gamma_n(C)}$ is equal to $2\left(g(C)-1+d_n\right)$, since the genus of $C$ and $C_n$ agree.
Now the required bounds in the proofs of Dujardin \cite[Thm.~3.1]{Dujardin2003_Laminar-currents-in-Bbb-P2} hold.
The proof as written in loc. cit. is for $\bP^2$, but for generalizing to the case of K3s it suffices to bound the number of ramification points by $O(d_n)$, as just established.

% 	End of Subsection: Existence of boundary potentials
%%===============================================

%%===============================================
% 	Start of Subsection: Uniqueness results, after Verbitsky--Sibony

\subsection{Uniqueness results, after Sibony--Verbitsky}
	\label{ssec:uniqueness_results_after_verbitsky_sibony}

We quote here some results established by Sibony and Verbitsky.
While these results are not needed for the construction of currents in \autoref{thm:continuous_family_of_boundary_currents}, they do show that the resulting currents are unique and hence canonical.

\begin{theorem}[Uniqueness in irrational classes, Sibony--Verbitsky \cite{VerbitskySibony}]
	\label{thm:uniqueness_in_irrational_classes_verbitsky_sibony}
	Let $X$ satisfy the standing assumptions in \autoref{sssec:standing_assumptions_main}.
	Suppose that $[\alpha]\in \partial \Amp(X)$ is an irrational class, i.e. no nonzero rescaling of $[\alpha]$ is integral.
	Then the closed positive current in the class $[\alpha]$ is unique.
\end{theorem}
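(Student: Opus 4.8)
\emph{Setup and cohomological rigidity.} Under the standing assumptions $\partial\Amp(X)$ is a component of the null cone, so $[\alpha]^2=0$ and $[\alpha]$ is nef; since $X$ carries no $(-2)$-curves, the ample, positive and nef cones of $X$ coincide and $\partial\Amp(X)$ is also the boundary of the pseudoeffective cone. Fix a smooth closed $(1,1)$-form $\theta$ representing $[\alpha]$, and let $T_1,T_2$ be two closed positive currents in $[\alpha]$, written $T_i=\theta+dd^c\varphi_i$ with $\varphi_i$ $\theta$-psh; the plan is to show $\varphi_1-\varphi_2$ is constant. First, since $[\alpha]$ is nef, $0\le\int_X\langle T_i\wedge T_i\rangle\le[\alpha]^2=0$, so the non-pluripolar self-intersections $\langle T_i\wedge T_i\rangle$ vanish, and likewise $\langle T_1\wedge T_2\rangle=0$. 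Second, no $T_i$ charges a curve: if the Siu decomposition of $T_i$ contained a term $\nu[C]$ then $[\alpha]-\nu[C]$ would remain pseudoeffective, hence in the closure of the positive cone, so $([\alpha]-\nu[C])\cdot[\alpha]\ge0$ forces $[C]\cdot[\alpha]\le0$; with $[\alpha]$ nef this gives $[C]\cdot[\alpha]=0$, and then $[C]^2\ge0$ (no $(-2)$-curves on a K3) together with the Hodge index theorem forces $[C]\in\mathbb R[\alpha]$, contradicting the irrationality of $[\alpha]$. Thus the $\varphi_i$ have at worst isolated logarithmic singularities, and the non-pluripolar calculus below applies to them (if necessary after Demailly regularization inside the nef class $[\alpha]$).

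\emph{The energy identity.} Put $u:=\varphi_1-\varphi_2$, so $dd^cu=T_1-T_2$, and let $S:=T_1+T_2$, a closed positive current in the nef class $2[\alpha]$. Both $\langle T_1\wedge S\rangle$ and $\langle T_2\wedge S\rangle$ are positive measures of total mass at most $[\alpha]\cdot 2[\alpha]=0$, hence vanish identically; therefore
\[
dd^cu\wedge S=\langle T_1\wedge S\rangle-\langle T_2\wedge S\rangle=0,
\]
i.e.\ $u$ is harmonic for the degenerate elliptic operator $\chi\mapsto dd^c\chi\wedge S$. Integrating by parts against $u$ then gives $\int_X\langle du\wedge d^cu\wedge S\rangle=0$, and since $S\ge0$ is closed the integrand is a non-negative measure, hence zero: $du=0$ almost everywhere with respect to $S=T_1+T_2$.

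\emph{Rigidity from irrationality and the main obstacle.} It remains to upgrade the degenerate-harmonicity of $u$ to $u\equiv\mathrm{const}$, and this is exactly where the irrationality of $[\alpha]$ is indispensable; this is the step explained to the first-named author by Verbitsky and it is the technical heart of the argument. For a rational isotropic class $[\alpha]=k[E]$ the current $T_1+T_2$ is pulled back from the base of the associated elliptic fibration, the operator $\chi\mapsto dd^c\chi\wedge(T_1+T_2)$ has a large kernel, and $u$ may genuinely vary transversally to the fibres — which is precisely why uniqueness fails in rational directions. When $[\alpha]$ is irrational there is no $\Aut(X)$-invariant or $\pi$-trivial fibration available: instead one uses the laminarity of $T_1+T_2$ (cf.\ \autoref{sssec:laminarity}) together with the absence of obstruction curves from the first paragraph to conclude that the leaves along which $u$ is locally constant are dense in $X$, whence $u$ is globally constant, so $T_1=T_2$. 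Converting ``$du=0$ a.e.\ with respect to $T_1+T_2$'' into a genuine maximum principle for the degenerate operator is the delicate point. (For the measure-zero set of classes that are eigenvectors of hyperbolic automorphisms a soft contraction argument — applying $f^n_*/\lambda^n$ to an arbitrary closed positive current — already yields uniqueness; the generic, non-eigenvector irrational classes are the ones requiring the argument above, and in particular this pins down the current in such a class as the one produced by \autoref{thm:continuous_family_of_boundary_currents}.)
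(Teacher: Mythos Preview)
Your approach is genuinely different from the paper's, and it has a real gap. The paper does \emph{not} argue via an energy identity or non-pluripolar products. Instead it introduces the diameter function $d([\alpha]):=\operatorname{diam}\cP_{[\alpha]}$ of the space of closed positive currents in a class, measured in the $L^1$ distance on potentials, and observes three soft properties: $d$ is upper semicontinuous in $[\alpha]$, it is $\Aut(X)$-invariant, and it is $1$-homogeneous. For the expanded class $[\alpha_+]$ of a hyperbolic automorphism $\gamma$ one has $d([\alpha_+])=d(\gamma[\alpha_+])=d(e^{\lambda}[\alpha_+])=e^{\lambda}d([\alpha_+])$, hence $d([\alpha_+])=0$. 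For an arbitrary irrational class one uses the orbit-closure dichotomy (\autoref{thm:orbit_closure_dichotomy}) to find $\gamma_i[\alpha]\to[\alpha_+]$, and upper semicontinuity plus $d\ge 0$ gives $d([\alpha])=0$. No pluripotential theory beyond basic compactness of quasi-psh functions is needed.

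Your argument, by contrast, is not complete. The first two paragraphs are fine as far as they go (the vanishing of non-pluripolar self-intersection and the absence of curve charges are correct, though the integration by parts needs $u$ in a suitable energy class, which you have not checked). But the third paragraph is where the proof should be, and you yourself flag it as ``the delicate point'' without actually carrying it out. Two specific problems: first, you invoke laminarity of $T_1+T_2$ by citing \autoref{sssec:laminarity}, but that section establishes laminarity only for the \emph{particular} currents built as limits of $\tfrac{1}{m_n}(\gamma_n)_*T_C$; to know an \emph{arbitrary} positive current in $[\alpha]$ is laminar you would already need uniqueness, so this is circular. Second, even granting laminarity, ``leaves dense in $X$'' plus ``$u$ locally constant along leaves'' does not give $u$ constant without further structure (e.g.\ continuity of $u$ and transverse regularity of the lamination), none of which you have. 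The paper's diameter argument sidesteps all of this by never looking inside the class $[\alpha]$ at all: it uses only the \emph{group action on the boundary} and the scaling behaviour of $d$.
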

\noindent For completeness, we include the proof but we emphasize that the result is due to Sibony--Verbitsky.
\begin{proof}
	Let $\cP_{[\alpha]}$ denote the closed positive currents in the class $[\alpha]$.
	It is a metric space equipped with the $L^1$ metric:
	\[
		\dist(\alpha,\alpha'):=\int_X |\phi|\dVol \text{ where } \alpha=\alpha'+dd^c \phi \text{ and } \int_X \phi\dVol =0.
	\]
	Let $d([\alpha]):=\diam \cP_{[\alpha]}$ denote the diameter of this metric space.
	This is an upper semicontinuous function, since if $\alpha_i\to \alpha$ and $\alpha_i'\to \alpha'$ weakly in the sense of currents, and $[\alpha_i]=[\alpha_i']$ then $\dist([\alpha],[\alpha'])=\lim_i \dist([\alpha_i],[\alpha_i'])$ since we have $L^1$ convergence of the normalized difference of potentials (see e.g. \cite[Thm.4.1.8]{Hormander_Notions-of-convexity} for the local version of the statement).
	The diameter function also satisfies $d(\gamma[\alpha]) = d([\alpha])$ for any $\gamma\in \Aut(X)$ and is homogeneous of degree $1$, i.e. $d\left(e^{\lambda}[\alpha]\right)=e^{\lambda} d([\alpha])$ for any $\lambda\in \bR$.
	
	Take now a hyperbolic automorphism $\gamma$ and boundary class $[\alpha_+]$ such that $\gamma[\alpha_+]=e^\lambda [\alpha_+]$ with $\lambda>0$.
	From the relation
	\[
		d([\alpha_+])=d(\gamma[\alpha_+])=d\left(e^{\lambda}[\alpha_+]\right)
		= e^{\lambda}d([\alpha_+])
	\]
	it follows that $d([\alpha_+])=0$.
	
	Take now an arbitrary irrational class $[\alpha]\in \partial \Amp(X)$.
	By \autoref{thm:orbit_closure_dichotomy} below, its orbit under $\Aut(X)$ is dense in $\partial \Amp(X)$.
	So there exists a sequence of automorphisms $\gamma_i$ such that $\gamma_i[\alpha]\to [\alpha_+]$ and by upper semicontinuity of the diameter function, combined with the trivial bound $d([\alpha])\geq 0$, it follows that $d([\alpha])=0$.
\end{proof}

\begin{remark}[On the diameter function]
	\label{rmk:on_the_diameter_function}
	We think it is instructive to visualize the diameter function in the simplest situation, when $\Aut(X)$ is a lattice in $\SO_{1,2}(\bR)$.
	The action of $\Aut(X)$ on $\partial \Amp(X)$ in this case is isomorphic to the action of a lattice in $\SL_2(\bR)$ on $\bR^2\setminus 0$.
	There is a finite set of nonzero values corresponding to the diameters for the classes $[E]$ of elliptic fibrations (which fall finitely many $\Aut(X)$ equivalence classes).
	
	In the simplest case when the lattice is commensurable to $\SL_2(\bZ)$, the function $d\colon \bR^2\setminus 0\to \bR_{\geq 0}$ is equal to one of those constants at the primitive integral vectors and extended by homogeneity to the each rational ray, and equals zero on irrational rays.
	The function is upper semicontinuous since there are only finitely many primitive vectors in a disc of given radius.
	Note also that restricting to say a vertical line $x=1$, the diameter function becomes at rational points of the form $\frac{p}{q}\mapsto \frac{c(p/q)}{q}$ where $c(p/q)$ takes finitely many values, and the function vanishes at irrational points.
	% This is a classical example of an upper semicontinuous function which is not continuous.
\end{remark}

% 	End of Subsection: Uniqueness results, after Verbitsky--Sibony
%%===============================================

%%===============================================
% 	Start of Subsection: Boundary currents and Ricci-flat metrics

\subsection{Boundary currents and Ricci-flat metrics}
	\label{ssec:boundary_currents_and_ricci_flat_metrics}

\subsubsection{Setup}
	\label{sssec:setup_boundary_currents_and_ricci_flat_metrics}
For each \Kahler class $[\omega]$ on a K3 surface, Yau's solution of the Calabi conjecture \cite{Yau_Ricci} ensures that there exists a unique Ricci-flat representative $[\omega]$.
The degeneration of these metrics, as the cohomology class $[\omega]$ approaches the boundary, has been the focus of intense study, see e.g. the second-named author's recent survey \cite{Tos_survey} and references therein.
We now interpret our earlier constructions and deduce some consequences for these degenerations, proving in particular \autoref{ricciflat}, which follows from the discussion in \autoref{sssec:irrational_boundary_classes} together with \autoref{thm:Linfty_bound_potentials} below.

Fix, for definiteness, a \Kahler class in $\NS(X)_{\bR}$ with a Ricci-flat metric $\omega_0$ inside.

\subsubsection{Irrational boundary classes}
	\label{sssec:irrational_boundary_classes}
Suppose that $[\alpha]\in \partial \Amp(X)$ is an irrational class.
Let also $\gamma_1,\gamma_2,\ldots$ be a sequence of generators as in \autoref{sssec:adapted_generators}, such that $\frac{1}{m_n}(\gamma_1)_*\cdots (\gamma_n)_{*}[\omega_0]\to [\alpha]$, where $m_n$ is an appropriate sequence of scalars with $m_n\to +\infty$.
From \autoref{thm:continuous_family_of_boundary_currents} it follows that the currents $\frac{1}{m_n}(\gamma_1\cdots \gamma_n)_*\omega_0$ converge in the $C^0$ sense of potentials to the unique closed positive current in the class $[\alpha]$.

At the same time, the \Kahler metrics $(\gamma_1\cdots \gamma_n)\omega_0$ and $\omega_0$ are isometric, since $\gamma_i$ are automorphisms.
Therefore, in the Gromov--Hausdorff sense, the \Kahler metrics $\frac{1}{m_n}(\gamma_1\cdots \gamma_n)_*\omega_0$ converge to a point, since $m_n\to +\infty$. This confirms \cite[Conjecture 3.25]{Tos_survey} for projective K3 surfaces with Picard number at least $3$ and no $(-2)$-curves, for classes on the boundary of the ample cone.

We thus see that for irrational boundary points, the Gromov--Hausdorff limit and weak limit as currents (or even stronger $C^0$-potentials limit) disagree. Of course, in this discussion the Ricci-flatness of $\omega_0$ did not play any role.

\subsubsection{Rational boundary points}
	\label{sssec:rational_boundary_points}
Suppose now that $[E]$ is the class of the fiber in a genus one fibration $X\xrightarrow{\pi}B$.
Then it is a famous result of Gross-Wilson \cite{GW} that, under the assumption that all singular fibers of $\pi$ are of type $I_1$, the Ricci-flat metrics in the class $t[\omega_0]+[E]$ converge as $t\to 0$ in the Gromov--Hausdorff sense to the metric completion of $(B^\circ,c\cdot \omega_B)$, where $B^\circ$ denotes the complement of the image of the singular fibers of $\pi$, $\omega_B:=\pi_*\dVol$ is the $(1,1)$-form on $B$ obtained by pushing forward the volume on $X$, and $c>0$ is a normalizing constant.
Furthermore, these Ricci-flat metrics converge locally smoothly on $X^\circ=\pi^{-1}(B^\circ)$ to $\pi^*\omega_B$, and the above metric completion is homeomorphic to $B$, has singularities at finitely many points, and on $B^\circ$ there is an affine structure that makes $\omega_B$ a ``Monge-Amp\`ere metric'' in the sense of Cheng-Yau \cite{ChYa}. Later a new proof of all these results was given in \cite{GTZ,GTZ2}, which also allows for arbitrary singular fibers, and this was extended to higher-dimensional hyperk\"ahler manifolds in \cite{TZ}.

It is therefore natural to compare $\omega_B$, which is canonically associated to the class $[E]$, and the currents coming from \autoref{thm:current_valued_pairing} via the dynamics of parabolic automorphisms.
With the same normalization, there is a $\bR\bP^{\rho-3}$ of currents coming from dynamics, and these are in general distinct as soon as $\rho\geq 4$, as follows from \cite[Thm.1.12]{DeMarcoMyrto-Mavraki2020_Elliptic-surfaces-and-arithmetic-equidistribution-for-R-divisors-on-curves}.
So it is not possible for the currents coming from dynamics to be equal to the current obtained by degenerating the Ricci-flat metric.
We suspect that even in the case $\rho=3$, the unique dynamically defined current does not, in general, equal the one coming from degenerations of Ricci-flat metrics.
See also \autoref{sssec:an_example_with_elliptic_fibrations} for an illustration of why the currents are, in general, different.

\subsubsection{Conical vs tangential approach to the boundary}
	\label{sssec:conical_vs_tangential_approach_to_the_boundary}
To put some of the discussion in this section into context, let us recall the notion of ``conical convergence'' to a boundary point $[\alpha]\in \partial\Amp^1(X)$, viewed as a point on the boundary of hyperbolic space.
Namely, $x_i\to[\alpha]$ conically, if there exists a geodesic ray $s$ starting at some point $p_0$ in the interior of hyperbolic space and going to $[\alpha]$, and a constant $C$, such that $\dist(x_i,s)\leq C$ and $\dist(p_0,x_i)\to+\infty$.
The construction in \autoref{thm:continuous_family_of_boundary_currents} uses a sequence of points converging conically to an irrational boundary point.
On the other hand, the currents associated to rational boundary points via \autoref{thm:current_valued_pairing}, are obtained from points in hyperbolic space that converge \emph{tangentially} to the rational boundary point, since they all lie on some fixed horosphere.
By contrast, the Gromov--Hausdorff collapse for elliptic fibrations happens along conical approaches to the rational boundary point.

\subsubsection{An application}
We now give another application of our main theorem \ref{thm:continuous_family_of_boundary_currents} to degenerations of Ricci-flat K\"ahler metrics. The setup is the following: $X$ is a K3 surface to which Theorem \ref{thm:continuous_family_of_boundary_currents} applies (so $X$ projective with Picard number at least $3$ and no $(-2)$-curves), with a fixed unit-volume Ricci-flat K\"ahler metric $\omega$ and $\alpha$ a closed real $(1,1)$-form whose class $[\alpha]\in \NS_{\mathbb{R}}$ is on the boundary of the ample cone and satisfies $\int_X\alpha^2=0$. As in \S \ref{sssec:rational_boundary_points}, for $t>0$ we let $\omega_t$ be the Ricci-flat K\"ahler metric on $X$ cohomologous to $\alpha+t\omega$, so that we can write $\omega_t=\alpha+t\omega+i\partial\overline{\partial}\vp_t$ where the smooth potentials $\vp_t$ are normalized by $\int_X\vp_t\omega^2=0$ and satisfy the complex Monge-Amp\`ere equation
\begin{equation}\label{ma_eqn}
\frac{\omega_t^2}{\int_X(\alpha+t\omega)^2}=\omega^2.
\end{equation}

\begin{theorem}
	\label{thm:Linfty_bound_potentials}
Under these assumptions, we have the uniform $L^\infty$ estimate
\begin{equation}\label{goal}
	\sup_X|\vp_t|\leq C,
\end{equation}
for all small $t>0$.
\end{theorem}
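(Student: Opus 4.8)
The plan is to reduce \autoref{goal} to a uniform bound on the oscillation $\mathrm{osc}_X\vp_t=\sup_X\vp_t-\inf_X\vp_t$: the normalization $\int_X\vp_t\omega^2=0$ together with $\int_X\omega^2=1$ forces $\sup_X\vp_t\geq 0\geq\inf_X\vp_t$, so a uniform oscillation bound is equivalent to \autoref{goal}. (Incidentally one gets a cheap one-sided bound for free: since $\alpha\leq C'\omega$ for a fixed $C'$, positivity of $\omega_t$ makes $\vp_t$ a $(C'+1)\omega$-plurisubharmonic function of fixed average, and the usual mean value inequality for quasi-psh functions in the fixed Kähler class $(C'+1)[\omega]$ gives $\sup_X\vp_t\leq C_1$; but it is the two-sided oscillation estimate that requires the new input.)

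That new input is the closed positive current with bounded potential in the boundary class, supplied by \autoref{thm:continuous_family_of_boundary_currents}: there is $\psi\in C^0(X)$, normalized so that $\sup_X\psi=0$ and hence $-C_0\leq\psi\leq 0$ for a fixed $C_0$, with $T:=\alpha+dd^c\psi\geq 0$ (for irrational $[\alpha]$ this is the unique such current; for rational $[\alpha]$ one uses any lift to $\partial^\circ\Amp_c(X)$, but only boundedness is needed, which was already available in that case). For $t\in(0,1]$ set
\[
\theta_t:=T+t\omega=\alpha+t\omega+dd^c\psi,
\]
a Kähler current with bounded potential $\psi$, lying in the cohomology class $[\alpha+t\omega]$, which is Kähler (sum of a nef and an ample class) and has volume $[\theta_t]^2=V_t:=\int_X(\alpha+t\omega)^2=2t\,[\alpha].[\omega]+t^2$, so $V_t\asymp t\to 0$. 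With $u_t:=\vp_t-\psi$ the Monge--Ampère equation \autoref{ma_eqn} becomes
\[
(\theta_t+dd^cu_t)^2=\omega_t^2=V_t\,\omega^2,
\]
whose right-hand side is the constant $V_t\leq V_1$ times the fixed smooth volume form $\omega^2$, with total mass $V_t=[\theta_t]^2$. The point of passing to $\theta_t$ is precisely to make this density constant: the naive reference choice (e.g. the fixed Ricci-flat metric $\omega_1$ at $t=1$) would instead leave a right-hand side of density comparable to $\mathrm{tr}_\omega\omega_t$, on which we have no a priori $L^p$ control.

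It then suffices to prove a Kolodziej-type $L^\infty$ estimate $\norm{u_t-\sup_Xu_t}_{C^0}\leq C$ with $C$ independent of $t$, since $\mathrm{osc}_X\vp_t\leq\mathrm{osc}_Xu_t+C_0\leq C+C_0$. The only delicate point is uniformity in $t$, because $\theta_t$ degenerates to the class $[\alpha]$ of vanishing self-intersection as $t\to 0$. This is absorbed by comparing throughout with the single fixed Kähler current $\theta_1=T+\omega$ in the fixed ample class $[\alpha+\omega]$, again with bounded potential $\psi$: since $\theta_t\leq\theta_1$, every $\theta_t$-plurisubharmonic function is $\theta_1$-plurisubharmonic, and for such functions with bounded potentials the Bedford--Taylor products satisfy $(\theta_t+dd^cv)^2\leq(\theta_1+dd^cv)^2$, so the Monge--Ampère capacities obey $\mathrm{Cap}_{\theta_t}\leq\mathrm{Cap}_{\theta_1}$. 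Hence the capacity-decay estimates and the volume--capacity comparisons on which Kolodziej's iteration rests can all be run with the single fixed reference $(\theta_1,\omega^2)$, yielding a constant $C$ depending only on $X,\omega,\alpha$ and $\psi$, not on $t$; feeding in that $V_t\leq V_1$ and $\int_X V_t\omega^2=[\theta_t]^2$ closes the estimate.

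I expect the genuine obstacle to be exactly this uniformity step: carrying Kolodziej's $L^\infty$ argument (or the relevant refinement in the literature) through the degeneration $\theta_t\to T$ and checking that no constant in the De Giorgi-type iteration blows up as the volume $V_t\to 0$. What makes it work is the combination of two structural facts — that $[\alpha]$ contains a positive current with bounded potential, which is \autoref{thm:continuous_family_of_boundary_currents} and is genuinely new when $[\alpha]$ is irrational (such classes contain no smooth semipositive form, by \cite{FT}), and that the whole family $\{\theta_t\}$ sits below the single fixed Kähler current $\theta_1$.
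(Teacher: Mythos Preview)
Your approach is correct and is in fact the pluripotential-theory route the paper itself cites: the authors note that the theorem follows from the bounded-potential current of \autoref{thm:continuous_family_of_boundary_currents} together with \cite[Prop.~1.1]{FGS}, which is precisely a uniform Ko\l odziej-type $L^\infty$ estimate for degenerating families, and your outline (pass to $u_t=\vp_t-\psi$, recognize the right-hand side as $V_t$ times a fixed smooth volume form, absorb the degeneration by comparing all $\theta_t$ to the single reference $\theta_1$) is the shape of that argument. The paper, however, writes out a different and more elementary proof via Moser iteration. The starting idea is the same---subtract the bounded potential---but they first \emph{regularize} $\psi$ to smooth $\psi_t$ with $\alpha+dd^c\psi_t\geq -\tfrac{t}{2}\omega$, so that $\hat{\omega}_t:=\alpha+t\omega+dd^c\psi_t\geq\tfrac{t}{2}\omega$ is an honest K\"ahler \emph{form}; then integration by parts on $(-\hat{\vp}_t)^{p-1}(\omega_t^2-\hat{\omega}_t^2)$ yields on one side $\leq Ct\int(-\hat{\vp}_t)^p\omega^2$ (from $\omega_t^2\leq Ct\,\omega^2$) and on the other $\geq\tfrac{(p-1)t}{p^2}\int|\partial(-\hat{\vp}_t)^{p/2}|^2\omega^2$ (from $\hat{\omega}_t\geq\tfrac{t}{2}\omega$); the factor $t$ cancels, and Sobolev plus iteration closes the loop. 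Your route is the general one (all dimensions, any nef class with a bounded-potential current) but black-boxes a nontrivial pluripotential result; the paper's is fully self-contained for surfaces and makes transparent exactly where the small volume $V_t\asymp t$ is spent---against the lower bound $\hat{\omega}_t\geq\tfrac{t}{2}\omega$ coming from the regularized reference.
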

\noindent Note that while $C^0$ convergence of $\vp_t$ also seems plausible, the geometric arguments using $\Aut(X)$ can only establish it only along subsequences of $t$ which project to a compact set modulo $\Aut(X)$.
\begin{proof}
This result follows from \autoref{thm:continuous_family_of_boundary_currents} by applying \cite[Prop.1.1]{FGS}, which uses sophisticated pluripotential theory (and holds in all dimensions). In our setting we can give a simple proof using a Moser iteration type argument as in the classical work of Yau \cite{Yau_Ricci}, as follows.

First, observe that
\begin{equation}\label{volbd}
\int_X (\alpha+t\omega)^2\leq Ct,
\end{equation}
as $t\to 0$. Thanks to \autoref{thm:continuous_family_of_boundary_currents} we can find a closed positive current $\eta=\alpha+i\partial\overline{\partial} \vp_0\geq 0$ with $\vp_0\in C^0(X)$ normalized by $\int_X\vp_0\omega^2=0$. From $\int_X\vp_t\omega^2=0$ we see that $\sup_X\vp_t\geq 0$, and since
$$0<\omega_t=\alpha+t\omega+i\partial\overline{\partial}\vp_t\leq C_0\omega+i\partial\overline{\partial}\vp_t,$$
for a uniform $C_0$ independent of $t$ (small), and therefore
$$\Delta_{\omega}\vp_t\geq -2C_0.$$
It then follows from the Green's formula for $\omega$ that $\sup_X\vp_t\leq C$ for $C$ also independent of $t$. Indeed, picking a point $x\in X$ where $\vp_t(x)=\sup_X\vp_t$, we have
$$\vp_t(x)=-\int_X \Delta_{\omega}\vp_t(y) G(x,y) \omega^2\leq 2C_0\int_XG(x,y) \omega^2\leq C,$$
where $G(x,y)$ is the nonnegative Green's function for $\omega$, which is in $L^1(X)$. The upshot is that
\begin{equation}\label{stupid}
|\sup_X\vp_t|\leq C,
\end{equation}
uniformly as $t\to 0$.

Next applying the regularization theorems in \cite[Cor. 2]{GWu} or \cite{Dem_L2} to $\vp_0$ (basically given by convolution with a mollifier in local charts, plus a patching argument, see also \cite{BK}), we can find smooth functions $\psi_t,t>0$ with $\int_X\psi_t\omega^2=0$ such that $\psi_t\to\vp_0$ uniformly on $X$ as $t\to 0$ and
$$\alpha+i\partial\overline{\partial}\psi_t\geq -\frac{t}{2}\omega,$$
on $X$. %The fact that $\psi_\ve\to\vp_0$ uniformly is not stated in \cite{De,BK} but follows immediately from the proof since $\vp_0$ is continuous (even H\"older continuous) and $\psi_\ve$ is obtained by convolution and patching. It is probably even explicitly stated in \cite{GWu}.
We then define
\begin{equation}\label{lowbd}
\hat{\omega}_t=\alpha+t\omega+i\partial\overline{\partial}\psi_t\geq \frac{t}{2}\omega,
\end{equation}
so these are K\"ahler forms cohomologous to $\omega_t$, and letting
$$\hat{\vp}_t=\vp_t-\psi_t-\sup_X(\vp_t-\psi_t)-1,$$ we can then write
$$\omega_t=\hat{\omega}_t+i\partial\overline{\partial}\hat{\vp}_t,$$
and $\sup_X\hat{\vp}_t=-1.$ Since $\psi_t$ converges uniformly to $\vp_0$, its $L^\infty$ norm is uniformly bounded as $t\to 0$, so to prove \eqref{goal} it suffices to obtain a uniform $L^\infty$ bound for $\vp_t-\psi_t$, and recalling \eqref{stupid} we can equivalently derive a uniform $L^\infty$ bound for $\hat{\vp}_t$ instead.

For this we employ the Moser iteration method. For $p\geq 2$ compute using \eqref{ma_eqn}, \eqref{volbd} and $\hat{\vp}_t\leq -1$
$$\int_X (-\hat{\vp}_t)^{p-1}(\omega_t^2-\hat{\omega}_t^2)\leq \int_X (-\hat{\vp}_t)^{p-1}\omega_t^2\leq Ct \int_X (-\hat{\vp}_t)^{p-1}\omega^2\leq Ct \int_X (-\hat{\vp}_t)^{p}\omega^2,$$
while also
\[\begin{split}
\int_X (-\hat{\vp}_t)^{p-1}(\omega_t^2-\hat{\omega}_t^2)&=\int_X (-\hat{\vp}_t)^{p-1}i\partial\overline{\partial}\hat{\vp}_t\wedge(\omega_t+\hat{\omega}_t)\\
&=(p-1)\int_X(-\hat{\vp}_t)^{p-2}i\partial\hat{\vp}_t\wedge\overline{\partial}\hat{\vp}_t\wedge(\omega_t+\hat{\omega}_t)\\
&\geq (p-1)\int_X(-\hat{\vp}_t)^{p-2}i\partial\hat{\vp}_t\wedge\overline{\partial}\hat{\vp}_t\wedge \hat{\omega}_t\\
&\geq \frac{(p-1)t}{2}\int_X(-\hat{\vp}_t)^{p-2}i\partial\hat{\vp}_t\wedge\overline{\partial}\hat{\vp}_t\wedge\omega\\
&=\frac{(p-1)t}{p^2}\int_X\left|\partial\left((-\hat{\vp}_t)^{\frac{p}{2}}\right)\right|^2_{g}\omega^2,
\end{split}\]
using \eqref{lowbd}, and so
$$\int_X\left|\partial\left((-\hat{\vp}_t)^{\frac{p}{2}}\right)\right|^2_{g}\omega^2\leq Cp\int_X (-\hat{\vp}_t)^{p}\omega^2,$$
for all $p\geq 2$ (and $C$ independent of $p$) and using the Sobolev inequality this gives
$$\left(\int_X (-\hat{\vp}_t)^{2p}\omega^2\right)^{\frac{1}{2}}\leq C\left(\int_X\left|\partial\left((-\hat{\vp}_t)^{\frac{p}{2}}\right)\right|^2_{g}\omega^2+\int_X (-\hat{\vp}_t)^{p}\omega^2\right)
\leq Cp\int_X (-\hat{\vp}_t)^{p}\omega^2,$$
i.e.
$$\|\hat{\vp}_t\|_{L^{2p}(X)}\leq C^{\frac{1}{p}}p^{\frac{1}{p}} \|\hat{\vp}_t\|_{L^p(X)},$$
and substituting $p\mapsto 2p$ and iterating this inequality, a standard Moser iteration argument (taking in the end $p=2$) gives
$$\sup_X (-\hat{\vp}_t)\leq C\left(\int_X (-\hat{\vp}_t)^{2}\omega^2\right)^{\frac{1}{2}}\leq C(\sup_X (-\hat{\vp}_t))^\frac{1}{2}\left(\int_X (-\hat{\vp}_t)\omega^2\right)^{\frac{1}{2}},$$
and so
$$\sup_X (-\hat{\vp}_t)\leq C\int_X (-\hat{\vp}_t)\omega^2=\sup_X(\vp_t-\psi_t)+1\leq C,$$
by \eqref{stupid} (and again the uniform boundedness of $\psi_t$). This proves the uniform $L^\infty$ bound for $\hat{\vp}_t$, and therefore also for $\vp_t$.
\end{proof}

\subsubsection{An example with elliptic fibrations}
	\label{sssec:an_example_with_elliptic_fibrations}
Let $F_0,F_1\in \bC[X_0,X_1,X_2]$ be two polynomials of homogeneous degree $3$, determining two distinct smooth cubics in $\bP^2$ intersecting at $9$ points.
They also a determine a pencil of cubics $C_t:=\{aF_0+bF_1=0\}$ and we assume additionally that the cubics are in general position so that the singular elements of the pencil are nodal.
Let $\wtilde{\bP}^2$ be the blowup of $\bP^2$ at the base locus of the pencil (the $9$ points) so we have a genus one fibration
\[
	\wtilde{\bP}^2 \xrightarrow{\pi'} \bP^1
\]
which in addition has $9$ sections $\sigma_i'\colon \bP^1\to \wtilde{\bP}^2$ corresponding to the exceptional divisors of the blowup.
By selecting one of them as a basepoint, we obtain an abelian group of parabolic automorphisms of rank $8$.
The discussion in \autoref{sec:elliptic_fibrations_and_currents} applies (since only nodal singular fibers occur) and associates to each parabolic automorphism a corresponding current on $\bP^1$.

In order to build a K3 surface, let us choose two distinct points $t_1,t_2$ in the pencil, with corresponding smooth cubics $C_{t_1},C_{t_2}$.
The double cover $X_{t_1,t_2}\to \bP^2$ ramified over $C_{t_1}\cup C_{t_2}$ is a K3 surface, after we blow up the $9$ intersection points of the cubics.
In fact we have the morphisms
\begin{equation}
	\label{eqn_cd:elliptic_fibration_double_plane}
\begin{tikzcd}
	X
	\arrow[r, "c'"]
	\arrow[d, "\pi"]
	&
	\wtilde{\bP}^2
	\arrow[d, "\pi'"]
	\\
	\bP^1
	\arrow[r, "2:1"', "c"]
	&
	\bP^1
\end{tikzcd}
\end{equation}
such that $\pi$ is the genus one fibration, and the map $\bP^1\xrightarrow{c}\bP^1 $ is a two-to-one cover ramified over $t_1,t_2$.
The genus one fibration\footnote{This example does not fit our standing assumptions on K3s, since the fibrations have sections and hence the K3 has $(-2)$ curves. But the formalism in \autoref{sec:elliptic_fibrations_and_currents} does apply.} $X\xrightarrow{\pi}\bP^1$ also has (at least) $9$ sections $\sigma_i'$, one for each intersection point of the cubics.
There is correspondingly a rank $8$ abelian group of parabolic automorphisms of $X$, and the diagram in \autoref{eqn_cd:elliptic_fibration_double_plane} commutes for the action of $\bZ^8$ on $X$ and $\wtilde{\bP}^2$.

The dynamical currents on $\bP^1$ associated to parabolic automorphisms on $X$ are therefore pulled back from those on the right side of the diagram via $c^*$, and consequently if we push them back via $c_*$ we get twice the original currents.
In particular, these pushed-forward currents are independent of the parameters $t_1,t_2$ that we used to make the ramified double cover.

On the other hand, we can compute $c_*\pi_*\dVol_X$ by viewing it alternatively as $\pi'_*c'_*\dVol_X$.
To compute this explicitly, let us work in an affine chart where the degree $3$ polynomials are $f_0(x_1,x_2)$ and $f_1(x_1,x_2)$ and $X$ is given by $y^2=(f_0+af_1)(f_0+bf_1)$.
Then the holomorphic $2$-form on $X$ is given by $\Omega:=\frac{dx_1\wedge dx_2}{y}$ and the volume form is $\dVol_X=\frac{|dx_1\wedge dx_2|}{|y|^2}$.
This is invariant by the involution $y\mapsto -y$ and pushes down to $\bA^2$ to become
\[
	\dVol_{\wtilde{\bP}^2}(a,b) = \frac{|dx_1\wedge dx_2|^2}{|(f_0 + af_1)(f_0+bf_1)|}
\]
where $(a,b)$ on the left are introduced to note that the volume form does depend on the choice of these parameters.
We now observe that this volume form, as well as its pushforward to $\bP^1$ via the map $(x_1,x_2)\mapsto\frac{f_0(x_1,x_2)}{f_1(x_1,x_t)}=:t$, do depend on the parameters.
Indeed, we have
\begin{align*}
	\frac{\dVol_{\wtilde{\bP}^2}(a,b)}{\dVol_{\wtilde{\bP}^2}(a',b')}
	& =
	\frac{|(f_0 + a'f_1)(f_0+b'f_1)|}{|(f_0 + af_1)(f_0+bf_1)|}\\
	& = \frac{\left|\left(\tfrac{f_0}{f_1} + a'\right) \left(\tfrac{f_0}{f_1} + b'\right) \right|}{\left| (\tfrac{f_0}{f_1} + a) (\tfrac{f_0}{f_1} + b)\right|}\\
	& = \frac{|t+a'||t+b'|}{|t+a||t+b|}
\end{align*}
So the proportionality factor is a nontrivial function pulled back from $\bP^1$, therefore distinct choices of cubics for ramification lead to distinct pushed-forward volume forms.

% 	End of Subsection: Boundary currents and Ricci-flat metrics
%%===============================================

%%%%%%%%%%%%%%%%%%%%%%%%%%%%%%%%%%%%%%%%%%%%%%%%%%%%%%%%%%%%%%%%%%%%%%%%%%%%%%%
%%% 				End of Section: Boundary currents, direct construction
%%%%%%%%%%%%%%%%%%%%%%%%%%%%%%%%%%%%%%%%%%%%%%%%%%%%%%%%%%%%%%%%%%%%%%%%%%%%%%%

%%%%%%%%%%%%%%%%%%%%%%%%%%%%%%%%%%%%%%%%%%%%%%%%%%%%%%%%%%%%%%%%%%%%%%%%%%%%%%%
%%% 				Start of Section: Elliptic fibrations and heights
%%%%%%%%%%%%%%%%%%%%%%%%%%%%%%%%%%%%%%%%%%%%%%%%%%%%%%%%%%%%%%%%%%%%%%%%%%%%%%%

\section{Elliptic fibrations and heights}
	\label{sec:elliptic_fibrations_and_heights}

\subsubsection*{Outline of section}

In this section we establish some preliminary properties of heights on surfaces with elliptic fibrations.
The main result of this section is \autoref{thm:preferred_heights_on_elliptic_fibrations} that establishes the existence of preferred height functions on an elliptic surface, associated to its parabolic automorphisms.
Our basic conventions are spelled out in \autoref{sssec:setup_the_pairing_from_the_variation_of_canonical_height}.

In \autoref{ssec:the_pairing_from_the_variation_of_canonical_height} we recall classical results of Silverman and Tate and formulate them in our setting of genus one fibrations without a section.
The dynamical constructions in \autoref{ssec:relative_dynamics_in_the_elliptic_fibration} are then used in \autoref{ssec:preferred_heights_on_elliptic_fibrations} to construct the preferred height functions.

%%===============================================
% 	Start of Subsection: The pairing from the variation of canonical height

\subsection{The pairing from the variation of canonical height}
	\label{ssec:the_pairing_from_the_variation_of_canonical_height}

\subsubsection{Setup}
	\label{sssec:setup_the_pairing_from_the_variation_of_canonical_height}
Fix a number field $k$ and an algebraic closure $\ov{k}\supset k$.
For all constructions, we allow implicitly to pass to a finite extension of $k$, so that various geometric objects are always defined over $k$.
In particular, all morphisms are over $k$.
Throughout $X$ and $B$ will be a smooth projective surface, respectively curve, defined over the number field $k$.
The notation follows closely that in \autoref{sec:elliptic_fibrations_and_currents}.
The morphism $X\xrightarrow{\pi}B$, also defined over $k$, is a fibration in genus one curves and we set $X^\circ\xrightarrow{\pi}B^{\circ}$ to be the morphism restricted to the open locus where the fibration is smooth.
We denote by $k(B)$ the function field of $B$ and by $X_{k(B)}$ the elliptic curve over $k(B)$ obtained from $X$ (i.e. the restriction of $X$ to the generic point of $B$).

\subsubsection{Standing assumptions}
	\label{sssec:standing_assumptions_elliptic_heights}
For this section only, we relax our earlier assumptions that all singular fibers are reduced and irreducible, and consider a slightly more general case.
We assume that $X$ is smooth and relatively minimal, i.e. there are no $(-1)$ curves in the fibers of the morphism to $B$, and that the fibration is not isotrivial (this implies that it has at least one singular fiber, see e.g. \cite[Theorem III.15.4]{BHPV}).

We additionally assume that for each singular fiber, at least one irreducible component has multiplicity one (other components can have higher multiplicity).
In the analytic topology near the singular fiber, this is equivalent to the fibration having locally a section.
This assumption is needed to avoid some torsion phenomena, and is implicit in the literature that considers elliptic curves, since those have a section.
Note also that this assumption is always satisfied when $X$ is a K3, by the classification of singular fibers of elliptic fibrations on K3s \cite[Theorem 11.1.9]{Huy}.

\subsubsection{Vertical divisors}
	\label{sssec:vertical_divisors}
A Weil or Cartier divisor on $X$, will be called \emph{vertical} if it is vertical in the sense of \cite[Definition~3.5, \S8.3, pg. 349]{Liu2002_Algebraic-geometry-and-arithmetic-curves}.
See also \cite[III.\S8]{Silverman1994_Advanced-topics-in-the-arithmetic-of-elliptic-curves} where such divisors are called \emph{fibral}.
Note that all fibers of the elliptic fibration have arithmetic genus one, so when a fiber is not smooth, its irreducible components must have geometric genus $0$.

\subsubsection{Jacobian surface}
	\label{sssec:jacobian_surface}
It will be technically convenient to refer to the Jacobian surface $J_X\to B$.
See \cite[\S11.4.1]{Huy} which describes the construction (the fact that the original $X$ is a K3 and $B\isom \bP^1$ is not used; it is the case, however, that if $X$ is a K3 then $J_X$ is also a K3).

One definition of $J_X$ is as follows.
View $X$ as a genus one curve over the function field $k(B)$.
It has an associated Jacobian elliptic curve $J_{X,k(B)}$ and take $J_X$ to be the surface over $k$ which is relatively minimal over $B$.
% \sfnote{FGA explained, Ch 5, has an extremely general construction of Picard schemes. Is the above discussion satisfactory, or should we include more?}

We now proceed to study the Picard group of $X$ and some of its subgroups.
\begin{proposition}
	\label{prop:injectivity_Pic0}
Let $\pi\colon X\to B$ be a non-isotrivial relatively minimal elliptic surface over $\mathbb{C}$ (we do not assume $X$ projective or that $\pi$ admits a section).
Then $\pi^*\colon\Pic^0(B)\to \Pic^0(X)$ is an isomorphism.
\end{proposition}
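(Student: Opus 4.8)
The plan is to prove surjectivity and injectivity of $\pi^*\colon \Pic^0(B)\to\Pic^0(X)$ separately, where $\Pic^0$ denotes the connected component of the identity of the Picard scheme (equivalently, the subgroup of line bundles that are algebraically/numerically trivial). Since $\pi$ is proper with connected fibers and $\pi_*\mathcal{O}_X=\mathcal{O}_B$, the pullback $\pi^*$ is injective on all of $\Pic$, so injectivity is automatic; the real content is surjectivity onto $\Pic^0(X)$.

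\textbf{Injectivity.} Because $\pi$ is proper with geometrically connected fibers and $\pi_*\mathcal{O}_X = \mathcal{O}_B$ (true for an elliptic fibration: the generic fiber is a smooth genus one curve, so $h^0$ of the structure sheaf of the geometric generic fiber is $1$, and $B$ is normal), the adjunction map $\mathcal{O}_B\to\pi_*\pi^*\mathcal{O}_B$ is an isomorphism. Hence if $\pi^*L\cong\mathcal{O}_X$ then $L\cong\pi_*\pi^*L\cong\mathcal{O}_B$. This gives injectivity of $\pi^*$ on $\Pic(B)$, a fortiori on $\Pic^0(B)$.

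\textbf{Surjectivity.} First I would reduce to the case where $\pi$ admits a section (equivalently, replace $X$ by its Jacobian surface $J_X\to B$ from \autoref{sssec:jacobian_surface}). There is a natural isogeny-type relationship $J_X\to B$ and, more to the point, the generic fibers $X_{k(B)}$ and $J_{X,k(B)}$ are isomorphic as genus-one curves \emph{after} base change to a finite cover, and in any case a $\Pic^0$ class on $X$ restricts to a degree-zero class on the generic fiber. The cleanest route: given $L\in\Pic^0(X)$, restrict to the generic fiber $X_\eta$ over $\eta=\operatorname{Spec}k(B)$ to get $L_\eta\in\Pic^0(X_\eta)$. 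The key claim is that $L_\eta$ is \emph{trivial}. Indeed $\Pic^0(X_\eta)=\operatorname{Pic}^0_{X_\eta/k(B)}(k(B))$ is a group isomorphic to $J_{X,k(B)}(k(B))$, i.e. the Mordell–Weil group of the Jacobian over the function field; a class in $\operatorname{im}\pi^*$ restricted to $X_\eta$ is trivial, so we must show $L_\eta=0$. This follows from the fact that $L$, being in $\Pic^0(X)$, is algebraically trivial, hence has intersection number zero with every curve; in particular $L\cdot C=0$ for $C$ a multisection. More robustly, a $\Pic^0$ class on a smooth projective surface lies in the image of $\operatorname{Pic}^0_{X/k}$, an abelian variety, and the Shioda–Tate / Lefschetz type analysis of $\operatorname{NS}(X)$ for a non-isotrivial elliptic surface shows that the "moving part" transverse to the fibration is detected by the Mordell–Weil lattice, while $\operatorname{Pic}^0(X)$ sees only $\operatorname{Pic}^0$ of the base; concretely $\operatorname{Pic}^0_{X/k}\cong\operatorname{Pic}^0_{B/k}=\operatorname{Jac}(B)$ via $\pi^*$ because $R^1\pi_*\mathcal{O}_X$ has rank one and $H^1(X,\mathcal{O}_X)\cong H^1(B,\mathcal{O}_B)$ (the Leray spectral sequence degenerates here since $h^{0,1}$ of an elliptic surface equals $g(B)$ when the fibration is non-isotrivial — this uses non-isotriviality to kill the contribution of $H^0(B,R^1\pi_*\mathcal{O}_X)$, as that sheaf has no global sections when the $j$-map is non-constant). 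Once $L_\eta$ is trivial, $L$ is supported on fibers, i.e. $L=\pi^*M+V$ with $V$ a vertical divisor; subtracting $\pi^*M$ (itself in $\Pic^0$ if $M$ is, and otherwise adjusting) reduces to showing a vertical divisor class in $\Pic^0(X)$ is pulled back from $B$ — but a vertical divisor is a $\mathbb{Z}$-combination of fiber components, and by the standing assumption each singular fiber has a multiplicity-one component; the classes of full fibers are all $\pi^*$ of points on $B$ (hence numerically equivalent, giving nothing new in $\operatorname{NS}$), while a \emph{proper} sub-sum of components of a reducible fiber has nonzero self-intersection (negative definite, by Zariski's lemma) and therefore cannot be numerically trivial unless it is zero. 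Hence $V$ is a combination of full fibers, so $L=\pi^*(\text{something in }\Pic^0(B))$.

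\textbf{Main obstacle.} The delicate point is the comparison $H^1(X,\mathcal{O}_X)\xrightarrow{\sim}H^1(B,\mathcal{O}_B)$, i.e. showing $H^0(B,R^1\pi_*\mathcal{O}_X)=0$ for a non-isotrivial elliptic fibration. This is where non-isotriviality is essential (for a product $E\times B$ it fails). I would handle it by noting $R^1\pi_*\mathcal{O}_X\cong (\pi_*\omega_{X/B})^\vee$ by relative duality, and $\pi_*\omega_{X/B}$ is a line bundle $\mathcal{L}$ on $B$ of positive degree when $\pi$ is non-isotrivial (by Kodaira's canonical bundle formula / the fact that $\deg\mathcal{L}=\chi(\mathcal{O}_X)>0$ here, or directly: $\mathcal{L}$ has non-negative degree always and strictly positive iff not isotrivial-and-smooth). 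Then $H^0(B,\mathcal{L}^\vee)=0$ since $\mathcal{L}^\vee$ has negative degree. This gives the degeneration of Leray in the relevant range, hence $\operatorname{Pic}^0_{X/k}\cong\operatorname{Pic}^0_{B/k}$ compatibly with $\pi^*$, which is exactly the assertion. All other steps (injectivity, the vertical-divisor analysis via negative-definiteness of reducible fibers, the reduction of the generic-fiber class to triviality) are routine given this.
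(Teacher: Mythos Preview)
Your core argument is correct and matches the paper's: injectivity via the projection formula $\pi_*\pi^*L\cong L$, and then the Leray spectral sequence plus the vanishing $H^0(B,R^1\pi_*\mathcal{O}_X)=0$ (from non-isotriviality, since $R^1\pi_*\mathcal{O}_X$ has negative degree) to conclude $q(X)=g(B)$, so that $\pi^*$ is an injective map between abelian varieties of equal dimension and hence an isomorphism. The paper's proof is exactly this, stated without detours.

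Your presentation, however, buries this under an unnecessary and slightly problematic excursion. The entire discussion of restricting $L$ to the generic fiber, the Mordell--Weil group, and the vertical-divisor analysis via Zariski's lemma is superfluous: once $\dim\Pic^0(X)=\dim\Pic^0(B)$ and $\pi^*$ is injective, surjectivity is automatic. More importantly, in that detour you invoke ``the standing assumption each singular fiber has a multiplicity-one component,'' but this proposition carries no such hypothesis --- that assumption is introduced elsewhere in the paper for a different purpose. As written, your surjectivity argument appears to rely on something not assumed. Drop the detour entirely and keep only the injectivity paragraph plus your ``Main obstacle'' paragraph: that \emph{is} the complete proof, and it is the paper's proof.
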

\begin{proof}
As in \cite[III.4.1]{Miranda}, the Leray spectral sequence gives the exact sequence
\[0\to H^1(B,\pi_*\mathcal{O}_X)\to H^1(X,\mathcal{O}_X)\to H^0(B, R^1 \pi_* \mathcal{O}_X)\to H^2(B,\pi_* \mathcal{O}_X),\]
but $\pi_*\mathcal{O}_X\cong\mathcal{O}_B$ since $\pi$ has connected fibers, hence
\[0\to H^1(B,\mathcal{O}_B)\to H^1(X,\mathcal{O}_X)\to H^0(B, R^1 \pi_* \mathcal{O}_X)\to 0,\]
is exact, and the dimensions of the first two terms are $g(B)$ and $q(X)$ respectively. As for the third term, \cite[Theorem III.18.2]{BHPV} (see also their Remark after Thm V.12.1 on p.213) shows that the degree of the dual of $R^1 \pi_* \mathcal{O}_X$ is strictly positive, unless all smooth fibers of $\pi$ are isomorphic (i.e. $\pi$ is isotrivial) and the only singular fibers are multiples of a smooth fiber.  So by our assumption it follows that the degree of the line bundle $R^1 \pi_* \mathcal{O}_X$ is strictly negative, so it has no sections, and the exact sequence above gives $q(X)=g(B)$.

We can now conclude as in \cite[Lemma VII.1.1]{Miranda}: first, the map $\pi:\Pic(B)\to\Pic(X)$ is injective since the projection formula gives $\pi_*\pi^*L\cong L\otimes\pi_*\mathcal{O}_X\cong L$ for every $L\in \Pic(B)$. And then since $q(X)=g(B)$, it follows that the tori $\Pic^0(B)$ and $\Pic^0(X)$ have the same dimension, hence $\pi^*:\Pic^0(B)\to\Pic^0(X)$ is an isomorphism.
\end{proof}

\subsubsection{Line bundles and automorphisms}
	\label{sssec:line_bundles_and_automorphisms}
Let $\Aut_\pi(X)$ be the group of parabolic automorphisms of the elliptic surface $X\xrightarrow{\pi}B$, i.e. those preserving the fibers of $\pi$.
Denote by $\Aut_\pi^\circ(X)$ those automorphisms that act trivially on the set of irreducible components of fibers of $\pi$, and such that the induced automorphism of the genus one curve $X_{k(B)}$ over the function field $k(B)$ is given by a translation.
Then $\Aut_{\pi}^{\circ}(X)$ is of finite index in $\Aut_{\pi}(X)$, since there are only finitely many fibers with distinct irreducible components, and because the translation automorphisms are of finite index in the group of all automorphisms of an elliptic curve (by composing by a translation, we can assume a point is fixed, and the group of automorphisms preserving a polarization is finite).
In the case when all singular fibers of $\pi$ are of type I (i.e a nodal $\bP^1$), the group $\Aut_\pi^\circ(X)$ defined here coincides with the one in \autoref{sec:elliptic_fibrations_and_currents} (after choosing an embedding $k\into \bC$ and changing base).

To handle more general elliptic fibrations compared to \autoref{sec:elliptic_fibrations_and_currents}, we will replace the group $[E]^\perp/[E]$ from previous sections by a different one.
Specifically, let $\Pic^0_{\pi}(X)$ denote the subgroup of line bundles on $X$ which restrict to have degree $0$ on each irreducible component of a fiber of the fibration.
Consider also the larger group $\Pic^0_{\pi}(X)^{sm}$ of line bundles that have degree $0$ on the generic smooth fiber.
Finally let $\pi^*\Pic(B)\subset \Pic(X)$ be the group of line bundles pulled back from $B$.
We have the filtration
\begin{align}
	\label{eqn:filtration_general_pic}
	\Pic(X)\supset
	\Pic^0_\pi(X)^{sm}\supset
	\Pic^0_\pi(X)\supset
	\pi^* \Pic(B).
\end{align}
Define now $\Pic^{rel}_\pi(X):=\Pic^0_{\pi}(X)/\pi^* \Pic(B)$ which will be the analogue of $[E]^\perp/[E]$.
Note that this is a discrete and finitely generated group by \autoref{prop:injectivity_Pic0}.
We will now consider the action of parabolic automorphisms on \autoref{eqn:filtration_general_pic}.

\begin{proposition}[Parabolic automorphisms and the filtration]
	\label{prop:parabolic_automorphisms_respect_the_filtration}
	Suppose that $\gamma\in \Aut^\circ_\pi(X)$ is a parabolic automorphism.
	\begin{enumerate}
		\item \label{item:parb_aut_to_PicB}
		If $L^0\in \Pic^0_{\pi}(X)^{sm}$, then we have that $\gamma^*L^0-L^0\in \pi^*\Pic(B)$.
		\item \label{item:par_aut_on_PicX}
		If $L\in \Pic(X)$ then $\gamma^*L-L\in \Pic^0_\pi(X)$.
	\end{enumerate}
\end{proposition}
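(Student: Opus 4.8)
The statement is really about how a fiber-preserving automorphism $\gamma$ interacts with the two intersection-theoretic conditions that define the groups in the filtration \autoref{eqn:filtration_general_pic}. The key input is that $\gamma$ preserves the fibration structure: it maps each fiber $X_b$ to itself (it is parabolic, i.e. in $\Aut_\pi(X)$) and, being in $\Aut^\circ_\pi(X)$, it permutes the irreducible components of each fiber trivially, i.e. fixes each component. Since $\gamma$ is an automorphism, it preserves intersection numbers: for any divisor $D$ and any curve $C$ on $X$ one has $(\gamma^*D . C) = (D . \gamma_* C)$.

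For part \ref{item:par_aut_on_PicX}, let $L\in\Pic(X)$ and let $C$ be any irreducible component of any fiber $X_b$. Because $\gamma\in\Aut^\circ_\pi(X)$, we have $\gamma_* C = C$, hence $(\gamma^*L . C) = (L . \gamma_* C) = (L . C)$, so $((\gamma^*L - L).C)=0$ for every fibral curve $C$. By the definition of $\Pic^0_\pi(X)$ as the subgroup of line bundles having degree zero on every irreducible component of every fiber, this says precisely $\gamma^*L - L\in\Pic^0_\pi(X)$. (One should note $\gamma^*L-L$ is automatically vertical, being cohomologically trivial on a general fiber since $\gamma$ restricts to an automorphism of the general fiber $X_b$ fixing the class; but the degree-zero statement on components already captures what we need and containment in $\Pic^0_\pi(X)$ is exactly that condition.)

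For part \ref{item:parb_aut_to_PicB}, let $L^0\in\Pic^0_\pi(X)^{sm}$, so $L^0$ has degree $0$ on the generic smooth fiber. First, by part \ref{item:par_aut_on_PicX} applied with $L = L^0$ we get $\gamma^*L^0 - L^0\in\Pic^0_\pi(X)$; in particular it is vertical. Now I must upgrade "vertical and fibral-degree-zero" to "pulled back from $B$". The clean way: a vertical divisor supported on a single fiber $X_b$ that meets every component of $X_b$ in degree zero is a rational multiple of the whole fiber $X_b$ — this is the standard fact that the intersection form on the fibral divisors of a fixed (connected) fiber is negative semi-definite with radical spanned by the fiber class (see \cite[III.\S8]{Silverman1994_Advanced-topics-in-the-arithmetic-of-elliptic-curves} or \cite[\S8.3]{Liu2002_Algebraic-geometry-and-arithmetic-curves}). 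Here is where the standing assumption of \autoref{sssec:standing_assumptions_elliptic_heights} — that every singular fiber has a multiplicity-one component — enters: it forces the coefficient to be an integer, so $\gamma^*L^0 - L^0$ is an integral combination of full fibers, hence lies in $\pi^*\Pic(B)$. Summing the local contributions over the finitely many fibers in the support gives $\gamma^*L^0 - L^0\in\pi^*\Pic(B)$.

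**Main obstacle.** The routine part is part \ref{item:par_aut_on_PicX} — pure intersection theory with $\gamma_*C = C$. The real content, and the step I expect to need the most care, is part \ref{item:parb_aut_to_PicB}: extracting "pulled back from $B$" from "vertical with all fibral intersection numbers zero." I would either invoke the negative-semidefiniteness of the fibral intersection form together with the multiplicity-one hypothesis to pin down integrality, or argue directly that the associated $\mathbb{Q}$-divisor class restricts trivially to $\Pic^0$ of each smooth fiber and use \autoref{prop:injectivity_Pic0} (that $\pi^*\colon\Pic^0(B)\to\Pic^0(X)$ is an isomorphism) to descend it; the multiplicity-one assumption is exactly what rules out the torsion ambiguity flagged in \autoref{sssec:standing_assumptions_elliptic_heights}. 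I would present the negative-semidefinite-form argument as the main line, since it is self-contained and makes the role of the hypothesis transparent.
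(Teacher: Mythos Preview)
Your argument for part~\ref{item:par_aut_on_PicX} is correct and matches the paper's.

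There is a genuine gap in part~\ref{item:parb_aut_to_PicB}. You write that $\gamma^*L^0 - L^0 \in \Pic^0_\pi(X)$ and conclude ``in particular it is vertical.'' This inference is false: membership in $\Pic^0_\pi(X)$ means degree zero on every fibral component, but a line bundle can restrict to a \emph{nontrivial} degree-zero class on smooth fibers (think of the difference of two sections on a Jacobian elliptic surface). Such a bundle is not linearly equivalent to any vertical divisor, and the negative-semidefinite argument you then run only applies to genuinely vertical divisors.

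What is missing is the one place where the full definition of $\Aut^\circ_\pi(X)$ is used: $\gamma$ acts on the generic fiber $X_{k(B)}$ as a \emph{translation}, and translations on an elliptic curve act trivially on its $\Pic^0$. Hence $(\gamma^*L^0 - L^0)|_{X_b}$ is the \emph{trivial} line bundle on every smooth fiber, not merely degree zero. This is what forces $\gamma^*L^0 - L^0$ to be pulled back from $B$ over the smooth locus $B^\circ$; only then do you have a vertical divisor supported on the singular fibers, to which your negative-semidefinite and multiplicity-one argument correctly applies. The paper proceeds exactly this way (after first adjusting $L^0$ by a vertical divisor to land in $\Pic^0_\pi(X)$, so that the intersection condition survives at the singular fibers).
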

\begin{proof}
	For part (i), note that adding a vertical divisor to $L^0$ doesn't affect the desired conclusion, and by \cite[III.Prop.8.3]{Silverman1994_Advanced-topics-in-the-arithmetic-of-elliptic-curves} we can adjust $L^0$ by vertical divisors to assume that it is in fact in $\Pic^0_\pi(X)$, i.e. it has degree $0$ on each irreducible component of each vertical divisor.
	% To establish that a line bundle $L'$ is pulled back from $B$, it suffices by \cite[Cor.~21.4.13]{Grothendieck1967_Elements-de-geometrie-algebrique-:-IV.-Etude-locale-des-schemas-et-des-morphismes} to show that for every closed point $b\in B$ the restriction $L'\vert_{X_b}$ is trivial.

	For each fiber $X_b$ we have the Jacobian variety of degree $0$ line bundles $\Jac(X_b)$.
	The restriction of the parabolic automorphism $\gamma$ to each fiber induces a group automorphism of $\Jac(X_b)$.
	In the case of smooth fibers, and also the generic fiber, the induced automorphism on $\Jac(X_{k(B)})$ is the identity by the definition of $\Aut^\circ_{\pi}(X)$ in \autoref{sssec:line_bundles_and_automorphisms}.
	Therefore over the open locus $X^{\circ}\to B^{\circ}$ where the fibration is smooth, the line bundle is pulled back from the base.
	So there exists some line bundle $L_B$ on $B$ (which we identify with a Cartier divisors) such that $\gamma^*L^0-L^0 - \pi^*L_B$ is linearly equivalent to a Cartier divisor supported on the singular vertical fibers (see also \autoref{sssec:vertical_divisors}).
	However, this divisor must have zero intersection number with any irreducible component of any fiber, since this holds for $\gamma^*L^0 - L^0$.
	It follows that this integral divisor is a rational multiple of full singular fibers (recall that some singular fibers could have multiplicity) by \cite[III.8.2(11)]{BHPV}.
	However, since we assumed that each singular fiber has at least one component of multiplicity $1$, it follows that the rational number is an integer and so the divisor is pulled back from $B$.

	Part (ii) follows because the restriction of $L$ and $\gamma^*L$ to each irreducible component have the same degree, since $\gamma$ acts trivially on the set of irreducible components.
\end{proof}

\subsubsection{Assigning line bundles to automorphisms}
	\label{sssec:assigning_line_bundles_to_automorphisms}
Returning to \autoref{eqn:filtration_general_pic}, the quotient $\Pic(X)/\Pic_\pi^0(X)^{sm}$ is a cyclic subgroup of $\bZ$.
There is also a canonical choice among the two possible generators, given by the one whose multiple is in the image of an ample class from $\Pic(X)$.
Fix therefore $L\in \Pic(X)$ to be an ample line bundle projecting to the generator.
For an element $\gamma\in \Aut^\circ_\pi(X)$ we then set
\[
	\xi(\gamma):=\gamma^* L - L \in \Pic^{rel}_\pi(X).
\]
where by \autoref{prop:parabolic_automorphisms_respect_the_filtration} \ref{item:par_aut_on_PicX} the line bundle $\gamma^*L-L$ belongs to $\Pic^{0}_\pi(X)$, and we further consider its image under the quotient by $\pi^*\Pic(B)$.

\begin{proposition}[Homomorphism to relative Picard]
	\label{prop:homomorphism_to_relative_picard}
	The map $\xi$ in \autoref{sssec:assigning_line_bundles_to_automorphisms} is independent of the choice of representative line bundle $L$ mapping to the generator of $\Pic(X)/\Pic^0_{\pi}(X)^{sm}$ and gives a group homomorphism:
		\[
			\xi\colon \Aut^{\circ}_\pi(X) \to \Pic^{rel}_\pi(X).
		\]

	% 	\item The map $\xi$ induces a group isomorphism
	% 	\[
	% 		\xi\colon \Aut^{\circ}_\pi(X) \toisom \Pic^{rel}_\pi(X).
	% 	\]
	% 	\item We have the isomorphism $\Pic^{rel}_{\pi}(X)\isom J_X(k(B))$ where the last group denotes the group of $k(B)$-rational points on $J_X$ viewed as an elliptic curve over $k(B)$.
\end{proposition}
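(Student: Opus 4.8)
The plan is to deduce everything from \autoref{prop:parabolic_automorphisms_respect_the_filtration}; no new geometry is needed. The single computational input is the cocycle identity
\[
	(\gamma_1\gamma_2)^* L - L = \gamma_2^*\bigl(\gamma_1^* L - L\bigr) + \bigl(\gamma_2^* L - L\bigr),
\]
valid for any $L\in\Pic(X)$ and any $\gamma_1,\gamma_2\in\Aut_\pi^\circ(X)$ (using $(\gamma_1\gamma_2)^*=\gamma_2^*\gamma_1^*$), together with the observation that $\Pic^{rel}_\pi(X)=\Pic^0_\pi(X)/\pi^*\Pic(B)$ is precisely the quotient in which the error terms produced by \autoref{prop:parabolic_automorphisms_respect_the_filtration}\ref{item:parb_aut_to_PicB} become trivial.

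First I would check that $\xi$ is well-defined. For $\gamma\in\Aut_\pi^\circ(X)$, part \ref{item:par_aut_on_PicX} of \autoref{prop:parabolic_automorphisms_respect_the_filtration} already gives $\gamma^*L-L\in\Pic^0_\pi(X)$, so its image in $\Pic^{rel}_\pi(X)$ makes sense. If $L'$ is another representative mapping to the same generator of $\Pic(X)/\Pic^0_\pi(X)^{sm}$, then $L-L'\in\Pic^0_\pi(X)^{sm}$, so part \ref{item:parb_aut_to_PicB} applied to $\gamma$ yields $\gamma^*(L-L')-(L-L')\in\pi^*\Pic(B)$; hence $\gamma^*L-L$ and $\gamma^*L'-L'$ have the same class in $\Pic^{rel}_\pi(X)$, proving independence of the choice of representative.

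For the homomorphism property, apply the cocycle identity. By part \ref{item:par_aut_on_PicX} we have $\gamma_1^*L-L\in\Pic^0_\pi(X)\subseteq\Pic^0_\pi(X)^{sm}$, so part \ref{item:parb_aut_to_PicB} applied to $\gamma_2$ gives $\gamma_2^*(\gamma_1^*L-L)-(\gamma_1^*L-L)\in\pi^*\Pic(B)$. Reducing the cocycle identity modulo $\pi^*\Pic(B)$ therefore yields $\xi(\gamma_1\gamma_2)=\xi(\gamma_1)+\xi(\gamma_2)$ in $\Pic^{rel}_\pi(X)$; since $\Aut_\pi^\circ(X)$ is abelian in any case, the order of composition is immaterial.

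There is no serious obstacle: the substantive work was already carried out in \autoref{prop:parabolic_automorphisms_respect_the_filtration} (and, behind it, in \autoref{prop:injectivity_Pic0}, which is what makes $\Pic^{rel}_\pi(X)$ discrete and finitely generated). The only point needing care is the bookkeeping of the subgroups in the filtration \autoref{eqn:filtration_general_pic} — in particular noticing that the naive additivity $(\gamma_1\gamma_2)^*L-L=(\gamma_1^*L-L)+(\gamma_2^*L-L)$ fails on the nose in $\Pic(X)$ but becomes an equality precisely after passing to $\Pic^{rel}_\pi(X)$, which is exactly the content of the statement.
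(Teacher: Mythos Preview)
Your proof is correct and follows essentially the same route as the paper's own argument: both deduce independence of $L$ from \autoref{prop:parabolic_automorphisms_respect_the_filtration}\ref{item:parb_aut_to_PicB}, and both verify the homomorphism property via the cocycle identity for pullbacks together with a second application of \ref{item:parb_aut_to_PicB} to kill the cross term modulo $\pi^*\Pic(B)$. The only cosmetic difference is which of $\gamma_1,\gamma_2$ is peeled off first, which is immaterial.
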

\begin{proof}
	Suppose that $L'$ is another such line bundle.
	Then $L-L'\in \Pic^0_\pi(X)^{sm}$ and so by \autoref{prop:parabolic_automorphisms_respect_the_filtration}\ref{item:parb_aut_to_PicB} it follows that $\gamma^*(L-L')-(L-L')\in \pi^* \Pic(B)$ so the map $\xi$ is independent of the choice of $L$.
	To check the group homomorphism property, we must show that for two parabolic automorphisms $\gamma_1,\gamma_2$, the line bundles
	\[
		(\gamma_1^*L-L) + (\gamma_2^*L-L) \quad \text{ and }\quad
		(\gamma_1^*\gamma_2^*)L - L
	\]
	give the same element after quotienting by $\pi^* \Pic (B)$.
	We rewrite the second one as
	\[
		(\gamma_1^*\gamma_2^*)L - L = \gamma_1^*(\gamma_2^*L - L) + (\gamma_1^*L-L)
	\]
	and observe that since $\gamma_2^*L-L$ is an element of $\Pic^0_\pi(X)$, by \autoref{prop:parabolic_automorphisms_respect_the_filtration}\ref{item:parb_aut_to_PicB} its image under $\gamma_1^*$ is equal to itself, modulo an element of $\pi^*\Pic(B)$.
\end{proof}

\subsubsection{A pairing valued in line bundles}
	\label{sssec:a_pairing_valued_in_line_bundles}
The next construction appears in \cite[III.Thm.9.5]{Silverman1994_Advanced-topics-in-the-arithmetic-of-elliptic-curves} in a slightly different notation.
We have a bilinear map
\[
	\Aut^\circ_\pi(X)\times \Pic^{rel}_\pi(X) \xrightarrow{\eta_{\Pic}} \Pic(B)
\]
given by
\[
	\pi^*\eta_{\Pic}(\gamma,L^0)= \gamma^* L^0 - L^0  \quad \text{ as line bundles on }X.
\]
In other words, the line bundle $\gamma^* L^0 - L^0$ is the pullback of a line bundle $\eta_{\Pic}(\gamma,L^0)$ from $B$.
Furthermore, if we change $L^0$ by an element of $\pi^* \Pic(B)$ the resulting line bundle on $B$ doesn't change.

\subsubsection{Recollections on heights}
	\label{sssec:recollections_on_heights}
We work with projective varieties which are not necessarily smooth.
Height functions are naturally associated to elements of the Picard group, i.e. line bundles.
For instance Serre \cite[\S2.8]{Serre_Lectures-on-the-Mordell-Weil-theorem} has a discussion of heights and their functorial properties for not necessarily smooth projective varieties and line bundles.

For a projective variety $X$, denote by $\Pic(X)$ its Picard group and by $\cH(X)$ the (additive) group of height functions on $X(\ov{k})$, so that we have a natural extension
\[
	0 \to L^\infty\left(X(\ov{k});\bR\right)\into \cH(X) \onto \Pic(X)\to 0
\]
Our interest will be to find preferred choices of height functions for various subgroups of the Picard group.

\subsubsection{Variation of canonical height}
	\label{sssec:variation_of_canonical_height}
We recall briefly the theory developed by Silverman \cite{Silverman1994_Variation-of-the-canonical-height-in-algebraic-families,Silverman1994_Variation-of-the-canonical-height-on-elliptic-surfaces.-III.-Global,Silverman1994_Variation-of-the-canonical-height-on-elliptic-surfaces.-II.-Local-analyticity,Silverman1992_Variation-of-the-canonical-height-on-elliptic-surfaces.-I.-Three-examples}
 and Tate \cite{Tate} for the variation of canonical height in a family.
Assuming that the elliptic fibration has a section $\sigma\colon B\to X$, there are canonical \Neron--Tate heights on individual fibers $X_b$ and denoted $h^{can}_{X_b,\sigma}$.
Then the basic result is that for another section $\sigma'$ we have that
\[
	h^{can}_{X_b,\sigma}(\sigma'(b)) =: h^{vcan}_{\sigma,\sigma'}(b)
\]
is a height function on $B$, which is associated to the line bundle $\eta_{\Pic}(\sigma'-\sigma,\sigma'-\sigma)$.
Note that the difference of the two sections induces a parabolic automorphism of the surface, and is also a line bundle on the surface which has degree $0$ on the vertical fiber.

\subsubsection{Variation of canonical height as a pairing}
	\label{sssec:variation_of_canonical_height_as_a_pairing}
The original statements of Tate and Silverman are concerned with elliptic fibrations with a section.
But this assumption can be relaxed to view the pairing as one between parabolic automorphisms and relative degree $0$ line bundles, without the need to introduce a section.
Indeed, fix a $\bQ$-line bundle $L$ which has degree $1$ when restricted to a generic smooth fiber of the fibration (this is the replacement of the section).
Fix some basepoint $x_b\in X_b$ in each fiber, so as to obtain a canonical height $h^{can}_{L\vert_{X_b},x_b}$.
Note that the canonical height thus defined depends on the choice of $x_b$ up to a constant.
Then we simply define
\begin{align}
	\label{eqn:vcan_definition}
	h^{vcan}_{\gamma,\xi(\gamma)} (b) :=
	\lim_{n\to +\infty}
	\frac{1}{n^2} h^{can}_{L\vert_{X_b},x_b}(\gamma^n x_b)
\end{align}
Note that this limit is independent of $x_b$ (we could independently replace each occurrence of $x_b$ by another $x_b'$ in the above limit without changing it).
Furthermore, this definition feeds directly into the telescoping argument of Tate to establish the existence of the limit and its properties.

Alternatively, we can work on the Jacobian surface $J_X\to B$.
The results of Silverman and Tate apply to it directly, and we do have fiberwise isomorphisms $J_{X,b}\isom X_{b}$ after some choice of basepoint in each fiber.
In particular the definition in \autoref{eqn:vcan_definition} of $h^{vcan}$ on $X$ translates directly to the Silverman--Tate definition on $J_X$.

The next result is the analogue of the current-valued pairing constructed in \autoref{thm:current_valued_pairing} but in the setting of heights.
It follows from the previous discussion by ``polarizing'' the variation of canonical height for a single parabolic automorphism to a bilinear pairing between parabolic automorphisms.
For the proof, see \cite[III.Thm.11.1]{Silverman1994_Advanced-topics-in-the-arithmetic-of-elliptic-curves}.

\begin{theorem}[Height-valued pairing]
	\label{thm:height_valued_pairing}
	There exists a map
	\[
		\eta_h\colon
		\Aut^\circ_\pi(X)
		\times
		\Pic^{rel}_\pi(X)
		\to
		\cH(B)
	\]
	which is linear in each coordinate and with the following properties.
	\begin{enumerate}
		\item The map is compatible with the corresponding pairing on line bundles, namely $\eta_h(\gamma,L^0)$ is a height on $B$ associated to the line bundle $\eta_{\Pic}(\gamma, L^0)$.
		\item For an automorphism $\gamma\in \Aut^\circ_\pi(X)$ we have that $\eta_h(\gamma,\xi(\gamma))$ agrees with Silverman's variation of canonical height on $B$ and
		\[
			\eta_h(\gamma,\xi(\gamma)) \geq 0 \quad \text{ as a function on }B(\ov{k}).
		\]

		\item The map is symmetric, i.e. if $\xi_i=\xi(\gamma_i)$
		\[
			\eta_h(\gamma_1,\xi_2) = \eta_h(\gamma_2,\xi_1)
		\]
	\end{enumerate}
\end{theorem}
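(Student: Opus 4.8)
The plan is to build $\eta_h$ fiberwise from the N\'eron--Tate height pairing, working on the Jacobian surface $J_X\to B$ of \autoref{sssec:jacobian_surface}, which carries a zero section $O\colon B\to J_X$. On the diagonal there is no freedom: $\eta_h(\gamma,\xi(\gamma))$ must be the function $h^{vcan}_{\gamma,\xi(\gamma)}$ defined by the limit \autoref{eqn:vcan_definition}, and with this choice property (ii) is immediate. Indeed $h^{vcan}_{\gamma,\xi(\gamma)}(b)=\lim_n n^{-2}h^{can}_{L|_{X_b},x_b}(\gamma^n x_b)$ is, for each $b$, the N\'eron--Tate canonical height of the translation point of $\gamma|_{X_b}$ with respect to the degree--$1$ polarization $L|_{X_b}$ fixed in \autoref{sssec:assigning_line_bundles_to_automorphisms}; hence it is $\geq 0$ everywhere, is independent of the basepoints $x_b$, and coincides with Silverman's variation of canonical height by the discussion in \autoref{sssec:variation_of_canonical_height_as_a_pairing}. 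The substantive points remaining are then: (a) that this function is a \emph{Weil height} on $B$ for the line bundle $\eta_{\Pic}(\gamma,\xi(\gamma))$, i.e.\ it lies over $\eta_{\Pic}(\gamma,\xi(\gamma))$ in the extension $0\to L^\infty(B(\ov k))\to\cH(B)\to\Pic(B)\to 0$; (b) the extension to a bilinear pairing; and (c) the extension of the second variable from $\xi(\Aut^\circ_\pi(X))$ to all of $\Pic^{rel}_\pi(X)$.

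For (a) and (c) I would transfer everything to $J_X$. Fiber by fiber one has canonical isomorphisms $\Pic^0(X_b)\cong\Pic^0(J_{X,b})\cong J_{X,b}$ (autoduality), so an $L^0\in\Pic^{rel}_\pi(X)$ restricts on each fiber to a point $Q_{L^0}(b)\in J_{X,b}$, additive in $L^0$ and $k(B)$-rational (as $L^0$ is defined over $k$), hence defining a section $B\to J_X$ by properness of $B\to B$; similarly $\gamma\in\Aut^\circ_\pi(X)$ acts on each $X_b$ as translation by a point $P_\gamma(b)\in J_{X,b}$, giving a section, and one sets
\[
	\eta_h(\gamma,L^0)(b):=\langle\,P_\gamma(b),\,Q_{L^0}(b)\,\rangle_{X_b},\qquad b\in B(\ov k),
\]
the sign conventions being normalized so that on the diagonal this recovers the nonnegative quantity above. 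Now Silverman's theorem on the variation of canonical height in a family \cite{Silverman1994_Variation-of-the-canonical-height-in-algebraic-families} applied to the pair of sections $b\mapsto P_\gamma(b)$, $b\mapsto Q_{L^0}(b)$ on $J_X$, together with Tate's telescoping argument \cite{Tate} (which is precisely what the limit in \autoref{eqn:vcan_definition} encodes and which promotes the height to the canonical one), shows that $\eta_h(\gamma,L^0)$ is a Weil height on $B$ for the height-pairing line bundle of these two sections; and this line bundle is identified with $\eta_{\Pic}(\gamma,L^0)$ via the defining relation $\pi^*\eta_{\Pic}(\gamma,L^0)=\gamma^*L^0-L^0$ and the classical identity $t_P^*M-M\sim\pi^*\langle P,\,\cdot\,\rangle$ on an elliptic surface (see \cite[III.\S9--11]{Silverman1994_Advanced-topics-in-the-arithmetic-of-elliptic-curves}). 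Here one uses that $\eta_{\Pic}$ is genuinely integral-valued, since $\pi^*\colon\Pic(B)\to\Pic(X)$ is injective by \autoref{prop:injectivity_Pic0}. This gives property (i).

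Bilinearity and symmetry (property (iii)) are then formal. The map $\gamma\mapsto P_\gamma(b)$ is additive because parabolic automorphisms in $\Aut^\circ_\pi(X)$ act by translations and commute, $L^0\mapsto Q_{L^0}(b)$ is additive because $\Pic^0(X_b)$ is a group (and it is compatible with $\xi$ by \autoref{prop:homomorphism_to_relative_picard}, i.e.\ $\xi(\gamma_1\gamma_2)=\xi(\gamma_1)+\xi(\gamma_2)$), and $\langle\,,\,\rangle_{X_b}$ is symmetric and bilinear; hence $\eta_h$ is a symmetric bilinear map into actual functions on $B(\ov k)$, a fortiori into $\cH(B)$. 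Equivalently, on the image of $\xi$ one may simply \emph{define} the bilinear pairing by polarizing the diagonal, $\eta_h(\gamma_1,\xi(\gamma_2))=\tfrac12\big(\eta_h(\gamma_1\gamma_2,\xi(\gamma_1\gamma_2))-\eta_h(\gamma_1,\xi(\gamma_1))-\eta_h(\gamma_2,\xi(\gamma_2))\big)$, using the parallelogram law for the N\'eron--Tate height, and deduce property (i) from bilinearity of $\eta_{\Pic}$.

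\textbf{Main obstacle.} The deep analytic input is entirely imported from the cited work of Tate and Silverman; the work here is bookkeeping, namely the fiberwise transfer between $X$ (which need not admit a section) and its Jacobian $J_X$, and the matching of the Silverman--Tate height-pairing line bundle on $B$ with $\eta_{\Pic}(\gamma,L^0)$ through the relation $\gamma^*L^0-L^0=\pi^*\eta_{\Pic}(\gamma,L^0)$. The only place the standing assumptions of \autoref{sssec:standing_assumptions_elliptic_heights} are genuinely used is at the singular fibers: the hypothesis that every fiber has a component of multiplicity one is exactly what makes $\gamma^*L^0-L^0$ descend to $B$ (as in \autoref{sssec:a_pairing_valued_in_line_bundles}) and is the hypothesis under which Silverman's local analysis of the variation of canonical height is valid.
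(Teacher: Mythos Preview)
Your proposal is correct and matches the paper's approach: the paper does not give an independent proof but simply notes that the result follows by polarizing the variation of canonical height for a single parabolic automorphism and refers to \cite[III.Thm.~11.1]{Silverman1994_Advanced-topics-in-the-arithmetic-of-elliptic-curves} for the details. Your write-up is effectively an unpacking of that citation---passing to the Jacobian surface to obtain sections, invoking the Silverman--Tate theory to identify the resulting function as a Weil height for the correct line bundle, and reading off bilinearity and symmetry from the N\'eron--Tate pairing---so there is nothing to correct.
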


\noindent Essential for our construction will be the analogue of the existence, for a $(1,1)$-form $\alpha$ of integral $0$ on fibers, of a potential $\phi$ which makes it trivial on fibers, and in particular satisfies the identity:
	\[
		\gamma_{*}(\alpha + dd^c\phi)
		=
		(\alpha + dd^c \phi) + \pi^{*}\eta_h(\gamma,[\alpha])
	\]
as in \autoref{thm:preferred_currents}.
This analogue is the content of \autoref{thm:preferred_heights_on_elliptic_fibrations}.

% 	End of Subsection: The pairing from the variation of canonical height
%%===============================================

%%===============================================
% 	Start of Subsection: Relative dynamics in the elliptic fibration

\subsection{Relative dynamics in the elliptic fibration}
	\label{ssec:relative_dynamics_in_the_elliptic_fibration}

\subsubsection{Setup}
	\label{sssec:setup_relative_dynamics_in_the_elliptic_fibration}
For a smooth genus one curve $E$ over a field, we denote by $\sim$ or $\sim_E$ linear equivalence of divisors.
This will apply to both $X$ viewed as a genus one curve over the function field $k(B)$ and to the fibers $X_b$ for $b\in B(\ov{k})$ viewed as genus one curves over $k(b)$ (the residue field at $b$).

Since our genus one curves do not have a preferred basepoint, we will distinguish between addition for a group law and formal addition of divisors.
For a divisor $p^0$ of degree $0$ on $E$, and for any divisor $q$ on $E$ of degree $1$, the degree $1$ divisors $q\oplus p^0$ and $q\ominus p^0$ are well-defined and equal to the unique degree $1$ divisors linearly equivalent to $q\pm p^0$ where $\pm$ is used now as formal addition of divisors.
Throughout, the symbols $+$ and $-$ for divisors will mean formal addition of divisors, whereas $\oplus$ and $\ominus$ will mean translation by corresponding degree $0$ divisors.

\subsubsection{Hyperbolic dynamics}
	\label{sssec:hyperbolic_dynamics_number_field}
Let $D\subset X$ be an effective divisor, with no vertical components (see \autoref{sssec:vertical_divisors}), and such that $[D].[X_b]=d$ i.e. it intersects the fibers in degree $d\geq 1$.
Associated to $D$ we have the ``multiplication by $(d+1)$ map'' in the fibers, defined on the open set of smooth fibers $X^\circ$ as follows:
\begin{align}
	\begin{split}
	\label{eqn:delta_circ_def}
	\delta_D^\circ \colon X^{\circ} &\to X^{\circ}\\
	\delta_D^\circ(x) & \sim_{X_b} (d+1)\cdot x - (D\cap X_b) \qquad b = \pi(x)
	\end{split}
\end{align}
In other words $\delta_D^\circ(x)$ is the unique point in the same fiber which is linearly equivalent to the degree $1$ divisor $(d+1)\cdot x - (D\cap X_b)$.
Note that if $D\cap X_b\sim_{X_b} d\cdot r_D(b)$, where $r_D(b)\in X_b(\ov{k})$ is well-defined up to $d$-torsion, then the map can also be expressed as $\delta_D^\circ(x)=x \oplus d(x-r_D(b))$, i.e. corresponds to multiplication by $d+1$ if we took $r_D(b)$ as the origin on $X_b$.
In particular, the map $\delta_D^\circ$ has degree $(d+1)^2$ on each fiber.
We will also consider the rational map
\[
	\delta_D \colon X \dashrightarrow X
\]
induced by $\delta_D^\circ$ on the open locus.

Let us note that the map $\delta_D^\circ$ only depends on the class of $D$ in the Picard group of $X$ (hence the \Neron--Severi group for a K3 surface $X$), and furthermore does not change if we add a divisor pulled back from $B$ to the linear equivalence class of $D$ on $X$.

\subsubsection{Parabolic dynamics}
	\label{sssec:parabolic_dynamics_number_field}
Let now $\gamma\colon X\to X$ be a parabolic automorphism in the sense of \autoref{sec:elliptic_fibrations_and_currents}.
Our goal will be to understand the interaction between $\gamma$ and $\delta_D$.
On the smooth locus $X^\circ$ we have the identity
\[
	\delta_D^\circ \left(\gamma(x)\right) = \gamma^{d+1}\left(\delta_D^\circ (x)\right)
\]
which it suffices to verify on each closed fiber $X_b$.
Let $\gamma_b$ be the degree $0$ divisor on $X_b$ such that $\gamma(x)=x \oplus \gamma_b$ in the fiber, for $b=\pi(x)$.
Then we have, using the formula for $\delta_D^\circ$, that
\begin{align*}
	\delta_D^\circ \left(\gamma(x)\right)& \sim_{X_b} (d+1)(x+\gamma_b)- (D\cap X_b)\\
	 \gamma^{d+1}\left(\delta_D^\circ(x)\right)&
	 \sim_{X_b}
	 \big[(d+1)x- (D\cap X_b)\big] \oplus (d+1)\gamma_b
\end{align*}
and these divisors are clearly linearly equivalent on $X_b$.

\subsubsection{A common space for the dynamics}
	\label{sssec:a_common_space_for_the_dynamics}
In order to analyze the behavior of heights under the rational map $\delta_D$ on $X$ we need to pass to a completion on which to compare line bundles.
We therefore consider the graphs:
\begin{align*}
	\Gamma(\delta^\circ_D)& :=
	\{\left(x,\delta_D^\circ(x)\right)\colon x\in X^\circ\} \subset X^{\circ}\times X^\circ\\
	\Gamma(\delta_D)& = \ov{\Gamma(\delta^\circ_D)} \subset X\times X
\end{align*}
with projections to each factor denoted $\pi_1$ and $\pi_2$ respectively.
Consider the map
\begin{align*}
	\gamma_2\colon X\times X & \to X\times X\\
 	\gamma_2(x,y) & := \left(\gamma(x),\gamma^{d+1}(y)\right)
\end{align*}

\begin{proposition}
	\label{prop:graph_of_delta_is_preserved}
	The map $\gamma_2$ preserves $\Gamma(\delta_D)$ and induces an automorphism on it.
\end{proposition}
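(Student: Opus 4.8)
The plan is to verify the statement on the dense open set $\Gamma(\delta^\circ_D)$ and then extend by continuity/properness, using the identity between $\gamma$ and $\delta_D$ established in \autoref{sssec:parabolic_dynamics_number_field}.

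\begin{proof}[Proof of \autoref{prop:graph_of_delta_is_preserved}]
First I would check that $\gamma_2$ maps the open graph $\Gamma(\delta^\circ_D)$ into itself. A point of $\Gamma(\delta^\circ_D)$ has the form $(x,\delta^\circ_D(x))$ with $x\in X^\circ$; since $\gamma$ preserves the smooth locus $X^\circ$ (it is an automorphism preserving the fibration and acting by translations on fibers), we have $\gamma(x)\in X^\circ$, and
\[
	\gamma_2\left(x,\delta^\circ_D(x)\right) = \left(\gamma(x), \gamma^{d+1}\left(\delta^\circ_D(x)\right)\right) = \left(\gamma(x), \delta^\circ_D(\gamma(x))\right),
\]
where the last equality is exactly the commutation relation $\delta^\circ_D\circ\gamma = \gamma^{d+1}\circ\delta^\circ_D$ on $X^\circ$ proven in \autoref{sssec:parabolic_dynamics_number_field}. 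Hence $\gamma_2$ carries $\Gamma(\delta^\circ_D)$ bijectively onto itself, with inverse given by the analogous map built from $\gamma^{-1}$ (which is again a parabolic automorphism, so the same argument applies). Thus $\gamma_2|_{\Gamma(\delta^\circ_D)}$ is an automorphism of $\Gamma(\delta^\circ_D)$.

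Next I would pass to closures. Since $\gamma_2\colon X\times X\to X\times X$ is an automorphism of a projective variety, it is in particular a homeomorphism (and an isomorphism of schemes), so it takes the Zariski closure of $\Gamma(\delta^\circ_D)$ to the Zariski closure of its image, i.e.
\[
	\gamma_2\left(\Gamma(\delta_D)\right) = \gamma_2\left(\ov{\Gamma(\delta^\circ_D)}\right) = \ov{\gamma_2\left(\Gamma(\delta^\circ_D)\right)} = \ov{\Gamma(\delta^\circ_D)} = \Gamma(\delta_D).
\]
Applying the same reasoning to $\gamma_2^{-1}$ shows that $\gamma_2$ restricts to an automorphism of $\Gamma(\delta_D)$. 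This already gives the statement; if one wants the induced map to be an automorphism of $\Gamma(\delta_D)$ as a (possibly non-normal) variety rather than merely a self-homeomorphism, it follows since the restriction of an isomorphism of ambient schemes to an invariant closed subscheme is an isomorphism of that subscheme.

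The only point requiring slight care — and the main potential obstacle — is to make sure $\Gamma(\delta_D)$ is genuinely the closure of $\Gamma(\delta^\circ_D)$ and not a union of $\Gamma(\delta^\circ_D)$ with extra components lying over $B\setminus B^\circ$; but this is built into the definition $\Gamma(\delta_D):=\ov{\Gamma(\delta^\circ_D)}$ given in \autoref{sssec:a_common_space_for_the_dynamics}, so no subtlety actually arises: $\Gamma(\delta_D)$ is irreducible, being the closure of the irreducible variety $\Gamma(\delta^\circ_D)\cong X^\circ$, and $\gamma_2$ permutes its dense open subset $\Gamma(\delta^\circ_D)$, hence fixes its closure. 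This completes the proof.
\end{proof}
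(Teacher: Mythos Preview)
Your proof is correct and follows essentially the same approach as the paper: verify that $\gamma_2$ preserves the open graph $\Gamma(\delta_D^\circ)$ using the commutation relation from \autoref{sssec:parabolic_dynamics_number_field}, then use that $\gamma_2$ is a regular automorphism of $X\times X$ to conclude it preserves the closure $\Gamma(\delta_D)$. Your version is more detailed (explicitly invoking the inverse $\gamma^{-1}$ and addressing the irreducibility point), but the underlying argument is identical.
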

\begin{proof}
	We already know that $\gamma_2$ preserves $\Gamma(\delta_D^\circ)$, since this is equivalent to the identity
	\[
		\left(\gamma(x),\delta_D^\circ(\gamma(x))\right) =
		\left(\gamma(x),\gamma^{d+1}(\delta_D^\circ(x))\right)
	\]
	established in \autoref{sssec:parabolic_dynamics_number_field}.
	But $\gamma_2$ is a regular (invertible) map on $X\times X$ and hence preserves the closure $\Gamma(\delta_D)$ of $\Gamma(\delta_D^\circ)$.
\end{proof}

\subsubsection{Normalization of the graph}
	\label{sssec:normalization_of_the_graph}
Denote by $\wtilde{X}$ the normalization of the graph $\Gamma(\delta_D)=:\ov{X}$.
The automorphism $\gamma_2$ extends to $\wtilde{X}$ since the normalization is functorial and we will denote it by $\tilde{\gamma}_2$.
The same applies to the projection $\pi_1,\pi_2$ on $\ov{X}$, but we will also denote the projection $\tilde{\pi}_2\colon \wtilde{X}\to X$ by $\tilde{\delta}_D$.

\subsubsection{Singularities of $\wtilde{X}$}
	\label{sssec:singularities_of_tildeX}
The normal surface $\wtilde{X}$ has a birational morphism to the smooth surface $X$, and since the singularities of $\wtilde{X}$ can be resolved by a birational morphism to $\wtilde{X}$, the singularities of $\wtilde{X}$ are called ``sandwiched'' (see \cite{Spivakovsky1990_Sandwiched-singularities-and-desingularization-of-surfaces-by-normalized-Nash-transformations} for more on sandwiched singularities).

Sandwiched (normal) singularities are rational (see \cite{Artin1966_On-isolated-rational-singularities-of-surfaces} for the definition of rational singularities).
Indeed \cite[Prop.~1.2]{Lipman1969_Rational-singularities-with-applications-to-algebraic-surfaces-and-unique} says that singularities mapping birationally to rational ones are also rational, and smooth points are rational singularities. Furthermore, rational singularities on normal surfaces are in particular $\mathbb{Q}$-factorial \cite[Prop.~17.1]{Lipman1969_Rational-singularities-with-applications-to-algebraic-surfaces-and-unique}, so every Weil divisor on $\wtilde{X}$ has some multiple which is Cartier.

\subsubsection{Action on line bundles}
	\label{sssec:action_on_line_bundles}
We now consider the dynamics of the two maps on line bundles.
Consider $L:=\cO(D)$ the line bundle on $X$ associated to $D$.
We also denote $L^0:=\gamma^*L - L$.

\begin{proposition}[Pullbacks of line bundles]
	\label{prop:pullbacks_of_line_bundles}
	With notation as above:
	\begin{enumerate}
		\item We have the isomorphism of line bundles on $\wtilde{X}$:
		\[
			\tilde{\delta}_D^{*}L = (d+1)^2 \pi_1^* L + E^{vert}
		\]
		where $E^{vert}$ is a vertical line bundle.
		\item After passing to a finite index subgroup of $\Aut^\circ_\pi(X)$, for any $\gamma$ and associated $\tilde{\gamma}_2$, the action of $\tilde\gamma_2$ on vertical divisors is trivial.
		\item For any $\gamma$ in the finite index subgroup of $\Aut^\circ_\pi(X)$ as in the previous part, we also have the isomorphism of line bundles on $\wtilde{X}$:
		\[
			\tilde{\delta}_D^{*}\left((\gamma^{d+1})^* L\right) = (d+1)^2 \pi_1^* \left(\gamma^*L\right) + E^{vert}
		\]
		where $E^{vert}$ is the same line bundle as in part (i).
	\end{enumerate}
\end{proposition}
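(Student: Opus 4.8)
The plan is to prove (i) by restriction to the generic fibre of $\wtilde{X}\to B$, to prove (ii) by a finiteness argument, and to deduce (iii) formally from (i) and (ii). Throughout I use that $\wtilde{X}$ is normal and $\bQ$-factorial (\autoref{sssec:singularities_of_tildeX}), so that divisor classes may be manipulated via Weil divisors.

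For (i), put $E^{vert}:=\tilde\delta_D^{*}L\otimes(\pi_1^{*}L)^{\otimes-(d+1)^2}$, a line bundle on $\wtilde{X}$; the task is to show that its class is represented by a divisor supported on fibres of $\wtilde{X}\to B$. Over the open locus $X^\circ$ of smooth fibres the map $\delta_D^\circ$ is a morphism, so $\Gamma(\delta_D^\circ)$ is the graph of a morphism and $\pi_1$ is an isomorphism over $X^\circ$ (and normalization changes nothing over the smooth scheme $X^\circ$); hence over the generic point $\eta$ of $B$ the morphism $\tilde\delta_D$ is identified with $\delta_D^\circ|_{X_\eta}\colon X_\eta\to X_\eta$. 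Writing $D\cap X_\eta\sim d\cdot r$ for a point $r$ of $X_\eta$ — which exists over a finite extension of $k(B)$ and is unique up to $d$-torsion — and using $r$ as origin to identify $X_\eta$ with its Jacobian, the relation $\delta_D^\circ(x)\sim(d+1)x-(D\cap X_\eta)$ shows that $\delta_D^\circ|_{X_\eta}$ becomes multiplication by $d+1$. Hence, by the theorem of the cube (for a symmetric line bundle $M$ on an abelian variety, $[m]^{*}M\cong M^{\otimes m^2}$) applied to $M=\cO(d\cdot r)$, one gets $(\delta_D^\circ)^{*}(L|_{X_\eta})\cong\cO\big(d(d+1)^2\cdot r\big)\cong\cO\big((d+1)^2(D\cap X_\eta)\big)\cong(L|_{X_\eta})^{\otimes(d+1)^2}$, so $E^{vert}|_{X_\eta}$ is trivial (a geometric statement, insensitive to the auxiliary finite base change). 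Finally, a line bundle on $\wtilde{X}$ that is trivial on the generic fibre is vertical: represent it by a Weil divisor $D_1$, choose $f\in k(\wtilde{X})^{\times}=k(X_\eta)^{\times}$ whose divisor on $X_\eta$ equals $D_1|_{X_\eta}$, and observe that $D_1$ minus the divisor of $f$ computed on $\wtilde{X}$ restricts to $0$ on $X_\eta$, hence has no horizontal component and is supported on fibres over closed points of $B$.

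For (ii), observe first that the set $\cV$ of prime divisors of $\wtilde{X}$ contained in fibres of $\wtilde{X}\to B$ is finite: over $X^\circ$ the map $\pi_1$ is an isomorphism with irreducible fibres, so $\cV$ consists of the proper transforms of the components of the finitely many singular fibres of $X\to B$, together with the finitely many $\pi_1$-exceptional curves. Each $\gamma\in\Aut^\circ_\pi(X)$ induces the automorphism $\tilde\gamma_2$ of $\wtilde{X}$ (\autoref{prop:graph_of_delta_is_preserved} and functoriality of normalization), which lies over $B$ since $\gamma$ induces the identity on $B$; hence $\tilde\gamma_2$ permutes $\cV$. Moreover $\Aut^\circ_\pi(X)$ is abelian — an element acting trivially on the generic fibre $X_{k(B)}$ is birationally, hence actually, the identity, and on $X_{k(B)}$ the group acts by translations — so $\gamma\mapsto\tilde\gamma_2$ is a homomorphism, and therefore so is $\gamma\mapsto(\tilde\gamma_2\ \text{acting on the finite set}\ \cV)$. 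Replacing $\Aut^\circ_\pi(X)$ by the kernel of this last homomorphism yields a finite-index subgroup for which $\tilde\gamma_2$ fixes every element of $\cV$; then $\tilde\gamma_2^{*}$ is the identity on the group of vertical divisors, in particular on the class $E^{vert}$.

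For (iii), apply $\tilde\gamma_2^{*}$ to the isomorphism of (i). Since $\pi_1\circ\tilde\gamma_2=\gamma\circ\pi_1$ and $\tilde\delta_D\circ\tilde\gamma_2=\gamma^{d+1}\circ\tilde\delta_D$ (the latter being the content of \autoref{sssec:parabolic_dynamics_number_field}), one has $\tilde\gamma_2^{*}\tilde\delta_D^{*}L=\tilde\delta_D^{*}\big((\gamma^{d+1})^{*}L\big)$ and $\tilde\gamma_2^{*}\pi_1^{*}L=\pi_1^{*}(\gamma^{*}L)$, while $\tilde\gamma_2^{*}E^{vert}=E^{vert}$ by (ii); combining these gives $\tilde\delta_D^{*}\big((\gamma^{d+1})^{*}L\big)=(d+1)^2\pi_1^{*}(\gamma^{*}L)+E^{vert}$ with the same $E^{vert}$. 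The main obstacle is part (i), and within it the identification of $\delta_D^\circ|_{X_\eta}$ with multiplication by $d+1$ and the ensuing computation of its action on $\Pic(X_\eta)$ via the theorem of the cube (keeping track of degrees and of the symmetric/torsion subtleties), together with the care needed to work with Weil divisors on the merely normal, $\bQ$-factorial surface $\wtilde{X}$; parts (ii) and (iii) are then essentially bookkeeping.
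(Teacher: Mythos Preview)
Your proof is correct and follows essentially the same approach as the paper's: restrict to the generic fibre and use that $\delta_D^\circ$ becomes multiplication by $d+1$ to prove (i), use finiteness of the set of vertical components to get (ii), and pull back (i) by $\tilde\gamma_2^{*}$ for (iii). Your write-up is in fact slightly more detailed than the paper's in two useful places --- you make explicit the theorem-of-the-cube computation (including the symmetry of $\cO(d\cdot r)$ and the need to pass to a finite extension of $k(B)$ to find $r$), and you spell out why $\gamma\mapsto\tilde\gamma_2$ is a group homomorphism (via abelianness of $\Aut_\pi^\circ(X)$), which the paper leaves implicit.
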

\noindent Subtracting the result in part (i) from that in part (iii) yields:
\begin{corollary}[Degree $0$ identity]
	\label{cor:degree_0_identity}
	There exists a finite index subgroup of $\Aut_\pi^\circ(X)$ such that any element $\gamma$ in it satisfies the identity of line bundles on $\wtilde{X}$:
	\begin{align*}
		\tilde{\delta}_D^{*}\left((\gamma^{d+1})^* L - L\right) & = (d+1)^2 \pi_1^* \left(\gamma^*L - L\right)\\
		\intertext{
		\quad or using the notation $L^0 = \gamma^*L-L$
		}
		\tilde{\delta}_D^{*}\left(L^0 + \gamma^*L^0 + \cdots + (\gamma^d)^* L^0\right)
		& = (d+1)^2 \pi_1^* L^0
	\end{align*}
\end{corollary}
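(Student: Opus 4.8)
The plan is to read off the corollary as a formal consequence of \autoref{prop:pullbacks_of_line_bundles}: all of the geometric content — the construction of the normalized graph $\wtilde X$, the $\bQ$-factoriality of its (sandwiched, rational) singularities that lets one speak of a well-defined vertical line bundle $E^{vert}$ on $\wtilde X$, and the fact that this $E^{vert}$ is independent of $\gamma$ once one restricts to a suitable finite index subgroup — has already been packaged into that proposition and into \autoref{sssec:singularities_of_tildeX}. What remains for the corollary is a subtraction and a telescoping identity in $\Pic(\wtilde X)$.

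Concretely, I would first fix the finite index subgroup of $\Aut^\circ_\pi(X)$ supplied by part (ii) of \autoref{prop:pullbacks_of_line_bundles}, on which the lift $\tilde\gamma_2$ acts trivially on vertical divisors; this is exactly what makes parts (i) and (iii) of the proposition hold with one and the same vertical line bundle $E^{vert}$. For $\gamma$ in this subgroup, I would subtract the identity of part (i) from that of part (iii) on $\wtilde X$: the $E^{vert}$ terms cancel, and since $\tilde\delta_D^*$ and $\pi_1^*$ are group homomorphisms on Picard groups, one is left with
\[
	\tilde\delta_D^*\big((\gamma^{d+1})^*L - L\big) = (d+1)^2\,\pi_1^*\big(\gamma^*L - L\big).
\]
As $L^0 := \gamma^*L - L$ by definition, the right-hand side is $(d+1)^2\,\pi_1^*L^0$, which is the first asserted identity. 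To obtain the second, equivalent form I would record the telescoping identity in $\Pic(X)$: since $\gamma$ is an automorphism, $(\gamma^{d+1})^* = (\gamma^*)^{d+1}$ on $\Pic(X)$, whence
\[
	(\gamma^{d+1})^*L - L = \sum_{j=0}^{d}\big((\gamma^{j+1})^*L - (\gamma^{j})^*L\big) = \sum_{j=0}^{d}(\gamma^{j})^*\big(\gamma^*L - L\big) = L^0 + \gamma^*L^0 + \cdots + (\gamma^d)^*L^0,
\]
and applying $\tilde\delta_D^*$ to both sides and invoking the identity just established yields the displayed formula in the statement.

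I do not expect a genuine obstacle at this stage: the corollary is elementary linear algebra in $\Pic(\wtilde X)$. The only point requiring care is that one must work over the finite index subgroup of part (ii), so that the two occurrences of $E^{vert}$ in parts (i) and (iii) literally coincide and drop out upon subtraction; the real work — in particular ruling out torsion contributions in the vertical divisors moved around by $\tilde\gamma_2$, which forces the passage to that subgroup — has already been done in \autoref{prop:pullbacks_of_line_bundles}.
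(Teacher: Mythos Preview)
Your proposal is correct and is precisely the paper's argument: the paper's entire proof is the one-line remark ``Subtracting the result in part (i) from that in part (iii) yields'' preceding the corollary, together with the note that the second form telescopes to the first. Your write-up is in fact more explicit than the paper's, but the approach is identical.
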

\noindent Note that the second form of the expression telescopes to the first one.

\begin{proof}[Proof of \autoref{prop:pullbacks_of_line_bundles}]
	For part (i), let $X_{k(B)}$ denote the (scheme-theoretic) generic fiber of $X$ over $k(B)$, in other words the associated elliptic curve over the function field $k(B)$.
	We can restrict line bundles from $X$ to $X_{k(B)}$ and will do so without additional notation.
	Note also that we can do the same on $\wtilde{X}$ and the associated elliptic curves are isomorphic (via the induced map): $\wtilde{X}_{k(B)}\toisom X_{k(B)}$.
	Now the line bundle $\tilde\delta_D^* L - (d+1)^2\tilde\pi_1 L$ is trivial on $X_{k(B)}$, since $\tilde\delta_D$ induces a multiplication by $(d+1)$ map on the elliptic curve $X_{k(B)}$.
	In particular the bundle has a trivializing section over $k(B)$, which gives a rational section on $\wtilde{X}$ which has only vertical poles and zeros (see also \cite[8.3 Prop.~3.4]{Liu2002_Algebraic-geometry-and-arithmetic-curves}).

	For part (ii), observe that vertical divisors are permuted by the lifts of parabolic automorphisms $\tilde\gamma_2$, so they are fixed by a finite index subgroup of $\Aut^\circ_\pi(X)$.

	For part (iii), apply the automorphism $\tilde\gamma_2$ to the identity in part (i).
	This gives:
	\[
		\tilde\gamma_2^*\tilde\delta_D^{*}L = (d+1)^2 \tilde\gamma_2^*\pi_1^* L + \tilde\gamma_2^*E^{vert}
	\]
	But as maps we have the identities $\tilde\delta_D\circ \tilde\gamma_2 = \gamma^d \circ \tilde\delta_D$ (using that $\tilde\delta_D=\tilde\pi_2$ and $\tilde\gamma_2$ acts on $\wtilde{X}$) and $\pi_1\circ \tilde\gamma_2 = \gamma\circ \pi_1$.
	Combined with the triviality of the action of $\tilde{\gamma}_2$ on vertical divisors, the claimed identity follows.
\end{proof}

% 	End of Subsection: Relative dynamics in the elliptic fibration
%%===============================================

%%===============================================
% 	Start of Subsection: Preferred heights on elliptic fibrations

\subsection{Preferred heights on elliptic fibrations}
	\label{ssec:preferred_heights_on_elliptic_fibrations}

\subsubsection{Setup}
	\label{sssec:setup_preferred_heights_on_elliptic_fibrations}
The heights we construct are not quite canonical, but they have some preferred properties.
In particular, they restrict to canonical heights on smooth fibers.

For the next result, recall that canonical height functions on elliptic curves depend on the choice of basepoint.
However, changing the basepoint only changes the canonical height function by a constant.
Therefore, we can speak of ``a canonical height function up to a constant''.

\begin{theorem}[Preferred heights on elliptic fibrations]
	\label{thm:preferred_heights_on_elliptic_fibrations}
	Consider a line bundle $L^0\in \Pic^0_\pi(X)$, i.e. one that restricts to have degree $0$ on each irreducible component of the fibers of the fibration.
	Then there exists a ``preferred'' height function $h^{pf}_{L^0}$ in its class, with the property that, when restricted to any smooth fiber $X_b$, the height function agrees, up to a constant depending on $b$, with a canonical height function of $L^0$ restricted to that fiber.

	In particular, the restricted height function is affine for the ``group law'' on the genus one smooth fibers.
	On the singular fibers of $\pi$ the height $h^{pf}_{L^0}$ can be taken to be identically $0$.
\end{theorem}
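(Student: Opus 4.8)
The plan is to imitate Tate's telescoping construction of the canonical height \cite{Tate}, with the fibrewise ``multiplication'' map $\delta_D$ from \autoref{sssec:hyperbolic_dynamics_number_field} playing the role of the multiplication-by-$m$ endomorphism. First I would fix an effective divisor $D\subset X$ with no vertical components and fibre degree $[D].[X_b]=d\geq 1$ (a general member of a sufficiently ample linear system will do), so that $\delta_D$ restricts on each smooth fibre $X_b$ to multiplication by $d+1$ relative to the basepoint $r_D(b)$ (well defined up to $d$-torsion). Since $L^0\vert_{X_b}$ has degree $0$ we have $[-1]^{*}(L^0\vert_{X_b})\cong (L^0\vert_{X_b})^{-1}$, whence $\delta_D^{*}(L^0\vert_{X_b})\cong (L^0\vert_{X_b})^{\otimes(d+1)}$; globalising over the normalisation $\wtilde X$ of the graph of $\delta_D$ (\autoref{prop:graph_of_delta_is_preserved}, \autoref{sssec:normalization_of_the_graph}, \autoref{prop:pullbacks_of_line_bundles}) this becomes an identity
\[
	\tilde\delta_D^{*}L^0 \;=\; (d+1)\,\pi_1^{*}L^0 \;+\; V
\]
in $\Pic(\wtilde X)_\bQ$, with $V$ supported over the finitely many critical values of $\pi$. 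The key structural input is that $V$ is \emph{pulled back from $B$}: this is exactly what \autoref{cor:degree_0_identity} yields after passing to the finite-index subgroup of $\Aut^\circ_\pi(X)$ it provides and using that $L^0$ may be moved inside $\Pic^0_\pi(X)/\pi^{*}\Pic(B)$ into the image of the homomorphism $\xi$ of \autoref{prop:homomorphism_to_relative_picard}; when $\Aut^\circ_\pi(X)$ is too small to see $L^0$ this way, one instead transports everything to the Jacobian surface $J_X\to B$ of \autoref{sssec:jacobian_surface}, which has a section, and invokes the results of Silverman and Tate there.

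With this in hand, I would take any Weil height $h$ associated to $L^0$ and form $h_n := (d+1)^{-n}\,(h\circ\delta_D^{\,n})$, using the normalisations of the graphs of the iterates $\delta_D^{\,n}$ to make sense of the pullbacks. Functoriality of Weil heights applied to the line-bundle identity (all spaces being proper once one works on the normalised graphs) shows that $h\circ\delta_D-(d+1)h$ equals $h_{V_B}\circ\pi$ plus a uniformly bounded term, where $h_{V_B}$ is a fixed Weil height on $B$ for $\cO_B(V_B)$ with $\pi^{*}V_B=V$; since $\delta_D$ is fibre-preserving we have $\pi\circ\delta_D^{\,n}=\pi$, so
\[
	h_{n+1}-h_n \;=\; \frac{1}{(d+1)^{\,n+1}}\Bigl(h_{V_B}\circ\pi \;+\; O(1)\Bigr),
\]
a geometric series. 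Hence $h_n$ converges pointwise on $X^{\circ}(\ov k)$, and $\lim_n h_n$ differs from $h$ by $\tfrac{1}{d}\,h_{V_B}\circ\pi$ plus a uniformly bounded function. Subtracting $\tfrac{1}{d}\,h_{V_B}\circ\pi$ I obtain $h^{pf}_{L^0}$, which differs from $h$ by a uniformly bounded amount and is therefore a genuine Weil height associated to $L^0$.

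It then remains to identify $h^{pf}_{L^0}$ fibrewise. For $b\in B^{\circ}(\ov k)$, $\delta_D\vert_{X_b}=[d+1]$ relative to $r_D(b)$, so $h_n\vert_{X_b}=(d+1)^{-n}(h\vert_{X_b})\circ[d+1]^{n}$ is precisely Tate's limit defining the N\'eron--Tate canonical height of $L^0\vert_{X_b}$ with basepoint $r_D(b)$; the $d$-torsion ambiguity in $r_D(b)$, the $-\tfrac{1}{d}h_{V_B}(b)$ correction, and the choice of $D$ alter this only by a constant depending on $b$, giving the asserted compatibility, and in particular affineness for the group law. For the last clause, note that $\delta_D$ may be undefined over the critical values, so the above only constructs $h^{pf}_{L^0}$ on $X^{\circ}(\ov k)$; because $L^0$ restricts to a degree-$0$ line bundle on the normalisation (a $\bP^1$, each component of a singular fibre having geometric genus $0$) of every component of every singular fibre, any Weil height associated to $L^0$ is bounded on each of the finitely many singular fibres, so one may simply set $h^{pf}_{L^0}\equiv 0$ there without affecting the Weil-height property or the statement on smooth fibres.

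The main obstacle is the global step: the fibrewise convergence is immediate, but upgrading it to the statement that $h^{pf}_{L^0}$ differs from a Weil height of $L^0$ by a \emph{uniformly} bounded function requires pinning down the error term $\tilde\delta_D^{*}L^0-(d+1)\pi_1^{*}L^0$ precisely enough to see that it is pulled back from $B$ — this is the role of the graph/normalisation constructions and of \autoref{cor:degree_0_identity}, and is also where the standing assumption that every singular fibre has a multiplicity-one component enters. One should also check along the way that the construction is compatible with the height-valued pairing of \autoref{thm:height_valued_pairing}, namely that $\gamma^{*}h^{pf}_{L^0}-h^{pf}_{L^0}$ is the pullback from $B$ of a height representing $\eta_{\Pic}(\gamma,L^0)$, which is the height analogue of the $\pi$-triviality identity for the preferred currents.
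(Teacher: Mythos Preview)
Your proposal is correct and follows the same strategy as the paper: reduce to $L^0=\gamma^*L-L$, use the fibrewise multiplication-by-$(d{+}1)$ map $\delta_D$ together with the line-bundle identity of \autoref{cor:degree_0_identity} on the normalised graph $\wtilde{X}$ to run a Tate-style contraction with ratio $(d{+}1)^{-1}$, and then identify the limit fibrewise as a N\'eron--Tate height for $L^0\vert_{X_b}$ with basepoint $r_D(b)$. The only difference is organisational: the paper folds the $B$-correction $\tfrac{d}{2}\eta_h(\gamma,L^0)$ into the contraction map $h\mapsto (d{+}1)^{-1}\bigl((\delta_D^\circ)^*h+\tfrac{d}{2}\eta_h(\gamma,L^0)\bigr)$ and takes its fixed point, whereas you iterate the bare pullback $h\mapsto (d{+}1)^{-1}(h\circ\delta_D)$ and subtract the accumulated $\tfrac{1}{d}h_{V_B}\circ\pi$ at the end---these produce the same preferred height.
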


\noindent Note that preferred height functions are only unique up to pullback of a bounded function from $B$.
This is in analogy to the uniqueness of the potential $\phi$ from \autoref{thm:preferred_currents} up to pullbacks of potentials from $B$.

\begin{proof}
	Fix an ample divisor $D$ and associated line bundle $L$ as in \autoref{ssec:relative_dynamics_in_the_elliptic_fibration}.
	Any line bundle $L^0$ as in the statement can be written as a linear combination of bundles of the form $\gamma^* L - L$, as $\gamma$ ranges over generators of $\Aut_\pi^\circ(X)$.
	The desired conclusion is also preserved by taking linear combinations, so it suffices to prove the statement assuming that $L^0 = \gamma^*L - L$.
	Furthermore, we can assume that $\gamma$ belongs to the finite index subgroup to which \autoref{cor:degree_0_identity} applies, so we have on $\wtilde{X}$ the identity of line bundles:
	\begin{align*}
		\tilde{\delta}_D^{*}\left(L^0 + \gamma^*L^0 + \cdots + (\gamma^d)^* L^0\right)
		& = (d+1)^2 \pi_1^* L^0.
	\end{align*}
	Recall that we also have the identity
	\[
		\gamma^* L^0 = L^0 + \eta_{\Pic}(\gamma,L^0)
	\]
	where $\eta_{\Pic}$ is the pairing valued in line bundles from \autoref{sssec:a_pairing_valued_in_line_bundles}.
	In particular $(\gamma^*)^i L^0 = L^0 + i \cdot \eta_{\Pic}(\gamma,L^0) $ so we can rewrite the above identity on $\wtilde{X}$ as
	\begin{align*}
		\tilde{\delta}_D^{*}\left((d+1)\cdot L^0
		+
		\frac{d(d+1)}{2}\eta_{\Pic}\left(\gamma,L^0\right)
		\right)
		& = (d+1)^2 \pi_1^* L^0\\
		\intertext{and after cancelling a factor of $d+1$:}
		\tilde{\delta}_D^{*}\left(L^0
		+
		\frac{d}{2}\eta_{\Pic}\left(\gamma,L^0\right)
		\right)
		& = (d+1) \pi_1^* L^0.
	\end{align*}
	Let now $h_{L^0}$ be an arbitrary height function on $X$ associated to $L^0$.
	Recall also that we had a pairing valued in heights, compatible with the pairing valued in line bundles, denoted $\eta_h(\gamma,L^0)$.
	Then on $\wtilde{X}$, using functoriality of heights, we obtain the identity of height functions
	\begin{align*}
		\tilde{\delta}_D^{*}
		\left(
		h_{L^0} +
		\frac{d}{2}
		\eta_h(\gamma,L^0)
		\right)
		& = (d+1) \pi_1^* h_{L^0} + O(1)\\
	\end{align*}
	where the error term $O(1)$ only depends on the choice of initial height function $h_{L^0}$, $\gamma$, and $D$.
	Restrict this identity on the open locus $\wtilde X^\circ$, which can then be rewritten on $X^\circ$ to yield:
	\[
		h_{L^0} = \frac{1}{d+1}\left(
		(\delta_D^{\circ})^* h_{L^0}
		+ \frac{d}{2} \eta_h\left(\gamma,L^0\right)
		\right)
		+ O(1)
	\]
	This identity shows that if we consider the transformation which changes any height function associated to $L^0$ on $X$ by the formula
	\begin{align}
		\label{eqn:fixed_point_equation}
		h_{L^0} \mapsto \frac{1}{d+1}\left(
		(\delta_D^{\circ})^* h_{L^0}
		+ \frac{d}{2} \eta_h\left(\gamma,L^0\right)
		\right)
	\end{align}
	on the open locus $X^\circ$, then the resulting function on $X(\ov{k})$ is again a height function associated to $L^0$.
	Note that the transformation does not do anything to the values of the height on the singular fibers.
	This transformation is also visibly a contraction for the $L^\infty$-norm on heights on $X^\circ(\ov{k})$, by a factor of $\frac{1}{d+1}$.
	Using the standard converging geometric series argument, it follows that if we start from some height function $h_{L^0}$, there is a corresponding height function $h^{pf}_{L^0}$ which is fixed by the above transformation.

	It remains to observe that if we restrict $h^{pf}_{L^0}$ to any smooth fiber $X_b$, it is still a fixed point of the transformation in \autoref{eqn:fixed_point_equation}.
	Note also that the value $\eta_h(\gamma,L^0)(b)$ is independent of the point on the fiber.
	Consider then the function on $X_b(\ov{k})$ defined by $h_b:= h^{pf}_{L^0} - \frac{1}{2}\eta_h(\gamma,L^0)(b)$, which differs from $h^{pf}_{L^0}$ by a constant.
	Observe that $h_b$ satisfies exactly the equation for the canonical height associated to the line bundle $L^0\vert_{X_b}$ and basepoint $r_D(b)$, where $r_D(b)\in X_b(\ov{k})$ is a point such that $d \cdot r_D(b)\sim D\cap X_b$.
	Therefore $h^{pf}_{L^0}$ is (up to a constant) a canonical height function on each smooth fiber and thus satisfies the conclusion of the theorem.

	For irreducible components of fibers where $\pi$ is not a smooth map, observe that they are all birational to $\bP^1$.
	By assumption $L^0$ restricts to a bundle of degree $0$ on each such irreducible component, hence the induced height function is a bounded distance from the trivial height function $0$.
\end{proof}

\begin{remark}[The emergence of the variation of canonical height]
	\label{rmk:the_emergence_of_the_variation_of_canonical_height}
	In the proof of \autoref{thm:preferred_heights_on_elliptic_fibrations} above, we could have used any height in the class $\eta_{\Pic}(\gamma,L^0)$, not necessarily $\eta_{h}(\gamma,L^0)$.
	This would have changed the preferred height $h^{pf}_{L^0}$ by a (bounded) function pulled back from $B$.
	But the variation of canonical height does emerge dynamically as follows.

	The statement of \autoref{thm:preferred_heights_on_elliptic_fibrations} does not involve any parabolic automorphism, but we did use one to construct the preferred height.
	Suppose now $\gamma'$ is another parabolic automorphism.
	Then we have the formula:
	\begin{align}
	\label{eqn:preferred_heights_dynamics}
		\left(\gamma'\right)^* h^{pf}_{L^{0}}	= h^{pf}_{L^{0}} + \eta_h(\gamma',L^{0})
	\end{align}
	and when $L^0 = \xi(\gamma)$ we will write $\eta_h(\gamma',\gamma)$ instead.
	Indeed, it suffices to check \autoref{eqn:preferred_heights_dynamics} pointwise, and for this we can use the fact that the preferred height is affine, for the group law on fibers.
	Then, after picking in each fiber a basepoint, the result follows from \autoref{thm:preferred_heights_on_elliptic_fibrations}.
\end{remark}

% 	End of Subsection: Preferred heights on elliptic fibrations
%%===============================================

%%%%%%%%%%%%%%%%%%%%%%%%%%%%%%%%%%%%%%%%%%%%%%%%%%%%%%%%%%%%%%%%%%%%%%%%%%%%%%%
%%% 				End of Section: Elliptic fibrations and heights
%%%%%%%%%%%%%%%%%%%%%%%%%%%%%%%%%%%%%%%%%%%%%%%%%%%%%%%%%%%%%%%%%%%%%%%%%%%%%%%

%%%%%%%%%%%%%%%%%%%%%%%%%%%%%%%%%%%%%%%%%%%%%%%%%%%%%%%%%%%%%%%%%%%%%%%%%%%%%%%
%%% 				Start of Section: Boundary heights, direct construction
%%%%%%%%%%%%%%%%%%%%%%%%%%%%%%%%%%%%%%%%%%%%%%%%%%%%%%%%%%%%%%%%%%%%%%%%%%%%%%%

\section{Boundary heights, direct construction}
	\label{sec:boundary_heights_direct_construction}

\subsubsection*{Outline of section}

We now use the same arguments that established the existence of canonical currents to produce canonical heights for arbitrary boundary classes.
Our main result on the existence of canonical heights is stated in \autoref{thm:canonical_heights_on_the_boundary}.

In \autoref{ssec:preliminary_parabolic_estimates} we collect the needed estimates needed to handle elliptic fibrations.
Then in \autoref{ssec:heights_from_a_basis} we make everything ``explicit'' by fixing non-canonical representatives for the heights, analogously to choosing smooth differential forms to represent every cohomology class.
Then any other height function differs from the non-canonical representative by a bounded ``potential'' function on $X(\ov k)$.

%%===============================================
% 	Start of Subsection: Preliminary parabolic estimates

\subsection{Preliminary parabolic estimates}
	\label{ssec:preliminary_parabolic_estimates}

Using the preferred representatives of height functions for relative degree $0$ lines bundles constructed in \autoref{thm:preferred_heights_on_elliptic_fibrations}, we can now establish some estimates for heights on general ample line bundles.

\subsubsection{Preparations with line bundles}
	\label{sssec:preparations_with_line_bundles}
Fix a ($\bQ$-)line bundle $L$ on $X$ of degree $1$ on the fibers of the elliptic fibration, let $h_{L}$ be a height function for $L$, and let $\gamma$ be a parabolic automorphism.
Set $L^0 = \gamma^* L - L$, let $h_{L^0}:=\gamma^*h_L- h_L$, and let $h^{pf}_{L^0}$ be a preferred height function provided by \autoref{thm:preferred_heights_on_elliptic_fibrations}.
The function $\phi\colon X\left(\ov k\right)\to \bR$ defined by $\phi:=h_{L^0}-h^{pf}_{L^0}$ is uniformly bounded.
We will make use of the following immediate formulas:
\begin{align*}
	\gamma^* h_L & = h_L + h_{L_0}		&	\gamma^* h^{pf}_{L^0} & = h^{pf}_{L^0} + \eta_h(\gamma,L^0)\\
	h_{L^0} & = h_{L^0}^{pf} + \phi 	&	(\gamma^i)^* h^{pf}_{L^0} & = h^{pf}_{L^0} + i\cdot \eta_h(\gamma,L^0)
\end{align*}
where $\eta_h$ is the height-valued pairing from \autoref{thm:height_valued_pairing}.
Note that the formula for the pullback $\gamma^* h^{pf}_{L^0}$ follows from \autoref{eqn:preferred_heights_dynamics}.
We abuse notation and view $\eta_h$ as giving heights not only on $B$, but also on $X$ by pullback along $\pi$.
The next result is a direct analogue of \autoref{prop:large_iterates_of_one_automorphism}.

\begin{proposition}[Large iterates of one automorphism]
	\label{prop:large_iterates_of_one_automorphism_heights}
	With notation as above, we have the identity of functions on $X(\ov{k})$:
	\begin{align*}
		\left(\gamma^i\right)^* h_{L^0} & = h_{L^0} + i\cdot \eta_h(\gamma,L^0) +
		(\gamma^i)^* \phi - \phi\\
		\left(\gamma^{n}\right)^* h_L & = h_L + n\cdot h_{L^0} + \tfrac{n(n-1)}{2}\eta_h(\gamma,L^0) + \\
		 & + \left[S_n(\gamma,\phi) - n\phi\right]
	\end{align*}
	where $S_n(\gamma,\phi):=\phi + \gamma^*\phi + \cdots + \gamma^{n-1}\phi$ is a Birkhoff sum associated to $\phi$ for the dynamics of $\gamma$.	
\end{proposition}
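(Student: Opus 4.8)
The plan is to derive both identities by purely formal manipulation of the four displayed relations preceding the statement, in exact parallel with the proof of \autoref{prop:large_iterates_of_one_automorphism}. The content is entirely carried by the earlier results --- existence of the preferred height $h^{pf}_{L^0}$ together with its equivariance \autoref{eqn:preferred_heights_dynamics}, and boundedness of $\phi$ --- so the argument itself is bookkeeping.

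First I would establish the formula for $\left(\gamma^i\right)^* h_{L^0}$. Starting from $h_{L^0} = h^{pf}_{L^0} + \phi$ and applying $\left(\gamma^i\right)^*$ gives $\left(\gamma^i\right)^* h_{L^0} = \left(\gamma^i\right)^* h^{pf}_{L^0} + \left(\gamma^i\right)^* \phi$; substituting $\left(\gamma^i\right)^* h^{pf}_{L^0} = h^{pf}_{L^0} + i\cdot \eta_h(\gamma,L^0)$ and then rewriting $h^{pf}_{L^0} = h_{L^0} - \phi$ yields the claimed identity. (The $\gamma$-invariance of $\eta_h(\gamma,L^0)$, coming from the fact that it is pulled back from $B$, is already built into the relation $\left(\gamma^i\right)^* h^{pf}_{L^0} = h^{pf}_{L^0} + i\cdot\eta_h(\gamma,L^0)$ that we take as given.) Next I would telescope the action on $h_L$: from $\gamma^* h_L = h_L + h_{L^0}$ one gets by induction on $n$ that $\left(\gamma^n\right)^* h_L = h_L + \sum_{i=0}^{n-1}\left(\gamma^i\right)^* h_{L^0}$ --- equivalently $\left(\gamma^i\right)^* h_{L^0} = \left(\gamma^{i+1}\right)^* h_L - \left(\gamma^i\right)^* h_L$, so the sum collapses. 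Substituting the first identity into each summand and evaluating the elementary sums $\sum_{i=0}^{n-1} i = \tfrac{n(n-1)}{2}$, $\sum_{i=0}^{n-1}\left(\gamma^i\right)^*\phi = S_n(\gamma,\phi)$, and $\sum_{i=0}^{n-1}(-\phi) = -n\phi$, then collecting terms, gives the stated expression for $\left(\gamma^n\right)^* h_L$.

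There is no real obstacle here; the only point worth a remark in the writeup is that all the equalities in question are genuine equalities of real-valued functions on $X(\ov k)$, not equalities up to $O(1)$, since $h^{pf}_{L^0}$ and $\eta_h(\gamma,L^0)$ are honest height functions (not classes in $\cH$) --- the boundedness of $\phi$ is then needed only for the subsequent applications of this proposition, not for its proof.
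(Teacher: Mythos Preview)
Your proposal is correct and follows essentially the same approach as the paper's proof: apply $(\gamma^i)^*$ to $h_{L^0}=h^{pf}_{L^0}+\phi$ and use the iteration property of $h^{pf}_{L^0}$ for the first identity, then telescope $\left(\gamma^n\right)^* h_L = h_L + \sum_{i=0}^{n-1}\left(\gamma^i\right)^* h_{L^0}$ and sum for the second. Your writeup is in fact slightly more explicit than the paper's (which just says ``sum the identity\ldots to conclude''), and your closing remark about genuine equalities versus $O(1)$ is a worthwhile clarification.
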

\begin{proof}
	The identity for $(\gamma^i)^*h_{L^0}$ follows by applying $(\gamma^i)^*$ to the identity $h_{L^0}=h_{L^0}^{pf}+\phi$ and using the simple iteration property for $h_{L^0}^{pf}$ established above.
	To obtain the identity for $\left(\gamma^n\right)^* h_{L}$, we simply observe that (by induction, say)
	\[
		\left(\gamma^n\right)^* h_L = h_L + h_{L^0} + \cdots + \left(\gamma^{n-1}\right)^* h_{L^0}
	\]
	and sum the identity for $(\gamma^i)^* h_{L^0}$ to conclude.
\end{proof}

\noindent We also need to establish an estimate combining several generators, in direct analogy with \autoref{prop:large_iterates_of_several_automorphisms}.

\begin{proposition}[Large iterates of several automorphisms]
	\label{prop:large_iterates_of_several_automorphisms_heights}
	Suppose that $\gamma_1,\ldots,\gamma_k$ are parabolic automorphisms, $L$ is a fixed line bundle as before, we denote by $L_i=\gamma_i^*L - L$ and associated height functions $h_L, h_{L_i}$, and preferred height functions $h^{pf}_{L_i}$ and functions $\phi_i=h_{L_i}-h_{L_i}^{pf}$.

	Then we have the identity
	\begin{align*}
		\left(\gamma_k^{n_k}\right)^*
		\cdots
		\left(\gamma_1^{n_1}\right)^* h_{L}
		& =
		h_L + \sum_{i=1}^k n_i h_{L_i} +
		\\
		& + \left[\sum_{i=1}^k S_{n_i}\left(
			\gamma_i,
			(\gamma_k^{n_k})^*
			\cdots
			(\gamma_{i+1}^{n_{i+1}})^* \phi_i
			\right)
			- n_i \phi_i\right]
			\\
		& + \left[
		\sum_{i=1}^k \tfrac{n_i(n_i-1)}{2}\eta_h(\gamma_i,\gamma_i)
		+
		\sum_{1\leq i<j\leq k} n_i n_j \cdot \eta_h(\gamma_i,\gamma_j)
		\right]
	\end{align*}
	where by abuse of notation we denoted by $\eta_h(\gamma_i,\gamma_j)$ the pullback to $X$ of the height function $\eta_h(\gamma_i,\xi(\gamma_j))$ on $B$.	
\end{proposition}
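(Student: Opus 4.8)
The proof will proceed by induction on $k$, exactly mirroring the argument for \autoref{prop:large_iterates_of_several_automorphisms} in the current-valued setting, with height functions replacing $(1,1)$-currents and $dd^c$-exact terms replaced by bounded ``potential'' functions. The base case $k=1$ is precisely \autoref{prop:large_iterates_of_one_automorphism_heights}. For the inductive step I will assume the formula holds for $k-1$ automorphisms $\gamma_1,\ldots,\gamma_{k-1}$ and then apply $(\gamma_k^{n_k})^*$ to both sides.

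The key computations are the following three, all of which follow from \autoref{prop:large_iterates_of_one_automorphism_heights}, \autoref{thm:height_valued_pairing}, and the bilinearity and symmetry of $\eta_h$. First, by \autoref{prop:large_iterates_of_one_automorphism_heights} applied to $\gamma_k$,
\[
	(\gamma_k^{n_k})^* h_L = h_L + n_k h_{L_k} + \tfrac{n_k(n_k-1)}{2}\eta_h(\gamma_k,\gamma_k) + \big[S_{n_k}(\gamma_k,\phi_k) - n_k\phi_k\big].
\]
Second, pulling back the linear terms: for each $i<k$, applying $(\gamma_k^{n_k})^*$ to $n_i h_{L_i}$ and using the first identity of \autoref{prop:large_iterates_of_one_automorphism_heights} (with $\gamma_k$ in the role of $\gamma$ and $L_i$ in the role of $L^0$), together with the fact that $\eta_h((\gamma_k)^{n_k}, L_i) = n_k\,\eta_h(\gamma_k,\xi(\gamma_i)) = n_k\,\eta_h(\gamma_k,\gamma_i)$ by linearity (and recalling $L_i$ represents $\xi(\gamma_i)$ in $\Pic^{rel}_\pi(X)$), we get
\[
	(\gamma_k^{n_k})^*(n_i h_{L_i}) = n_i h_{L_i} + n_i n_k\,\eta_h(\gamma_k,\gamma_i) + n_i\big[(\gamma_k^{n_k})^*\phi_i - \phi_i\big].
\]
Third, pulling back the bracketed Birkhoff-sum term from the inductive hypothesis: since the parabolic automorphisms commute, $(\gamma_k^{n_k})^*$ commutes past the composition $(\gamma_{k-1}^{n_{k-1}})^*\cdots(\gamma_{i+1}^{n_{i+1}})^*$ inside each $S_{n_i}$, turning $S_{n_i}(\gamma_i, (\gamma_{k-1}^{n_{k-1}})^*\cdots(\gamma_{i+1}^{n_{i+1}})^*\phi_i)$ into $S_{n_i}(\gamma_i,(\gamma_k^{n_k})^*\cdots(\gamma_{i+1}^{n_{i+1}})^*\phi_i)$, and turning $-n_i\phi_i$ into $-n_i(\gamma_k^{n_k})^*\phi_i$. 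Adding the $n_i\big[(\gamma_k^{n_k})^*\phi_i - \phi_i\big]$ contribution from the second computation, the terms $\pm n_i(\gamma_k^{n_k})^*\phi_i$ cancel, leaving exactly $S_{n_i}(\gamma_i,(\gamma_k^{n_k})^*\cdots(\gamma_{i+1}^{n_{i+1}})^*\phi_i) - n_i\phi_i$, which is the desired form. The quadratic terms pulled back from the inductive hypothesis are untouched since $\eta_h(\gamma_i,\gamma_j)$ is pulled back from $B$ and $\gamma_k$ acts fiberwise, while the new cross-terms $n_i n_k\,\eta_h(\gamma_k,\gamma_i)$ together with $\tfrac{n_k(n_k-1)}{2}\eta_h(\gamma_k,\gamma_k)$ and the old quadratic terms assemble into $\sum_{i=1}^k \tfrac{n_i(n_i-1)}{2}\eta_h(\gamma_i,\gamma_i) + \sum_{1\le i<j\le k} n_i n_j\,\eta_h(\gamma_i,\gamma_j)$, using symmetry of $\eta_h$ to match indices.

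I do not expect a genuine obstacle here: the proposition is a purely formal bookkeeping consequence of \autoref{prop:large_iterates_of_one_automorphism_heights}, \autoref{thm:height_valued_pairing}, and the commutativity of $\Aut^\circ_\pi(X)$ acting on fibers, in complete parallel with \autoref{prop:large_iterates_of_several_automorphisms}. The only point requiring minor care is tracking which composition of automorphisms is applied inside each Birkhoff sum and verifying the cancellation $n_i\gamma_k^{n_k}\phi_i$ against the $-n_i\phi_i$ term produced when $(\gamma_k^{n_k})^*$ hits the inductive bracket — exactly the cancellation noted at the end of the proof of \autoref{prop:large_iterates_of_several_automorphisms}. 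One should also record that all identities are identities of honest functions on $X(\ov{k})$ (not merely up to $O(1)$), which is legitimate because \autoref{prop:large_iterates_of_one_automorphism_heights} and \autoref{eqn:preferred_heights_dynamics} are stated as exact identities once the height functions $h_L$, $h_{L_i}$, $h^{pf}_{L_i}$ and the pairing $\eta_h$ have been fixed.
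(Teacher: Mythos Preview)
Your proposal is correct and follows essentially the same approach as the paper: induction on $k$ with base case \autoref{prop:large_iterates_of_one_automorphism_heights}, then applying $(\gamma_k^{n_k})^*$ term-by-term to the inductive formula, using the same three computations (pullback of $h_L$, of $n_i h_{L_i}$, and of the Birkhoff-sum bracket via commutativity), the observation that $\eta_h$-terms are pulled back from $B$ and hence fixed by $\gamma_k$, and the cancellation of the $n_i(\gamma_k^{n_k})^*\phi_i$ terms. Your explicit mention of the symmetry of $\eta_h$ to match the index ordering in the cross-terms is a small clarification the paper leaves implicit.
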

\begin{proof}
	We apply the earlier formula from \autoref{prop:large_iterates_of_one_automorphism_heights} for a single automorphism successively, but also taking into account that preferred heights satisfy the formula in \autoref{eqn:preferred_heights_dynamics}.
	In particular, for the heights $h_{L_i}$ and automorphisms $\gamma_j$, using the intermediate preferred heights (which disappear from the final statement), we see that:
	\[
		\left(\gamma_j^{n_j}\right)^* h_{L_i} =
		h_{L_i} + n_j \cdot \eta_h(\gamma_j,\gamma_i) +
		\left[
		\left(\gamma_j^{n_j}\right)^{*}\phi_i - \phi_i
		\right].
	\]
	Then the desired formula follows by induction on $k$.
	The case $k=1$ is \autoref{prop:large_iterates_of_one_automorphism_heights}.
	Now apply $(\gamma_k^{n_k})^*$ to each term in the formula for $k-1$ automorphisms to find:
	\begin{align*}
		\left(\gamma_k^{n_k}\right)^*
		h_L  & = h_L + n_k h_{L_k} + \tfrac{n_k(n_k-1)}{2}\eta_h(\gamma_k,L_k) + \left[S_n(\gamma_k,\phi_k) - n_k\phi_k\right] \\
		\left(\gamma_k^{n_k}\right)^* n_i h_{L_i} & =
		n_i h_{L_i} + n_i n_k \cdot \eta_h(\gamma_k,\gamma_i) +
		n_i \left[
		\left(\gamma_k^{n_k}\right)^{*}\phi_i - \phi_i
		\right]
		\\
		\left(\gamma_k^{n_k}\right)^* n_i \phi_i & = n_i \left(\gamma_k^{n_k}\right)^* \phi_i\\
	\end{align*}
	and the identity
	\[
		\left(\gamma_k^{n_k}\right)^* S_{n_i}\left(
			\gamma_i,
			(\gamma_{k-1}^{n_{k-1}})^*
			\cdots
			(\gamma_{i+1}^{n_{i+1}})^* \phi_i
			\right) =
			S_{n_i}\left(
			\gamma_i,
			(\gamma_{k}^{n_{k}})^*
			\cdots
			(\gamma_{i+1}^{n_{i+1}})^* \phi_i
			\right)
	\]
	which follows because the automorphisms $\gamma_\bullet$ commute with each other.
	Note also that parabolic automorphisms preserve any height functions pulled back from $B$, so the desired identity follows.	
\end{proof}

Let us set $\gamma=\gamma_k^{n_k}\cdots \gamma_1^{n_1}$ so that we can write a bit more concisely the expression in \autoref{prop:large_iterates_of_several_automorphisms_heights} as:
\begin{align}
	\label{eqn:large_iterate_grouped_terms}
	\begin{split}	
	\gamma^* h_{L}
	& =
	h_L + \sum_{i=1}^k n_i \left(h_{L_i} + \tfrac{1}{2}\eta_h(\gamma_i,\gamma_i)\right) + \frac{1}{2}\eta_h(\gamma,\gamma)+
	\\
	& + \left[\sum_{i=1}^k S_{n_i}\left(
		\gamma_i,
		(\gamma_k^{n_k})^*
		\cdots
		(\gamma_{i+1}^{n_{i+1}})^* \phi_i
		\right)
		- n_i \phi_i\right]
	\end{split}
\end{align}
where, in comparison to similar expressions in \autoref{sssec:summary_elliptic_fibrations}, we have already normalized $L$ to have degree $1$ on fibers.

% 	End of Subsection: Preliminary parabolic estimates
%%===============================================

%%===============================================
% 	Start of Subsection: Heights from a basis

\subsection{Heights from a basis}
	\label{ssec:heights_from_a_basis}

\subsubsection{Setup}
	\label{sssec:setup_heights_from_a_basis}
In order to establish the existence of canonical heights associated to every point on the boundary of the ample cone, we fix a basis for the \Neron--Severi group of $X$ and choose once and for all representatives for the height functions of the basis elements.
Extended by linearity, we have for every $\bR$-line bundle $L$ a height function $h_{L}^{lin}$.
For any other height function $h_L$ associated to the same line bundle there is a unique function $\phi(h_L)$ in $L^\infty\left(X(\ov{k})\right)$ such that we have
\begin{align}
	\label{eqn:height_noncanonical_potential}
	h_L = h_{L}^{lin} + \phi(h_L)
\end{align}
directly analogous to \autoref{eqn:omega_noncanonical_potential}.

\subsubsection{Change of height under automorphism}
	\label{sssec:change_of_height_under_automorphism}
For an element $\gamma\in \Aut(X)$ we then have the basic formula
\[
	\gamma^* h_L^{lin} = h^{lin}_{\gamma^*L} + \phi(\gamma^*h^{lin}_{L})
\]
which relates to height functions associated to the line bundle $\gamma^*L$, namely $h_{\gamma^*L}^{lin}$ and $\gamma^* h_{L}^{lin}$.
Let us now iterate this formula when applying several automorphisms:
\begin{align*}
	(\gamma_n\cdots \gamma_1)^* h^{lin}_L & =
	\gamma_1^*\cdots \gamma_n^* h^{lin}_L =
	\gamma_1^*\cdots \gamma_{n-1}^* \left(h^{lin}_{\gamma_{n}^*L} + \phi\left(\gamma_{n}^*h^{lin}_{L}\right)\right)\\
	& = \gamma_1^*\cdots \gamma_{n-2}^*
	\left(h^{lin}_{\gamma_{n-1}^*\gamma_{n}^*L} + \phi\left(\gamma_{n-1}^*h^{lin}_{\gamma_n^* L}\right) + \gamma_{n-1}^*\phi\left(\gamma_{n}^*h^{lin}_{L}\right)\right)\\
	& = \cdots\\
	& = h^{lin}_{\gamma_1^* \cdots \gamma_n^* L} + \sum_{i=1}^{n} \gamma_1^*\cdots \gamma_{i-1}^{*}\phi\left(\gamma_{i}^* h^{lin}_{\gamma_{i+1}^*\cdots\gamma_n^* L}\right)
\end{align*}
In other words, we have that
\begin{align*}
	\phi\left(\gamma_1^*\cdots \gamma_n^* h_L^{lin}\right) & =
	\sum_{i=1}^{n} \gamma_1^*\cdots \gamma_{i-1}^{*}\phi\left(\gamma_{i}^* h^{lin}_{\gamma_{i+1}^*\cdots\gamma_n^* L}\right)
\end{align*}
which is the direct analogue of \autoref{eqn:gamma1n_omega}, except that we now pull back line bundles, instead of pushing forward \Kahler metrics.

We record the estimates that we will need.
The next proposition simply replaces in \autoref{prop:parabolic_convergence_to_the_boundary} the class $[\omega]$ by $[L]$ and the representative $(1,1)$-form $A([\omega])$ by the representative height $h_{L}^{lin}$.
It uses the normed space $L^\infty(X(\ov{k}))$ instead of $C^0(X(\bC))$.

\begin{proposition}[Parabolic convergence to the boundary for heights]
	\label{prop:parabolic_convergence_to_the_boundary_heights}
	Consider a parabolic automorphism $\gamma=\gamma_k^{n_k}\cdots \gamma_1^{n_1}$ written as a product of generators and $[L]$ a parabolically normalized class.
	\begin{enumerate}
		\item
		\label{large_parabolic_brings_close_to_eta_height}
		There exists a constant $C_2=C_2(E)$ giving the estimate on potentials:
		\[
			\norm{
			\frac{\phi\left(\gamma^* h_L^{lin}\right)}{M(\gamma^* [L])
			}
			-
			\frac{\phi\left(\eta_h(\gamma,\gamma)\right)}{M\left(\eta(\gamma,\gamma)\right)}
			}_{L^{\infty}}
			\leq
			\frac{C_2}{1 + \norm{\gamma}_{\NS}}
		\]
		\item \label{key_parabolic_estimate_height}
		We also have the estimate
		\[
			\norm{
			\phi\left(\gamma^* h_L^{lin}\right)
			}_{L^{\infty}}
			\leq C_3 \left(1+\norm{\gamma}_{\NS}^2\right)\norm{[L]}
		\]
		which holds for arbitrary $[L]$.
	\end{enumerate}
\end{proposition}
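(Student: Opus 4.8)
The plan is to mirror the proof of \autoref{prop:parabolic_convergence_to_the_boundary} essentially line by line, replacing the current identity of \autoref{sssec:summary_elliptic_fibrations} by its height counterpart \autoref{eqn:large_iterate_grouped_terms}, the normed space $C^0(X(\bC))$ by $L^{\infty}(X(\ov k))$, and using throughout that both $[L]\mapsto h_L^{lin}$ and $h\mapsto \phi(h)$ are $\bR$-linear. The second linearity follows directly from \autoref{eqn:height_noncanonical_potential}, since $\phi(h+h')=\phi(h)+\phi(h')$ and $\phi(ch)=c\,\phi(h)$; in particular, for a fixed $\gamma$, the function $\phi(\gamma^*h_L^{lin})=\gamma^*h_L^{lin}-h^{lin}_{\gamma^*[L]}$ is linear in $[L]$, hence bounded by $\leqapprox\norm{[L]}$, which is the height analogue of \autoref{prop:boundedness_for_fixed_automorphism}.

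First I would record the height analogue of \autoref{sssec:summary_elliptic_fibrations}: applying \autoref{prop:large_iterates_of_several_automorphisms_heights} to $h_L^{lin}/([L].[E])$, multiplying the result by $[L].[E]$, and writing each intermediate term as $h_{L_i}=\gamma_i^*h_L^{lin}-h_L^{lin}=h_{L_i}^{lin}+\phi(\gamma_i^*h_L^{lin})$ with $\norm{\phi(\gamma_i^*h_L^{lin})}_{L^\infty}\leqapprox 1$ (the $\gamma_i$ form a fixed finite set, $[L]$ ranges over the compact set of parabolically normalized classes), one obtains, with $\gamma=\gamma_k^{n_k}\cdots\gamma_1^{n_1}$,
\[ \gamma^*h_L^{lin} = h_L^{lin} + \sum_{i=1}^k n_i\!\left(h_{L_i}+\tfrac12\eta_h(\gamma_i,\gamma_i)\right) + \big[\text{Birkhoff sums}\big] + \tfrac{[L].[E]}{2}\,\eta_h(\gamma,\gamma). \]
Applying $\phi(\cdot)$ and using linearity, the first sum contributes a potential of $L^\infty$-norm $\leqapprox\sum_i|n_i|\leqapprox\norm{\gamma}_{\NS}$ (each summand has bounded potential, for fixed $i$), the Birkhoff-sum term is likewise $O(\norm{\gamma}_{\NS})$ in $L^\infty$ since each $\phi_i=h_{L_i}-h_{L_i}^{pf}$ is bounded by \autoref{thm:preferred_heights_on_elliptic_fibrations} and pullbacks by automorphisms preserve the $L^\infty$-norm, and the last term contributes $\tfrac{[L].[E]}{2}\phi(\eta_h(\gamma,\gamma))$. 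Hence $\phi(\gamma^*h_L^{lin}) = \tfrac{[L].[E]}{2}\,\phi(\eta_h(\gamma,\gamma)) + O(\norm{\gamma}_{\NS})$, while at the level of classes the same identity gives $M(\gamma^*[L])=\tfrac{[L].[E]}{2}\norm{\gamma}_{\NS}^2 M([E])\big(1+O(1/\norm{\gamma}_{\NS})\big)$ and $M(\eta(\gamma,\gamma))=\norm{\gamma}_{\NS}^2 M([E])$; parabolic normalization enters only through $[L].[E]\geq 1/C(E)>0$.

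Part (ii) then follows at once: for parabolically normalized $[L]$ the previous estimate together with $\norm{\phi(\eta_h(\gamma,\gamma))}_{L^\infty}\leqapprox\norm{\gamma}_{\NS}^2$ gives $\norm{\phi(\gamma^*h_L^{lin})}_{L^\infty}\leqapprox 1+\norm{\gamma}_{\NS}^2$, and for an arbitrary class one expands $[L]$ in a fixed basis of parabolically normalized classes with coefficients $O(\norm{[L]})$ and sums, using again the linearity of $L\mapsto h_L^{lin}$ and $h\mapsto\phi(h)$. For part (i) one divides the identity for $\phi(\gamma^*h_L^{lin})$ by $M(\gamma^*[L])$, substitutes the mass estimate, and compares with $\phi(\eta_h(\gamma,\gamma))/M(\eta(\gamma,\gamma))$; the difference is $O(1/\norm{\gamma}_{\NS})\leq C_2/(1+\norm{\gamma}_{\NS})$ once one knows the normalized potential $\phi(\eta_h(\gamma,\gamma))/\norm{\gamma}_{\NS}^2$ stays bounded.

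The only ingredient not already supplied by the analytic proof is this last boundedness — the height analogue of \autoref{sssec:extension_to_real_coefficients} — and, more mundanely, the bookkeeping of which height function represents which line bundle when invoking \autoref{prop:large_iterates_of_several_automorphisms_heights}; I expect these to take the most care. The boundedness is handled as follows: by bilinearity of $\eta_h$ (\autoref{thm:height_valued_pairing}) and the fixed choice of generators, $\phi(\eta_h(\gamma,\gamma))=\sum_{i,j}n_in_j\,\phi(\eta_h(\gamma_i,\gamma_j))$ with the finitely many potentials $\phi(\eta_h(\gamma_i,\gamma_j))$ bounded, whereas $\norm{\gamma}_{\NS}^2=-\sum_{i,j}n_in_j\,\xi(\gamma_i).\xi(\gamma_j)$ is a positive-definite quadratic form in the $n_i$, so the ratio is $\leqapprox 1$ uniformly in $\gamma$.
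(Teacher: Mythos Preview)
Your proposal is correct and follows exactly the approach the paper takes: the paper's own proof is a single sentence stating that the argument is ``word for word the same as for \autoref{prop:parabolic_convergence_to_the_boundary}, with the replacements indicated above,'' and you have carried out precisely those replacements (the height identity \autoref{eqn:large_iterate_grouped_terms} in place of the current identity, $L^\infty(X(\ov k))$ in place of $C^0(X(\bC))$, $h_L^{lin}$ in place of $A([\omega])$). Your added care about the bilinear expansion of $\phi(\eta_h(\gamma,\gamma))$ and the bookkeeping for $h_{L_i}$ simply makes explicit what the paper leaves implicit.
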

\noindent The proof is word for word the same as for \autoref{prop:parabolic_convergence_to_the_boundary}, with the replacements indicated above.

\subsubsection{Adapted generators}
	\label{sssec:adapted_generators}
We will use the generators for $\Aut(X)$ fixed in \autoref{sssec:fixed_finite_set_of_generators}, and keep the notation $\leqapprox$ to denote inequalities that hold up to constants that only depend on the fixed background geometric objects.
For any such generator $\gamma$, we have the basic inequality
\begin{align}
	\label{eqn:operator_norm_bound_heights}
	\norm{\phi(\gamma^* h_L^{lin})}_{L^\infty}\leqapprox \norm{\gamma}_{op}  \cdot M(L)
\end{align}
where as before $M(L)$ is the mass of $L$ relative to a fixed once and for all ample class, see \autoref{sssec:mass_and_normalization}.
For parabolic automorphisms this is \autoref{prop:parabolic_convergence_to_the_boundary_heights}\ref{key_parabolic_estimate_height}, and for the finitely many remaining automorphisms we pick a sufficiently large constant in $\leqapprox$ so that the bound continues to hold.

% 	End of Subsection: Heights from a basis
%%===============================================

%%===============================================
% 	Start of Subsection: Existence of boundary heights

\subsection{Existence of boundary heights}
	\label{ssec:existence_of_boundary_heights}

Here is the main result of this section, in analogy with \autoref{thm:continuous_family_of_boundary_currents}.
Recall that $\partial^\circ \Amp_c(X)$ is the blown-up boundary of the ample cone, as in \autoref{sssec:blown_up_boundary_of_the_ample_cone}, and $\cH(X)$ denotes the space of heights on $X$.

\begin{theorem}[Canonical heights on the boundary]
	\label{thm:canonical_heights_on_the_boundary}
	There exists an assignment of height functions
	\[
		h^{can}\colon \partial^\circ \Amp_c(X) \to \cH(X)
	\]
	compatible with the map from $\partial^\circ \Amp_c(X)$ to $\partial \Amp(X)\subset \NS(X)$, and from heights $\cH(X)$ to $\NS(X)$.
	The map has the following additional properties:
	\begin{enumerate}
		\item \label{hcan_is_equivariant}
		The map $h^{can}$ is $\Aut(X)$-equivariant.
		\item \label{hcan_is_continuous}
		For a fixed $p\in X\left(\ov k\right)$, the function
		\begin{align*}
			\partial^\circ \Amp_c(X) & \xrightarrow{\alpha\mapsto h^{can}_\alpha(p)}\bR_{\geq 0}
		\end{align*}
		is continuous.
		\item \label{hcan_silverman_par}
		On the real projective $(\rho-2)$-spaces in $\partial^\circ \Amp_c(X)$ associated to rational boundary rays corresponding to elliptic fibrations, the height function $h^{can}$ coincides with Silverman's variation of canonical height pairing:
		\[
			h^{can}(\lambda[E]\times [\xi])=\lambda\cdot \frac{\eta_{[E],h}([\xi],[\xi])}{\norm{\xi}_{\NS}^2} \text{ for }[\xi]\in \bP([E]^{\perp}/[E]).
		\]
		where $\eta_{[E],h}$ is the height-valued pairing from \autoref{thm:height_valued_pairing} associated to the genus one fibration given by $[E]$.
		\item \label{hcan_silverman_hyp}
		For boundary classes $\alpha_\pm$ expanded/contracted by hyperbolic automorphisms of $X$, the heights $h^{can}_{\alpha_\pm}$ coincide with Silverman's canonical heights \cite{Silverman_Rational-points-on-K3-surfaces:-a-new-canonical-height}.
		\item
		\label{hcan_def_prop}
		Fix an ample line bundle $L$, height function $h_L$, and mass function (on cohomology classes) $M$ as in \autoref{sssec:mass_in_cohomology}.
		For $p\in X(\ov{k})$ and a sequence of automorphisms $\gamma_i\in \Aut(X)$ as in \autoref{sssec:fixed_finite_set_of_generators}, such that $L_i:=\gamma_1^*\cdots \gamma_i^* L$ satisfies $\frac{1}{M(L_i)}L_i \to \alpha\in \partial^\circ \Amp_c(X)$, we have that
		\[
			h^{can}_\alpha(p) = \lim_{i\to \infty} \frac{1}{M(L_i)}\gamma_1^* \cdots \gamma_i^* h_L(p)
		\]
		and the limit exists.
		In particular $h^{can}_\alpha(p)\geq 0$ for any $p\in X\left(\ov k\right)$ and any $\alpha \in \partial^\circ \Amp_c(X)$.
	\end{enumerate}
	\hspace{\parindent}Furthermore, property \ref{hcan_def_prop} characterizes $h^{can}$, as does the combination of properties \ref{hcan_is_equivariant}, \ref{hcan_is_continuous}, together with either \ref{hcan_silverman_par} or \ref{hcan_silverman_hyp}.
\end{theorem}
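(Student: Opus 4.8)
The plan is to prove \autoref{thm:canonical_heights_on_the_boundary} by running the proof of \autoref{thm:continuous_family_of_boundary_currents} essentially verbatim, with three systematic substitutions: pushforwards $\gamma_*\omega$ of K\"ahler forms become pullbacks $\gamma^*h_L$ of heights, the non-canonical potential $A([\omega])$ becomes the linear representative $h^{lin}_L$ of \autoref{sssec:setup_heights_from_a_basis}, the space $C^0(X(\bC))$ of continuous potentials becomes $L^\infty(X(\ov k))$, and the current-valued pairing $\eta_{[E]}$ becomes the height-valued pairing $\eta_{[E],h}$ of \autoref{thm:height_valued_pairing}. Fix once and for all an ample line bundle $L$ together with a height $h_L\geq 0$ representing it (possible since $L$ is ample), and let $M$ be the mass function on $\NS(X)_\bR$. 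To $\alpha\in\partial^\circ\Amp_c(X)$ associate, as in \autoref{sssec:fixed_finite_set_of_generators}, the sequence of generators $\gamma_1,\gamma_2,\dots$ along the (unique, by the $\CAT(0)$ property recalled in \autoref{sssec:truncated_hyperbolic_space}) geodesic from $[\omega_0]$ to $\alpha$, put $L_i:=\gamma_1^*\cdots\gamma_i^*L$, and define
\[
	h^{can}_\alpha:=\lim_{i\to\infty}\frac{1}{M(L_i)}\,\gamma_1^*\cdots\gamma_i^* h_L,
\]
the limit being interpreted, when $\alpha$ lies over a parabolic point, by freezing all but the last generator and letting the latter run to infinity in the relevant parabolic subgroup, exactly as in the currents case. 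Since $\gamma_1^*\cdots\gamma_i^* h_L$ is literally $h_L$ precomposed with an automorphism, it is $\geq 0$, so $h^{can}_\alpha\geq 0$ once the limit exists; and as $\tfrac1{M(L_i)}L_i\to\alpha$, the limit represents the class $\alpha$. This gives property \ref{hcan_def_prop} and compatibility with the map to $\NS(X)$.

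The irrational case is the technical heart and I would run \autoref{prop:boundedness_of_potentials} and \autoref{prop:cauchy_property_for_a_fixed_geodesic} line by line. The telescoping expansion of \autoref{sssec:change_of_height_under_automorphism} replaces \autoref{eqn:gamma1n_omega}; the transversality facts of \autoref{ssec:linear_algebra_and_hyperbolic_geometry} (that $[\omega_0]$ and $\gamma_{j+1}\cdots\gamma_{j+k}[\omega_0]$ are transverse for $\gamma_i\cdots\gamma_j$), \autoref{prop:contraction_from_transversality}, and the mass comparisons are unchanged as they are purely linear-algebraic; the operator-norm bound on potentials \autoref{eqn:operator_norm_bound_heights} replaces \autoref{eqn:basic_bound_potential_all_automorphisms}; and \autoref{prop:displacement_estimate} sums the resulting geometric series. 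The outcome is the same quantitative Cauchy bound $\leqapprox n e^{-\delta n}$ for the $L^\infty(X(\ov k))$-distance between the $n$-th and $(n+m)$-th normalized iterates, yielding a well-defined $h^{can}_\alpha$, and—by the usual fact that geodesics landing in a sufficiently small neighbourhood of $\alpha$ share an arbitrarily long initial string of generators—continuity of $\alpha\mapsto h^{can}_\alpha(p)$ at irrational points, which is \ref{hcan_is_continuous} there.

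On the rational stratum I define $h^{can}(\lambda[E]\times[\xi]):=\lambda\,\eta_{[E],h}([\xi],[\xi])/\norm{\xi}_{\NS}^2$, extended by scaling; this is non-negative by the positivity statement in \autoref{thm:height_valued_pairing}, and agrees with the limit definition through \autoref{prop:large_iterates_of_several_automorphisms_heights} (equivalently \autoref{eqn:large_iterate_grouped_terms}), giving \ref{hcan_silverman_par}. Continuity there is the height analogue of \autoref{sssec:estimates_for_continuity_at_a_parabolic_boundary_point}: after applying finitely many generators one may assume $[E]$ is a fixed representative, then use that geodesics landing near $\lambda[E]\times[\xi]$ have first generator $\gamma_1$ parabolic with $\norm{\xi(\gamma_1)}_{\NS}$ as large and direction as close to $[\xi]$ as desired, feed this into \autoref{prop:parabolic_convergence_to_the_boundary_heights}\ref{large_parabolic_brings_close_to_eta_height}, and kill the tail using the uniform boundedness from the analogue of \autoref{prop:boundedness_of_potentials} against a mass factor tending to $0$. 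Property \ref{hcan_silverman_hyp} is then the classical statement that for an eigenclass $\alpha_+$ of a hyperbolic $g$ with $g^*\alpha_+=\lambda\alpha_+$, the Tate limit $\lambda^{-n}(g^*)^n h_L$ is Silverman's canonical height \cite{Silverman_Rational-points-on-K3-surfaces:-a-new-canonical-height}, and $M(L_n)\asymp\lambda^n$ makes this our limit.

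Finally, equivariance \ref{hcan_is_equivariant} is read off the limit formula together with naturality of every ingredient of the construction (the geodesic picture, the generators, the pairings $\eta_{[E],h}$, Silverman's heights) under $\Aut(X)$: applying $g\in\Aut(X)$ carries the data attached to $\alpha$ to that attached to $g\cdot\alpha$, while $M(L_i)\to\infty$ absorbs the finitely-many-generators' worth of bounded correction (and $M(g^*L_i)/M(L_i)$ converges to the mass of the transported class, so homogeneity gives the right normalization); this is the one place where the heights argument genuinely diverges from the currents one, which instead extracted equivariance from the Sibony--Verbitsky uniqueness \autoref{thm:uniqueness_in_irrational_classes_verbitsky_sibony}. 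The characterization statements then follow as there, but directly rather than via uniqueness: property \ref{hcan_def_prop} is a definition and hence characterizes $h^{can}$; and any assignment with \ref{hcan_is_equivariant}, \ref{hcan_is_continuous} and either \ref{hcan_silverman_par} or \ref{hcan_silverman_hyp} agrees with $h^{can}$ on the rational points, resp. the hyperbolic eigenclasses, hence by equivariance on their $\Aut(X)$-orbits, which are dense in $\partial^\circ\Amp_c(X)$ by \autoref{thm:orbit_closure_dichotomy}, hence everywhere by continuity in $\alpha$ for each fixed $p$. I expect the main obstacle to be exactly the one from the currents case—pushing the transversality-and-displacement bookkeeping of \autoref{prop:cauchy_property_for_a_fixed_geodesic} through in the $L^\infty$ setting—with the only new subtlety being that non-negativity and equivariance must be read off the explicit construction (termwise, and by naturality, respectively) rather than from a uniqueness theorem.
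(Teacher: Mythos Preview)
Your proposal is correct and matches the paper's proof almost exactly: the paper itself says the existence of the limit and continuity follow ``from arguments word for word identical to those in \autoref{ssec:existence_of_boundary_potentials}'' with precisely the substitutions you list ($C^0(X(\bC))\rightsquigarrow L^\infty(X(\ov k))$, $A([\omega])\rightsquigarrow h^{lin}_L$, \autoref{eqn:basic_bound_potential_all_automorphisms}$\rightsquigarrow$\autoref{eqn:operator_norm_bound_heights}, and \autoref{prop:parabolic_convergence_to_the_boundary}$\rightsquigarrow$\autoref{prop:parabolic_convergence_to_the_boundary_heights} for the rational strata).

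The one point where your emphasis differs from the paper is equivariance. The paper does \emph{not} argue equivariance by direct naturality of the limit construction; instead it first observes that continuity \ref{hcan_is_continuous} together with either \ref{hcan_silverman_par} or \ref{hcan_silverman_hyp} already forces uniqueness (by density of hyperbolic eigenclasses, resp.\ parabolic classes, in $\partial^\circ\Amp_c(X)$), and then equivariance \ref{hcan_is_equivariant} falls out of uniqueness automatically. This is exactly the argument you give in your final paragraph for the characterization statements, so your proof is complete either way; but your direct naturality sketch (``$M(L_i)\to\infty$ absorbs the finitely-many-generators' worth of bounded correction'') would need more care to make rigorous, since the generator sequence is tied to the fixed basepoint $[\omega_0]$ and does not transform cleanly under $\Aut(X)$. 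The paper's route via uniqueness is shorter and avoids this bookkeeping, and is in fact the same alternative already offered in the currents proof alongside Sibony--Verbitsky.
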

\noindent It would be interesting and useful to have a criterion that uniquely characterizes $h^{can}$ in terms of its equivariance and positivity properties.
It is reasonable to expect that the uniqueness of positive currents in \autoref{thm:uniqueness_in_irrational_classes_verbitsky_sibony}, due to Sibony--Verbitsky in the archimedean case, also holds in the non-archimedean case.
Then the canonical height functions $h^{can}_{\alpha}$ for irrational $\alpha$ would be the unique adelic height function with subharmonic potentials at each place.

\begin{proof}
	Let us note that if a mapping $h^{can}$ satisfies the continuity property in \ref{hcan_is_continuous}, and say \ref{hcan_silverman_par} or \ref{hcan_silverman_hyp}, then it is unique and satisfies \ref{hcan_is_equivariant} automatically.
	Indeed, the expanded/contracted rays of hyperbolic automorphisms are dense on the boundary, as are the parabolic classes, so together with continuity they uniquely determine the values of other heights.
	Together with uniqueness of canonical heights for hyperbolic automorphisms, the equivariance property from \ref{hcan_is_equivariant} then follows.
	So it suffices to construct $h^{can}$ by the process described in \ref{hcan_def_prop} and check that it yields, for a fixed $p\in X(\ov{k})$, continuous functions on $\partial^\circ \Amp_c(X)$.

	The existence of the limit
	\[
		h^{can}_\alpha(p) =
		\lim_{i\to \infty} \frac{1}{M(L_i)}\gamma_1^* \cdots \gamma_i^* h_L(p) =
		\lim_{i\to \infty} \frac{1}{M(L_i)}h_L(\gamma_i\cdots \gamma_1 p)
	\]
	follows from arguments word for word identical to those in \autoref{ssec:existence_of_boundary_potentials}, with the following replacements.
	Instead of $C^0(X(\bC))$-bounds, use $L^\infty\left(X(\ov{k})\right)$, and instead of $A([\omega])$ use $h^{lin}_{L}$.
	Continuity, for fixed $p$ and varying point on the boundary, follows analogously.

	To be more specific, the bounds used in the proof of \autoref{thm:continuous_family_of_boundary_currents} for currents are listed in \autoref{sssec:the_necessary_bounds_currents}.
	Besides the ones that involve masses of cohomology classes in \autoref{eqn:useful_inequalities}, which continue to hold in the present case for the same reason, the bound in \autoref{eqn:basic_bound_potential_all_automorphisms} holds in the case of heights by \autoref{eqn:operator_norm_bound_heights}.

	The uniform boundedness of the sequence $\frac{\phi\left(h_{L}^{lin}(\gamma_i\cdots \gamma_1 p)\right)}{M(\gamma_1^*\cdots \gamma_i^* L)}$ (with bounds independent of $p\in X(\ov{k})$) follows just like in \autoref{prop:boundedness_of_potentials} for potentials.
	Similarly, the Cauchy property for such sequences is proved just like in \autoref{prop:cauchy_property_for_a_fixed_geodesic} and the same estimates imply continuity of $h_{[\alpha]}^{can}(p)$ at an irrational $[\alpha]$ for fixed $p$.

	The continuity of the height function at irrational points uses the bound in \autoref{prop:parabolic_convergence_to_the_boundary_heights}\ref{large_parabolic_brings_close_to_eta_height}, just like in \autoref{sssec:estimates_for_continuity_at_a_parabolic_boundary_point}.
\end{proof}

% 	End of Subsection: Existence of boundary heights
%%===============================================

%%%%%%%%%%%%%%%%%%%%%%%%%%%%%%%%%%%%%%%%%%%%%%%%%%%%%%%%%%%%%%%%%%%%%%%%%%%%%%%
%%% 				End of Section: Boundary heights, direct construction
%%%%%%%%%%%%%%%%%%%%%%%%%%%%%%%%%%%%%%%%%%%%%%%%%%%%%%%%%%%%%%%%%%%%%%%%%%%%%%%

%%%%%%%%%%%%%%%%%%%%%%%%%%%%%%%%%%%%%%%%%%%%%%%%%%%%%%%%%%%%%%%%%%%%%%%%%%%%%%%
%%% 				Start of Section: Suspension space construction
%%%%%%%%%%%%%%%%%%%%%%%%%%%%%%%%%%%%%%%%%%%%%%%%%%%%%%%%%%%%%%%%%%%%%%%%%%%%%%%

\section{Suspension space construction}
	\label{sec:suspension_space_construction}

\subsubsection*{Outline of section}

It is natural to wonder if the constructions of canonical currents, and heights, could be formulated in terms of the geodesic flow for an appropriate hyperbolic manifold, instead of the more direct, combinatorial construction in \autoref{sec:boundary_currents_direct_construction} and \autoref{sec:boundary_heights_direct_construction}.
We describe below the candidate space and some of its properties.
However, because of the need to blow up boundary of the ample cone, and because of the non-uniqueness of the associated currents and heights for boundary classes corresponding to elliptic fibrations, the homogeneous spaces constructed in this section cannot, as far as we can see, be used to construct the canonical currents and heights.

We establish some results from ergodic theory and formulate a number of questions.
Some of the constructions in this section will be taken up in future work, where we will make use of these homogeneous spaces to study the currents and heights.

%%===============================================
% 	Start of Subsection: Suspension space construction

\subsection{Suspension space construction}
	\label{ssec:suspension_space_construction}

\subsubsection{Setup}
	\label{sssec:setup_suspension_space_construction}
We return to the constructions in \autoref{sec:hyperbolic_geometry_background} and discuss how they interact with the geometry of the K3 surface.
Specifically, let $X$ be a K3 surface satisfying the standing assumptions in \autoref{sssec:standing_assumptions_main}.
Let $N:=\NS(X)$ be its \Neron--Severi group, and set $\bbG:=\SO(N)$ to be the orthogonal group preserving the intersection pairing on $N$.
We will denote by $G:=\bbG(\bR)^+$ the connected component of the identity, and by $\Gamma:=\Aut(X)\cap G$ the corresponding subgroup.
By our standing assumptions $\Gamma$ is a lattice in $G$.

Denote by $\wtilde\cM:=\Amp^1(X)$ the ample classes of volume $1$, which we view as the \Teichmuller space of Ricci-flat metrics on $X$ normalized to volume $1$ (and in the \Neron--Severi part).
The quotient $\cM:=\leftquot{\Gamma}{\wtilde{\cM}}$ is then naturally the moduli space, and its frame bundle is $\cQ := \leftquot{\Gamma}{G} $ after we choose a basepoint in $\wtilde\cM$ with a fixed frame in its tangent space.
We have a right action of $G$ on $\cQ$, a natural object in homogeneous dynamics.

\subsubsection{Suspended space}
	\label{sssec:suspended_space}
Consider now the space $\wtilde{\cX}:=G\times X$, equipped with an action of $G$ on the right by only acting on the first factor by right multiplication, and a left (diagonal) action of $\Gamma=\Aut(X)$ on both factors.
We can then form
\[
	\leftquot{\Gamma}{\wtilde{\cX}}
	=: \cX \to \cQ =
	\leftquot{\Gamma}{G}
\]
which is $G$-equivariant on the right.
This is a nonlinear analogue of constructing a flat vector bundle from a representation of the fundamental group.

\subsubsection{Basic ergodic theory}
	\label{sssec:basic_ergodic_theory}
If a group $H$ acts on a space $Y$, an $H$-invariant measure $\mu$ is \emph{ergodic} if there does not exist a decomposition $Y=Y_1\coprod Y_2$ into measurable $H$-invariant sets with $\mu(Y_1)\neq 0 \neq \mu(Y_2)$.
We will be interested in the situation when $Y$ is a manifold and carries a topology, in which case orbit closures $\ov{H\cdot y}$ for $y\in Y$ are to be understood in the ordinary, Euclidean topology (even if $Y$ also has a Zariski topology).

\subsubsection{The groups of interest}
	\label{sssec:the_groups_of_interest}
Besides the full semisimple group $G$, we will be interested in the dynamics of the following subgroups:
\begin{enumerate}
	\item A parabolic subgroup denoted $P\subset G$.
	\item A diagonal (Cartan) subgroup $A\subset P$ whose elements will be denoted $g_t$, and we view as generating the geodesic flow on $\cQ$.
	\item The maximal unipotent subgroup $U\subset P$, or subgroups $U'\subset U$.
\end{enumerate}
We fix but do not specify the necessary data (a Cartan subalgebra and Weyl chamber) needed to define these groups.
For future reference, we also a maximal compact subgroup $K\subset G$, determined by a fixed basepoint in the ample cone, and $M:=K\cap Z_K(A)$ the centralizer of $A$ inside $K$.

% 	End of Subsection: Suspension space construction
%%===============================================

%%===============================================
% 	Start of Subsection: G-dynamics

\subsection{\texorpdfstring{$G$}{G}-dynamics}
	\label{ssec:g_dynamics}

In this section, we begin by recalling some elementary facts from dynamics.
Then we proceed to formulate some questions that we find natural and of interest.

\subsubsection{Correspondence principle}
	\label{sssec:correspondence_principle}
It is immediate to check from the definitions that the following objects give equivalent data:
\begin{enumerate}
	\item A finite $\Gamma$-invariant measure on $X$.
	\item A finite $G$-invariant measure on $\cX$.
	\item A locally finite measure on $G\times X$ which is invariant by $G$ on the right, and by $\Gamma$ on the left.
\end{enumerate}
An analogous discussion applies to establish an equivalence between closed $\Gamma$-invariant sets on $X\times G/H$ and closed $H$-invariant sets in $\cX$, for general closed subgroups $H\subseteq G$.
The next result is an immediate corollary of Cantat's \cite[Thms.~0.2 \& 0.3]{Cantat_Dynamique-des-automorphismes-des-surfaces-K3} (see also Wang \cite{Wang_Rational-points-and-canonical-heights-on-K3-surfaces-in-bf-P1bf-P1bf}):

\begin{corollary}[$G$-invariant classification]
	\label{cor:g_invariant_classification}
	Suppose that the K3 surface, with assumptions as in \autoref{sssec:standing_assumptions_main}, has an elliptic fibration.

	Then any ergodic $G$-invariant measure on $\cX$, or closed $G$-invariant set, is obtained by suspension as in \autoref{sssec:suspended_space} from $X$ and one of the following:
	\begin{enumerate}
		\item A finite $\Aut(X)$-invariant set, with uniform measure.
		\item A totally real $\Aut(X)$-invariant submanifold in $X$, equipped with a smooth measure in the Lebesgue class of the submanifold.
		\item All of $X$ equipped with $\dVol_X$.
	\end{enumerate}
\end{corollary}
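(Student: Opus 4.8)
The plan is to deduce \autoref{cor:g_invariant_classification} directly from the correspondence principle of \autoref{sssec:correspondence_principle} together with Cantat's classification of $\Aut(X)$-invariant measures and closed invariant sets \cite{Cantat_Dynamique-des-automorphismes-des-surfaces-K3}, supplemented by Wang \cite{Wang_Rational-points-and-canonical-heights-on-K3-surfaces-in-bf-P1bf-P1bf} on the topological side. First I would spell out the correspondence in the precise form needed. Given a finite $\Gamma$-invariant measure $\nu$ on $X$, the locally finite measure $m_G\otimes\nu$ on $G\times X$, with $m_G$ a Haar measure on the unimodular group $G$, is invariant under the diagonal left $\Gamma$-action and under the right $G$-action on the first factor, so it descends to a $G$-invariant measure on $\cX=\leftquot{\Gamma}{(G\times X)}$; since $\Gamma$ is a lattice, this descended measure is finite. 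Conversely, a finite $G$-invariant measure on $\cX$ projects to the unique homogeneous probability measure on $\cQ=\leftquot{\Gamma}{G}$, and disintegrating over $\cQ$ recovers a finite $\Gamma$-invariant measure on the fiber $X$; these operations are mutually inverse. Ergodicity transfers in both directions: a $G$-invariant measurable $E\subseteq\cX$ lifts to a $\Gamma$- and right-$G$-invariant subset of $G\times X$, and transitivity of the right $G$-action on the first factor forces this lift to have the shape $G\times F$ with $F\subseteq X$ measurable and $\Gamma$-invariant, so $E$ is null or co-null precisely when $\nu(F)\in\{0,1\}$. The same argument with ``measurable'' replaced by ``closed'' shows that the closed $G$-invariant subsets of $\cX$ are exactly the sets $\leftquot{\Gamma}{(G\times Y)}$ with $Y\subseteq X$ closed and $\Gamma$-invariant; this is the case $H=G$ of the equivalence noted in \autoref{sssec:correspondence_principle}.

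Next I would invoke the classification on $X$ itself. Under the standing assumptions of \autoref{sssec:standing_assumptions_main}, $\Gamma=\Aut(X)\cap G$ is a lattice in $\SO_{1,\rho-1}(\bR)^+$ with $\rho\geq3$, hence non-elementary and (by Borel density) Zariski dense in $\SO(\NS(X)_\bR)$, and the assumed elliptic fibration furnishes a parabolic automorphism; thus the hypotheses of \cite[Thm.~0.2 and Thm.~0.3]{Cantat_Dynamique-des-automorphismes-des-surfaces-K3} are met, and there are no $\Gamma$-periodic curves. By \cite[Thm.~0.2]{Cantat_Dynamique-des-automorphismes-des-surfaces-K3}, every ergodic $\Gamma$-invariant probability measure on $X$ is uniform on a finite $\Gamma$-orbit, or lies in the Lebesgue class of a totally real $\Gamma$-invariant submanifold, or equals the canonical invariant volume, which in our K\"ahler normalization is $\dVol_X$ (it is $\Gamma$-invariant because $\Aut(X)$ preserves $|\Omega_X|^2$, and it is the only diffuse invariant measure not carried by a proper totally real submanifold). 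Likewise, by \cite[Thm.~0.3]{Cantat_Dynamique-des-automorphismes-des-surfaces-K3} and \cite{Wang_Rational-points-and-canonical-heights-on-K3-surfaces-in-bf-P1bf-P1bf}, every closed $\Gamma$-invariant subset of $X$ is a finite orbit, a totally real $\Gamma$-invariant submanifold, or all of $X$. Pushing both lists through the suspension correspondence of the previous paragraph yields exactly the three alternatives in the statement, with the asserted measures.

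Since all of the dynamics is contained in Cantat's and Wang's work, the parts that require genuine care are bookkeeping: checking that our standing assumptions give the exact hypotheses under which \cite{Cantat_Dynamique-des-automorphismes-des-surfaces-K3} operates, confirming that the ``canonical measure'' there is $\dVol_X$ in our normalization, and verifying that the ergodicity- and closedness-transfer in the suspension is bijective onto \emph{all} $G$-invariant objects on $\cX$, not merely a distinguished subfamily. I expect this last point — ruling out exotic $G$-invariant measures or closed sets on $\cX$ that might not descend to the fiber — to be the main (though still routine) thing to nail down; once it is in place, the corollary is immediate.
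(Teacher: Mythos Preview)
Your proposal is correct and follows exactly the approach the paper takes: the paper presents this corollary as an immediate consequence of Cantat's Theorems~0.2 and~0.3 (with Wang for the topological statement) combined with the correspondence principle of \autoref{sssec:correspondence_principle}, and gives no further argument. You have supplied considerably more detail than the paper does---in particular the explicit verification that ergodicity and closedness transfer bijectively through the suspension---but the underlying strategy is identical.
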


\subsubsection{Consequences of Howe--Moore theorem}
	\label{sssec:consequences_of_howe_moore_theorem}
Recall that the Howe--Moore theorem \cite[\S2]{Zimmer1984_Ergodic-theory-and-semisimple-groups} says that if the semisimple Lie group $G$ has a unitary representation on a Hilbert space, and there are no $G$-invariant vectors, then there are no $H$-invariant vectors for any unbounded (i.e. with noncompact closure) subgroup of $G$.
As a consequence, if $G$ acts ergodically on a measure space, then so does any of its unbounded subgroups.
Since the measures in \autoref{cor:g_invariant_classification} are $G$-ergodic, we obtain:

\begin{corollary}[Orbit closures for a.e. point]
	\label{cor:orbit_closures_for_a_e_point}
	Fix the assumptions as in \autoref{cor:g_invariant_classification} and let $\mu$ be one of the ergodic $G$-invariant measures.
	
	Then for any noncompact Lie subgroup $H\subset G$ (e.g. $U,A$ or $P$) and $\mu$-a.e. $x\in \cX$ the orbit $H\cdot x$ is dense in the support of $\mu$.
\end{corollary}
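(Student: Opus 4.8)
The plan is to deduce \autoref{cor:orbit_closures_for_a_e_point} from \autoref{cor:g_invariant_classification} together with the Howe--Moore theorem, by a standard ergodicity-plus-topological-support argument. First I would observe that each of the measures $\mu$ listed in \autoref{cor:g_invariant_classification} is $G$-invariant and $G$-ergodic; this ergodicity is part of the input (the measures there are the \emph{ergodic} $G$-invariant ones). By the Howe--Moore theorem, recalled in \autoref{sssec:consequences_of_howe_moore_theorem}, the restriction of the associated unitary representation of $G$ on $L^2_0(\cX,\mu)$ to any noncompact closed subgroup $H\subset G$ still has no invariant vectors, hence $H$ acts ergodically on $(\cX,\mu)$. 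Here one uses that $G=\bbG(\bR)^+$ is (up to center) a simple Lie group, so that the only obstruction to Howe--Moore — namely compact factors — is absent; the subgroups $U,A,P$ are all noncompact, and more generally the statement is made for an arbitrary noncompact Lie subgroup.

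Next I would run the classical argument that an ergodic action on a second-countable space, with respect to a measure of full support, has almost every orbit dense in that support. Concretely: let $Y:=\operatorname{supp}\mu\subseteq\cX$, which is a closed $H$-invariant set, and fix a countable basis $\{V_j\}$ of nonempty open subsets of $Y$. Each $V_j$ has $\mu(V_j)>0$ by definition of the support, so the $H$-saturation $H\cdot V_j$ is an $H$-invariant measurable set of positive measure; by ergodicity of $H$ on $(Y,\mu)$ it has full measure. Intersecting over the countable family, the set $\bigcap_j H\cdot V_j$ still has full measure, and any point $x$ in it has $\overline{H\cdot x}\cap V_j\neq\emptyset$ for every $j$, i.e. $\overline{H\cdot x}=Y$. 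This gives the conclusion for $\mu$-a.e.\ $x\in\cX$.

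One small point to address for cleanliness: \autoref{cor:g_invariant_classification} describes the measures as suspensions of $\Aut(X)$-invariant data on $X$, and I should note that the second-countability and the identification $Y=\operatorname{supp}\mu$ behave well under the suspension construction of \autoref{sssec:suspended_space} — the suspended space $\cX=\leftquot{\Gamma}{(G\times X)}$ is a second-countable locally compact space and the support of a suspended measure is the suspension of the support of the base measure. This is routine from the correspondence principle in \autoref{sssec:correspondence_principle}. The examples $H=U$, $H=A$, $H=P$ are then immediate instances, each being a noncompact connected Lie subgroup of $G$.

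The main obstacle — really the only substantive one — is verifying the hypotheses of Howe--Moore in our setting, namely that $G$ has no nontrivial compact factors so that noncompactness of $H$ genuinely forces decay of matrix coefficients and hence ergodicity; since $G=\SO(N_\bR)^+$ with $N_\bR$ of signature $(1,\rho-1)$ and $\rho\geq 3$ this is a simple (in particular almost-simple) Lie group, so there is nothing to check beyond citing the theorem. Everything else is the standard ``ergodicity $\Rightarrow$ a.e.\ orbit equidistributes over the support'' argument. I do not expect any difficulty requiring new ideas; the proof is a short corollary, as the statement already advertises.
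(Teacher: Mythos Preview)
Your proposal is correct and follows exactly the paper's own approach: the paper derives the corollary directly from Howe--Moore (ergodicity of $G$ passes to any unbounded subgroup $H$) and then implicitly invokes the standard ``ergodic $\Rightarrow$ a.e.\ orbit dense in the support'' argument on a second-countable space. Your write-up simply makes explicit the details the paper leaves tacit, including the simplicity of $G$ and the countable-basis argument; there is no divergence in method.
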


\subsubsection{Relation to $P$-dynamics}
	\label{sssec:relation_to_p_dynamics}
We recall here some results of Fur\-sten\-berg \cite[Thm.~2.1]{Furstenberg_Noncommuting-random-products}.
Fix a probability measure $\nu$ on $G$ which is ``admissible'', i.e. some convolution power of it is absolutely continuous for Haar measure on $G$.
Suppose $Y$ is locally compact and second countable, with a continuous $G$-action.
Then results of Furstenberg, see \cite[Thm.~1.4]{NevoZimmer1999_Homogenous-projective-factors-for-actions-of-semi-simple-Lie-groups} for precise statements, imply that there is a one-to-one correspondence between $\nu$-stationary measures on $Y$ and $P$-invariant measures on $Y$.
Recall that a $\nu$-stationary measure on $Y$ is a (probability) measure $\mu$ satisfying $\int_{G}\left(g_*\mu\right) d\nu(g)=\mu$.
One says that the action of $G$, or of $\Gamma$, on $Y$ is \emph{stiff} if any stationary measure is actually invariant, i.e. $g_*\mu = \mu,\forall g\in G$, or for all $g\in \Gamma$ respectively.
Stiffness for $\Gamma$ is expected to hold in situations such as $\Aut(X)$ acting on $X$, see e.g. conjectures of Furstenberg \cite{Furstenberg1998_Stiffness-of-group-actions}.
In the homogeneous setting these were resolved by Benoist--Quint \cite{BenoistQuint_Mesures-stationnaires-et-fermes-invariants-des-espaces-homogenes_annals} quite generally and by Bourgain--Furman--Lindenstrauss--Mozes in the case of tori \cite{BourgainFurmanLindenstrauss2011_Stationary-measures-and-equidistribution-for-orbits-of-nonabelian-semigroups-on-the-torus}.
Analogous results in the nonlinear setting, closer to the one considered here, have been obtained by Brown--Rodriguez-Hertz \cite{BrownRodriguez-Hertz_Measure-rigidity-for-random-dynamics-on-surfaces-and-related} for group actions on smooth real surfaces, and by Eskin, Mirzakhani, and Mohammadi in the setting of \Teichmuller dynamics \cite{EskinMirzakhaniMohammadi_Isolation-equidistribution-and-orbit-closures-for-the-rm-SL2Bbb-R-action-on-moduli,EskinMirzakhani_Invariant-and-stationary-measures-for-the-rm-SL2Bbb-R-action-on-moduli-space}.
It is therefore natural to pose:

\begin{question}[$P$-invariant implies $G$-invariant]
	\label{eg:p_invariant_implies_g_invariant}
	If a measure $\mu$ on $\cX$ is $P$-invariant, must it be also $G$-invariant?
\end{question}
This is close to, but not equivalent to stiffness for the action of $\Gamma=\Aut(X)$ on $X$.
This stiffness has been established under some assumptions by Cantat--Dujardin \cite{CantatDujardin2020_Stationary-measures-on-complex-surfaces}.
Let us also note that an affirmative answer to \autoref{eg:p_invariant_implies_g_invariant} implies stiffness for AC measures in the sense of \cite[Def.~4.1]{Furstenberg1998_Stiffness-of-group-actions}, by the same method as the proof of Thm.~4.2 in loc. cit. which identifies the boundaries of $\Gamma$ and $G$ when the measure on $\Gamma$ is AC.

\subsubsection{$U$-dynamics}
	\label{sssec:u_dynamics}
The discussion in \autoref{sssec:relation_to_p_dynamics} illustrates that there is little essential difference between random walks on $\Aut(X)$ and the action of $P$ on $\cX$.
In the setting of the suspended space $\cX$, it is also meaningful to ask about the action of unipotent subgroups.
This cannot be, as far as we know, directly translated into properties of the random walk.
The most interesting challenge to us seems:
\begin{question}[$U$-invariant implies $P$-invariant]
	\label{eg:u_invariant_implies_p_invariant}
	Can one classify all $U'$-invariant measures, for $U'\subset G$ a unipotent subgroup?

	Specifically, suppose that $\mu$ is a measure on $\cX$, invariant under $U$, and that projects to Haar measure $\dVol_\cQ$ on $\cQ$.
	Is it true that $\mu$ must also be $P$-invariant?
\end{question}

\begin{remark}[On unipotent classification]
	\label{rmk:on_unipotent_classification}
	On the homogeneous space $\cQ$, the classification of orbit closures and invariant measures for $U$, or one of its subgroups $U'$, follows from Ratner's work \cite{Ratner_On-Raghunathans-measure-conjecture,Ratner_On-measure-rigidity-of-unipotent-subgroups-of-semisimple-groups}, see also earlier work of Margulis \cite{Margulis1987_Formes-quadratriques-indefinies-et-flots-unipotents-sur-les-espaces-homogenes}.
	Using this classification of invariant measures and sets for $U'$ on $\cQ$,
	\autoref{eg:u_invariant_implies_p_invariant} is most interesting when the invariant measure, resp. set on $\cX$, when projected to $\cQ$ is $\dVol_{\cQ}$, resp. all of $\cQ$.
	The possibilities for $U'$-invariant measures on $\cQ$ are homogeneous ones supported on closed $U''$-orbits, for $U''$ an intermediate unipotent subgroup between $U'$ and $U$, or homogeneous measures on $\leftquot{\Gamma\cap H}{H}$ where $H\subset G$ is a semisimple subgroup (yielding a totally geodesic hyperbolic submanifold in $\cM$).

	The ergodic invariant measures on $\cX$ projecting to homogeneous closed $U''$-orbits on $\cQ$, for $U''$ a unipotent subgroup, are immediately classified.
	They must be homogeneous measures on tori of dimension $0$, $1$, or $2$, contained in a single fiber of the corresponding elliptic fibration.
\end{remark}

% 	End of Subsection: G-dynamics
%%===============================================

%%===============================================
% 	Start of Subsection: A-dynamics

\subsection{\texorpdfstring{$A$}{A}-dynamics}
	\label{ssec:a_dynamics}

\subsubsection{Setup}
	\label{sssec:setup_a_dynamics}
We now look in more detail at the dynamics of the subgroup $A$ generating the geodesic flow on $\cM$.
To introduce some notation, let $\pi\colon \cX\to \cQ$ denote the projection.
Given a probability measure $\mu$ on $\cQ$, we will be interested in probability measures on $\cX$ that project to $\mu$.
The space of all such measures will be denoted $\pi_{*}^{-1}(\mu)$, it is compact in the weak topology.

\subsubsection{The centralizer of $A$}
	\label{sssec:the_centralizer_of_a}
Recall from \autoref{sssec:the_groups_of_interest} that $K$ is a maximal compact and $M$ is the centralizer in $K$ of the $A$-action.
For this section only, all measures on $\cX$ and $\cQ$ will be assumed to be $M$-invariant, or equivalently we work on the spaces $\cX/M$ and $\cQ/M$ and only consider the right action of $A$.
For example, $\cQ/M=\leftrightquot{\Gamma}{G}{M}$ is the unit tangent bundle of $\cM$, denoted from now on $T^1\cM$, and the right action of $A$ gives the geodesic flow.

\subsubsection{The space of geodesics}
	\label{sssec:the_space_of_geodesics}
The stabilizer of an oriented hyperbolic geodesic in $\wtilde{\cM}$ is the group $MA$, with $A$ acting as translation along the geodesic and $M$ acting as a rotation along the axis of the geodesic.
Therefore $G/MA$ is naturally identified with the space of geodesics in $\wtilde{\cM}$.
Furthermore, by the correspondence principle in \autoref{sssec:correspondence_principle}, a $\Gamma$-invariant locally finite measure on $G/MA$ is equivalent to an $A$-invariant locally finite measure on $\leftrightquot{\Gamma}{G}{M}=:T^1\cM$.
We will only work with finite $A$-invariant measures on $T^1\cM$.

\subsubsection{Bruhat cell and boundaries}
	\label{sssec:bruhat_cell_and_boundaries}
If we denote by $U^{\pm}$ the stable and unstable unipotent subgroups associated to $A$, then the embedding
\[
	\rightquot{G}{MA} \into \rightquot{G}{MAU^+}\times \rightquot{G}{MAU^-}
\]
identifies the space of oriented geodesics with the space of points on the boundary of $\wtilde{\cM}$ that are transverse.
Equivalently, this is the open Bruhat cell on the right-hand side \cite[VII.4]{Knapp2002_Lie-groups-beyond-an-introduction}.

Let us denote the image by $\partial^{(2)}\wtilde{\cM}$ and observe that it consists of pairs of transverse points on the boundary:
\[
	\partial^{(2)}\wtilde{\cM} =
	\Big\lbrace
	(l_+, l_-)\in \bP(N)^2 \colon
	l_+^2 = 0 =  l_-^2 \text{ and } l_+\wedge l_-\neq 0
	\Big\rbrace
\]
where $l_{\pm}$ are isotropic lines in the \Neron--Severi group.
We can also make the identification with pairs of actual cohomology classes $([\eta_+],[\eta_-])\in N\times N$ satisfying the condition $[\eta_+]\wedge[\eta_-]=1$ modulo the equivalence $([\eta_+],[\eta_-])\sim \left(e^\lambda[\eta_+],e^{-\lambda}[\eta_-]\right)$ for $\lambda\in \bR$ and $[\eta_\pm]$ in the boundary of the ample cone.

Finally, let $\partial^{(2)}_{irr}\wtilde{\cM}$ denote the subset of $\partial^{(2)}\wtilde{\cM}$ consisting of pairs of rays both of which are irrational.

\begin{proposition}[Finite measures are supported on irrational directions]
	\label{prop:finite_measures_are_supported_on_irrational_directions}
	Suppose that $\mu$ is a finite $A$-invariant measure $T^1\cM$.
	Then $\Gamma$-invariant measure on $G/MA$ associated to $\mu$ by the correspondence principle and denoted $\mu_{G/MA}$, satisfies:
	\[
		\mu_{G/MA}\left(\partial^{(2)}\wtilde{\cM}\setminus \partial^{(2)}_{irr}\wtilde{\cM}\right) = 0
	\]
	or in other words, it is carried by the irrational directions.
\end{proposition}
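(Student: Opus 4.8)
The plan is to reduce the statement to Poincar\'e recurrence for the geodesic flow on $T^1\cM$, using the fact recorded in \autoref{sssec:recurrent_and_divergent_rays} that a geodesic ray converging to a rational (parabolic) boundary point is divergent, i.e.\ its projection to $\cM$ eventually escapes into a cusp and leaves every compact subset of $T^1\cM$.

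First I would invoke the correspondence principle of \autoref{sssec:correspondence_principle}: the projection $G/M\to G/MA$ has fibers equal to the orbits of the geodesic flow $A\cong\bR$, on each of which an $A$-invariant measure is a multiple of Lebesgue measure, so the lift $\wtilde{\mu}$ of $\mu$ to $G/M$ disintegrates along flow lines as $\mu_{G/MA}$ times Lebesgue. Hence a flow-saturated, $\Gamma$-invariant Borel subset $\mathcal{R}\subseteq G/M$ (a union of geodesic lines) is $\wtilde{\mu}$-null if and only if the corresponding set of geodesics is $\mu_{G/MA}$-null, which in turn holds if and only if the image $\ov{\mathcal{R}}\subseteq T^1\cM$ is $\mu$-null (the conditional measures of $\wtilde{\mu}$ have full support on the flow lines). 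I would apply this with $\mathcal{R}$ the set of lifts of geodesics having at least one rational endpoint; this set is $\Gamma$-invariant because $\Aut(X)$ preserves $\NS(X)_\bQ$ and hence rationality of boundary points. Thus the desired identity $\mu_{G/MA}(\partial^{(2)}\wtilde{\cM}\setminus\partial^{(2)}_{irr}\wtilde{\cM})=0$ becomes equivalent to $\mu(\ov{\mathcal{R}})=0$.

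It then remains to prove $\mu(\ov{\mathcal{R}})=0$. Split $\ov{\mathcal{R}}=\ov{\mathcal{R}}^{+}\cup\ov{\mathcal{R}}^{-}$ according to whether the rational endpoint is the forward or the backward one. By divergence of parabolic rays (seen concretely in the upper half-space model with the parabolic point placed at $\infty$, where the ray is eventually vertical and escapes the cusp), for $v\in\ov{\mathcal{R}}^{+}$ and any compact $\mathcal{K}\subseteq T^1\cM$ there is $T$ with $g_tv\notin\mathcal{K}$ for all $t\geq T$, and symmetrically for $\ov{\mathcal{R}}^{-}$ in backward time. Fix a compact exhaustion $\mathcal{K}_n\uparrow T^1\cM$. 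Since $\mu$ is finite and preserved by the diffeomorphisms $g_{\pm 1}$, Poincar\'e recurrence gives that $\mu$-a.e.\ point of $\mathcal{K}_n$ returns to $\mathcal{K}_n$ in both forward and backward time; a point of $\ov{\mathcal{R}}^{+}\cap\mathcal{K}_n$ cannot return to $\mathcal{K}_n$ in forward time, so $\mu(\ov{\mathcal{R}}^{+}\cap\mathcal{K}_n)=0$, and letting $n\to\infty$ yields $\mu(\ov{\mathcal{R}}^{+})=0$; likewise $\mu(\ov{\mathcal{R}}^{-})=0$, whence $\mu(\ov{\mathcal{R}})=0$.

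In this approach there is no serious obstacle: the only step requiring a little care is the first reduction, transferring nullity between $\mu$ on $T^1\cM$ and $\mu_{G/MA}$ on the space of geodesics, and this is a routine consequence of the flow-line disintegration. Everything else is either already in the excerpt (the cusp-versus-rationality dictionary and divergence of parabolic rays, \autoref{sssec:recurrent_and_divergent_rays}) or an immediate application of Poincar\'e recurrence.
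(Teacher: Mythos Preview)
Your argument is correct. The reduction via the correspondence principle is sound: for a $\Gamma$-invariant, $A$-saturated set the three notions of nullity (for $\mu_{G/MA}$, for the lifted $\wtilde\mu$, and for $\mu$ on $T^1\cM$) coincide, exactly because the conditional measures along $A$-orbits are Haar and $\Gamma$ is infinite. The Poincar\'e recurrence step is also fine, using that divergence of the footpoint into a cusp forces divergence in $T^1\cM$ (compact fiber over $\cM$).

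The paper's proof reaches the same conclusion by a closely related but slightly different mechanism. Rather than invoking Poincar\'e recurrence, it decomposes the bad set as a countable union of slices $\{[E]\}\times(\bP(N)\setminus[E])$ (and their flips), one for each parabolic class $[E]$; if any slice had positive $\mu_{G/MA}$-measure, then under the correspondence the measure $\mu$ would place infinite mass in the cusp associated to $[E]$, since those geodesics spend infinite $A$-time there. Your route packages this ``infinite time in the cusp'' phenomenon as non-recurrence and then appeals to a named theorem; the paper's route computes the mass directly. Both exploit the same geometric input (divergence of rays with rational endpoint, \autoref{sssec:recurrent_and_divergent_rays}), and neither is materially shorter or more general than the other.
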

\begin{proof}
	The set excluded from $\partial^{(2)}_{irr}\wtilde{\cM}$ consists of pairs of the form $[E]\times (\bP(N)\setminus [E])$ and those obtained by flipping the two factors, where $E$ is an integral isotropic vector.
	Since there are countably many such sets, arguing by contradiction it would follow that one such set has positive $\mu_{G/MA}$-measure.

	But then going back through the correspondence principle construction, it would follow that $\mu$ has infinite mass in the cusp associated to $[E]$.
\end{proof}

Note that there are locally finite $A$-invariant but infinite measures on $T^1\cM$, such as those supported on a geodesic that goes between two cusps of $\cM$.

\subsubsection{Canonical lifts}
	\label{sssec:canonical_lifts_of_maximal_entropy}
Let now $\mu$ be an ergodic, $A$-invariant probability measure on $T^1\cM$.
Using the equivariant family of currents on the boundary of the ample cone, we construct a lift $\mu_{\cX}$ of $\mu$ to an $A$-invariant measure on $\cX$.
First, observe that as a consequence of \autoref{thm:continuous_family_of_boundary_currents}, which equivariantly assigns closed positive currents to each boundary class, we have:

\begin{corollary}[Equivariant measures]
	\label{cor:equivariant_measures}
	There exists a $\Gamma$-equivariant measurable map
	\[
		\mu_{mme}\colon \partial^{(2)}_{irr}\wtilde{\cM}
		\to
		\cP(X)
	\]
	were $\cP(X)$ denotes the probability measures on $X$.
\end{corollary}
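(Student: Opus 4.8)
The plan is to send a transverse pair of irrational rays to the wedge product of the two canonical currents attached to its endpoints. Given $(l_+,l_-)\in\partial^{(2)}_{irr}\wtilde\cM$, I would represent it by a pair of cohomology classes $[\eta_+],[\eta_-]\in\partial\Amp(X)$ with $[\eta_+]\cdot[\eta_-]=1$, as in \autoref{sssec:bruhat_cell_and_boundaries}. Since both are irrational, \autoref{thm:continuous_family_of_boundary_currents} produces closed positive currents $\eta_X([\eta_+])$ and $\eta_X([\eta_-])$ in these classes, each with globally continuous potentials on $X$. As $X$ is a compact K\"ahler surface and both currents have continuous local potentials, their Bedford--Taylor wedge product $\eta_X([\eta_+])\wedge\eta_X([\eta_-])$ is a well-defined positive Borel measure on $X$ of total mass $[\eta_+]\cdot[\eta_-]=1$, and I would set $\mu_{mme}(l_+,l_-):=\eta_X([\eta_+])\wedge\eta_X([\eta_-])$. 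When $l_\pm$ are the eigendirections of a hyperbolic automorphism $f$ this recovers Cantat's measure of maximal entropy of $f$ (using part of \autoref{thm:continuous_family_of_boundary_currents} which identifies $\eta_X$ with Cantat's currents), which is why the map is named $\mu_{mme}$.

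First I would check well-definedness on the equivalence class. The only freedom in the representatives is the scaling $([\eta_+],[\eta_-])\mapsto(e^\lambda[\eta_+],e^{-\lambda}[\eta_-])$, and since $\eta_X$ is homogeneous of degree one (the $\bR_{>0}$-equivariance in \autoref{thm:continuous_family_of_boundary_currents}) this multiplies the two currents by $e^{\lambda}$ and $e^{-\lambda}$ respectively, leaving the wedge product unchanged; hence $\mu_{mme}$ descends to $\partial^{(2)}_{irr}\wtilde\cM$. Equivariance is then formal: $\Gamma$ acts on $\partial^{(2)}_{irr}\wtilde\cM$ through its action on $\NS(X)$ and on $\cP(X)$ by pushforward, and for $\phi\in\Gamma$ one has $\phi_*(T_1\wedge T_2)=(\phi_*T_1)\wedge(\phi_*T_2)$ for closed positive currents with continuous potentials, so the $\Aut(X)$-equivariance of $\eta_X$ transports directly to $\mu_{mme}$.

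For measurability I would first fix a continuous choice of representatives, for instance normalizing so that $M([\eta_+])=M([\eta_-])$ with $M$ the reference mass function of \autoref{sssec:mass_in_cohomology}; on the irrational locus this section is continuous since $M$ is continuous and strictly positive there. Then I would invoke two facts: that $\eta_X$ is continuous for the $C^0$-topology on potentials along the irrational strata (\autoref{thm:continuous_family_of_boundary_currents}), and that the mixed Monge--Amp\`ere operator is continuous under uniform convergence of globally bounded potentials. Composing, $(l_+,l_-)\mapsto\mu_{mme}(l_+,l_-)$ is continuous into $\cP(X)$ with its weak topology, in particular Borel measurable, which is more than the statement requires.

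The only step that is not pure bookkeeping is this last one: one needs continuity of the wedge product as the classes vary, for currents whose potentials are merely globally $C^0$ rather than smooth. This is exactly Bedford--Taylor's continuity theorem for the complex Monge--Amp\`ere operator on locally bounded (here continuous) plurisubharmonic functions, applied locally after subtracting the smooth representatives $A([\eta_\pm])$ and using that the two currents are positive; so no genuinely new analytic input beyond \autoref{thm:continuous_family_of_boundary_currents} is required.
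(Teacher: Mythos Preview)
Your proposal is correct and follows essentially the same approach as the paper: define $\mu_{mme}$ via the wedge product of the canonical currents from \autoref{thm:continuous_family_of_boundary_currents}, use scaling equivariance for well-definedness and $\Aut(X)$-equivariance of $\eta_X$ for $\Gamma$-equivariance, and deduce measurability from continuity. The paper's proof is terser (it simply says the map is ``the restriction to a measurable subset of a continuous map''), while you spell out the Bedford--Taylor continuity of the Monge--Amp\`ere operator that underlies this; but the substance is the same.
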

\begin{proof}
	We can associate to a pair of cohomology classes $[\eta_\pm]$ the currents $\eta_{\pm}$ provided by \autoref{thm:continuous_family_of_boundary_currents}, and define
	\[
		\mu_{mme}([\eta_+],[\eta_-]):=\eta_+\wedge \eta_-
	\]
	where the wedge-product is well-defined since each of $\eta_\pm$ has continuous potentials and is positive, while the normalization $[\eta_+]\wedge[\eta_-]=1$ ensures we get a probability measure, and the scaling equivariance of the canonical currents implies the measure is independent of the choice of representatives for the pair of cohomology classes.

	The map $\mu_{mee}$ is measurable since it is the restriction to a measurable subset of a continuous map.
\end{proof}

\begin{theorem}[Canonical lifts of measures]
	\label{thm:canonical_lifts_of_measures}
	Let $\mu$ be an $A$-invariant probability measure on $T^1\cM$.
	Then there exists a measure $\mu_\cX$ on $\cX/M$ which pushes forward to $\mu$ and which is also $A$-invariant (and $M$-invariant), with fiberwise conditional measures given by \autoref{cor:equivariant_measures}.
\end{theorem}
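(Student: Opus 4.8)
The plan is to assemble $\mu_{\cX}$ from three ingredients already available: the correspondence principle of \autoref{sssec:correspondence_principle} and \autoref{sssec:the_space_of_geodesics}, the support statement of \autoref{prop:finite_measures_are_supported_on_irrational_directions}, and the $\Gamma$-equivariant family of fiberwise measures $\mu_{mme}$ from \autoref{cor:equivariant_measures}.

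First I would pass to the space of geodesics. Lifting $\mu$ to the $\Gamma$-invariant, right-$A$-invariant measure $\tilde{\mu}$ on $G/M$ and disintegrating along the right $A$-action (which is free, so its conditionals on each orbit are forced to be Lebesgue), one obtains, as in \autoref{sssec:the_space_of_geodesics}, the associated $\Gamma$-invariant locally finite measure $\hat{\mu}$ on $G/MA$, the space of oriented geodesics in $\wtilde{\cM}$. By \autoref{prop:finite_measures_are_supported_on_irrational_directions}, $\hat{\mu}$ is carried by $\partial^{(2)}_{irr}\wtilde{\cM}$ under the Bruhat-cell identification $G/MA\cong\partial^{(2)}\wtilde{\cM}$ of \autoref{sssec:bruhat_cell_and_boundaries} --- which is exactly the locus on which $\mu_{mme}$ is defined.

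Next I would build the lift on the boundary model and transport it back. Set
\[
	\hat{\mu}_{\cX}:=\int_{\partial^{(2)}_{irr}\wtilde{\cM}}\left(\delta_{\xi}\otimes\mu_{mme}(\xi)\right)d\hat{\mu}(\xi),
\]
a measure on $\partial^{(2)}_{irr}\wtilde{\cM}\times X$ that is a well-defined Radon measure since $\xi\mapsto\mu_{mme}(\xi)\in\cP(X)$ is measurable, and that is $\Gamma$-invariant since $\hat{\mu}$ is $\Gamma$-invariant and $\mu_{mme}$ is $\Gamma$-equivariant. Applying the correspondence principle once more, now with coefficients in $X$ --- that is, disintegrating along right $A$-orbits with Lebesgue conditionals, as above --- a $\Gamma$-invariant locally finite measure on $(G/MA)\times X$ corresponds to an $A$-invariant locally finite measure on $\cX/M=\leftquot{\Gamma}{G/M\times X}$; this works precisely because $\mu_{mme}(\xi)$ depends only on the oriented geodesic $\xi$ and is therefore constant along the $A$-flow. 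Let $\mu_{\cX}$ be the measure so obtained. By construction it projects to $\tilde{\mu}$, hence to $\mu$ on $T^1\cM$; it is $A$-invariant; it is $M$-invariant because $M$ fixes each geodesic pointwise, so $\mu_{mme}$ is $M$-invariant and the whole construction lives on $\cX/M$; and for $\mu$-a.e. $v\in T^1\cM$, writing $\xi(v)=([\eta_+],[\eta_-])$ for the pair of endpoints of the geodesic through $v$, the conditional measure of $\mu_{\cX}$ on the fiber over $v$ is $\mu_{mme}(\xi(v))=\eta_+\wedge\eta_-$, as required.

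The main obstacle is the bookkeeping in this last ``correspondence principle with coefficients'' step: one must verify that gluing the flow-invariant family $\{\mu_{mme}(\xi)\}$ onto the flow-invariant measure $\mu$ genuinely produces an $A$-invariant measure on $\cX/M$ with the stated projection and conditionals, and that the mere measurability --- not continuity --- of $\mu_{mme}$, which stems from $\partial^{(2)}_{irr}\wtilde{\cM}$ not being closed, causes no difficulty in the disintegrations involved. All of the substantive geometric content, by contrast, is already in hand: \autoref{prop:finite_measures_are_supported_on_irrational_directions} ensures that $\hat{\mu}$ lives where $\mu_{mme}$ is defined, while \autoref{thm:continuous_family_of_boundary_currents}, via \autoref{cor:equivariant_measures}, ensures that $\eta_+\wedge\eta_-$ is a bona fide probability measure on $X$ depending measurably and $\Gamma$-equivariantly on the pair of boundary classes.
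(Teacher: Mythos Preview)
Your proposal is correct and follows essentially the same approach as the paper: pass to the $\Gamma$-invariant measure on $G/MA\cong\partial^{(2)}\wtilde{\cM}$ via the correspondence principle, use \autoref{prop:finite_measures_are_supported_on_irrational_directions} to ensure it is carried by $\partial^{(2)}_{irr}\wtilde{\cM}$, tensor with $\mu_{mme}$, and apply the correspondence principle in reverse. The paper's proof is terser (writing simply $\mu_{G/MA}\otimes\mu_{mme}$ and invoking the correspondence principle again), while you have spelled out more of the bookkeeping, but the content is the same.
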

\begin{proof}
	Indeed, $\mu$ gives rise to the $\Gamma$-invariant measure $\mu_{G/MA}$ on $G/MA$, and this space is identified with $\partial^{(2)}\wtilde{\cM}$.
	Furthermore, by \autoref{prop:finite_measures_are_supported_on_irrational_directions} the measure $\mu_{G/MA}$ is supported on the irrational points, and we can form
	\[
		\mu_{G/MA}\otimes \mu_{mme} \text{ on }\partial^{(2)}\wtilde{\cM}\times X
	\]
	which is $\Gamma$-equivariant, and by the correspondence principle again yields an $A$-invariant measure on $\cX/M$.
\end{proof}

\begin{remark}[Maximal entropy]
	\label{rmk:maximal_entropy}
	A subsequent work will establish that $\mu_{\cX}$ has relative entropy $1$ for $A$, i.e. that the difference between the entropy of $\mu$ and that of $\mu_{\cX}$ is $1$.
	Furthermore $1$ is the maximum relative entropy among lifts of $\mu$, and $\mu_{\cX}$ is unique with this property.

	This puts together and generalizes the Gromov--Yomdin theorem \cite{Gro,Yom} which says that the entropy of a hyperbolic automorphism $\gamma$ is equal to the spectral radius of $\gamma^*$ acting on $H^{1,1}$.
	Indeed, we can take $\mu$ to be the probability measure supported by the closed geodesic corresponding to $\gamma$ and observe that for a flow such as $g_t$ (generating $A$) the relation between entropies of different elements is $h(g_t)=|t|h(g_1)$, and the spectral radius of $\gamma^*$ is precisely the length of the associated geodesic.
\end{remark}

% 	End of Subsection: A-dynamics
%%===============================================

%%===============================================
% 	Start of Subsection: A consequence of homogeneous dynamics

\subsection{A consequence of homogeneous dynamics}
	\label{ssec:a_consequence_of_homogeneous_dynamics}

\subsubsection{Setup}
	\label{sssec:setup_a_consequence_of_homogeneous_dynamics}
We keep the notation for Lie groups from the previous sections.
Our goal is to explain how rigidity theorems in homogeneous dynamics imply the following statement:

\begin{theorem}[Orbit closure dichotomy]
	\label{thm:orbit_closure_dichotomy}
	Let $[\alpha]\in \partial \Amp(X)$ be a point in the boundary of the ample cone.
	Then either its orbit $\Aut(X)\cdot[\alpha]$ is dense in the ample cone, or $[\alpha]$ is proportional to an integral vector $[E]$ and its orbit is discrete.
\end{theorem}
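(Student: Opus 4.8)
The plan is to transport the statement into homogeneous dynamics on $\cQ=\leftquot{\Gamma}{G}$ via the correspondence principle of \autoref{sssec:correspondence_principle}, and then invoke Ratner's orbit closure theorem. The preliminary bookkeeping is this: $\partial\Amp(X)$ is the forward component of the null cone in $\NS(X)_\bR$, which is a single $G$-orbit; fixing a null vector $v_0$ one has $\operatorname{Stab}_G(v_0)=MU$ — the maximal unipotent $U$ and the compact factor $M$ fix $v_0$, while $A$ rescales it — so that $\partial\Amp(X)\cong G/MU$ as a $G$-space, with projectivization $\partial\Amp^1(X)=G/P$. Write $n=\rho-1\ge 2$ for the dimension of the hyperbolic space $\Amp^1(X)$; then $U\cong\bR^{n-1}=\bR^{\rho-2}$.

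First I would dispose of the rational case. If $[\alpha]$ is proportional to an integral vector, normalize it to a primitive integral class $[E]$; then $\Aut(X)\cdot[E]$ is contained in the set of primitive integral vectors in the forward null cone. Since every bounded region of $\NS(X)_\bR$ meets the lattice $\NS(X)$ in a finite set, this set, and a fortiori the orbit, is discrete in $\NS(X)_\bR\setminus\{0\}$; this is the second alternative.

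Next, for irrational $[\alpha]$, I would write $[\alpha]=g\cdot v_0$ and set $x=\Gamma g\in\cQ$. By the correspondence between closed $\Gamma$-invariant subsets of $G/MU$ and closed $MU$-invariant subsets of $\cQ$, and since $M$ is compact, it suffices to show that the $U$-orbit $xU$ is dense in $\cQ$. By Ratner's orbit closure theorem \cite{Ratner_On-Raghunathans-measure-conjecture,Ratner_On-measure-rigidity-of-unipotent-subgroups-of-semisimple-groups}, $\overline{xU}=xH$ for a closed connected subgroup $H$ with $U\subseteq H\subseteq G$ and $g^{-1}\Gamma g\cap H$ a lattice in $H$. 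Because $G$ has real rank one and $U$ is the full maximal unipotent, the connected subgroups containing $U$ are, up to the compact factor $M$, precisely $U$, $AU$, $P=MAU$, and $G$ (no proper reductive subgroup contains $U$, since $\SO(1,n)$ has a single conjugacy class of proper parabolics). The solvable groups $AU$ and $MAU$ admit no lattices at all: a lattice, being cocompact by amenability, would project onto a lattice in the quotient by $U$, and would thereby force a nontrivial element of $A$ to normalize a lattice in $U\cong\bR^{n-1}$ — impossible, since $A$ acts there by a nontrivial dilation. Hence $H\in\{U,MU,G\}$. If $H=G$ we are done. If $H=U$ or $H=MU$, then $g^{-1}\Gamma g\cap MU$ is a cocompact lattice in $MU$, so $\Gamma$ contains a free abelian group of rank $\rho-2$ of parabolic elements all fixing the boundary point $[\alpha]$; by the description of cusps in \autoref{sssec:cusps_of_cm} such a subgroup is a cusp stabilizer $\Gamma_{[E]}$ for a genus one fibration class $[E]$, whose fixed point is the rational class $[E]$, so $[\alpha]$ is proportional to $[E]$ — contradicting irrationality. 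Therefore $H=G$ and $\Aut(X)\cdot[\alpha]$ is dense in $\partial\Amp(X)$.

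The conceptual core of the argument is the reduction to $\leftquot{\Gamma}{G}$ and the appeal to Ratner's theorem; the remainder — enumerating the subgroups $H$ in a rank-one group, eliminating the solvable ones, and converting a closed $U$- or $MU$-orbit into a cusp — is routine. The one place I would be careful is that last conversion: one must extract not merely a single parabolic but a full rank-$(\rho-2)$ abelian group of parabolics fixing $[\alpha]$, which is exactly what the lattice condition for $H\in\{U,MU\}$ supplies, and which \autoref{sssec:cusps_of_cm} then turns into rationality of $[\alpha]$.
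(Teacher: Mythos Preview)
Your proof is correct and follows essentially the same route as the paper: reduce via the correspondence principle to classifying $U$-orbit closures on $\leftquot{\Gamma}{G}$, invoke the homogeneous-dynamics classification, enumerate the possible intermediate groups in rank one, and convert a closed $U$-orbit into rationality of the fixed boundary point. The only cosmetic differences are that the paper cites Dani's earlier theorem on horospherical flows (sufficient here) rather than the full Ratner theorem, carries out the enumeration of intermediate subgroups by an explicit Lie-algebra computation, and extracts rationality directly from the fact that an integral unipotent matrix fixes a rational isotropic line rather than by appealing to the cusp description.
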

\begin{proof}
	By the correspondence principle \autoref{sssec:correspondence_principle} and passing to finite-index subgroups, it suffices to show the same dichotomy for $H$-orbits on $\leftquot{\Gamma}{G}$, where $H$ is the stabilizer of an isotropic vector $[\alpha]\in \partial \Amp(X)$.
	In our earlier notation, the subgroup is $H=MU^+$, where $M\subset K$ is the centralizer of $A$ and $U^+$ is a horospherical subgroup.
	Note that $M$ also normalizes $U^+$, in particular it acts on $U^+$-orbits.

	Now $U^+$ is a unipotent group, in fact a maximal horospherical subgroup, so Dani's theorem \cite[Thm.~A]{Dani1986_Orbits-of-horospherical-flows} (see also \cite[Thm.~8.2]{Dani1981_Invariant-measures-and-minimal-sets-of-horospherical-flows} where incidentally Raghunathan's conjectures are stated, that eventually became Ratner's theorems) applies to show that $U^+$-orbit closures are homogeneous under some connected Lie subgroup $L$ with $U^+\subseteq L\subseteq G$.
	Furthermore
	there exists $U'\subset U^+$ which is normal in $L$ such that $L/U'$ is reductive, and
	$L$ is unimodular (so the $L$-orbit can admit a finite invariant measure).

	If $L\subset MU^+$ with the closed orbit $\Gamma gL\subset \leftquot{\Gamma}{G}$, then since $gLg^{-1}\cap\Gamma$ in a lattice in $gLg^{-1}$, and by the structure of lattices in compact-by-unipotent groups, it follows that also $gU^+g^{-1}\cap\Gamma$ is a lattice in $gU^+g^{-1}$.
	Therefore $gU^+g^{-1}$ stabilizes an integral isotropic vector $[E]$.
	It follows that $gMg^{-1}$ also stabilizes $[E]$ and hence the orbit $\Gamma gH=\Gamma gU^+M$ is closed, and $\Gamma gL$ is contained in this closed orbit.

	Because $G$ has real rank $1$, we now check that the only other possibility is $L=G$.
	It suffices to look at the Lie algebra decomposition
	\[
		\frakg = \fraku^+ \oplus (\frakm \oplus \fraka) \oplus \fraku^-
	\]
	Suppose that the Lie algebra of $L$, denoted $\frakl$, is not entirely contained in $\fraku^+ \oplus \frakm$, but contains $\fraku^+$.
	Considering the Lie algebra $\fraku'\subset \fraku^+$ such that $\frakl/\fraku$ is reductive, we see that we must have $\frakl/\fraku'=\fraku^{+}{}''\oplus \frakm'' \oplus \fraka''\oplus \fraku^{-}{}''$, where the $''$ parts come from the corresponding subspaces in the decomposition.
	By the assumption that $\frakl$ is not contained in $\fraku^+\oplus \frakm$ it follows that $\fraka''\neq 0$ (if $\frakl$ only had something in $\fraku^-$, we could still obtain elements in $\fraka$ by commutators with $\fraku^+$).
	Thus in fact $\fraka\subset \frakl$, since $\dim \fraka = 1$.
	Furthermore $\fraku^{-}{}''\neq 0$ since otherwise $\frakl$ would not be unimodular (it would be almost a parabolic, up to some compact parts in $\frakm$).
	Finally, an explicit calculation with commutators of matrices shows that by taking commutators of a vector in $\fraku^{-}$ with arbitrary vectors in $\fraku^+$, and then subsequent commutators, we can generate first all of $\frakm$ and then all of $\fraku^-$.
	So it must be that $\frakl=\frakg$ as claimed.
\end{proof}

% 	End of Subsection: A consequence of homogeneous dynamics
%%===============================================

%%%%%%%%%%%%%%%%%%%%%%%%%%%%%%%%%%%%%%%%%%%%%%%%%%%%%%%%%%%%%%%%%%%%%%%%%%%%%%%
%%% 				End of Section: Suspension space construction
%%%%%%%%%%%%%%%%%%%%%%%%%%%%%%%%%%%%%%%%%%%%%%%%%%%%%%%%%%%%%%%%%%%%%%%%%%%%%%%

%%%%%%%%%%%%%%%%%%%%%%%%%%%%%%%%%%%%%%%%%%%%%%%%%%%%%%%%%%%%%%%%%%%%%%%%%%%%%%%
%%% 				Start of Section: Applications of heights
%%%%%%%%%%%%%%%%%%%%%%%%%%%%%%%%%%%%%%%%%%%%%%%%%%%%%%%%%%%%%%%%%%%%%%%%%%%%%%%

\section{Applications of heights}
	\label{sec:applications_of_heights}

\subsubsection*{Outline of section}
We now consider some elementary applications of the canonical heights constructed in \autoref{thm:canonical_heights_on_the_boundary}.
In \autoref{ssec:invariants_associated_to_canonical_heights} we construct an invariant of $\Aut(X)$-orbits of rational points.
This can be viewed as analogous to the invariant constructed by Silverman \cite[Thm.1.2]{Silverman_Rational-points-on-K3-surfaces:-a-new-canonical-height} for a single hyperbolic automorphism, which in his case is the \emph{product} $\hat{h}^+(p)\cdot \hat{h}^-(p)$ of the canonical heights.

In \autoref{ssec:small_points_and_bounded_orbits} we analyze the situation when a point has vanishing canonical height, for an irrational direction on the boundary.
It is established in \autoref{thm:zero_height_implies_bounded_orbit_irrational_case} that this is equivalent to the point having finite orbit under an appropriate sequence of automorphisms.
This can be seen as analogous to results of Silverman for a single automorphism, and to results about torsion points in elliptic fibrations for parabolic automorphisms.

%%===============================================
% 	Start of Subsection: Invariants associated to canonical heights

\subsection{Invariants associated to canonical heights}
	\label{ssec:invariants_associated_to_canonical_heights}

\subsubsection{Setup}
	\label{sssec:setup_invariants_associated_to_canonical_heights}
We maintain the conventions from \autoref{sec:boundary_heights_direct_construction}.
Let $h^{can}\colon \partial^\circ \Amp_c(X)\to \cH(X)$ be the assignment of canonical heights constructed in \autoref{thm:canonical_heights_on_the_boundary}.
We will denote by $[\alpha]$ an element of $\partial^\circ \Amp_c(X)$ and by $p$ an element of $X(\ov{k})$, and we will denote by $h^{can}([\alpha],p)$ the associated height.
This is $\Aut(X)$-equivariant, and also scales as $h^{can}(\lambda[\alpha],p)=\lambda\cdot h^{can}([\alpha],p)$.

We will also keep the Lie-theoretic notions introduced in \autoref{sec:suspension_space_construction} and particularly the groups defined in \autoref{sssec:the_groups_of_interest}.

\subsubsection{The space of horocycles}
	\label{sssec:the_space_of_horocycles}
The stabilizer in $G$ of a null-vector in $\NS(X)$ is the group $MN$, therefore we can identify in a $\Gamma$-equivariant manner $\partial \Amp(X)$ (the usual boundary of the ample cone) with $G/MN$.
Note that there is a well-defined right action of $A$, since $A$ normalizes $MN$, which commutes with the left action of $\Gamma$.
Under the identification with null-vectors, this is nothing but the scaling action of $\bR_+^\times$.
Our goal will be to associate invariants (for the left $\Gamma$-action) to functions that are homogeneous for the $A$-action.

Both $G$ and $MN$ are unimodular groups, therefore there is a $G$-invariant measure on $G/MN$, unique up to scaling.
Here is an explicit description in coordinates.
Take the quadratic form to be $x_0^2-x_1^2-\cdots - x_{\rho-1}^{2}$ and identify the boundary of the ample cone with $\bR^{\rho-1}\setminus 0$ via
\[
	(x_1,\ldots,x_{\rho-1})\mapsto \left(\left(x_1^2+\cdots + x_{\rho-1}^2\right)^{\tfrac 12},x_1,\ldots,x_{\rho-1}\right)
\]
and the square root is the positive one.
Then the differential form
\begin{align}
	\label{eqn:invariant_measure}
	\frac{dx_1\wedge\cdots\wedge dx_{\rho-1}}{\left(x_1^2+\cdots +x_{\rho-1}^2\right)^{\tfrac 12}}	
\end{align}
is clearly $\SO_{\rho-1}(\bR)$-invariant, and if we check that it is invariant also under $A$ which only acts on the first two coordinates by
\[
	\begin{bmatrix}
		x_0\\
		x_1
	\end{bmatrix}
	\mapsto
	\begin{bmatrix}
		\cosh(t)x_0 + \sinh(t)x_1\\
		\sinh(t)x_0 + \cosh(t)x_1
	\end{bmatrix}
\]
the invariance under the full group $G$ follows (since $G$ admits the Iwasawa, or $KAK$ decomposition).
However, identifying the denominator in \autoref{eqn:invariant_measure} with $x_0$, an immediate calculation gives that $\frac{dx_1\wedge \cdots \wedge dx_{\rho-1}}{x_0}$ is invariant by the above transformation and the result follows.
To check the invariance one needs to use that $x_0 dx_0 = \sum_{i=1}^{\rho-1} x_i dx_i$ and the above differential form can also be obtained as the residue of $\frac{dx_0\wedge\cdots\wedge dx_{\rho-1}}{x_0^2-x_1^2 - \cdots - x_{\rho-1}^2}$ along the null hypersurface.

\subsubsection{The null quadric}
	\label{sssec:the_null_quadric}
We can also view the boundary of $\Amp^1(X)$ as the null quadric $Q\subset \bP(\NS(X))$ in the projectivized \Neron--Severi group.
Now the projective space $\bP:=\bP\left(\NS(X)\right)$ carries the tautological bundle $\cO(-1)$.
Then the null vectors are the total space of $\cO(-1)\vert_Q \to Q$, and the boundary of the positive cone is one of its two connected components (all over $\bR$).
The quadric itself is given as the vanishing locus of a section of $\cO_{\bP}(2)$ so its canonical bundle, by the adjunction formula, is $\cO_{\bP}(-\rho+2)\vert_{Q}$.

\subsubsection{An invariant of $\Gamma$-orbits}
	\label{sssec:an_invariant_of_gamma_orbits}
Let us now identify the irrational rays on $Q$ with the irrational part of the boundary $\partial^{\circ}_{irr}\Amp(X)$ and denote this set, and its rays, by $Q_{irr}$ and $\cO(-1)\vert_{Q_{irr}}$.
The case $\rho=3$ is special and we don't need to restrict to irrational rays, since then the boundaries of interest agree, see the discussion in \autoref{sssec:some_examples}.
In general, for the constructions we make below, ignoring a set of Lebesgue measure zero, such as that of irrational directions, will be irrelevant.

For a point $p\in X(\ov{k})$ the height function $h^{can}(-,p)$ gives a $1$-homogeneous function on $\cO(-1)\vert_{Q_{irr}}$, i.e. it provides us with a measurable section of $\cO(1)\vert_Q$, let us call this section $s_p$.
Because the canonical bundle of $Q$ is $\cO(-\rho+2)$, if we raise $s_p$ to the power $-\rho+2$ we can then integrate it.
Recall that by our standing assumption $\rho\geq 3$.

\begin{theorem}[Invariance of total height]
	\label{thm:invariance_of_total_height}
	For a point $p\in X\left(\ov k\right)$, the integral
	\[
		\int_{Q}s_p^{-\rho+2}=:h^{tot}(p)
	\]
	is independent of the choice of representative in the $\Gamma$-orbit, i.e. we have
	\[
		h^{tot}(p)=h^{tot}(\gamma p) \quad \forall \gamma \in \Gamma.
	\]
	In fact this holds for any $\gamma\in \Aut(X)$.

	Furthermore, the invariant can be alternatively described as follows.
	Consider the subset
	\[
		S_{p}:=\{[\alpha]\in \cO(-1)\vert_{Q_{irr}}\colon h^{can}([\alpha],p)\leq 1\}
	\]
	Then up to a constant $c(\rho)$ that only depends on $\rho$, we have that
	\[
		\Vol(S_p) = c(\rho)\cdot \frac{1}{h^{tot}(p)}
	\]
	where $\Vol$ is a fixed $G$-invariant volume on the boundary of the ample cone, e.g. the one in \autoref{eqn:invariant_measure}.
\end{theorem}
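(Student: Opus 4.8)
The plan is to reduce everything to the scaling and equivariance properties of $h^{can}$ established in \autoref{thm:canonical_heights_on_the_boundary}, together with the fact that the pushforward of the $G$-invariant volume on $\partial\Amp(X)$ under any $\gamma\in\Aut(X)$ changes by the Jacobian of the linear action of $\gamma$ on $\NS(X)_\bR$, which is $\pm1$ since $\gamma\in\Orthog(\NS(X))$. First I would set up the integrand more carefully: the height $h^{can}(-,p)$ restricted to $\cO(-1)|_{Q_{irr}}$ is $1$-homogeneous along the fibers of $\cO(-1)|_Q\to Q$, hence defines a measurable section $s_p$ of $\cO(1)|_Q$ (nonvanishing off a measure-zero set, by the discussion following \autoref{thm:canonical_heights_on_the_boundary} and the positivity in property \ref{hcan_def_prop}). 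The key observation is that $s_p^{-\rho+2}$ is a section of $\cO(-\rho+2)|_Q\cong K_Q$ by the adjunction computation in \autoref{sssec:the_null_quadric}, so $\int_Q s_p^{-\rho+2}$ is intrinsically the integral of a (measurable) density on $Q$ and makes sense without choosing coordinates.

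Next I would verify $\Gamma$-invariance. For $\gamma\in\Aut(X)$, the equivariance property \ref{hcan_is_equivariant} gives $h^{can}(\gamma_*[\alpha],\gamma p)=h^{can}([\alpha],p)$ for all $[\alpha]\in\partial^\circ\Amp_c(X)$; projecting to $\partial\Amp(X)$ and using that $\gamma$ permutes irrational rays, this reads $s_{\gamma p}\circ\gamma = s_p$ as sections of $\cO(1)|_{Q_{irr}}$ (here $\gamma$ acts on $Q$ and on $\cO(\pm1)$ by its linear action on $\NS(X)_\bR$). Raising to the power $-\rho+2$ and integrating, the change of variables $[\alpha]\mapsto\gamma[\alpha]$ on $Q$ transforms $\int_Q s_{\gamma p}^{-\rho+2}$ into $\int_Q (\gamma^*s_{\gamma p})^{-\rho+2}\cdot|\!\det\gamma|_{K_Q}$. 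But $\gamma^*s_{\gamma p}=s_p$ and the action on the canonical bundle $K_Q$ is by an orthogonal transformation, so its determinant has absolute value $1$; hence $h^{tot}(\gamma p)=h^{tot}(p)$. (One should note here that this argument is cleanest stated invariantly on the canonical bundle; the coordinate form \autoref{eqn:invariant_measure} is an auxiliary check that the $G$-invariant density exists, and $G$-invariance of that density is exactly the statement that the orthogonal action on $K_Q$ is volume-preserving.)

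Finally I would prove the volume reformulation. Fix a ray $[\alpha]\in Q_{irr}$ and parametrize the fiber of $\cO(-1)|_Q$ over it by $t\mapsto t\cdot v$ for a chosen representative $v$ with $s_p(v)=h^{can}(v,p)=:c_0>0$; then by $1$-homogeneity $h^{can}(tv,p)=tc_0$, so the fiber of $S_p$ over $[\alpha]$ is the interval $0\le t\le 1/c_0$, of length $c_0^{-1}=s_p([\alpha])^{-1}$ in the fiber coordinate. Writing the $G$-invariant volume on $\cO(-1)|_Q$ (equivalently on the null cone $\partial\Amp(X)$) as the product of the fiberwise Lebesgue measure $dt$ with a density on $Q$, and recalling that the residue description in \autoref{sssec:the_space_of_horocycles} identifies that base density with $s^{-\rho+2}$-type weights coming from $K_Q=\cO(-\rho+2)|_Q$, Fubini gives $\Vol(S_p)=\int_Q \big(\int_0^{1/s_p}dt\big)\,d(\text{base})=c(\rho)\int_Q s_p^{-\rho+2}=c(\rho)/h^{tot}(p)$, with $c(\rho)$ an explicit normalization constant depending only on how one fixes the invariant volume in \autoref{eqn:invariant_measure}. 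The main obstacle I anticipate is being careful about measure-theoretic regularity: $s_p$ is only measurable and could in principle vanish or blow up on small sets, so one needs the finiteness (or at least well-definedness in $[0,+\infty]$) of $\int_Q s_p^{-\rho+2}$ and the legitimacy of Fubini; the continuity of $h^{can}(-,p)$ on $\partial^\circ\Amp_c(X)$ from property \ref{hcan_is_continuous}, together with the compactness of $Q$ and positivity, should suffice to control this, and the change-of-variables step is then routine once the integrand is recognized as a section of the canonical bundle.
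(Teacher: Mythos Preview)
For the $\Gamma$-invariance, your argument is the paper's: equivariance of $h^{can}$ gives $s_{\gamma p}=\gamma_* s_p$, and integrating a section of $K_Q$ over $Q$ is a diffeomorphism-invariant operation. The paper states this in one line (``$G$-invariance of the operation of integration of differential forms'').

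For the volume identity your route differs from the paper's and contains a gap. The paper argues purely by homogeneity: replacing $s_p$ by $\lambda s_p$ rescales $\int_Q s_p^{-\rho+2}$ by $\lambda^{-\rho+2}$ and sends $S_p$ to $\tfrac{1}{\lambda}S_p$, whose $G$-invariant volume rescales by the same power of $\lambda$; since both functionals of $s_p$ are $G$-invariant and given locally over $Q$ by integration, they must be proportional. Your Fubini computation would make this explicit, but the step ``writing the $G$-invariant volume as the product of the fiberwise Lebesgue measure $dt$ with a density on $Q$'' is wrong: there is no $G$-invariant measure on $Q\cong G/P$ because $P=MAN$ is non-unimodular, so no such product splitting can exist. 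Concretely, in polar coordinates $x_i=r\theta_i$ the form in \autoref{eqn:invariant_measure} becomes $r^{\rho-3}\,dr\wedge d\sigma$, with $d\sigma$ the round (merely $K$-invariant) measure on $S^{\rho-2}$. The radial integral is therefore $\int_0^{1/s_p(\theta)} r^{\rho-3}\,dr=\tfrac{1}{\rho-2}\,s_p(\theta)^{-(\rho-2)}$, not $s_p^{-1}$; the missing weight $r^{\rho-3}$ is exactly what produces the exponent $-\rho+2$. With this correction Fubini gives $\Vol(S_p)=\tfrac{1}{\rho-2}\int_{S^{\rho-2}} s_p(\theta)^{-\rho+2}\,d\sigma(\theta)$, and the identification $K_Q\cong\cO(-\rho+2)|_Q$ shows this integrand is independent of the radial trivialization, recovering $c(\rho)\,h^{tot}(p)$. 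Note also that your final equality $c(\rho)\int_Q s_p^{-\rho+2}=c(\rho)/h^{tot}(p)$ is a slip: by definition the integral \emph{is} $h^{tot}(p)$.
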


The boundaries of the sets $S_p$ are depicted in \autoref{fig:star_shaped_sets} in blue, using computer simulations.

\begin{proof}
	The $\Gamma$-invariance of $h^{tot}(p)$ follows from the $G$-invariance of the operation of integration of differential forms, i.e. $\int_{Q}s_p^{-\rho+2}$.
	In fact this is further invariant under the larger group of orthogonal transformations preserving the positive cone (this group has two connected components) but it contains the entire automorphism group $\Aut(X)$.

	For the relation between the volume of $S_p$ and $h^{tot}(p)$, it suffices to remember only the measurable section $s_p$.
	Then if we change $s_p$ to $\lambda s_p$, then $\left(\int_Q s_p^{-\rho+2}\right)$ rescales by $\lambda^{-\rho+2}$.
	Similarly, the set $S_p$ changes to $\frac{1}{\lambda}S_p$ and its volume changes by $\frac{1}{\lambda^{-\rho+2}}$.
	Both assignments are $G$-invariant, and clearly given locally on $Q$ by integration, so they must be the same up to some fixed constant that depends on the $G$-invariant measure fixed on $G/MN$.
\end{proof}

\begin{remark}[Relation to Silverman's invariant]
	\label{rmk:relation_to_silverman_s_invariant}
	Recall that in \cite{Silverman_Rational-points-on-K3-surfaces:-a-new-canonical-height}, starting from a hyperbolic automorphism $\gamma$ with expanded/contracted directions $[\alpha_{\pm}]$, and given a point $p\in X(\ov{k})$, Silverman defined the quantity $h^{can}([\alpha_+],p)\cdot h^{can}([\alpha_-],p)$ and verified that it is an invariant associated to the $\gamma$-orbit of $p$.
	His construction corresponds to the critical case $\rho=2$ in the above constructions.
	The space of sections of $\cO(1)\vert_{Q}$ is $2$-dimensional and identified naturally with $\NS(X)$, and it admits a $G$-invariant quadratic form of signature $(1,1)$.
	Silverman's invariant is the quadratic form applied to the section $s_p$.
\end{remark}

% 	End of Subsection: Invariants associated to canonical heights
%%===============================================

%%===============================================
% 	Start of Subsection: Small points and bounded orbits

\subsection{Small points and bounded orbits}
	\label{ssec:small_points_and_bounded_orbits}

\subsubsection{Setup}
	\label{sssec:setup_small_points_and_bounded_orbits}
Suppose that $\gamma\in \Aut(X)$ is a hyperbolic automorphism and $[\alpha_+]$ is the expanded direction.
Silverman in \cite{Silverman_Rational-points-on-K3-surfaces:-a-new-canonical-height} proved that a point $p\in X\left(\ov k\right)$ satisfies $h^{can}([\alpha_+],p)=0$ if and only if $p$ is periodic for $\gamma$ (and hence also $h^{can}([\alpha_-],p)=0$ if $[\alpha_-]$ denotes the contracted direction).
We will now develop some analogous corollaries, both for the irrational boundary points, and for the ones contained in the rational directions associated to elliptic fibrations.

\subsubsection{The irrational boundary points}
	\label{sssec:the_irrational_boundary_points}
Suppose that $[\alpha]\in \partial^\circ\Amp_c(X)$ is an irrational direction.
Using the fixed generating set in \autoref{sssec:fixed_finite_set_of_generators}, there is therefore a sequence of elements $\gamma_1,\gamma_2,\ldots$ among them such that the appropriately normalized basepoint ample class $[L]$ converges to $[\alpha]$ under the iterations $\gamma_1^*\cdots \gamma_n^*[L]$.
To a point $p\in X(\ov{k})$ we can associate the sequence of points $p_n:=\gamma_n\cdots \gamma_1 p$.
Observe that this sequence is, up to finite ambiguity, independent of the choice of generators.
Namely, if we picked a different set of generators $\gamma_{\bullet}'$, considered the new sequence $\gamma_i'$ associated to $[\alpha]$, and hence sequence of points $p_n':=\gamma_n'\cdots \gamma_1'p$, then there exists a \emph{finite set} of automorphisms $\beta_1,\ldots,\beta_K$, depending only on the two fixed sets of generators, such that $p_n'=\beta_{\psi(n)}p_{\phi(n)},\forall n$ for appropriate $\phi(n),\psi(n)$.
Indeed this already holds at the level of group elements, i.e. $\gamma_n'\cdots \gamma_1' = \beta_{\psi(n)}\cdot \gamma_{\phi(n)}\cdots \gamma_1$.

\begin{theorem}[Zero height implies bounded orbit]
	\label{thm:zero_height_implies_bounded_orbit_irrational_case}
	Suppose that $[\alpha]\in \partial^\circ\Amp_c(X)$ is an irrational class and $p\in X\left(\ov k\right)$ is a point.
	Then the canonical height satisfies $h^{can}([\alpha],p)=0$ if and only if the set of points $\{\gamma_n\cdots \gamma_1p\}_{n\geq 1}$ is finite, where $\gamma_i$ is the sequence of generators associated to $[\alpha]$ by \autoref{sssec:fixed_finite_set_of_generators}.
\end{theorem}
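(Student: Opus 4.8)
The plan is to exploit the defining property of $h^{can}$ from \autoref{thm:canonical_heights_on_the_boundary}\ref{hcan_def_prop}, namely that
\[
	h^{can}([\alpha],p) = \lim_{i\to\infty} \frac{1}{M(L_i)}\, h_L(\gamma_i\cdots\gamma_1 p),
\]
where $L_i = \gamma_1^*\cdots\gamma_i^* L$ and $M(L_i)\to\infty$. One direction is nearly immediate: if $\{p_n := \gamma_n\cdots\gamma_1 p\}_{n\geq 1}$ is finite, then $h_L(p_n)$ takes only finitely many values, so it is bounded, and dividing by $M(L_n)\to +\infty$ forces the limit to be $0$; hence $h^{can}([\alpha],p)=0$. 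So the content is in the converse: assuming $h^{can}([\alpha],p)=0$, we must show the orbit $\{p_n\}$ is finite.

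**The converse direction.** First I would reduce to a statement about heights staying bounded. The key observation is that $h^{can}$ is $\Aut(X)$-equivariant (property \ref{hcan_is_equivariant}), so $h^{can}(\gamma_n\cdots\gamma_1[\alpha],\, p_n) = h^{can}([\alpha],p) = 0$ for all $n$. The direction $\gamma_n\cdots\gamma_1[\alpha]$ is again irrational, but more importantly one can use the \emph{shifted} sequences: the tail $\gamma_{n+1},\gamma_{n+2},\ldots$ is exactly the sequence of generators associated (via \autoref{sssec:fixed_finite_set_of_generators}) to the shifted boundary point $\gamma_n\cdots\gamma_1[\alpha]$. Running property \ref{hcan_def_prop} for that shifted point and basepoint $p_n$ gives $h^{can}(\gamma_n\cdots\gamma_1[\alpha],p_n) = \lim_{m\to\infty}\frac{1}{M(\gamma_{n+1}^*\cdots\gamma_{n+m}^* L)} h_L(\gamma_{n+m}\cdots\gamma_{n+1}p_n) = 0$. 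Combining this with the boundedness estimates behind \autoref{prop:boundedness_of_potentials} (in its height incarnation used in the proof of \autoref{thm:canonical_heights_on_the_boundary}), one extracts that $h_L(p_n)$ is bounded \emph{uniformly in $n$}: the non-canonical potential $\phi(h_L^{lin})(p_n)$ is controlled by a geometric series in the displacements $\lambda_i$, and the leading cohomological term is $M(L_n)\cdot h^{can}([\alpha],p) = 0$, so what remains is $O(1)$. Thus $\sup_n h_L(p_n) < \infty$.

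**From bounded heights to finiteness.** Once $\{h_L(p_n)\}$ is bounded, I would invoke Northcott's theorem: the points $p_n$ all lie in $X(\ov k)$, but a priori over extensions of growing degree, so a naive Northcott does not apply. However, the $p_n$ are all $\Aut(X)$-translates of the fixed point $p\in X(\ov k)$, and $\Aut(X)$ acts by morphisms defined over $k$ (after a fixed finite extension, as in \autoref{sssec:standing_assumptions_elliptic_heights}); hence all $p_n$ are defined over the fixed number field $k(p)$, a single finite extension of $k$. With bounded degree and bounded height with respect to the ample bundle $L$, Northcott gives that $\{p_n\}_{n\geq 1}$ is a finite set, which is exactly the claim. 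Finally, I would remark — using the ``up to finite ambiguity'' discussion in \autoref{sssec:the_irrational_boundary_points} — that finiteness of the orbit does not depend on the chosen generating set, so the statement is well-posed.

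**Main obstacle.** The step I expect to require the most care is establishing the \emph{uniform} bound $\sup_n h_L(p_n)<\infty$ from $h^{can}([\alpha],p)=0$. The difficulty is that the convergence $\frac{1}{M(L_n)}h_L(p_n)\to 0$ only says $h_L(p_n) = o(M(L_n))$, which by itself permits $h_L(p_n)\to\infty$. One genuinely needs the finer telescoping identity (the height analogue of \autoref{eqn:gamma1n_omega}, used in the proof of \autoref{thm:canonical_heights_on_the_boundary}) together with the displacement estimate \autoref{prop:displacement_estimate} to see that the $O(1)$ remainder terms sum to something bounded independently of $n$, and then that the quadratic/linear terms all carry the factor $h^{can}([\alpha],p)=0$. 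Making this bookkeeping precise — in particular checking that every term that is not multiplied by $h^{can}([\alpha],p)$ contributes a convergent geometric series in the $\lambda_i$ — is the technical heart of the argument.
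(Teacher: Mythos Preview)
Your easy direction and the Northcott endgame are fine. The gap is in the step you flag as the main obstacle: getting a \emph{uniform} bound $\sup_n h_L(p_n)<\infty$ from $h^{can}([\alpha],p)=0$. The telescoping expansion you invoke (the height analogue of \autoref{eqn:gamma1n_omega}) gives
\[
h_L(p_n)=h^{lin}_{L_n}(p)+\sum_{i=1}^n\big(\gamma_1^*\cdots\gamma_{i-1}^*\,\phi(\gamma_i^* h^{lin}_{\gamma_{i+1}^*\cdots\gamma_n^*L})\big)(p),
\]
and the bound on the $i$-th summand is $\leqapprox \norm{\gamma_i}_{op}\cdot M(\gamma_{i+1}^*\cdots\gamma_n^*L)\approx e^{\lambda_i+\cdots+\lambda_n}$, not $e^{-(\lambda_1+\cdots+\lambda_{i-1})}$. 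Summed, this is $O(M(L_n))$, which is exactly the content of \autoref{prop:boundedness_of_potentials}; it does \emph{not} sum to $O(1)$. Likewise the ``leading cohomological term'' $h^{lin}_{L_n}(p)$ is not $M(L_n)\cdot h^{can}([\alpha],p)$: it is just the fixed reference height for the class $L_n$ evaluated at $p$, of size $O(M(L_n))$ with no factor of $h^{can}([\alpha],p)$ to kill it. So nothing in your bookkeeping forces the total to be bounded, and the Cauchy estimate only yields $h_L(p_n)=O\big(M(L_n)\cdot n e^{-\delta n}\big)$, which can blow up since $M(L_n)\approx e^{\lambda_1+\cdots+\lambda_n}$ and the $\lambda_i$ can be arbitrarily large during cusp excursions.

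The paper's argument avoids this entirely by a different mechanism. For each $n$ it decomposes the ample class as $[L]=[\alpha_{+,n}]+[\alpha_{-,n}]$ into two isotropic pieces, chosen so that $g_n[\alpha_{+,n}]$ is proportional to $[\alpha]$ (where $g_n=\gamma_1^*\cdots\gamma_n^*$). By equivariance $h^{can}([\alpha_{+,n}],p_n)=c_n\cdot h^{can}([\alpha],p)=0$, and a hyperbolic-geometry estimate shows $g_n[\alpha_{-,n}]$ stays in a \emph{bounded} region of $\NS(X)_\bR$, so $h^{can}([\alpha_{-,n}],p_n)=h^{can}(g_n[\alpha_{-,n}],p)$ is bounded independently of $n$. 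Since $h^{can}_{[\alpha_{+,n}]}+h^{can}_{[\alpha_{-,n}]}$ is within $O(1)$ of $h_L$ uniformly in $n$ (the height analogue of \autoref{prop:boundedness_of_potentials}), this gives the uniform bound on $h_L(p_n)$. The missing idea in your approach is precisely this null-vector decomposition of $[L]$ adapted to the direction $[\alpha]$; without it, the telescoping alone cannot separate the contribution of $h^{can}([\alpha],p)$ from the rest.
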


\begin{proof}
	Fix an ample class $[L]$ with $[L]^2=1$ and a height function $h_{[L]}$ in this class.
	Then the converse direction is immediate: if the set of points is finite, then the set of heights $h_{[L]}(p_n)$ is finite, so the limit of $\frac{1}{M(\gamma_1^*\cdots \gamma_n^*[L])}h_L(p_n)$, which defines the canonical height, is clearly zero.
	To show that the vanishing of the height implies the set of points is finite, we will show that the set of heights $h_{[L]}(p_n)$ is bounded, with a bound independent of $n$, so the result will follow by the Northcott property of heights associated to ample line bundles.

	\noindent \textbf{$[L]$-normalized representatives.}
	Suppose that $[\alpha_+]$ satisfies $[\alpha_+]^2=0$ and $[\alpha_+].[L]=\tfrac 12$; call such classes \emph{$[L]$-normalized isotropic}.
	Then there exists a unique $[L]$-normalized and isotropic class $[\alpha_-]$, which we will also denote by $op_{[L]}([\alpha_+])$ such that $[L]=[\alpha_+]+[\alpha_-]$.
	We can view $[\alpha_-]$ as the ``opposite'' point on the boundary to $[\alpha_+]$, with respect to the interior basepoint $[L]$.
	We then have the following family of heights associated to $[L]$, namely:
	\[
		h^{can}_{[\alpha_+]} + h^{can}_{op_{[L]}([\alpha_+])} \text{ as }[\alpha_+] \text{ ranges over $[L]$-normalized isotropic classes.}
	\]
	When $[\alpha_+]$ or its opposite land at a rational point, we can take an arbitrary height $h^{can}$ corresponding to that elliptic fibration.
	Then by the analogue for heights of \autoref{prop:boundedness_of_potentials}, it follows that all of these heights are within a uniform constant of the fixed height $h_{[L]}$.
	So it suffices to find for each $p_n$ an $[L]$-normalized isotropic class $[\alpha_{+,n}]$ (with $[\alpha_{-,n}]:=op_{[L]}([\alpha_{+,n}])$) such that $h^{can}([\alpha_{+,n}],p_n)+h^{can}([\alpha_{-,n}],p_n)$ is bounded by a constant independently of $n$.

	\noindent \textbf{Dynamics and choice of classes.}
	Set $g_n:=\gamma_1^*\cdots \gamma_n^*$ viewed as an orthogonal transformation in $\SO(\NS(X))$.
	Then we have, using the equivariance property of canonical heights:
	\begin{multline*}
		h^{can}([\alpha_{+,n}],p_n)+h^{can}([\alpha_{-,n}],p_n)  = \\
		= h^{can}([\alpha_{+,n}],\gamma_n\cdots\gamma_1p)+h^{can}([\alpha_{-,n}],\gamma_n\cdots \gamma_1 p)\\
		= h^{can}(g_n[\alpha_{+,n}],p)+h^{can}(g_n[\alpha_{-,n}],p)
	\end{multline*}
	Let us now choose $[\alpha_{+,n}]$ such that $g_n[\alpha_{+,n}]$ is proportional to the class $[\alpha]$ for which we know that $h^{can}([\alpha];p)=0$.
	Note that the constant of proportionality is going to be large (increasing with $n$), but the vanishing of the height still holds.
	The boundedness of heights will then follow if we show that $g_n[\alpha_{-,n}]$ is a vector whose norm (relative the fixed basepoint $[L]$) is bounded independently of $n$.
	The uniform boundedness of $g_n[\alpha_{-,n}]$ and ampleness of $[L]$ will imply $h_{L}\geqapprox h_{g_n[\alpha_{+,n}]}^{can}+O(1)$ and the result will follow, since $h(L;p)$ is a fixed constant.

	\noindent \textbf{Estimating the opposite vector.}
	The above argument has reduced the claim to an estimate in linear algebra.
	Specifically, we have that
	\begin{align}
	\label{eqn:L_gnL}
	\begin{split}
		[L] & = [\alpha_{+,n}] + [\alpha_{-,n}]\\
		g_n[L] & = e^{A_n}[\alpha] + g_n[\alpha_{-,n}] \quad \text{ so }g_n[\alpha_{n,+}]=e^{A_n}[\alpha]
	\end{split}
	\end{align}
	by choice of classes $[\alpha_{\pm,n}]$.
	The construction of the group element $g_n$ is such that, in hyperbolic space, the point $g_n[L]$ is a uniformly bounded distance away from the geodesic connecting $[L]$ with the boundary point determined by $[\alpha]$.
	Let us call $[\alpha_+]:=[\alpha]$ and $[\alpha_-]:=op_{[L]}([\alpha])$.
	Then the last geometric statement in hyperbolic space translates as follows.

	Let $m_n:=\norm{g_n}$, for the norm relative to the fixed basepoint $[L]$.
	Let $a_n\in \SO(\NS(X))$ be the element which acts by $[\alpha_\pm]\mapsto e^{\pm m_n}[\alpha_{\pm}]$ and the identity on the orthogonal complement.
	Then the property that $g_n[L]$ is a bounded distance from the geodesic connecting $[L]$ and $[\alpha_+]$ is equivalent to the existence of $h_n\in \SO(\NS(X))$, of uniformly bounded norm, such that
	\[
		g_n = a_n h_n \quad \text{ as elements of }\SO(\NS(X)).
	\]
	Indeed, there exists an $h_n'$ of uniformly bounded size such that $g_nh_n'[L]=a_n[L]$ for the action on hyperbolic space (this is the statement about uniform distance to the geodesic) and so $a_n^{-1}g_nh_n'$ stabilizes $[L]$, hence must be in the corresponding compact subgroup:
	\[
		a_n^{-1} g_n h_n' = k_n \in \Stab_{[L]}(\SO(\NS(X)))
	\]
	Thus $g_n = a_n k_n \left(h_n'\right)^{-1}$ as claimed.

	Consider now the action of $h_n$ on $[L]=[\alpha_+]+[\alpha_-]$:
	\[
		h_n[L] = c_n[\alpha_+] + d_n[\alpha_-] + v_n
	\]
	where, by boundedness of $h_n$, we see that $c_n, d_n$ and $\norm{v_n}$ are uniformly bounded above, and $v_n$ is orthogonal to the span of $[\alpha_{\pm}]$.
	It follows that:
	\[
		g_n[L]=a_n h_n[L] = c_ne^{m_n}[\alpha_+] + d_n e^{-m_n}[\alpha_-]+v_n
	\]
	so it must be that $g_n[\alpha_{-,n}] = d_n e^{-m_{n}}[\alpha_-]+v_n$, which is of bounded norm, as required.
\end{proof}

\subsubsection{Heights along elliptic fibrations}
	\label{sssec:heights_along_elliptic_fibrations}
Let now $[E]$ be the class of a fiber in a genus one fibration $X\xrightarrow{\pi}B$ and $p\in X\left(\ov k\right)$.
Set also $V({[E]}):=[E]^{\perp}/[E]$ viewed as a real vector space, with a $\bZ$-lattice inside, which we can identify with a finite-index subgroup of $\Aut_\pi(X)$ as we now do.
By \autoref{thm:height_valued_pairing}, we have a quadratic form:
\[
	h^{vcan}(-,p)\colon V([E])\to \bR
\]
which only depends on the projection $b:=\pi(p)\in B$, so we will write it as $h^{vcan}(-,b)$.
In fact, if $\Jac(X)\to B$ denotes the Jacobian fibration associated to the original genus one fibration, there is a natural specialization map $\Aut_{\pi}^{\circ}(X)\to \Jac(X)_{b}(k)$ (recall that we assume all automorphisms are already defined over $k$) and the above quadratic form is just the canonical height on the elliptic curve $\Jac(X)_{b}$ restricted to the image of $\Aut_{\pi}^{\circ}(X)$.
The next result collects several known facts, due to Silverman and Tate \cite{Tate,Silverman1994_Variation-of-the-canonical-height-in-algebraic-families}.

\begin{theorem}[Kernel of quadratic form and torsion]
	\label{thm:kernel_of_quadratic_form_and_torsion}
	\leavevmode
	\begin{enumerate}
		\item A parabolic automorphism $\gamma\in \Aut_\pi^{\circ}(X)\toisom V([E])$ has $h^{vcan}(\xi(\gamma);b)=0$ if and only if it is of finite order when restricted to the fiber $X_b$.
		\item The kernel of the quadratic form $h^{vcan}(-;b)$ is a rational subspace inside $V([E])$.
		\item \label{item:kernel_rank1_uniformity}
		If $B\isom \bP^1$ then the kernel of the quadratic form can have dimension $\geq 1$ for only finitely many values of $b\in X(k)$.
	\end{enumerate}
\end{theorem}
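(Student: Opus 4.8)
The plan is to prove all three statements by transporting everything to the Jacobian elliptic surface $\Jac(X)\to B$ and invoking the arithmetic of Néron--Tate heights, together with Silverman's specialization theorem for part (iii). Concretely, write $\sigma_b\colon \Aut_\pi^\circ(X)\to \Jac(X)_b(\ov k)$ for the specialization homomorphism (well defined once $\Aut_\pi^\circ(X)$ is identified with the lattice in $V([E])=[E]^\perp/[E]$, as in the setup preceding the theorem), so that by construction $h^{vcan}(\xi(\gamma);b)=\hat h_{\Jac(X)_b}(\sigma_b(\gamma))$, the Néron--Tate canonical height of the specialized point. Part (i) is then immediate: the Néron--Tate height of a point vanishes exactly when that point is torsion, and a point of $\Jac(X)_b$ is torsion precisely when the corresponding translation of $X_b$ has finite order.

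For part (ii) I would argue as follows. Recall that the Néron--Tate height extends to a positive definite quadratic form on $(\Jac(X)_b(k)/\mathrm{tors})\otimes_\bZ\bR$. Let $L_b\subset \Aut_\pi^\circ(X)$ be the subgroup of $\gamma$ for which $\sigma_b(\gamma)$ is torsion, and let $\psi_b\colon V([E])_\bR\to (\Jac(X)_b(k)/\mathrm{tors})\otimes\bR$ be the $\bR$-linear extension of $\sigma_b$; since $\bR$ is flat over $\bZ$, one has $\ker\psi_b=L_b\otimes_\bZ\bR$. Because $h^{vcan}(v;b)$ equals the square of the Néron--Tate norm of $\psi_b(v)$, and that norm is positive definite, the kernel of the quadratic form $h^{vcan}(-;b)$ is exactly $\ker\psi_b=L_b\otimes\bR$, which is a rational subspace of $V([E])$ as $L_b$ is a subgroup of the defining lattice.

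For part (iii), the description just obtained shows that $h^{vcan}(-;b)$ has kernel of dimension $\geq 1$ if and only if $L_b\neq 0$, i.e.\ if and only if the specialization map $\sigma_b$ fails to be injective on the finitely generated torsion-free group $\Aut_\pi^\circ(X)$ (torsion-freeness having been arranged at the outset). The plan is then to apply Silverman's specialization theorem \cite{Silverman1994_Variation-of-the-canonical-height-in-algebraic-families}: since $\Jac(X)\to B$ is a non-isotrivial elliptic surface with $B\isom\bP^1$, the specialization map restricted to the finitely generated subgroup $\sigma(\Aut_\pi^\circ(X))\subset \Jac(X)(k(B))$ is injective for every $b\in B(k)$ outside a set of bounded height, and a set of bounded height in $\bP^1(k)$ is finite by Northcott's property. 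This yields exactly the assertion, with the exceptional $b$ being among this finite set.

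This is largely an assembly of known results (indeed the theorem is attributed to Silverman and Tate, see also \cite{Tate}), so I do not expect a serious obstacle; the two points needing care are: (a) that one must use the quantitative, bounded-height form of Silverman's theorem for the exceptional locus, not merely the asymptotic statement $\hat h_{\Jac(X)_b}(\sigma_b P)/h_B(b)\to \hat h_{k(B)}(P)$ — it is precisely the passage ``bounded height $\Rightarrow$ finite'' via Northcott on $\bP^1(k)$ that makes (iii) work; and (b) verifying non-isotriviality of the fibration, which under the standing assumptions of \autoref{sssec:standing_assumptions_main} follows from the classification of singular fibers (a type $I_1$ fiber forces a pole of $j$, and the remaining purely type $II$ case with $j$ constant would produce either reducible fibers or a $(-2)$-curve, contradicting the hypotheses), and which is in any case imposed in the setup of \autoref{sssec:standing_assumptions_elliptic_heights} under which this part of the paper operates.
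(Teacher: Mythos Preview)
Your proposal is correct and follows essentially the same route as the paper: both reduce to the Jacobian fibration and the Néron--Tate height for (i) and (ii), and for (iii) both appeal to the Silverman--Tate theory and Northcott on $\bP^1(k)$. The only difference is presentational: for (iii) the paper argues directly from the height-valued pairing already built, observing that the functions $h^{vcan}(v;-)$ for $v$ on the unit sphere of $V([E])$ are all height functions for degree-one line bundles on $B$ and hence satisfy $h^{vcan}(v;b)+C\geq h_{L_B}(b)$ with $C$ independent of both $b$ and $v$, so any zero forces $b$ to have bounded height; you instead black-box this into a citation of Silverman's specialization theorem, which encodes the same uniformity.
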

\begin{proof}
	Part (i) is immediate from the classical properties of canonical heights on the elliptic curve $\Jac(X)_b$, and part (ii) similarly identifies the kernel of the quadratic form with the kernel of the specialization map $\Aut_\pi^\circ(X)\to \Jac(X)_b(k)$.

	Part (iii) follows from the results of Silverman and Tate, see \autoref{ssec:the_pairing_from_the_variation_of_canonical_height}.
	Indeed, the height functions $h^{vcan}(v;-)\colon B\left(\ov{k}\right)\to \bR$, as $v$ varies in $V([E])$ subject to the normalization $[v]^2=-1$, are within uniform constants of each other, and in particular for a fixed height $h_{L_B}$ on $\cO_{B}(1)$, we will have that $h^{vcan}(v;b)+C\geq h_{L_B}(b)$ for a constant $C$ independent of $b$.
	Therefore, the set of points $b\in B\left(\ov k\right)$ where $h^{vcan}(v;b)$ vanishes \emph{for some $v\in V([E])\setminus 0$} is of bounded height, and hence there are only finitely many such in $B(k)$.
\end{proof}

\begin{remark}[On the results of Masser and Zannier]
	\label{rmk:on_the_results_of_masser_and_zannier}
	Based on the results of Masser--Zannier \cite{MasserZannier2012_Torsion-points-on-families-of-squares-of-elliptic-curves,MasserZannier2008_Torsion-anomalous-points-and-families-of-elliptic-curves}, it seems plausible also that the kernel of the quadratic form can have dimension $\geq 2$ for only finitely many values of $b\in X\left(\ov k\right)$.
	Indeed, what they proved is that any specific $2$-dimensional subspace inside $V([E])$ can be in the kernel at only finitely many $b\in B\left(\ov k\right)$.

	Note also that the result in \ref{item:kernel_rank1_uniformity} of \autoref{thm:kernel_of_quadratic_form_and_torsion} above is frequently only stated for a single section or automorphism of the elliptic fibration, but the proof of the slightly stronger statement, allowing for any linear combinations, is analogous.
\end{remark}

% 	End of Subsection: Small points and bounded orbits
%%===============================================

%%%%%%%%%%%%%%%%%%%%%%%%%%%%%%%%%%%%%%%%%%%%%%%%%%%%%%%%%%%%%%%%%%%%%%%%%%%%%%%
%%% 				End of Section: Applications of heights
%%%%%%%%%%%%%%%%%%%%%%%%%%%%%%%%%%%%%%%%%%%%%%%%%%%%%%%%%%%%%%%%%%%%%%%%%%%%%%%

%%%%%%%%%%%%%%%%%%%%%%%%%%%%%%%%%%%%%%%%%%%%%%%%%%%%%%%%%%%%%%%%%%%%%%%%%%%%%%%
%%%%%%%%%%%%%%%%%%%%%%%%%%%%%%%%%%%%%%%%%%%%%%%%%%%%%%%%%%%%%%%%%%%%%%%%%%%%%%%
%						End of Main Article
%%%%%%%%%%%%%%%%%%%%%%%%%%%%%%%%%%%%%%%%%%%%%%%%%%%%%%%%%%%%%%%%%%%%%%%%%%%%%%%
%%%%%%%%%%%%%%%%%%%%%%%%%%%%%%%%%%%%%%%%%%%%%%%%%%%%%%%%%%%%%%%%%%%%%%%%%%%%%%%

%%%%%%%%%%%%%%%%%%%%%%%%%%%%%%%%%%%%%%%%%%%%%%%%%%%%%%%%%%%%%%%%%%%%%%%%%%%%%%%
%						Appendix
%%%%%%%%%%%%%%%%%%%%%%%%%%%%%%%%%%%%%%%%%%%%%%%%%%%%%%%%%%%%%%%%%%%%%%%%%%%%%%%

% \appendix

%%%%%%%%%%%%%%%%%%%%%%%%%%%%%%%%%%%%%%%%%%%%%%%%%%%%%%%%%%%%%%%%%%%%%%%%%%%%%%%
%						End of Appendix
%%%%%%%%%%%%%%%%%%%%%%%%%%%%%%%%%%%%%%%%%%%%%%%%%%%%%%%%%%%%%%%%%%%%%%%%%%%%%%%

%%%%%%%%%%%%%%%%%%%%%%%%%%%%%%%%%%%%%%%%%%%%%%%%%%%%%%%%%%%%%%%%%%%%%%%%%%%%%%%
%						Bibliography
%%%%%%%%%%%%%%%%%%%%%%%%%%%%%%%%%%%%%%%%%%%%%%%%%%%%%%%%%%%%%%%%%%%%%%%%%%%%%%%
\bibliographystyle{sfilip_bibstyle}
\bibliography{canonical_currents_heights}

\providecommand{\bysame}{\leavevmode ---\ }
\providecommand{\andchar}{\&}
\begin{thebibliography}{DNGG22}

\bibitem[ABR01]{ABR}
\textsc{Axler~S., Bourdon~P.,{ \andchar\hskip 0.5em}Ramey~W.} --- {\em Harmonic
  function theory}, vol.~137 of {\em Graduate Texts in Mathematics}.
  Springer-Verlag, New York --- second~ed., 2001.
  \url{https://doi.org/10.1007/978-1-4757-8137-3}.

\bibitem[ACZ20]{AndreCorvajaZannier2020_The-Betti-map-associated-to-a-section-of-an-abelian-scheme}
\textsc{Andr\'{e}~Y., Corvaja~P.,{ \andchar\hskip 0.5em}Zannier~U.} --- {``The
  {B}etti map associated to a section of an abelian scheme''}. {\em Invent.
  Math.} {\bfseries 222} no.~1, (2020) 161--202.
  \url{https://doi.org/10.1007/s00222-020-00963-w}.

\bibitem[Art66]{Artin1966_On-isolated-rational-singularities-of-surfaces}
\textsc{Artin~M.} --- {``On isolated rational singularities of surfaces''}.
  {\em Amer. J. Math.} {\bfseries 88} (1966) 129--136.
  \url{https://doi.org/10.2307/2373050}.

\bibitem[Bar96]{Baragar1996_Rational-points-on-K3-surfaces-in-bf-P1bf-P1bf-P1}
\textsc{Baragar~A.} --- {``Rational points on {$K3$} surfaces in {${\bf
  P}^1\times{\bf P}^1\times{\bf P}^1$}''}. {\em Math. Ann.} {\bfseries 305}
  no.~3, (1996) 541--558. \url{https://doi.org/10.1007/BF01444236}.

\bibitem[BFLM11]{BourgainFurmanLindenstrauss2011_Stationary-measures-and-equidistribution-for-orbits-of-nonabelian-semigroups-on-the-torus}
\textsc{Bourgain~J., Furman~A., Lindenstrauss~E.,{ \andchar\hskip
  0.5em}Mozes~S.} --- {``Stationary measures and equidistribution for orbits of
  nonabelian semigroups on the torus''}. {\em J. Amer. Math. Soc.} {\bfseries
  24} no.~1, (2011) 231--280.
  \url{https://doi.org/10.1090/S0894-0347-2010-00674-1}.

\bibitem[BH99]{BridsonHaefliger1999_Metric-spaces-of-non-positive-curvature}
\textsc{Bridson~M.~R.{ \andchar\hskip 0.5em}Haefliger~A.} --- {\em Metric
  spaces of non-positive curvature}, vol.~319 of {\em Grundlehren der
  Mathematischen Wissenschaften [Fundamental Principles of Mathematical
  Sciences]}. Springer-Verlag, Berlin --- 1999.
  \url{https://doi.org/10.1007/978-3-662-12494-9}.

\bibitem[BHPV04]{BHPV}
\textsc{Barth~W.~P., Hulek~K., Peters~C. A.~M.,{ \andchar\hskip 0.5em}{Van de
  Ven}~A.} --- {\em {Compact complex surfaces}}, vol.~4 of {\em {Ergebnisse der
  Mathematik und ihrer Grenzgebiete. 3. Folge.}} Springer-Verlag, Berlin ---
  second~ed., 2004. \url{http://dx.doi.org/10.1007/978-3-642-57739-0}.

\bibitem[BK07]{BK}
\textsc{B{\l}ocki~Z.{ \andchar\hskip 0.5em}Ko{\l}odziej~S.} --- {``On
  regularization of plurisubharmonic functions on manifolds''}. {\em Proc.
  Amer. Math. Soc.} {\bfseries 135} no.~7, (2007) 2089--2093.
  \url{https://doi.org/10.1090/S0002-9939-07-08858-2}.

\bibitem[BQ11]{BenoistQuint_Mesures-stationnaires-et-fermes-invariants-des-espaces-homogenes_annals}
\textsc{Benoist~Y.{ \andchar\hskip 0.5em}Quint~J.-F.} --- {``Mesures
  stationnaires et ferm\'{e}s invariants des espaces homog\`enes''}. {\em Ann.
  of Math. (2)} {\bfseries 174} no.~2, (2011) 1111--1162.
  \url{https://doi.org/10.4007/annals.2011.174.2.8}.

\bibitem[BRH17]{BrownRodriguez-Hertz_Measure-rigidity-for-random-dynamics-on-surfaces-and-related}
\textsc{Brown~A.{ \andchar\hskip 0.5em}Rodriguez~Hertz~F.} --- {``Measure
  rigidity for random dynamics on surfaces and related skew products''}. {\em
  J. Amer. Math. Soc.} {\bfseries 30} no.~4, (2017) 1055--1132.
  \url{https://doi.org/10.1090/jams/877}.

\bibitem[BvL07]{BaragarLuijk2007_K3-surfaces-with-Picard-number-three-and-canonical-vector}
\textsc{Baragar~A.{ \andchar\hskip 0.5em}van Luijk~R.} --- {``{$K3$} surfaces
  with {P}icard number three and canonical vector heights''}. {\em Math. Comp.}
  {\bfseries 76} no.~259, (2007) 1493--1498.
  \url{https://doi.org/10.1090/S0025-5718-07-01962-X}.

\bibitem[Can01]{Cantat_Dynamique-des-automorphismes-des-surfaces-K3}
\textsc{Cantat~S.} --- {``Dynamique des automorphismes des surfaces {$K3$}''}.
  {\em Acta Math.} {\bfseries 187} no.~1, (2001) 1--57.
  \url{https://doi.org/10.1007/BF02392831}.

\bibitem[CD12]{Cantat_Dolgachev}
\textsc{Cantat~S.{ \andchar\hskip 0.5em}Dolgachev~I.} --- {``Rational surfaces
  with a large group of automorphisms''}. {\em J. Amer. Math. Soc.} {\bfseries
  25} no.~3, (2012) 863--905.
  \url{https://doi.org/10.1090/S0894-0347-2012-00732-2}.

\bibitem[CD20a]{CantatDujardin2020_Finite-orbits-for-large-groups-of-automorphisms-of-projective-surfaces}
\textsc{Cantat~S.{ \andchar\hskip 0.5em}Dujardin~R.} --- {``Finite orbits for
  large groups of automorphisms of projective surfaces''}.
  \url{https://arxiv.org/abs/2012.01762}.

\bibitem[CD20b]{CantatDujardin2020_Stationary-measures-on-complex-surfaces}
\bysame , {``Random dynamics on real and complex projective surfaces''}.
  \url{https://arxiv.org/abs/2006.04394}.

\bibitem[CD20c]{CD}
\textsc{Cantat~S.{ \andchar\hskip 0.5em}Dupont~C.} --- {``Automorphisms of
  surfaces: {K}ummer rigidity and measure of maximal entropy''}. {\em J. Eur.
  Math. Soc. (JEMS)} {\bfseries 22} no.~4, (2020) 1289--1351.
  \url{https://doi.org/10.4171/JEMS/946}.

\bibitem[CFKP97]{CannonFloydKenyon1997_Hyperbolic-geometry}
\textsc{Cannon~J.~W., Floyd~W.~J., Kenyon~R.,{ \andchar\hskip
  0.5em}Parry~W.~R.} --- {``Hyperbolic geometry''}. in {\em Flavors of
  geometry} --- vol.~31 of {\em Math. Sci. Res. Inst. Publ.}, pp.~59--115.
  Cambridge Univ. Press, Cambridge --- 1997.
  \url{http://library.msri.org/books/Book31/files/cannon.pdf}.

\bibitem[CL81]{CL}
\textsc{Cheng~S.~Y.{ \andchar\hskip 0.5em}Li~P.} --- {``Heat kernel estimates
  and lower bound of eigenvalues''}. {\em Comment. Math. Helv.} {\bfseries 56}
  no.~3, (1981) 327--338. \url{https://doi.org/10.1007/BF02566216}.

\bibitem[CY82]{ChYa}
\textsc{Cheng~S.~Y.{ \andchar\hskip 0.5em}Yau~S.-T.} --- {``The real
  {M}onge-{A}mp\`ere equation and affine flat structures''}. in {\em
  Proceedings of the 1980 {B}eijing {S}ymposium on {D}ifferential {G}eometry
  and {D}ifferential {E}quations, {V}ol. 1, 2, 3 ({B}eijing, 1980)} ---
  pp.~339--370. Sci. Press Beijing, Beijing --- 1982.

\bibitem[Dan81]{Dani1981_Invariant-measures-and-minimal-sets-of-horospherical-flows}
\textsc{Dani~S.~G.} --- {``Invariant measures and minimal sets of horospherical
  flows''}. {\em Invent. Math.} {\bfseries 64} no.~2, (1981) 357--385.
  \url{https://doi.org/10.1007/BF01389173}.

\bibitem[Dan86]{Dani1986_Orbits-of-horospherical-flows}
\bysame , {``Orbits of horospherical flows''}. {\em Duke Math. J.} {\bfseries
  53} no.~1, (1986) 177--188.
  \url{https://doi.org/10.1215/S0012-7094-86-05312-3}.

\bibitem[Dem82]{Dem_L2}
\textsc{Demailly~J.-P.} --- {``Estimations {$L^{2}$} pour l'op\'{e}rateur
  {$\bar \partial $} d'un fibr\'{e} vectoriel holomorphe semi-positif au-dessus
  d'une vari\'{e}t\'{e} k\"{a}hl\'{e}rienne compl\`ete''}. {\em Ann. Sci.
  \'{E}cole Norm. Sup. (4)} {\bfseries 15} no.~3, (1982) 457--511.
  \url{https://doi.org/10.24033/asens.1434}.

\bibitem[Dem85]{Dema_MA}
\bysame , {``Mesures de {M}onge-{A}mp\`ere et caract\'{e}risation
  g\'{e}om\'{e}trique des vari\'{e}t\'{e}s alg\'{e}briques affines''}. {\em
  M\'{e}m. Soc. Math. France (N.S.)} no.~19, (1985) 124.
  \url{https://doi.org/10.24033/msmf.320}.

\bibitem[DMM22]{DeMarcoMyrto-Mavraki2020_Elliptic-surfaces-and-arithmetic-equidistribution-for-R-divisors-on-curves}
\textsc{DeMarco~L.{ \andchar\hskip 0.5em}Myrto~Mavraki~N.} --- {``Elliptic
  surfaces and intersections of adelic $\mathbb{R}$-divisors''}. {\em J. Eur.
  Math. Soc. (JEMS)} (2022) . \url{https://arxiv.org/abs/2012.14529}.

\bibitem[DNGG22]{DGG}
\textsc{Di~Nezza~E., Guedj~V.,{ \andchar\hskip 0.5em}Guenancia~H.} ---
  {``Families of singular {K}\"ahler-{E}instein metrics''}. {\em J. Eur. Math.
  Soc. (JEMS)} (2022) . \url{https://arxiv.org/abs/2003.08178}.

\bibitem[dT04]{Thelin2004_Sur-la-laminarite-de-certains-courants}
\textsc{de~Th\'{e}lin~H.} --- {``Sur la laminarit\'{e} de certains courants''}.
  {\em Ann. Sci. \'{E}cole Norm. Sup. (4)} {\bfseries 37} no.~2, (2004)
  304--311. \url{https://doi.org/10.1016/j.ansens.2003.06.002}.

\bibitem[Duj03]{Dujardin2003_Laminar-currents-in-Bbb-P2}
\textsc{Dujardin~R.} --- {``Laminar currents in {${\mathbb P}^2$}''}. {\em
  Math. Ann.} {\bfseries 325} no.~4, (2003) 745--765.
  \url{https://doi.org/10.1007/s00208-002-0402-9}.

\bibitem[EM18]{EskinMirzakhani_Invariant-and-stationary-measures-for-the-rm-SL2Bbb-R-action-on-moduli-space}
\textsc{Eskin~A.{ \andchar\hskip 0.5em}Mirzakhani~M.} --- {``Invariant and
  stationary measures for the {${\rm SL}(2,\mathbb{R})$} action on moduli
  space''}. {\em Publ. Math. Inst. Hautes \'{E}tudes Sci.} {\bfseries 127}
  (2018) 95--324. \url{https://doi.org/10.1007/s10240-018-0099-2}.

\bibitem[EMM15]{EskinMirzakhaniMohammadi_Isolation-equidistribution-and-orbit-closures-for-the-rm-SL2Bbb-R-action-on-moduli}
\textsc{Eskin~A., Mirzakhani~M.,{ \andchar\hskip 0.5em}Mohammadi~A.} ---
  {``Isolation, equidistribution, and orbit closures for the {${\rm
  SL}(2,\mathbb{R})$} action on moduli space''}. {\em Ann. of Math. (2)}
  {\bfseries 182} no.~2, (2015) 673--721.
  \url{https://doi.org/10.4007/annals.2015.182.2.7}.

\bibitem[FGS20]{FGS}
\textsc{Fu~X., Guo~B.,{ \andchar\hskip 0.5em}Song~J.} --- {``Geometric
  estimates for complex {M}onge-{A}mp\`ere equations''}. {\em J. Reine Angew.
  Math.} {\bfseries 765} (2020) 69--99.
  \url{https://doi.org/10.1515/crelle-2019-0020}.

\bibitem[Fil20]{Filip2020_Counting-special-Lagrangian-fibrations-in-twistor-families-of-K3-surfaces}
\textsc{Filip~S.} --- {``Counting special {L}agrangian fibrations in twistor
  families of {K}3 surfaces''}. {\em Ann. Sci. \'{E}c. Norm. Sup\'{e}r. (4)}
  {\bfseries 53} no.~3, (2020) 713--750.
  \url{https://doi.org/10.24033/asens.2432}.

\bibitem[FT18]{FT}
\textsc{Filip~S.{ \andchar\hskip 0.5em}Tosatti~V.} --- {``Smooth and rough
  positive currents''}. {\em Ann. Inst. Fourier (Grenoble)} {\bfseries 68}
  no.~7, (2018) 2981--2999. \url{https://doi.org/10.5802/aif.3234}.

\bibitem[FT21]{FT2}
\bysame , {``Kummer rigidity for {$K3$} surface automorphisms via {R}icci-flat
  metrics''}. {\em Amer. J. Math.} {\bfseries 143} no.~5, (2021) 1431--1462.
  \url{https://doi.org/10.1353/ajm.2021.0036}.

\bibitem[Fur63]{Furstenberg_Noncommuting-random-products}
\textsc{Furstenberg~H.} --- {``Noncommuting random products''}. {\em Trans.
  Amer. Math. Soc.} {\bfseries 108} (1963) 377--428.
  \url{https://doi.org/10.2307/1993589}.

\bibitem[Fur98]{Furstenberg1998_Stiffness-of-group-actions}
\textsc{Furstenberg~H.} --- {``Stiffness of group actions''}. in {\em Lie
  groups and ergodic theory ({M}umbai, 1996)} --- vol.~14 of {\em Tata Inst.
  Fund. Res. Stud. Math.}, pp.~105--117. Tata Inst. Fund. Res., Bombay ---
  1998.

\bibitem[GMT15]{GadreMaherTiozzo2015_Word-length-statistics-and-Lyapunov-exponents-for-Fuchsian-groups}
\textsc{Gadre~V., Maher~J.,{ \andchar\hskip 0.5em}Tiozzo~G.} --- {``Word length
  statistics and {L}yapunov exponents for {F}uchsian groups with cusps''}. {\em
  New York J. Math.} {\bfseries 21} (2015) 511--531.
  \url{http://nyjm.albany.edu:8000/j/2015/21_511.html}.

\bibitem[Gro03]{Gro}
\textsc{Gromov~M.} --- {``On the entropy of holomorphic maps''}. {\em Enseign.
  Math. (2)} {\bfseries 49} no.~3-4, (2003) 217--235.
  \url{https://doi.org/10.5169/seals-66687}.

\bibitem[GSVY90]{GSVY}
\textsc{Greene~B.~R., Shapere~A., Vafa~C.,{ \andchar\hskip 0.5em}Yau~S.-T.} ---
  {``Stringy cosmic strings and noncompact {C}alabi-{Y}au manifolds''}. {\em
  Nuclear Phys. B} {\bfseries 337} no.~1, (1990) 1--36.
  \url{https://doi.org/10.1016/0550-3213(90)90248-C}.

\bibitem[GTZ13]{GTZ}
\textsc{Gross~M., Tosatti~V.,{ \andchar\hskip 0.5em}Zhang~Y.} --- {``Collapsing
  of abelian fibered {C}alabi-{Y}au manifolds''}. {\em Duke Math. J.}
  {\bfseries 162} no.~3, (2013) 517--551.
  \url{https://doi.org/10.1215/00127094-2019703}.

\bibitem[GTZ16]{GTZ2}
\bysame , {``Gromov-{H}ausdorff collapsing of {C}alabi-{Y}au manifolds''}. {\em
  Comm. Anal. Geom.} {\bfseries 24} no.~1, (2016) 93--113.
  \url{https://doi.org/10.4310/CAG.2016.v24.n1.a4}.

\bibitem[GW79]{GWu}
\textsc{Greene~R.~E.{ \andchar\hskip 0.5em}Wu~H.} --- {``{$C^{\infty }$}
  approximations of convex, subharmonic, and plurisubharmonic functions''}.
  {\em Ann. Sci. \'{E}cole Norm. Sup. (4)} {\bfseries 12} no.~1, (1979) 47--84.
  \url{https://doi.org/10.24033/asens.1361}.

\bibitem[GW00]{GW}
\textsc{Gross~M.{ \andchar\hskip 0.5em}Wilson~P. M.~H.} --- {``Large complex
  structure limits of {$K3$} surfaces''}. {\em J. Differential Geom.}
  {\bfseries 55} no.~3, (2000) 475--546.
  \url{https://doi.org/10.4310/jdg/1090341262}.

\bibitem[H{\"o}r07]{Hormander_Notions-of-convexity}
\textsc{H{\"o}rmander~L.} --- {\em Notions of convexity}. Modern Birkh\"{a}user
  Classics. Birkh\"{a}user Boston, Inc., Boston, MA --- 2007.
  \url{https://doi.org/10.1007/978-0-8176-4585-4}.

\bibitem[HT15]{HT}
\textsc{Hein~H.-J.{ \andchar\hskip 0.5em}Tosatti~V.} --- {``Remarks on the
  collapsing of torus fibered {C}alabi-{Y}au manifolds''}. {\em Bull. Lond.
  Math. Soc.} {\bfseries 47} no.~6, (2015) 1021--1027.
  \url{https://doi.org/10.1112/blms/bdv067}.

\bibitem[Huy16]{Huy}
\textsc{Huybrechts~D.} --- {\em Lectures on {K}3 surfaces}, vol.~158 of {\em
  Cambridge Studies in Advanced Mathematics}. Cambridge University Press,
  Cambridge --- 2016. \url{https://doi.org/10.1017/CBO9781316594193}.

\bibitem[Kaw13]{Kawaguchi2013_Canonical-vector-heights-on-K3-surfaces---a-nonexistence-result}
\textsc{Kawaguchi~S.} --- {``Canonical vector heights on {$K3$} surfaces---a
  nonexistence result''}. {\em Atti Accad. Naz. Lincei Rend. Lincei Mat. Appl.}
  {\bfseries 24} no.~2, (2013) 181--197. \url{https://doi.org/10.4171/RLM/651}.

\bibitem[Kna02]{Knapp2002_Lie-groups-beyond-an-introduction}
\textsc{Knapp~A.~W.} --- {\em Lie groups beyond an introduction}, vol.~140 of
  {\em Progress in Mathematics}. Birkh\"{a}user Boston, Inc., Boston, MA ---
  second~ed., 2002. \url{https://doi.org/10.1007/978-1-4757-2453-0}.

\bibitem[Lip69]{Lipman1969_Rational-singularities-with-applications-to-algebraic-surfaces-and-unique}
\textsc{Lipman~J.} --- {``Rational singularities with applications to algebraic
  surfaces and unique factorization''}. {\em Publications Math\'ematiques de
  l'IH\'ES} {\bfseries 36} (1969) 195--279.
  \url{http://www.numdam.org/item/PMIHES_1969__36__195_0}.

\bibitem[Liu02]{Liu2002_Algebraic-geometry-and-arithmetic-curves}
\textsc{Liu~Q.} --- {\em Algebraic geometry and arithmetic curves}, vol.~6 of
  {\em Oxford Graduate Texts in Mathematics}. Oxford University Press, Oxford
  --- 2002.

\bibitem[Mar87]{Margulis1987_Formes-quadratriques-indefinies-et-flots-unipotents-sur-les-espaces-homogenes}
\textsc{Margulis~G.~A.} --- {``Formes quadratriques ind\'{e}finies et flots
  unipotents sur les espaces homog\`enes''}. {\em C. R. Acad. Sci. Paris
  S\'{e}r. I Math.} {\bfseries 304} no.~10, (1987) 249--253.
  \url{https://gallica.bnf.fr/ark:/12148/bpt6k57445742/f13.item}.

\bibitem[Mir89]{Miranda}
\textsc{Miranda~R.} --- {\em The basic theory of elliptic surfaces}. Dottorato
  di Ricerca in Matematica. [Doctorate in Mathematical Research]. ETS Editrice,
  Pisa --- 1989.
  \url{https://www.math.colostate.edu/~miranda/BTES-Miranda.pdf}.

\bibitem[MZ08]{MasserZannier2008_Torsion-anomalous-points-and-families-of-elliptic-curves}
\textsc{Masser~D.{ \andchar\hskip 0.5em}Zannier~U.} --- {``Torsion anomalous
  points and families of elliptic curves''}. {\em C. R. Math. Acad. Sci. Paris}
  {\bfseries 346} no.~9-10, (2008) 491--494.
  \url{https://doi.org/10.1016/j.crma.2008.03.024}.

\bibitem[MZ12]{MasserZannier2012_Torsion-points-on-families-of-squares-of-elliptic-curves}
\textsc{Masser~D.{ \andchar\hskip 0.5em}Zannier~U.} --- {``Torsion points on
  families of squares of elliptic curves''}. {\em Math. Ann.} {\bfseries 352}
  no.~2, (2012) 453--484. \url{https://doi.org/10.1007/s00208-011-0645-4}.

\bibitem[NZ99]{NevoZimmer1999_Homogenous-projective-factors-for-actions-of-semi-simple-Lie-groups}
\textsc{Nevo~A.{ \andchar\hskip 0.5em}Zimmer~R.~J.} --- {``Homogenous
  projective factors for actions of semi-simple {L}ie groups''}. {\em Invent.
  Math.} {\bfseries 138} no.~2, (1999) 229--252.
  \url{https://doi.org/10.1007/s002220050377}.

\bibitem[Rat90]{Ratner_On-measure-rigidity-of-unipotent-subgroups-of-semisimple-groups}
\textsc{Ratner~M.} --- {``On measure rigidity of unipotent subgroups of
  semisimple groups''}. {\em Acta Math.} {\bfseries 165} no.~3-4, (1990)
  229--309. \url{https://doi.org/10.1007/BF02391906}.

\bibitem[Rat91]{Ratner_On-Raghunathans-measure-conjecture}
\bysame , {``On {R}aghunathan's measure conjecture''}. {\em Ann. of Math. (2)}
  {\bfseries 134} no.~3, (1991) 545--607.
  \url{https://doi.org/10.2307/2944357}.

\bibitem[Rua05]{Ruane2005_CAT0-boundaries-of-truncated-hyperbolic-space}
\textsc{Ruane~K.} --- {``C{AT}(0) boundaries of truncated hyperbolic space''}.
  vol.~29 --- pp.~317--331. 2005.
  \url{http://topology.nipissingu.ca/tp/reprints/v29/tp29126.pdf}. Spring
  Topology and Dynamical Systems Conference.

\bibitem[RZ11]{RZ}
\textsc{Ruan~W.-D.{ \andchar\hskip 0.5em}Zhang~Y.} --- {``Convergence of
  {C}alabi-{Y}au manifolds''}. {\em Adv. Math.} {\bfseries 228} no.~3, (2011)
  1543--1589. \url{https://doi.org/10.1016/j.aim.2011.06.023}.

\bibitem[Sat80]{Satake}
\textsc{Satake~I.} --- {\em Algebraic structures of symmetric domains}, vol.~4
  of {\em Kan\^{o} Memorial Lectures}. Iwanami Shoten, Tokyo; Princeton
  University Press, Princeton, N.J. --- 1980.

\bibitem[Ser97]{Serre_Lectures-on-the-Mordell-Weil-theorem}
\textsc{Serre~J.-P.} --- {\em Lectures on the {M}ordell-{W}eil theorem}.
  Aspects of Mathematics. Friedr. Vieweg \& Sohn, Braunschweig --- third~ed.,
  1997. \url{https://doi.org/10.1007/978-3-663-10632-6}.

\bibitem[Sil91]{Silverman_Rational-points-on-K3-surfaces:-a-new-canonical-height}
\textsc{Silverman~J.~H.} --- {``Rational points on {$K3$} surfaces: a new
  canonical height''}. {\em Invent. Math.} {\bfseries 105} no.~2, (1991)
  347--373. \url{https://doi.org/10.1007/BF01232270}.

\bibitem[Sil92]{Silverman1992_Variation-of-the-canonical-height-on-elliptic-surfaces.-I.-Three-examples}
\bysame , {``Variation of the canonical height on elliptic surfaces. {I}.
  {T}hree examples''}. {\em J. Reine Angew. Math.} {\bfseries 426} (1992)
  151--178. \url{https://doi.org/10.1515/crll.1992.426.151}.

\bibitem[Sil94a]{Silverman1994_Advanced-topics-in-the-arithmetic-of-elliptic-curves}
\bysame , {\em Advanced topics in the arithmetic of elliptic curves}, vol.~151
  of {\em Graduate Texts in Mathematics}. Springer-Verlag, New York, 1994.
  \url{https://doi.org/10.1007/978-1-4612-0851-8}.

\bibitem[Sil94b]{Silverman1994_Variation-of-the-canonical-height-in-algebraic-families}
\bysame , {``Variation of the canonical height in algebraic families''}. in
  {\em {$p$}-adic monodromy and the {B}irch and {S}winnerton-{D}yer conjecture
  ({B}oston, {MA}, 1991)}, vol.~165 of {\em Contemp. Math.}, pp.~123--133.
  Amer. Math. Soc., Providence, RI, 1994.
  \url{https://doi.org/10.1090/conm/165/01615}.

\bibitem[Sil94c]{Silverman1994_Variation-of-the-canonical-height-on-elliptic-surfaces.-II.-Local-analyticity}
\bysame , {``Variation of the canonical height on elliptic surfaces. {II}.
  {L}ocal analyticity properties''}. {\em J. Number Theory} {\bfseries 48}
  no.~3, (1994) 291--329. \url{https://doi.org/10.1006/jnth.1994.1069}.

\bibitem[Sil94d]{Silverman1994_Variation-of-the-canonical-height-on-elliptic-surfaces.-III.-Global}
\bysame , {``Variation of the canonical height on elliptic surfaces. {III}.
  {G}lobal boundedness properties''}. {\em J. Number Theory} {\bfseries 48}
  no.~3, (1994) 330--352. \url{https://doi.org/10.1006/jnth.1994.1070}.

\bibitem[Siu87]{Si}
\textsc{Siu~Y.~T.} --- {\em Lectures on {H}ermitian-{E}instein metrics for
  stable bundles and {K}\"{a}hler-{E}instein metrics}, vol.~8 of {\em DMV
  Seminar}. Birkh\"{a}user Verlag, Basel --- 1987.
  \url{https://doi.org/10.1007/978-3-0348-7486-1}.

\bibitem[Spi90]{Spivakovsky1990_Sandwiched-singularities-and-desingularization-of-surfaces-by-normalized-Nash-transformations}
\textsc{Spivakovsky~M.} --- {``Sandwiched singularities and desingularization
  of surfaces by normalized {N}ash transformations''}. {\em Ann. of Math. (2)}
  {\bfseries 131} no.~3, (1990) 411--491.
  \url{https://doi.org/10.2307/1971467}.

\bibitem[Ste85]{Sterk_Finiteness-results-for-algebraic-K3-surfaces}
\textsc{Sterk~H.} --- {``Finiteness results for algebraic {$K3$} surfaces''}.
  {\em Math. Z.} {\bfseries 189} no.~4, (1985) 507--513.
  \url{https://doi.org/10.1007/BF01168156}.

\bibitem[SV]{VerbitskySibony}
\textsc{Sibony~N.{ \andchar\hskip 0.5em}Verbitsky~M.} --- {``In preparation''}.
  \url{http://verbit.ru/MATH/TALKS/Rigid-currents-NYU-2019.pdf}.

\bibitem[Tat83]{Tate}
\textsc{Tate~J.} --- {``Variation of the canonical height of a point depending
  on a parameter''}. {\em Amer. J. Math.} {\bfseries 105} no.~1, (1983)
  287--294. \url{https://doi.org/10.2307/2374389}.

\bibitem[Tos20]{Tos_survey}
\textsc{Tosatti~V.} --- {``Collapsing {C}alabi-{Y}au manifolds''}. in {\em
  Surveys in differential geometry 2018. {D}ifferential geometry,
  {C}alabi-{Y}au theory, and general relativity} --- vol.~23 of {\em Surv.
  Differ. Geom.}, pp.~305--337. Int. Press, Somerville, MA --- 2020.
  \url{https://doi.org/10.4310/SDG.2018.v23.n1.a8}.

\bibitem[TZ20]{TZ}
\textsc{Tosatti~V.{ \andchar\hskip 0.5em}Zhang~Y.} --- {``Collapsing
  {H}yperk\"{a}hler manifolds''}. {\em Ann. Sci. \'{E}c. Norm. Sup\'{e}r. (4)}
  {\bfseries 53} no.~3, (2020) 751--786.
  \url{https://doi.org/10.24033/asens.2433}.

\bibitem[Vin93]{Vinberg1993_Geometry.-II}
\textsc{Vinberg~E.} --- {\em Geometry. {II}}, vol.~29 of {\em Encyclopaedia of
  Mathematical Sciences}. Springer-Verlag, Berlin --- 1993.
  \url{https://doi.org/10.1007/978-3-662-02901-5}.

\bibitem[Wan95]{Wang_Rational-points-and-canonical-heights-on-K3-surfaces-in-bf-P1bf-P1bf}
\textsc{Wang~L.} --- {``Rational points and canonical heights on
  {$K3$}-surfaces in {${\bf P}^1\times{\bf P}^1\times{\bf P}^1$}''}. in {\em
  Recent developments in the inverse {G}alois problem ({S}eattle, {WA}, 1993)}
  --- vol.~186 of {\em Contemp. Math.}, pp.~273--289. Amer. Math. Soc.,
  Providence, RI --- 1995. \url{https://doi.org/10.1090/conm/186/02187}.

\bibitem[Weh88]{Wehler1988_K3-surfaces-with-Picard-number-2}
\textsc{Wehler~J.} --- {``{$K3$}-surfaces with {P}icard number {$2$}''}. {\em
  Arch. Math. (Basel)} {\bfseries 50} no.~1, (1988) 73--82.
  \url{https://doi.org/10.1007/BF01313498}.

\bibitem[Yau78]{Yau_Ricci}
\textsc{Yau~S.~T.} --- {``On the {R}icci curvature of a compact {K}\"{a}hler
  manifold and the complex {M}onge-{A}mp\`ere equation. {I}''}. {\em Comm. Pure
  Appl. Math.} {\bfseries 31} no.~3, (1978) 339--411.
  \url{https://doi.org/10.1002/cpa.3160310304}.

\bibitem[Yom87]{Yom}
\textsc{Yomdin~Y.} --- {``Volume growth and entropy''}. {\em Israel J. Math.}
  {\bfseries 57} no.~3, (1987) 285--300.
  \url{https://doi.org/10.1007/BF02766215}.

\bibitem[Yos97]{Yo}
\textsc{Yoshikawa~K.-i.} --- {``Degeneration of algebraic manifolds and the
  spectrum of {L}aplacian''}. {\em Nagoya Math. J.} {\bfseries 146} (1997)
  83--129. \url{https://doi.org/10.1017/S002776300000622X}.

\bibitem[Zim84]{Zimmer1984_Ergodic-theory-and-semisimple-groups}
\textsc{Zimmer~R.~J.} --- {\em Ergodic theory and semisimple groups}, vol.~81
  of {\em Monographs in Mathematics}. Birkh\"{a}user Verlag, Basel --- 1984.
  \url{https://doi.org/10.1007/978-1-4684-9488-4}.

\end{thebibliography}

\end{document}